\def\badmu{0.5r^{-2}\varepsilon^{3/2} p^{3/2}(1-0.25\varepsilon)((1-0.25\varepsilon)\varepsilon^3 p^3-\lambda)}
\def\badprob{5n^2 e^{-\mu D}}
\def\badprobR{(2n+1)e^{-0.01\varepsilon p^{3}r^{-2}D}}
\def\acknowledgements{1}
\def\showauthornotes{1}
\def\showkeys{0}
\def\showdraftbox{0}
\def\showcolorlinks{1}
\def\usemicrotype{1}
\def\showfixme{0}
\title{New High Dimensional Expanders from Covers}
\author{Yotam Dikstein\thanks{Weizmann Institute of Science, ISRAEL. email: yotam.dikstein@weizmann.ac.il. Supported by Irit Dinur's ERC grant 772839.}} 
\date{\today}
\providecommand{\RR}{\mathbb{R}}
\providecommand{\NN}{\mathbb{N}}
\newtheorem{theorem}{Theorem}[section]
\newtheorem*{theorem*}{Theorem}
\newtheorem{proposition}[theorem]{Proposition}
\newtheorem*{proposition*}{Proposition}
\newtheorem{lemma}[theorem]{Lemma}
\newtheorem*{lemma*}{Lemma}
\newtheorem{corollary}[theorem]{Corollary}
\newtheorem*{corollary*}{Corollary}
\newtheorem*{conjecture*}{Conjecture}
\newtheorem*{fact*}{Fact}
\newtheorem*{hypothesis*}{Hypothesis}
\theoremstyle{definition}
\newtheorem{definition}[theorem]{Definition}
\newtheorem*{definition*}{Definition}
\newtheorem{example}[theorem]{Example}
\theoremstyle{remark}
\newtheorem{claim}[theorem]{Claim}
\newtheorem*{claim*}{Claim}
\newtheorem{remark}[theorem]{Remark}
\newtheorem*{remark*}{Remark}
\newtheorem{observation}[theorem]{Observation}
\newtheorem*{observation*}{Observation}
\newcommand{\savehyperref}[2]{\texorpdfstring{\hyperref[#1]{#2}}{#2}}
\newcommand{\Sref}[1]{\hyperref[#1]{\S\ref*{#1}}}
\newcommand{\Authornote}[2]{{\sffamily\small\color{red}{[#1: #2]}}}
\newcommand{\Authornotecolored}[3]{{\sffamily\small\color{#1}{[#2: #3]}}}
\newcommand{\Authorcomment}[2]{{\sffamily\small\color{gray}{[#1: #2]}}}
\newcommand{\Authorstartcomment}[1]{\sffamily\small\color{gray}[#1: }
\newcommand{\Authorfnote}[2]{\footnote{\color{red}{#1: #2}}}
\newcommand{\Authorfixme}[1]{\Authornote{#1}{\textbf{??}}}
\newcommand{\Authormarginmark}[1]{\marginpar{\textcolor{red}{\fbox{\Large #1:!}}}}
\newcommand{\Authornote}[2]{}
\newcommand{\Authornotecolored}[3]{}
\newcommand{\Authorcomment}[2]{}
\newcommand{\Authorstartcomment}[1]{}
\newcommand{\Authorfnote}[2]{}
\newcommand{\Authorfixme}[1]{}
\newcommand{\Authormarginmark}[1]{}
\newcommand{\brac}[1]{[#1]}
\newcommand{\Brac}[1]{\left[#1\right]}
\newcommand{\abs}[1]{\lvert#1\rvert}
\newcommand\sett[2]{\left\{ #1 \left| \; \vphantom{#1 #2} \right. #2  \right\}}
\newcommand{\set}[1]{\{#1\}}
\newcommand{\norm}[1]{\lVert#1\rVert}
\newcommand{\iprod}[1]{\langle#1\rangle}
\newcommand{\Esymb}{\mathbb{E}}
\newcommand{\Psymb}{\mathbb{P}}
\DeclareMathOperator*{\E}{\Esymb}
\DeclareMathOperator*{\ProbOp}{\Psymb}
\renewcommand{\Pr}{\ProbOp}
\def\one{{\mathbf{1}}}
\newcommand{\prob}[1]{\Pr \left[ {#1} \right] }
\newcommand{\Prob}[2][]{\Pr_{{#1}}\left[#2\right]} 
\newcommand{\cProb}[3]{\Pr_{{#1}}\left[ #2 \left| \; \vphantom{#2 #3} \right. #3  \right]} 
\newcommand{\ex}[1]{\E\brac{#1}}
\newcommand{\Ex}[2][]{\E_{{#1}}\Brac{#2}}
\newcommand{\ve}{\;\hbox{and}\;}
\newcommand{\textparen}[1]{\text{(#1)}}
\newcommand{\because}[1]{\textparen{because #1}}
\renewcommand{\because}[1]{\textparen{because #1}}
\newcommand\bdot\bullet
\DeclareMathOperator{\poly}{poly}
\renewcommand{\leq}{\leqslant}
\renewcommand{\geq}{\geqslant}
\let\epsilon=\varepsilon
\numberwithin{equation}{section}
\newcommand{\MYstore}[2]{%
  \global\expandafter \def \csname MYMEMORY #1 \endcsname{#2}%
}
\newcommand{\MYload}[1]{%
  \csname MYMEMORY #1 \endcsname%
}
\newcommand{\MYnewlabel}[1]{%
  \newcommand\MYcurrentlabel{#1}%
  \MYoldlabel{#1}%
}
\newcommand{\MYdummylabel}[1]{}
\newcommand{\torestate}[1]{%
  \let\MYoldlabel\label%
  \let\label\MYnewlabel%
  #1%
  \MYstore{\MYcurrentlabel}{#1}%
  \let\label\MYoldlabel%
}
\newcommand{\restatetheorem}[1]{%
  \let\MYoldlabel\label
  \let\label\MYdummylabel
  \begin{theorem*}[Restatement of \prettyref{#1}]
    \MYload{#1}
  \end{theorem*}
  \let\label\MYoldlabel
}
\newcommand{\restatelemma}[1]{%
  \let\MYoldlabel\label
  \let\label\MYdummylabel
  \begin{lemma*}[Restatement of \prettyref{#1}]
    \MYload{#1}
  \end{lemma*}
  \let\label\MYoldlabel
}
\newcommand{\restateprop}[1]{%
  \let\MYoldlabel\label
  \let\label\MYdummylabel
  \begin{proposition*}[Restatement of \prettyref{#1}]
    \MYload{#1}
  \end{proposition*}
  \let\label\MYoldlabel
}
\newcommand{\restateclaim}[1]{%
  \let\MYoldlabel\label
  \let\label\MYdummylabel
  \begin{claim*}[Restatement of \prettyref{#1}]
    \MYload{#1}
  \end{claim*}
  \let\label\MYoldlabel
}
\newcommand{\restatecorollary}[1]{%
  \let\MYoldlabel\label
  \let\label\MYdummylabel
  \begin{corollary*}[Restatement of \prettyref{#1}]
    \MYload{#1}
  \end{corollary*}
  \let\label\MYoldlabel
}
\newcommand{\restatefact}[1]{%
  \let\MYoldlabel\label
  \let\label\MYdummylabel
  \begin{fact*}[Restatement of \prettyref{#1}]
    \MYload{#1}
  \end{fact*}
  \let\label\MYoldlabel
}
\newcommand{\restatedefinition}[1]{
\let\MYoldlabel\label 
\let\label\MYdummylabel 
\begin{definition*}[Restatement of \prettyref{#1}] 
    \MYload{#1} 
\end{definition*} 
\let\label\MYoldlabel 
} 
\newcommand{\restate}[1]{%
  \let\MYoldlabel\label
  \let\label\MYdummylabel
  \MYload{#1}
  \let\label\MYoldlabel
}
\let\origparagraph\paragraph
\renewcommand{\paragraph}[1]{\origparagraph{#1.}}
\newcommand{\dunion}{\mathbin{\mathaccent\cdot\cup}}
\let\pref=\prettyref
\newcommand{\dir}[1]{\overset{\to}{#1}}
\begin{document}
\maketitle

\begin{abstract}
We present a new construction of high dimensional expanders based on covering spaces of simplicial complexes. High dimensional expanders (HDXs) are hypergraph analogues of expander graphs. They have many uses in theoretical computer science, but unfortunately only few constructions are known which have arbitrarily small local spectral expansion.

We give a randomized algorithm that takes as input a high dimensional expander \(X\) (satisfying some mild assumptions). It outputs a sub-complex \(Y \subseteq X\) that is a high dimensional expander and has infinitely many simplicial covers. These covers form new families of bounded-degree high dimensional expanders. The sub-complex \(Y\) inherits \(X\)'s underlying graph and its links are sparsifications of the links of \(X\). When the size of the links of \(X\) is \(O(\log |X|)\), this algorithm can be made deterministic.

Our algorithm is based on the groups and generating sets discovered by Lubotzky, Samuels and Vishne (2005), that were used to construct the first discovered high dimensional expanders. We show these groups give rise to many more ``randomized'' high dimensional expanders. 

In addition, our techniques also give a random sparsification algorithm for high dimensional expanders, that maintains its local spectral properties. This may be of independent interest.
\end{abstract}

\clearpage
\section{Introduction}
Expanders are graphs that are highly connected. It is a known fact that there exist families of expander graphs that are bounded-degree. These graphs play a key role in theoretical computer science, combinatorics and many other areas in mathematics \cite{HooryLW2006}.
High dimensional expanders (HDXs) are a hypergraph analogue of expander graphs. Loosely speaking, high dimensional expanders have the property that neighborhoods of faces are themselves expanding graphs. Lubotzky surveys the different definitions of high dimensional expanders \cite{Lubotzky2017}.

High dimensional expanders are promising objects in theoretical computer science. They have been used for efficiently solving CSPs \cite{AlevFT2019}, for constructing agreement testers \cite{DinurK2017,DiksteinD2019} and for sampling and counting matroids and other combinatorial objects (e.g. \cite{AnariLGV2019}, \cite{ChenLV2021}).

There are only two constructions of high dimensional expanders that achieve arbitrarily good expansion,  \cite{LubotzkySV2005a} and \cite{KaufmanO2018}. Both constructions rely on group theory. This is contrary to expander graphs, where in addition to group-theoretic constructions, elementary combinatorial constructions and random constructions are known \cite{HooryLW2006}.

The goal of this work is to construct new high dimensional expanders by using random simplicial covers.
Graph covers (or lifts) are a key ingredient in many random expander constructions. A graph \(G'\) covers a graph \(G\) if there is a surjective graph homomorphism \(\psi:G' \to G\) so that for every vertex \(v \in G'\), \(deg(v)=deg(\psi(v))\). There is a simple algorithm (described below) that given a graph \(G\), outputs a random cover \(G'\)  of \(G\). In many cases the covers will also be expander graphs, and indeed this is exploited to construct infinite families of expanders (see e.g. \cite{BiluL2006} and \cite{MarkusSS2015}). In this work, we manage to extend this technique to high dimensional expanders.

\subsection{High dimensional expanders}
A pure \(d\)-dimensional simplicial complex \(X\) is a set system (or hypergraph) consisting of an arbitrary collection of sets of size \(d + 1\) together with all their subsets. The sets of size \(i+1\) in \(X\) are denoted by \(X(i)\), and in particular, the vertices of \(X\) are denoted by \(X(0)\). The \(1\)-skeleton of \(X\) is the graph whose vertices are \(X(0)\) and edges are \(X(1)\). The sets in a simplicial complex are called faces. We say that \(X\) is \emph{connected} if its \(1\)-skeleton is connected.

Let \(s \in X(i)\). The link of \(s\) is a simplicial complex denoted by \(X_s\) whose sets are all \(t \in X\) so that \(s \cap t = \emptyset\) and \(s \cup t \in X\). For \(\lambda \geq 0\), a \(\lambda\)-two-sided high dimensional expander is a simplicial complex so that its \(1\)-skeleton, and all the \(1\)-skeletons of its links are \(\lambda\)-two-sided spectral expanders.

This definition is due to \cite{DinurK2017}; it concentrates on spectral properties. High dimensional expanders by this definition are useful in many applications as discussed above. However, there are other non-equivalent definitions for high dimensional expansion. We recommend \cite{Lubotzky2017} for a comparison between the main definitions that interest the community.

\subsection{Graph covers and simplicial covers}Let \(X,Y\) be pure \(d\)-dimensional simplicial complexes. A simplicial homomorphism is a function \(\psi:X(0) \to Y(0)\) so that every \(s \in X(d)\) is mapped to \(\psi(s)\in Y(d)\). A homomorphism is a \emph{cover} if it is surjective, and locally it behaves like an isomorphism. That is, for every face \(t \in Y(i)\) and preimage \(s \in X(i)\) so that \(\psi(s)=t\), the restriction of \(\psi\) to \(X_s\) is an isomorphism between \(X_s\) and \(Y_t\). For graphs, this definition coincides with the previous definition discussed above.

Given a graph on \(n\) vertices \(G = (V = \set{v_1,v_2,...,v_n},E)\), we can construct a cover \(G'=(V',E')\) of \(G\) as follows. Let \(\Gamma\) be any finite group and let \(f:E \to \Gamma\) be a labeling. The vertices of \(G'\) are set to be \(V' = V \times \Gamma\). For every \(i<j\), the vertices \((v_i,g), (v_j,h)\) are adjacent in \(G'\) if \(v_i v_j \in E\) and \(h=g \cdot f(ij)\). The covering map is the map that projects \((i,g)\) to its left coordinate \(i\). In fact, every graph cover corresponds to some group and some edge labeling \(f:E \to \Gamma\) \cite{Surowski1984}.

This method has proven to be useful in constructing new expander graphs from initial expanders. Amit and Linial initiated the study of random graph covers \cite{AmitL2002}. Bilu and Linial showed that if \(G\) is a bounded-degree expander, \(\Gamma = \mathbb{F}_2\) and \(f\) is chosen at random, then \(G'\) is an expander with positive probability \cite{BiluL2006}. A breakthrough paper by Marcus, Spielman and Srivastava \cite{MarkusSS2015} used graph covers to create optimal (Ramanujan) bipartite expander graphs. It is tempting to try and generalize this simple construction to high dimensional expanders.

Unfortunately, in simplicial complexes this method does not work as is. Given a \(d\)-dimensional simplicial complex \(X\) and a labeling of the \(1\)-skeleton \(f:X(1) \to \Gamma\), the resulting covering graph \(G'\) is not necessarily a (\(1\)-skeleton of a) \(d\)-dimensional simplicial complex at all. It turns out that connected covers of a simplicial complex correspond to subgroups of its fundamental group (as a topological space) \cite{Surowski1984}. This group may be trivial so complexes may have no non-trivial connected covers.
%

\subsection{Our contribution}
Not all is lost. In this paper we show that even if a high dimensional expander \(X\) has no covers, with the help of certain groups \(\Gamma\) we can find a non-trivial sub-complex \(Y \subseteq X\) that is a high dimensional expander \emph{and has infinitely many covers}. In more detail, we start with:
\begin{enumerate}
    \item\label{item:first} An initial high dimensional expander \(X\). The links of \(X\) need to be (almost) regular and relatively dense, but \(X\) itself may have bounded degree.
    \item\label{item:second} An infinite group \(\Gamma\) that has infinitely many normal subgroups of finite index, together with a generating set \(S \subseteq \Gamma\). Links of the clique complex of the Cayley graph \(Cay(\Gamma,S)\) should be themselves expanding graphs. \footnote{A clique complex of a graph is a simplicial complex whose vertices and edges are those of the graph \(Cay(\Gamma,S)\). The high-dimensional faces of the complex are all cliques in the graph.}
\end{enumerate}

\begin{theorem}[Informal, for the formal theorem see \pref{thm:main-formal}] \label{thm:main}
There is a randomized algorithm that takes as input a high-dimensional expander \(X\) satisfying \pref{item:first}, and a group and generating set \((\Gamma,S)\) satisfying \pref{item:second}. The algorithm outputs a subcomplex \(Y \subseteq X\) and a labeling \(f:X(1) \to S\) so that:
\begin{enumerate}
    \item \(Y\) has infinitely many covers \(\set{Y_n}_{n=1}^\infty\) which can be constructed in polynomial time using \(f\).
    \item \(Y\) is a high dimensional expander.
    \item \(Y\) contains a constant fraction of the faces of \(X\), proportional to \(\frac{1}{poly(|S|)}\). 
\end{enumerate}
\end{theorem}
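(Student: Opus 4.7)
The plan is to specify the randomized algorithm explicitly and then verify each of the three claimed properties separately. The algorithm assigns to each edge $e \in X(1)$ an independent uniform random label $f(e) \in S$ (with $f(vu) = f(uv)^{-1}$), and defines $Y \subseteq X$ to be the sub-complex consisting of those faces $s = \{v_0,\dots,v_k\}$ whose labeled edges embed into the Cayley graph $Cay(\Gamma,S)$; equivalently, $s \in Y$ iff for every triangle $\{u,v,w\} \subseteq s$ the product $f(uv)f(vw)f(wu)$ equals the identity of $\Gamma$. Because this cocycle condition is closed under taking subfaces, $Y$ is indeed a simplicial complex with the same vertex set and $1$-skeleton as $X$.

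The cover construction is then standard from the labeling. For any finite-index normal subgroup $N \triangleleft \Gamma$, define $Y_N$ with vertex set $Y(0) \times (\Gamma/N)$; the face above $s \in Y(k)$ starting at $(v_0, gN)$ is $\{(v_i,\, g\cdot f(v_0 v_1) \cdots f(v_{i-1} v_i) N)\}_{i=0}^k$. The cocycle condition defining $Y$ guarantees this lift is well defined and that $Y_N$ is an honest simplicial cover of $Y$. Since by assumption $\Gamma$ has infinitely many normal subgroups of finite index, we obtain infinitely many such covers, each constructible in polynomial time from $f$.

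For the density claim, fix $s \in X(d)$ and observe that the event $s \in Y$ depends only on the $\binom{d+1}{2}$ labels within $s$. A spanning tree of $s$ carries $d$ labels that extend uniquely by the cocycle to the remaining edges, so the probability that a uniform labeling is consistent \emph{and} that each extended edge label happens to lie in $S$ is $\Omega(|S|^{-O(d)})$, using the assumption that $Cay(\Gamma,S)$ has enough triangles (which follows from the link expansion of its clique complex). Linearity of expectation then yields $\E[|Y(d)|] = \Omega(|X(d)|/\poly(|S|))$, and a standard averaging-with-Markov argument over the labeling lets the algorithm select an outcome achieving this bound simultaneously with the expansion bound below.

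The hard part, and the main technical obstacle, is showing that $Y$ is a two-sided HDX. For each face $s \in Y(k)$, the link $Y_s$ is obtained from $X_s$ by keeping each edge $\{u,w\}$ with probability (roughly) the density of the analogous link in the Cayley clique complex. The plan is to apply a matrix Bernstein concentration inequality to show that the normalized adjacency operator of $Y_s$ is spectrally close to its expectation, which decomposes as a weighted product of the adjacency operator of $X_s$ (which is expanding by assumption on $X$) and that of the corresponding link in $Cay(\Gamma,S)$ (expanding by assumption on $(\Gamma,S)$); hence both contribute to the two-sided gap of $Y_s$. The density assumption on $X$'s links, which says that each link has $\Omega(\log|X|)$ (or larger) many edges, provides precisely the sample size needed for matrix concentration to beat a union bound over all $s \in Y$ of all dimensions. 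Making this simultaneous concentration tight enough that the spectral loss is only $\poly(|S|) \cdot \lambda(X)$—so that $Y$ is a $\lambda$-HDX for a slightly worse $\lambda$—is the crux of the argument and is where most of the delicate calculation (reflected in the parameters $\mu, D, r, \varepsilon$ preconfigured in the preamble) will be concentrated.
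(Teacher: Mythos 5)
Your algorithm and cover construction match the paper, and the local analysis of each link is roughly in the right spirit, but the global argument for two-sided expansion has a genuine gap that the paper explicitly calls out.

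You propose to prove that $Y$ is an HDX by showing concentration of each link's adjacency operator (via matrix Bernstein) and then taking a union bound over all faces $s \in Y$, crediting the link-density assumption as providing ``precisely the sample size needed for matrix concentration to beat a union bound.'' This relies on a misreading of the hypothesis: the assumption in the paper's \pref{def:suitable-complex} is that each vertex in each link has degree at least $c(1+\log Q)$, where $Q = \max_{v\in X(0)}\deg_d(v)$ is the \emph{local} maximal $d$-degree --- not $c\log|X|$. The whole point of the theorem is to apply it when $X$ is bounded-degree (or at any rate has $Q$ much smaller than $|X|$), in which case each bad local event --- say, that the link of some $(d-2)$-face $\tau$ fails to expand --- depends only on the $\mathrm{poly}(Q)$ edges within distance $2$ of $\tau$, and hence has probability bounded below by a quantity depending only on $Q$, not on $|X|$. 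Summing such probabilities over $|X|$-many faces cannot be driven below $1$; indeed, the \emph{expected} number of faces with broken links grows with $|X|$. Your union bound therefore fails exactly in the regime the theorem is about.

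The paper circumvents this with the Lovász Local Lemma (\pref{thm:l3}): it defines local bad events $E_\tau$ (encoding failure of link expansion, atypical label counts, and bad cover connectivity), observes that $E_\tau$ and $E_{\tau'}$ are independent unless $\tau,\tau'$ lie within distance $2$ of each other, so that each event depends on only $\mathrm{poly}(Q)$ others, and then shows each $\Pr[E_\tau]$ is small as a function of $Q$ alone (using Chernoff bounds and the Bilu--Linial converse expander mixing lemma in place of matrix Bernstein). The LLL then yields a positive-probability assignment on which \emph{all} local events fail simultaneously, with no dependence on $|X|$; Moser--Tardos converts this to the randomized algorithm. Your density argument via linearity of expectation plus Markov has the same defect: it would need to couple with the expansion event, whereas the LLL route gives density deterministically once the good event holds. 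To repair your proof you should replace the union bound by the LLL with dependency graph given by distance-$\le 2$ proximity between faces.
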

We note that the cover \(\set{Y_n}_{n=1}^\infty\) are the new family of high dimensional expanders we seek. Their vertices are \(Y(0) \times (\Gamma / \Gamma_n)\) where \(\set{\Gamma_n}_{n=1}^\infty\) are normal subgroups of \(\Gamma\) of finite index. Their links are isomorphic to links of faces in the original subcomplex \(Y\). Hence links of these covers are sparsifications of links in the original complex \(X\).

Groups and generating sets \((\Gamma,S)\) as required in the input appeared in \cite{LubotzkySV2005b,LubotzkySV2005a}, where they were used to construct the first known high dimensional expanders. The resulting simplicial complexes we construct are however, are different and depend on the input \(X\) we give the algorithm.

Interestingly, we show that this new simplicial complex \(Y\) inherits properties both from \(X\) and from \(Cay(\Gamma,S)\). Globally, the \(1\)-skeleton of \(Y\) is identical to the \(1\)-skeleton of \(X\) (i.e it has the same vertex set and edge set). For every \(\sigma \in Y\), the link \(Y_\sigma \subseteq X_\sigma\) is a sparsification that has a constant fraction of the faces in \(X_\sigma\). The algorithm sparsifies the higher-dimensional faces so that locally its links are homomorphic to links of faces in the clique complex of \(Cay(\Gamma,S)\).

There are many starting points \(X\) that satisfies the requirements in \pref{thm:main}. The simplest is the complete complex. We could also use a random model to sample our initial simplicial complex, such as the model suggested by \cite{LinialM2006}: its vertices are \(\set{1,2,...,m}\), every \(d\)-face is sampled independently with some probability \(p \in (0,1)\). Although we don't usually think about the complete complex or the model in \cite{LinialM2006} as ``bounded-degree'', recall that \(X\) is just a single complex (and thus always has bounded degree). Albeit, we can initialize our construction by taking \(X\) out of a family of bounded-degree of high dimensional expanders as long as they are regular. A work by Friedgut and Iluz shows how to construct strongly regular high dimensional expanders from existing ones \cite{FriedgutI2020}, which we can use.

The algorithm in \pref{thm:main} can also be derandomized when \(X\) has logarithmic sized links.
\medskip

Though our main result deals with covers, our random sparsification technique has another application of independent interest. We show that we can sparsify a high dimensional expander \(X\), so that it will be homomorphic to some fixed smaller high dimensional expander \(C_0\), while maintaining its high dimensional expansion.
\begin{theorem}[Informal, for the formal theorem see \pref{thm:second-construction}] \label{thm:second-construction-informal}
There is a randomized algorithm that takes as input a simplicial complex \(C_0\), and simplicial complex \(X\) that satisfies \pref{item:first}, and outputs a sub-complex \(Y \subseteq X\) so that:
\begin{enumerate}
    \item \(Y\) is homomorphic to \(C_0\).
    \item \(Y\) has a constant fraction of the faces of \(X\) (the constant depends on \(C_0\)).
    \item If \(X\) and \(C_0\) are high dimensional expanders then \(Y\) is also a high dimensional expander.
\end{enumerate}
\end{theorem}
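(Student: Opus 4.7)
The plan is to construct $Y$ by a random vertex coloring and then verify the three items via matrix concentration on links. Sample $\phi : X(0) \to C_0(0)$ independently and uniformly, and define $Y = \set{\sigma \in X : \phi(\sigma) \in C_0}$. Since $C_0$ is closed under subsets, so is $Y$, making $Y$ a sub-complex of $X$, and $\phi$ is by definition a simplicial homomorphism $Y \to C_0$, giving item~$1$. For item~$2$, observe that for each $\sigma \in X(i)$, the probability that $\phi(\sigma) \in C_0(i)$ equals $(i+1)! \cdot |C_0(i)| / |C_0(0)|^{i+1}$, a positive constant depending only on $C_0$. Linearity of expectation then yields the expected fraction claimed, and concentration (e.g.\ McDiarmid applied to $\phi \mapsto |Y(i)|$, whose per-coordinate Lipschitz constant is controlled by the maximum $i$-degree of a vertex in $X$ and hence by the regularity assumption on $X$) upgrades this to a high-probability statement.

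The interesting step is item~$3$. I first invoke Oppenheim's trickle-down theorem to reduce the task to showing that for every codimension-$2$ face $\sigma$, the $1$-skeleton of $Y_\sigma$ is a sufficiently good two-sided spectral expander. Fix such $\sigma$ and condition on $\phi(\sigma) = \pi$ for some $\pi \in C_0(d-2)$. Then $Y_\sigma(1)$ consists of those $\set{u,v} \in X_\sigma(1)$ satisfying $\set{\phi(u),\phi(v)} \in (C_0)_\pi(1)$, and on $X_\sigma(0)$ the coloring is still uniform and independent into $C_0(0)$. Writing the (normalized) adjacency matrix $A$ of $Y_\sigma$ as the Hadamard product of the adjacency of $X_\sigma$ with the random matrix $\Phi^T M \Phi$, where $\Phi$ is the one-hot indicator of $\phi$ on $X_\sigma(0)$ and $M$ is the adjacency of $(C_0)_\pi$, one sees that $\E[A]$ is a scalar multiple of the adjacency of $X_\sigma$ (plus a negligible diagonal correction from the exclusion of $\phi(u)=\phi(v)$). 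By the HDX assumption on $X$ this expectation is a two-sided spectral expander. I would then use matrix Bernstein to bound $\norm{A - \E[A]}$, after writing the zero-mean part as a sum of independent contributions (one per vertex of $X_\sigma(0)$, handling the shared-vertex correlations by a standard decoupling step). A union bound over polynomially many codimension-$2$ faces finishes item~$3$.

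The main obstacle is the matrix concentration step for the random link, since edge-keep events are correlated through shared vertex colors and the entries of $A$ are therefore not independent. I plan to control this by decoupling in the standard way, producing a sum of independent summands to which matrix Bernstein applies cleanly; the variance and per-summand norm bounds needed are absorbed by the density assumption on $X$'s links (item~\ref{item:first} in the algorithm's input requirements), which gives enough edges per codimension-$2$ link to tolerate the $\Theta(1/|C_0(0)|^2)$-scale sparsification while retaining spectral expansion. Throughout, the constants hidden in the trickle-down and concentration estimates are absolute, so the final spectral expansion parameter of $Y$ depends only on that of $X$, that of $C_0$, and $|C_0(0)|$, as required.
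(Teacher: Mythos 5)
The overall structure of your construction (random vertex coloring $\phi:X(0)\to C_0(0)$, pruning to $\phi$-satisfied faces, one invocation of Oppenheim's trickle-down) does match the paper's. But your item 3 has a fatal flaw precisely where the theorem is supposed to be nontrivial: you close the argument with ``a union bound over polynomially many codimension-2 faces.'' This cannot work in the regime the paper cares about, namely when $X$ has bounded degree $Q=\max_v\deg_d(v)$. In that case each link of $X$ has only $O_Q(1)$ vertices, so any per-link concentration bound (matrix Bernstein, Chernoff, or otherwise) has magnitude $e^{-\Omega(c\log Q)}=Q^{-O(1)}$ --- a constant in $Q$ that does not shrink as $|X|\to\infty$. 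The expected number of bad codimension-2 links therefore diverges, and a union bound over $\mathrm{poly}(|X|)$ faces gives nothing. The paper flags exactly this obstacle in \pref{sec:proof-outline-main-theorem} and gets around it with the Lov\'asz Local Lemma: the bad event $E_\tau$ depends only on the $f$-values of edges within graph distance $2$ of $\tau$, so each $E_\tau$ is correlated with only $K=\mathrm{poly}(Q)$ other events, and LLL (\pref{thm:l3}) applies once each per-event probability is driven below $1/(e(K+1))$ by choosing $c$ large enough. To repair your argument you would have to package your per-link concentration bound into an LLL dependency argument of this type rather than a global union bound.

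Two secondary issues. First, you propose to write the normalized adjacency of $Y_\sigma$ as $A_{X_\sigma}\odot\Phi^{\!\top} M\Phi$, but that Hadamard product gives the unnormalized adjacency; the normalization (which encodes the vertex and edge measure of $Y_\sigma$) is itself random and depends on $\phi$. The paper does not analyze $Y_\sigma$ with the measure inherited from $X$ but with the coloring-induced measure of \pref{sec:simplicial-homomorphism} (see \pref{claim:graph-composition} and \pref{claim:not-at-implies-measure-behaves-nicely-in-links}), and for that measure to even be defined the restricted coloring must be non-degenerate --- which is exactly what the $\neg AC$ bad events enforce. Skipping this means your $\E[A]$ is not actually the (normalized) adjacency of $X_\sigma$, only something close to it when these non-degeneracy events hold. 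Second, your reduction via trickle-down to codimension-$2$ links silently requires all intermediate links to be connected; the paper establishes this via $\neg AC$ as well, and you would need an analogous argument. Your matrix-Bernstein-plus-decoupling idea in place of the Bilu--Linial inverse expander mixing lemma is a genuinely different route for the per-link estimate and may be workable, but it is a side point until the union-bound-to-LLL and measure issues are repaired.
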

Note that \(Y \subseteq X\) could be viewed also as a sparsifying algorithm for \(X\), that produces a sparser simplicial complex that still has high dimensional expansion properties. We also show that when \(X\) has small enough links, then we can derandomize the algorithm in \pref{thm:second-construction-informal}.

\subsection{Constructing covers in higher dimension}
Let us understand how to generalize the graph cover construction to higher dimensional simplicial complexes.

Let \(X\) be a \(d\)-dimensional high dimensional expander where \(d \geq 2\) with vertex set \(X(0)=\set{v_1,v_2,...,v_n}\) (we order of the vertices arbitrarily). Let \(f:X(1) \to \Gamma\) be a labeling to some group \(\Gamma\). Let \(G'\) be the \emph{graph cover} of the \(1\)-skeleton of \(X\), generated by \(f\). In order to extend \(G'\) to a \emph{simplicial cover} of \(f\) to have the property that for every triangle \(v_i v_j v_k \in X\) where \(i<j<k\)
\begin{equation}\label{eq:triangle-property}
    f(v_i v_j)f(v_j v_k)=f(v_i v_k).
\end{equation}
Labelings \(f\) with this property are called \(\Gamma\)-cosystols. If \(f\) is a \(\Gamma\)-cosystol, then whenever \((v_i,g) \sim (v_j,g f(v_i v_j))\) and \((v_i,g) \sim (v_k, g f(v_i v_k))\), it also holds that 
\[(v_j, g f(v_i v_j)) \sim (v_k,g f(v_i v_j) f(v_j v_k) = g f(v_i v_k) ),\]
I.e. all edges of the triangle \(\set{(v_i,g),(v_j,g f(v_i v_j)),(v_k,g f(v_i v_k))}\) are in \(G'\). We can then extend \(G'\) to a simplicial complex \(X'\) with
\[X'(2) = \sett{ \set{(v_i,g),(v_j,g f(v_i v_j)),(v_k,g f(v_i v_k))}}{v_i v_j v_k 
\in X(2), i<j<k<, g \in \Gamma}\]
and similarly for \(\ell > 2\), 
\[X'(\ell) = \sett{\set{(v_{i_0},g), (v_{i_1}, g f(v_{i_0}v_{i_1})),...,(v_{i_\ell}, g f(v_{i_0} v_{i_\ell}))}}{i_0 i_1 ... i_\ell \in X(\ell), i_0 < i_1 <... < i_\ell, g \in \Gamma }).\]
%
%
\subsection{Overview of our construction}
The cover constructed from a labeling \(f:X(1) \to \Gamma\) may be trivial, that is, that \(X'\) may consist of \(|\Gamma|\) disjoint copies of \(X\). Recall that connected covers are determined by the fundamental group of \(X\), and it could be the case that all covers \(X'\) of \(X\) are trivial.

To overcome this in our construction, we ``go the other way around''. We sample a labeling \(f:X(1)\to \Gamma\) by sampling the label of each edge independently. We then wish to remove from \(X\) all faces that contain triangles \(v_i v_j v_k \in X(2)\) where \(i<j<k\) and \(f(v_i v_j)f(v_j v_k)\ne f(v_i v_k)\). If we do so, we get a sub-complex \(Y \subseteq X\) so that \(f\) is one of its \(\Gamma\)-cosystols.

The main problem with this naive triangle removal, is that with high probability some links of faces in \(Y\) will not be expanders anymore. It could be, for example, that all triangles adjacent to some vertex are removed (and its link becomes disconnected). Denote by \(B_\sigma\) the event that the link of \(\sigma\) is not an expander graph.

Fortunately, Lovasz's Local Lemma \cite{ErdosL1974} guarantees us that if the probability of each \(B_\sigma\) is small, and that every event \(B_\sigma\) is independent of all but a few of the \(B_{\sigma'}\), then the event where no \(B_\sigma\) occurs has positive probability (i.e. \(Y\) is a high dimensional expander with positive probability). The heart of our analysis is to properly define these ``local bad events'', and to show that we can indeed apply Lovasz's Local Lemma to them. We use similar techniques to also promise that the \(f\)-cover of \(Y\) is indeed connected.

Moser and Tardos proved that there is a procedure to turn the existential proof based on Lovasz's Local Lemma to a randomized algorithm that finds \(Y\) in polynomial time \cite{MoserT2010}. When the links of \(X\) are not too small, we can derandomize this algorithm using the work of \cite{ChandrasekaranGH2013}. 

As \(Y\) is obtained by removing faces from \(X\), it is not surprising that \(1\)-skeletons of links of faces in \(Y\) are obtained by removing vertices and edges from links of faces in \(X\). To show that these links still expand with high probability, we analyze random sparsifications of expander graphs with logarithmic degree, and show that with high enough probability a graph remains an expander after such a sparsification. To do so we use the Inverse Expander Mixing Lemma proven by Bilu and Linial \cite{BiluL2006}. This analysis is quite general and may be of independent interest. 

\subsection{Related works}
Properties of high dimensional expanders were first used in the work of Garland \cite{Garland1973}, even before they were explicitly defined. Local spectral properties of links were used there to show that the real cohomology of a simplicial complex vanishes. Later on, a strong notion of coboundary expansion was discovered by Gromov \cite{Gromov2010}, and also by Linial, Meshulam and Wallach \cite{LinialM2006}, \cite{MeshulamW09}. The definition we use of high dimensional expanders is due to Dinur and Kaufman \cite{DinurK2017}, who also defined the one-sided version of high-dimensional expansion (before their work, a weaker notion of expansion in the links was used in e.g. \cite{EvraK2016,KaufmanKL2014}). The first construction of bounded-degree high dimensional expanders was done before the definition we use today, by Lubotzky, Samuels and Vishne \cite{LubotzkySV2005b}. A second algebraic construction was discovered by Kaufman and Oppenheim \cite{KaufmanO2018,KaufmanO2021}. Recently, \cite{ODonnellP2022} construct new high dimensional expanders using the method of \cite{KaufmanO2018} with Chevalley groups. Friedgut and Iluz modified existing constructions for creating hyper-regular high dimensional expanders \cite{FriedgutI2020}. There have been combinatorial constructions of high dimensional expanders by Chapman, Linial and Peled \cite{ChapmanLP2020}, Liu, Mohanty and Yang \cite{LiuMY2020} and Golowich \cite{Golowich2021}. However, the combinatorial constructions currently produce \(\lambda\)-HDXs for \(\lambda \geq \frac{1}{2}\).

As mentioned above, there is a long line of works on random graph covers and their properties e.g. \cite{AmitL2002, AmitLMR2001, Friedman2003, BiluL2006, Puder2015, MarkusSS2015}. This work lead to the breakthrough construction of bipartite Ramanujan graphs of all degrees by Marcus, Spielman and Srivastava. The works that study expansion usually focused on analysis of the eigenvalues of the covering graph. In higher dimension however, every connected cover is also a high dimensional expander. This fact is a corollary of a work of Oppenheim \cite{Oppenheim2018}, which we discuss more precisely in \pref{sec:preliminaries}. However, to analyze the links in our construction, we use the techniques developed by Bilu and Linial \cite{BiluL2006} towards analyzing random graph covers.

Dinur and Meshulam also studied on simplicial covers from a TCS perspective \cite{DinurM2019}. They study a topological property test on a class of simplicial complexes called \emph{cosystolic expanders}. They show that if \(X\) is such a complex, then every surjective mapping \(f:Y \to X\) that satisfies almost all the local conditions of being a cover is close to a genuine cover \(g:Y \to X\).

Random sparsification of graphs and their adjacency matrices is also a well studied problem, and there are known randomized and deterministic algorithms to sparsify graphs while keeping their spectral properties, e.g. \cite{SpielmanS08}, \cite{SpielmanT2011}, \cite{LeeS2017}, \cite{JambulapatiS18} and \cite{LaiXX2020}. Furedi and Kolmos initiated the study of random sparsified matrices \cite{FurediK1981} (see also the work of Vu \cite{Vu2007}). A celebrated result by Spielman and Teng, studied sparsification of spectral expanders by randomly removing edges \cite{SpielmanT2011}. However, the probability of removing an edge in \cite{SpielmanT2011} varies according to the degrees of its vertices, and we could not use its analysis as is. Hence, we prove a graph sparsification result that is tailored for our needs.

\subsection{Open questions}
The construction we give in this paper is for two sided high dimensional expansion. Is there also an extension of our algorithm or our analysis that works on \emph{one sided} high dimensional expanders \(X\)?

All constructions of bounded-degree high dimensional expanders with \(\lambda < \frac{1}{2}\) rely on group theory (either deterministic or randomized). Finding a combinatorical construction that does not rely on a group for arbitrarily small \(\lambda>0\) or even \(\lambda =0.49\) is an open problem. It is interesting to note that the theorem of Oppenheim mentioned above \cite{Oppenheim2018} only applies for \(\lambda <\frac{1}{2}\). This seems to show that \(\lambda\)-high dimensional expanders for \(\lambda <\frac{1}{2}\) must have some non-trivial structure.

Our construction has properties inherited by both the complex \(X\), and the clique complex of \(Cay(\Gamma,S)\). There are many other interesting properties that could be of interest in our construction. For example, one can repeat this construction with groups and generating sets so that the links of the clique complex of \(Cay(\Gamma,S)\) are \emph{small set expanders}, but not expanders themselves. Will the resulting complexes be small set expanders (either locally or globally)? Another potentially interesting property is coboundary and cosystolic expansion. Will the resulting complexes be cosystolic expanders?

Our construction relies on infinite groups \(\Gamma\) that are residually finite and have expanding links. Besides the groups in \cite{LubotzkySV2005a}, we are not aware of groups that have these properties \footnote{Not including trivial constructions such as \((\Gamma \times \Gamma, S \times S)\).}. Finding new examples of such groups is interesting. The groups that were used in \cite{KaufmanO2018,KaufmanO2021} are possible candidates, as they already showed potential for construction of high dimensional expanders. 
Note that \cite{Zuk2003} showed that any such \(\Gamma\) has Kazhdan's property-\(T\). This already constrains the types of groups we can consider. \cite{Gromov2003} defined a random model of groups that seems to have expanding links, but it is not clear whether they are residually finite or not. It will be interesting to understand if this model, or some variation to it, could produce new examples of groups useful for our construction.

Another open question that may have algorithmic applications, is embedding simplicial complexes in high dimensional expanders. That is, given a simplicial complex \(X\), is it possible to embed it in a high dimensional-expander \(X' \supseteq X\), so that \(|X'|/|X|\) is constant. For graphs this is easy, since given \(G = (V,E)\), we can find an expander graph with the same vertex set \(H = (V,E')\) and embed \(G\) in \(G' = (V,E \cup E')\). This simple embedding in graphs is useful, for example, in the PCP-theorem proof by \cite{Dinur2005}. Understanding when we also have such an embedding in simplicial complexes could potentially lead to new algorithms for solving constraint satisfaction problems.

Finally, \cite{DiksteinDFH2018} generalized the notion of high dimensional-expanders to \emph{expanding posets}. Another open direction of research is whether one could use topological covers to construct new expanding posets.

\if\acknowledgements1
\subsection{Acknowledgments}
I would like to express my deepest gratitude to Irit Dinur for her invaluable comments and feedback that greatly improved this paper. I would also like to thank Gil Melnik and Yael Hitron for their feedback on preliminary versions of this paper.
\fi

\subsection{Organization of this paper}
 \pref{sec:preliminaries} contains the preliminaries necessary for this paper. We formally state and prove our main theorem about simplicial covers in \pref{sec:main}, while deferring some technical claims on sparsification of graphs to \pref{sec:expander-sparsification}. In \pref{sec:ramifications} we also show that for every fixed simplicial complex \(C\), there is an algorithm that receives a simplicial complex \(X\) and outputs a sub-complex \(Y \subseteq X\) that is homomorphic to \(C\). In \pref{sec:derandomization} we show that when the input complex \(X\) has constant degree, we obtain deterministic algorithms for constructing the covering families and for sparsification.

\section{Preliminaries} \label{sec:preliminaries}
\subsection{Classical probability}

\begin{theorem}[Chernoff's inequality] \label{thm:chernoff}
Let \(X_1,X_2,...,X_n\) be independent random variables. Denote \(\ex{\sum_{j=1}^n X_j} = \mu\). Then for any \(\varepsilon \in (0,1)\) it holds that
    \[ \prob{\abs{\sum_{j=1}^n X_n - \mu}\geq \varepsilon \mu} < 2e^{-0.3\varepsilon^2 \mu}.\]
\end{theorem}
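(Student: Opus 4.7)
The plan is to follow the standard moment-generating-function (Chernoff--Hoeffding) argument, assuming implicitly that each $X_j$ takes values in $[0,1]$ (this is how the bound is invoked in the paper, where indicator variables for edges/faces appear). Writing $S = \sum_{j=1}^n X_j$ and $\mu_j = \E[X_j]$, I would split the two-sided deviation into an upper tail $\Pr[S \geq (1+\varepsilon)\mu]$ and a lower tail $\Pr[S \leq (1-\varepsilon)\mu]$, handle each by Markov applied to $e^{tS}$, and then union-bound.

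For the upper tail, fix $t > 0$ and use independence to write $\E[e^{tS}] = \prod_j \E[e^{tX_j}]$. Using the convexity inequality $e^{tx} \leq 1 + x(e^t-1)$ valid for $x \in [0,1]$, I get $\E[e^{tX_j}] \leq 1 + \mu_j(e^t-1) \leq \exp(\mu_j(e^t-1))$, and so $\E[e^{tS}] \leq \exp(\mu(e^t-1))$. Markov's inequality then gives
\[
\Pr[S \geq (1+\varepsilon)\mu] \leq \exp\bigl(\mu(e^t-1) - t(1+\varepsilon)\mu\bigr).
\]
Optimizing by setting $t = \ln(1+\varepsilon)$ yields the classical bound $\exp\bigl(-\mu\bigl((1+\varepsilon)\ln(1+\varepsilon) - \varepsilon\bigr)\bigr)$, and the elementary inequality $(1+\varepsilon)\ln(1+\varepsilon) - \varepsilon \geq \varepsilon^2/3$ for $\varepsilon \in (0,1)$ finishes this side with $e^{-\varepsilon^2\mu/3}$. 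For the lower tail, I run the analogous argument with $t < 0$, getting the even stronger bound $e^{-\varepsilon^2\mu/2}$ after using $(1-\varepsilon)\ln(1-\varepsilon) + \varepsilon \geq \varepsilon^2/2$.

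Adding the two tail bounds and using $1/3 < 0.3^{-1}$, i.e.\ $\varepsilon^2/3 > 0.3\varepsilon^2$, gives
\[
\Pr[|S - \mu| \geq \varepsilon\mu] \leq e^{-\varepsilon^2\mu/2} + e^{-\varepsilon^2\mu/3} \leq 2 e^{-0.3\varepsilon^2\mu},
\]
which is the stated inequality.

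The only real obstacle here is entirely cosmetic: verifying the two calculus facts $(1+\varepsilon)\ln(1+\varepsilon) - \varepsilon \geq \varepsilon^2/3$ and $(1-\varepsilon)\ln(1-\varepsilon) + \varepsilon \geq \varepsilon^2/2$ on $(0,1)$, both of which follow by Taylor expansion of the log and a sign check on the resulting power series. Since this theorem is being imported as a black-box tool, I would simply cite the statement from a standard reference (e.g.\ the textbook of Mitzenmacher and Upfal) rather than reproduce these elementary estimates, and note only that the constant $0.3$ is a convenient rounding of $1/3$.
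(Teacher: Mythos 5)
The paper does not prove this theorem; it is stated in the preliminaries as a standard black-box fact, exactly as you anticipate at the end of your proposal, so there is no internal argument to compare against. Your moment-generating-function derivation is the textbook Chernoff argument and is correct. You are also right to flag that the hypothesis as printed is incomplete: without a boundedness assumption such as $X_j \in [0,1]$ (or Bernoulli $X_j$) no such bound can hold for general independent random variables, and in the paper the inequality is only ever invoked on sums of bounded indicator-type variables, so your implicit restriction matches the intended usage. The two calculus facts you cite, $(1+\varepsilon)\ln(1+\varepsilon) - \varepsilon \geq \varepsilon^2/3$ and $(1-\varepsilon)\ln(1-\varepsilon) + \varepsilon \geq \varepsilon^2/2$ on $(0,1]$, do check out (for the first, $g(\varepsilon) = (1+\varepsilon)\ln(1+\varepsilon) - \varepsilon - \varepsilon^2/3$ satisfies $g(0)=g'(0)=0$ and $g'(\varepsilon)=\ln(1+\varepsilon)-2\varepsilon/3>0$ on $(0,1]$; the second is even easier since the relevant second derivative is nonnegative). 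The one blemish is purely typographical: the inequality you write as ``$1/3 < 0.3^{-1}$'' should read $1/3 > 0.3$, which is what actually yields $e^{-\varepsilon^2\mu/3} \leq e^{-0.3\varepsilon^2\mu}$; the displayed conclusion is correct, only the stated justification is garbled.
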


\begin{theorem}[Hoeffding's inequality] \label{thm:hoeffding}
Let \(X_1,X_2,...,X_n\) be independent random variables. Let \([a,b] \subseteq \RR\) so that for all \(j=1,2,...,n\) it holds that \(X_j \in [a,b]\). Denote \(\ex{\sum_{j=1}^n X_j} = \mu\). Then for any \(\varepsilon > 0\) it holds that
\[\prob{\abs{\sum_{j=1}^n X_n - \mu}\geq \varepsilon \mu} < 2e^{\frac{-2\varepsilon^2 \mu^2}{\sum_{j=1}^n (b_i-a_i)^2}}.\]
\end{theorem}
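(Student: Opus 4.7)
The plan is to follow the standard Chernoff--Hoeffding template: apply the exponential Markov inequality, factor the moment generating function (MGF) using independence, bound each factor via Hoeffding's lemma, and finally optimize the free parameter. First I would handle the upper tail. For any \(s > 0\),
\[\Pr\Bigl[\sum_{j=1}^n X_j - \mu \geq \varepsilon\mu\Bigr] \;\leq\; e^{-s\varepsilon\mu}\,\E\bigl[e^{s(\sum_j X_j - \mu)}\bigr] \;=\; e^{-s\varepsilon\mu}\prod_{j=1}^n \E\bigl[e^{s(X_j - \E X_j)}\bigr],\]
where the factorization uses independence of the \(X_j\)'s.

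The core ingredient I would then prove is \textbf{Hoeffding's lemma}: if \(Y \in [a',b']\) and \(\E Y = 0\), then \(\E[e^{sY}] \leq \exp(s^2(b'-a')^2/8)\). I would deduce this from the convexity of \(y \mapsto e^{sy}\): writing \(y = \tfrac{b'-y}{b'-a'}a' + \tfrac{y-a'}{b'-a'}b'\), convexity gives \(e^{sy} \leq \tfrac{b'-y}{b'-a'}e^{sa'} + \tfrac{y-a'}{b'-a'}e^{sb'}\). Taking expectations and using \(\E Y = 0\) reduces the bound to a function \(L(s)\) of \(s\) alone. A short calculus argument (verifying that the second derivative of \(\log\) of this function is bounded by \((b'-a')^2/4\)) plus Taylor expansion around \(s=0\) yields \(L(s) \leq s^2(b'-a')^2/8\).

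Applying the lemma with \(Y_j = X_j - \E X_j \in [a_j - \E X_j,\, b_j - \E X_j]\), an interval of length \(b_j - a_j\), I obtain
\[\prod_{j=1}^n \E\bigl[e^{sY_j}\bigr] \;\leq\; \exp\Bigl(\tfrac{s^2}{8}\textstyle\sum_{j=1}^n (b_j-a_j)^2\Bigr).\]
Plugging this into the exponential Markov bound and minimizing the resulting exponent \(-s\varepsilon\mu + s^2 \sum_j (b_j-a_j)^2/8\) in \(s\), the optimum is attained at \(s^\star = 4\varepsilon\mu/\sum_j (b_j-a_j)^2\), producing
\[\Pr\Bigl[\sum_{j=1}^n X_j - \mu \geq \varepsilon \mu\Bigr] \;\leq\; \exp\Bigl(-\tfrac{2\varepsilon^2\mu^2}{\sum_{j=1}^n (b_j - a_j)^2}\Bigr).\]
The symmetric lower tail follows by applying the same argument to the variables \(-X_j \in [-b_j,-a_j]\), and a union bound over the two one-sided events produces the factor of \(2\) in the statement.

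The main (and essentially the only) nontrivial step is Hoeffding's lemma with the sharp constant \(1/8\); everything else is bookkeeping (independence to factor the MGF, and calculus to optimize \(s\)). Since in this paper the inequality is invoked as a black-box concentration tool, the derivation above can either be presented in a few lines or relegated to a standard reference.
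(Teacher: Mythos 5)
The paper states Hoeffding's inequality in its preliminaries as a classical black-box fact and gives no proof of its own, so there is nothing internal to compare against. Your derivation is the standard and correct one: exponential Markov, factorization of the moment generating function via independence, Hoeffding's lemma with the sharp constant $1/8$, optimization of the free parameter $s$, and a union bound over the two tails. The calculus checks out (at $s^\star = 4\varepsilon\mu/\sum_j(b_j-a_j)^2$ the exponent equals $-2\varepsilon^2\mu^2/\sum_j(b_j-a_j)^2$). One small observation about the statement rather than your proof: the paper declares a single interval $[a,b]$ but then writes $\sum_{j=1}^n (b_i-a_i)^2$ in the denominator with per-index endpoints, which is internally inconsistent; your reading with per-variable intervals $X_j \in [a_j,b_j]$ is the cleaner and strictly more general formulation and resolves that discrepancy. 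Also note the paper writes a strict inequality while the Markov/Hoeffding machinery only yields $\leq$; this is harmless since the bound is never tight, but strictly speaking the conclusion should be non-strict (or one should argue separately that equality is impossible).
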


\begin{theorem}[Lovazs Local Lemma, \cite{ErdosL1974}, algorithmic version by \cite{MoserT2010}] \label{thm:l3}
Let \(X_1,...,X_d\) be independent random variables. Let \(E_1,E_2,...,E_n\) be events determined by these random variables. For every \(E_i\) we denote by \(A_i = \sett{j}{E_i,E_j \text{ are not independent}}\). Assume that there are \(\alpha_i \in [0,1)\) so that for every event
\[ \prob{E_i} \leq \alpha_i \prod_{j \in A_i}(1-\alpha_j).\] 
Then \(\prob{\neg E_1 \wedge \neg E_2 \wedge ...\wedge \neg E_n} > 0\). Moreover, there exists a randomized algorithm that finds assignments to \(X_1,X_2,...,X_d\) so that all the events \(E_1,E_2,...,E_n\) don't occur. The algorithm runs in an expected time of 
\(\sum_{i=1}^n \frac{|A_i|\alpha_i}{1-\alpha_i}\).
\end{theorem}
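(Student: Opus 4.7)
The statement splits into two parts: the existential claim $\Pr\Brac{\bigwedge_i \neg E_i} > 0$, and the algorithmic runtime bound. My plan is to prove the existential part by the standard induction on conditional probabilities, then describe the Moser--Tardos random resampling algorithm and bound its expected running time via a witness-tree argument.

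For the existential part, I would prove by induction on $|S|$ that for every $S \subseteq \set{1, \dots, n}$ and every $i \notin S$,
\[\Pr\Brac{E_i \Mid \bigwedge_{j \in S} \neg E_j} \leq \alpha_i.\]
Write $S = S_1 \cup S_2$ with $S_1 = S \cap A_i$ and $S_2 = S \setminus A_i$, and expand the conditional probability as the ratio $\Pr\Brac{E_i \wedge \bigwedge_{j \in S_1} \neg E_j \Mid \bigwedge_{j \in S_2} \neg E_j} / \Pr\Brac{\bigwedge_{j \in S_1} \neg E_j \Mid \bigwedge_{j \in S_2} \neg E_j}$. Since $E_i$ is independent of every event indexed by $S_2$, the numerator is at most $\Pr[E_i] \leq \alpha_i \prod_{j \in A_i}(1-\alpha_j)$, while the denominator is at least $\prod_{j \in S_1}(1-\alpha_j)$ by successively applying the inductive hypothesis to strictly smaller conditioning sets. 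Cancelling gives $\alpha_i$. The chain rule then yields $\Pr\Brac{\bigwedge_i \neg E_i} \geq \prod_i (1-\alpha_i) > 0$.

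For the algorithm, I sample all $X_1, \ldots, X_d$ independently, and while some event $E_i$ holds, pick any such $i$ and resample all the variables that determine $E_i$. Each iteration costs $O(|A_i|)$ to re-check which adjacent events now hold, so if $N_i$ denotes the number of times $E_i$ is resampled, the total expected runtime is on the order of $\sum_i |A_i| \cdot \ex{N_i}$. It therefore suffices to show $\ex{N_i} \leq \alpha_i / (1-\alpha_i)$.

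The main obstacle is this bound on $\ex{N_i}$, which I would attack using the Moser--Tardos witness-tree machinery. To the $k$-th resampling of $E_i$ one associates a rooted labeled tree constructed by walking the execution log backwards and attaching each earlier step $(j, t)$ as a child of the deepest existing vertex whose label lies in $A_j \cup \set{j}$, or creating a new root if no such vertex exists. The central witness-tree lemma states that the probability a fixed tree $\tau$ arises as a witness in the execution is at most $\prod_{v \in \tau} \Pr[E_{\mathrm{label}(v)}]$, because the construction of $\tau$ prescribes an independent sequence of resampled variable-blocks, each of which must realize its assigned event. Plugging in the LLL hypothesis and summing over all trees rooted at label $i$, the count reduces to the expected total size of a Galton--Watson branching process in which a node labeled $j$ spawns, for each $k \in A_j$, an independent child with probability $\alpha_k$; a short generating-function recursion shows this expectation equals $\alpha_i / (1-\alpha_i)$, completing the runtime bound.
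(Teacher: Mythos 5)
This theorem is quoted in the paper as a black box from the cited works (\cite{ErdosL1974} for the existential statement, \cite{MoserT2010} for the algorithmic one); the paper gives no proof, so there is no in-paper argument to compare against. Your reconstruction follows the standard route of those original proofs and is substantively correct: the induction establishing $\Pr\Brac{E_i \Mid \bigwedge_{j\in S}\neg E_j} \le \alpha_i$ (splitting $S$ into $S\cap A_i$ and $S\setminus A_i$, bounding the numerator by mutual independence and the denominator by the chain rule and the inductive hypothesis) and then the chain rule for $\Pr\Brac{\bigwedge_i \neg E_i} \ge \prod_i(1-\alpha_i) > 0$; and, for the algorithm, the Moser--Tardos resampling loop analyzed via witness trees and a Galton--Watson comparison to get $\ex{N_i}\le \alpha_i/(1-\alpha_i)$.

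One genuine inaccuracy in your witness-tree description: when walking the execution log backwards and encountering a step whose event label $j$ has no vertex in the current partial tree with label in $A_j\cup\set{j}$, the Moser--Tardos construction \emph{discards} that step; it does not create a new root. Creating a new root would produce a forest rather than a single rooted tree, and the downstream counting argument --- the coupling bound $\Pr\Brac{\tau \text{ occurs}}\le \prod_{v\in\tau}\Pr\Brac{E_{\mathrm{label}(v)}}$ and the comparison of $\sum_{\tau \text{ rooted at } i}\prod_v \alpha_{\mathrm{label}(v)}$ to the total progeny of a branching process seeded by a single individual of type $i$ --- depends on every witness being exactly one tree with fixed root label $i$. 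With multiple roots the same log prefix could be charged to unrelated labels, the injection from trees to executions would fail, and the branching-process sum would no longer bound $\ex{N_i}$. With the correct ``skip the step'' rule the rest of your argument goes through as stated.
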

Note that when all \(|A_i| < R\), if one takes \(\alpha_i = \frac{1}{R}\) and shows that \(\prob{E_i} \leq \frac{1}{e(R+1)}\), then this theorem promises us there exists an assignment so that \(\prob{\neg E_1 \wedge \neg E_2 \wedge ...\wedge \neg E_n} > 0\). Moreover, it gives us a polynomial time algorithm to find such an assignment to \(X_1,X_2,...,X_d\).

\subsection{Expander graphs}
A graph \(G=(V,E)\) is a finite set \(V\), the vertices, and \(E \subseteq \binom{V}{2}\), the edges. Graphs in this paper are assumed to be with no isolated vertices. The graphs we consider have some probability distribution \(\nu :E \to [0,1]\), and we assume henceforth that \(\nu(e) \ne 0\) for every edge \(e \in E\). As a convention \(\nu(e') = 0\) for every \(e' \notin E\). Sometimes it is convenient to consider oriented edges \(\dir{E} = \sett{(u,v),(v,u)}{\set{u,v} \in E}\). For an oriented edge we define \(\nu(u,v) = \frac{1}{2} \nu(\set{u,v})\).

The probabilities on edges give rise to a probability measure on $V$, i.e. $\nu(v) = \frac{1}{2}\sum_{w \sim v} \prob{\set{v,w}} = \sum_{w \sim v} \nu(w,v)$. This probability measure defines an inner product on \(\ell_2(V)\) the space of real valued functions on the vertices \(f,g:V \to \RR\) by
    \[ \iprod{f,g} := \Ex[v \in V]{f(v)g(v)} ,\]
where the expectation is with respect to \(\nu\). Every graph induces a random walk on its vertices; the transition probability from $v$ to $u$ is
    \[\nu(u|v) := \frac{\nu(uv)}{\sum_{u' \sim v}\nu(u' v)}.\]
Denote by $A=A(G)$ the Markov operator associated with this random walk. We associate \(A\) with the matrix indexed by \(V \times V\) where \(A(v,u) = \nu(u | v)\). We call this operator the \emph{normalized adjacency operator} (in this paper we will refer to it as the adjacency operator). $A$ is an operator on real valued functions on the vertices, where
    \[ \forall v\in V \; Af(v) = \Ex[u \sim v]{f(u)} \left (= \sum_{u \sim v} \nu(u|v)f(u). \right )\]

It is well known that \(A\) is self adjoint with respect to the inner product defined above. Its eigenvalues are in the interval $[-1,1]$. We denote its eigenvalues by $\lambda_1 \geq \lambda_2 \geq ... \geq \lambda_n$ (with multiplicities). The largest eigenvalue is always $\lambda_1 = 1$, and it is obtained by the constant function. The second eigenvalue is strictly less than $1$ if and only if the graph is connected.

\begin{definition}[spectral expanders]\torestate{\label{def:spectral-expanders}
Let $G$ be a graph and let \(0 \leq \lambda < 1\). $G$ is a \emph{$\lambda$-one sided spectral expander}, if
        \[ \lambda_2 \leq \lambda.\]
$G$ is a \emph{$\lambda$-two sided spectral expander}, if
    \[ \max (\abs{\lambda_2},\abs{\lambda_n}) \leq \lambda.\]}
\end{definition}
In this paper, when saying a graph is a \(\lambda\)-spectral expander, we mean it is a \(\lambda\)-\emph{two sided} spectral expander.
\subsubsection{Bipartite Graphs and Bipartite Expanders}
    A bipartite graph is a graph where the vertex set can be partitioned to two independent sets $V = L \dunion R$, called sides. We sometimes denote such a bipartite graph by \(G=(L,R,E)\).

\paragraph{The Bipartite Adjacency Operator}
    In a bipartite graph, we view each side as a separate probability space, where for any $v \in L$ (resp. $R$), $\nu(v) = \sum_{w \sim v} \nu(vw)$. We can define the \emph{bipartite adjacency operator} as the operator  $B: \ell_2(L) \to \ell_2(R)$ by
    \[\forall f \in \ell_2(L), v \in R, \; Bf(v) = \Ex[w \sim v]{f(u)}\]
    where the expectation is taken with respect to the probability space $L$, conditioned on being adjacent to $v$.
There is a similar operator $B^*: \ell_2(R) \to \ell_2(L)$ as the bipartite operator for the opposite side. As the notation suggests, $B^*$ is adjoint to $B$ with respect to the inner products of $\ell_2(L), \ell_2(R)$.

    We denote by $\lambda(B)$ the spectral norm of $B$ when restricted to $\ell_2^0(L) = \set{\one}^\bot$, the orthogonal complement of the constant functions (according to the inner product the measure induces). Namely
    \[ \lambda(B) = \sup \sett{\iprod{Bf,g}}{\norm{g},\norm{f}=1, f \bot \one_L}.\]
    \begin{definition}[Bipartite Expander]\torestate{\label{def:bipartite-expander}
    Let $G$ be a bipartite graph, let $\lambda < 1$. We say $G$ is a \emph{$\lambda$-bipartite expander}, if
    $\lambda(B) \leq \lambda$.}
    \end{definition}
It is easy to show that a bipartite graph is a \(\lambda\)-bipartite expander if and only if it is a \(\lambda\)-one sided spectral expander. So we use these terms interchangeably on bipartite graphs.

\subsubsection{Expander Mixing Lemma}
A classical result in expander graphs is the \emph{expander mixing lemma}, that intuitively says that the weight of the edges between any two vertex sets $S,T \subset V$ is proportionate to the probabilities of $S,T$. We denote by \(E(S,T)\) the edge set between \(S\) and \(T\).
    \begin{lemma}[Expander Mixing Lemma]
    \label{lem:EML}
    Let $G = (V,E)$ be a $\lambda$-two sided spectral expanders. Then for any $S,T \subset V$
\begin{equation}\label{eq:EML-prelim}
\abs{\nu(E(S,T)) - \nu(S)\nu(T) } \leq \lambda \sqrt{\nu(S) \nu(T) (1-\nu(S))(1-\nu(T))} .    
\end{equation}   
$\qed$
\end{lemma}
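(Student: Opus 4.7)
The plan is to rewrite $\nu(E(S,T))$ as a bilinear form on $\ell_2(V)$ and then exploit the spectral gap of the normalized adjacency operator $A$. First I would unpack the definitions to identify $\nu(E(S,T))$ with the inner product $\langle \mathbf{1}_S, A \mathbf{1}_T\rangle$. A direct computation gives
\[\langle \mathbf{1}_S, A \mathbf{1}_T\rangle = \sum_{v \in S} \nu(v) \sum_{u \in T} \nu(u \mid v) = \sum_{v \in S, u \in T} \nu(u,v),\]
which is the oriented-edge mass between $S$ and $T$, matching the definition of $\nu(E(S,T))$.

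Next, I would orthogonally decompose each indicator into its projection onto the constant function and a zero-mean remainder: $\mathbf{1}_S = \nu(S)\mathbf{1} + g_S$ and $\mathbf{1}_T = \nu(T)\mathbf{1} + g_T$, with $g_S, g_T \perp \mathbf{1}$. Pythagoras then gives $\|g_S\|^2 = \|\mathbf{1}_S\|^2 - \nu(S)^2 = \nu(S)(1-\nu(S))$, and similarly $\|g_T\|^2 = \nu(T)(1-\nu(T))$.

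Expanding $\langle \mathbf{1}_S, A \mathbf{1}_T\rangle$ bilinearly and using $A \mathbf{1} = \mathbf{1}$ together with the self-adjointness of $A$, the three cross terms all vanish because $g_S$ and $g_T$ are orthogonal to $\mathbf{1}$. This yields the clean identity
\[\langle \mathbf{1}_S, A \mathbf{1}_T\rangle = \nu(S)\nu(T) + \langle g_S, A g_T\rangle.\]

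Finally, I would invoke the two-sided $\lambda$-expansion hypothesis, which implies that the operator norm of $A$ restricted to $\{\mathbf{1}\}^\perp$ is at most $\lambda$. Combined with Cauchy--Schwarz, this gives
\[|\langle g_S, A g_T\rangle| \leq \lambda \|g_S\| \|g_T\| = \lambda \sqrt{\nu(S)\nu(T)(1-\nu(S))(1-\nu(T))},\]
which is exactly the claimed bound. The argument is entirely standard and has no serious obstacle; the only care required is in the first step, checking that the paper's convention for $\nu(E(S,T))$ agrees with the oriented-edge sum produced by the inner product $\langle \mathbf{1}_S, A \mathbf{1}_T\rangle$.
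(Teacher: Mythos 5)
Your argument is correct and is the standard proof of the Expander Mixing Lemma. The paper states this lemma without proof (the $\qed$ immediately after the statement signals it is taken as a known classical fact), so there is nothing in the paper to compare against; your decomposition of $\mathbf{1}_S$ and $\mathbf{1}_T$ into constant plus zero-mean parts and the appeal to the operator norm of $A$ on $\{\mathbf{1}\}^\perp$ is exactly the canonical route, and all the intermediate computations check out.
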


Bipartite graphs have their own type of expander mixing lemma:
    \begin{lemma}[Bipartite Expander Mixing Lemma]
        \label{lem:bipartite-EML}
        Let $G = (L,R,E)$ be a $\lambda$-bipartite expander. Then for any $S \subset L, T \subset R$
\begin{equation}\label{eq:bipartite-EML-prelim}
\abs{\nu(E(S,T)) - \nu(S)\nu(T) } \leq \lambda \sqrt{\nu(S) \nu(T) (1-\nu(S))(1-\nu(T))} .
\end{equation}
        $\qed$
\end{lemma}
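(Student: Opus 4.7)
The plan is to carry out the standard spectral argument, adapted to the bipartite setting with the measures $\nu$ on $L$ and $R$ as defined earlier in the preliminaries. The key observation is that the quantity $\nu(E(S,T))$ can be written as an inner product $\iprod{B\one_S, \one_T}$, at which point decomposing the indicator functions into their constant and mean-zero parts will expose exactly the term $\nu(S)\nu(T)$ plus a residual that is controlled by $\lambda(B)$.

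First I would verify that $\nu(E(S,T)) = \iprod{B\one_S, \one_T}$. Unpacking the definitions:
\[\iprod{B\one_S, \one_T} = \Ex[v\in R]{(B\one_S)(v)\one_T(v)} = \sum_{v \in T} \nu(v) \sum_{w \sim v} \frac{\nu(vw)}{\nu(v)}\one_S(w) = \sum_{v\in T, w\in S, w\sim v} \nu(vw),\]
which is exactly $\nu(E(S,T))$. Next I would decompose $\one_S = \nu(S)\one_L + f$ and $\one_T = \nu(T)\one_R + g$, where $f \bot \one_L$ in $\ell_2(L)$ and $g \bot \one_R$ in $\ell_2(R)$; orthogonality follows immediately since $\Ex{f} = \Ex{\one_S} - \nu(S) = 0$ and similarly for $g$.

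Substituting into the inner product and using that $B\one_L = \one_R$ (because $B$ is a stochastic averaging operator on each side), two of the four cross terms vanish:
\[\iprod{B\one_S,\one_T} = \nu(S)\nu(T)\iprod{B\one_L,\one_R} + \nu(S)\iprod{\one_R, g} + \nu(T)\iprod{f, B^*\one_R} + \iprod{Bf, g} = \nu(S)\nu(T) + \iprod{Bf,g},\]
since $\iprod{\one_R,g}=0$ and $B^*\one_R = \one_L$ gives $\iprod{f, B^*\one_R} = \iprod{f,\one_L} = 0$. Finally I would use the spectral bound: because $f \bot \one_L$, Cauchy--Schwarz together with the definition of $\lambda(B)$ yields $|\iprod{Bf,g}| \le \lambda\|f\|\|g\|$, and a direct computation gives $\|f\|^2 = \Ex{\one_S^2} - \nu(S)^2 = \nu(S)(1-\nu(S))$ and similarly $\|g\|^2 = \nu(T)(1-\nu(T))$. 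Combining these yields the claimed bound.

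There is no real obstacle here; the argument is structurally identical to the proof of the standard Expander Mixing Lemma (\pref{lem:EML}) given just above, with the bipartite operator $B$ and its adjoint $B^*$ playing the role of the single adjacency operator $A$. The only point to be a bit careful about is the convention on measures: the inner products are taken with respect to the separate distributions on $L$ and $R$, and one must verify that $B\one_L = \one_R$ and $B^*\one_R = \one_L$, which follows directly from the definition of the bipartite random walk $\nu(u|v) = \nu(uv)/\nu(v)$.
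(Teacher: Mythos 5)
Your proof is correct and is the standard spectral argument for the expander mixing lemma, adapted faithfully to the weighted bipartite setting; the key identities $\iprod{B\one_S,\one_T}=\nu(E(S,T))$, $B\one_L=\one_R$, $B^*\one_R=\one_L$, and $\|\one_S - \nu(S)\one_L\|^2 = \nu(S)(1-\nu(S))$ are all verified correctly against the paper's definitions of $\nu$, $B$, and the inner products. The paper omits the proof (marking the lemma with $\qed$ as a known fact), so there is no alternative route to compare against; your argument is exactly the expected one.
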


Bilu and Linial gave a converse to the expander mixing lemma:
\begin{theorem}[\cite{BiluL2006}] 
\torestate{ \label{thm:converse-bipartite-eml}
Let \(A\) be a normalized adjacency-operator of a bipartite graph \(H'=(A,B,E')\). Assume that for all \(S \subseteq A, T \subseteq B\) it holds that
\[ \abs{\nu(E(S,T)) - \nu(S)\nu(T) } \leq \alpha \sqrt{\nu(S) \nu(T)} .\]
Then \(\lambda(A) \leq 260\alpha(1+\log_2(3/\alpha)).\)
}
\end{theorem}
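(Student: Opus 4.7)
The strategy mirrors Bilu and Linial's original converse expander mixing lemma argument, adapted to the bipartite setting. By the definition of $\lambda(B)$, I would first take near-optimal unit vectors $f \in \ell_2(L)$ and $g \in \ell_2(R)$ with $f \perp \mathbf{1}_L$ and $\langle Bf, g\rangle$ arbitrarily close to $\lambda(B)$. A preliminary reduction: since $B$ is an averaging operator, $B^*\mathbf{1}_R = \mathbf{1}_L$, so $\langle Bf, \mathbf{1}_R\rangle = \langle f, \mathbf{1}_L\rangle = 0$ whenever $f \perp \mathbf{1}_L$. Hence $Bf$ is already orthogonal to constants on the right, and I may project $g$ onto $\mathbf{1}_R^{\perp}$ for free and assume $g \perp \mathbf{1}_R$ as well. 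This bilateral orthogonality is crucial — it is what will make the "main term" in the eventual expansion vanish.

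Next I would dyadically approximate both functions by step functions. Let
\[ S_k^{\pm} = \{v \in L : \pm f(v) \in [2^k, 2^{k+1})\}, \qquad T_k^{\pm} = \{u \in R : \pm g(u) \in [2^k, 2^{k+1})\}, \]
and set $\tilde f = \sum_k 2^k(\mathbf{1}_{S_k^+} - \mathbf{1}_{S_k^-})$, similarly $\tilde g$. Then $|\tilde f| \leq |f| \leq 2|\tilde f|$ pointwise, so $\sum_k 2^{2k}(\nu(S_k^+) + \nu(S_k^-)) \leq \|f\|_2^2 = 1$, and analogously for $\tilde g$. To bound the number of relevant dyadic levels, I would truncate away values of $|f|$ outside the window $[c\alpha, c/\alpha]$: the small-value tail has $\ell_2$-mass at most $c\alpha$ and perturbs $\langle Bf, g\rangle$ by $O(\alpha)$, while the large-value tail lives on a set of $\nu$-measure at most $O(\alpha^2)$ by Markov's inequality applied to $f^2$. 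The same cutoffs are applied to $g$. After these truncations only $N = O(\log(1/\alpha))$ dyadic levels survive on each side.

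Now expand $\langle B\tilde f, \tilde g\rangle$ as a double sum over (signed) pairs of level sets, whose generic term is $\pm 2^{i+j}\nu(E(S_i, T_j))$. Plug in the hypothesis $\nu(E(S_i, T_j)) = \nu(S_i)\nu(T_j) + \epsilon_{ij}$ with $|\epsilon_{ij}| \leq \alpha\sqrt{\nu(S_i)\nu(T_j)}$. The main terms reassemble into $\langle \tilde f, \mathbf{1}\rangle\,\langle \tilde g, \mathbf{1}\rangle$, which is within $O(\alpha)$ of $\langle f, \mathbf{1}\rangle\langle g, \mathbf{1}\rangle = 0$ thanks to the reduction above. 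The collective error is bounded by
\[ \alpha \cdot \Bigl(\sum_k 2^k\sqrt{\nu(S_k^+)+\nu(S_k^-)}\Bigr) \cdot \Bigl(\sum_k 2^k \sqrt{\nu(T_k^+)+\nu(T_k^-)}\Bigr). \]
Cauchy--Schwarz across the $N$ surviving levels bounds each factor by $\sqrt{N \cdot \sum_k 2^{2k}\nu(\cdot)} \leq \sqrt{N} = O(\sqrt{\log(1/\alpha)})$, giving a final bound of order $\alpha \log(1/\alpha)$.

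The principal obstacle is the handling of the tails, especially the high tail: the range of $f$ is not a priori bounded (it depends on $\nu_{\min}$), so one cannot simply read off an upper truncation level from $\|f\|_2 = 1$, and one has to argue via the measure of $\{|f| > c/\alpha\}$ and a careful pairing with the discrepancy hypothesis so that the high-value contributions are absorbed into the $O(\alpha)$ error budget. The explicit constant $260$ in the statement reflects careful bookkeeping of the dyadic-rounding factor of $2$ per side, the $\sqrt{N}$ Cauchy--Schwarz loss on each side, and the constants in the two truncation cutoffs. A secondary subtlety is the asymmetric nature of the definition of $\lambda(B)$, which only requires $f \perp \mathbf{1}_L$; without the bilateral reduction in the first step, the main term of the discrepancy expansion would not cancel and the argument would fail outright.
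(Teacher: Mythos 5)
Your high-level strategy --- reduce to \(f\perp\one_L\), \(g\perp\one_R\), dyadically decompose into signed level sets, and bound the discrepancy term by Cauchy--Schwarz over the levels --- is the right family of ideas, but the route you take diverges from the paper's and, as you yourself flag, it has a genuine gap in the high-tail step.

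The paper does not truncate at all. It first passes to \(B = A - J\) (subtracting the complete-bipartite averaging operator), which makes the main term vanish automatically and gives a self-adjoint operator with zero diagonal. Then it invokes an abstract operator lemma (\pref{thm:bilu-linial-variation}) whose hypotheses are the discrepancy bound \emph{and} a ``row-sum'' bound: for pairwise disjoint sign vectors \(v_1,\dots,v_m\) and any sign vector \(u\), one has \(\sum_j |\iprod{u,Av_j}| \le d\,\iprod{u,u}\); this holds with \(d=2\) for \(B\) because \(A\) and \(J\) are both averaging operators. In the dyadic double sum \(\sum_{i,j} 2^{-i-j}\iprod{w_i,Aw_j}\), pairs of levels within distance \(\gamma = \log_2(d/\alpha)\) are controlled by the discrepancy hypothesis, while far-apart pairs \(|i-j|>\gamma\) are controlled by the row-sum bound: for those pairs the coefficient \(2^{-i-j}\) is at most \(2^{-(2\min(i,j)+\gamma)}\), so the cost \(d\cdot 2^{-\gamma}=\alpha\) is already of the right order, with no truncation needed. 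That \(d\)-bound is precisely what replaces your cutoff, and it is information about the operator that your sketch never uses.

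Your truncation to the window \([c\alpha, c/\alpha]\) is where the argument breaks. The low cutoff is fine: \(\|f\cdot\one_{|f|<c\alpha}\|_2 \le c\alpha\) and \(\|B\|\le 1\), so that contribution is \(O(\alpha)\). But for the high cutoff, Markov gives only \(\nu(\{|f|>c/\alpha\}) \le \alpha^2/c^2\) --- a bound on the \emph{measure} of the bad set, not on the \(\ell_2\)-mass of \(f\) there, which can still be \(\Theta(1)\) (e.g.\ \(f\) concentrated on one vertex of measure \(\alpha^4\) with value \(\alpha^{-2}\)). So throwing away \(\{|f|>c/\alpha\}\) can discard nearly all of \(f\), and you have no bound on the discarded contribution to \(\iprod{Bf,g}\). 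Bounding it by ``a careful pairing with the discrepancy hypothesis'' is exactly the substance of the proof, and your proposal does not supply it. If you wanted to stay with a truncation-style argument you would need to replay a dyadic decomposition \emph{within} the high tail and close the estimate there, at which point you would be reinventing the paper's far-level/degree-bound mechanism; the cleaner fix is to import that mechanism directly as the paper does.

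A secondary, more minor point: you write the dyadic approximation as an additive perturbation, but \(|f-\tilde f|\) is not small in \(\ell_2\); the correct statement is the multiplicative bound \(|\tilde f|\le|f|\le 2|\tilde f|\) pointwise, which distorts the Rayleigh quotient by a bounded factor. The paper handles this with a short randomized-rounding argument (\pref{claim:powers-of-two-approximation}) rather than a deterministic comparison, but either works; the high tail is the real obstacle.
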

The Theorem in \cite{BiluL2006} assumes \(H'\) is regular. We show their proof extends to arbitrary weighted graphs in \pref{sec:bilu-linial}.

\subsection{Graph homomorphisms}
Let \(G=(V,E)\) and \(H=(V',E')\) be graphs. We say that \(f:V \to V'\) is a \emph{graph homomorphism} if for any edge \(\set{u,v} \in E\) it holds that \(\set{f(v),f(u)}\in E'\). If \(f\) is surjective we say that \(f\) an \(H\)-coloring of \(G\), and that \(G\) is \(H\)-colorable. In addition, if for every edge \({a,b} \in E'\) there is \(\set{u,v}\in E\) so that \(f(u)=a, f(v)=b\) then we say that \(G\) is \emph{non-degeneratly \(H\)-colorable} and that \(f\) is a \emph{non-degenerate \(H\)-coloring}. We depict this in \pref{fig:colorable-graph}.

Let \(G=(V,E),H=(V',E')\) be graphs. We say that \(G\) \emph{covers} \(H\) if there is a graph homomorphism \(f:V \to V'\) so that for every \(v \in V\), \(f\) restricted to a neighbourhood of \(v\) a bijection to the neighbourhood of \(f(v)\).

\begin{figure} 
    \centering
    \includegraphics[scale=0.8]{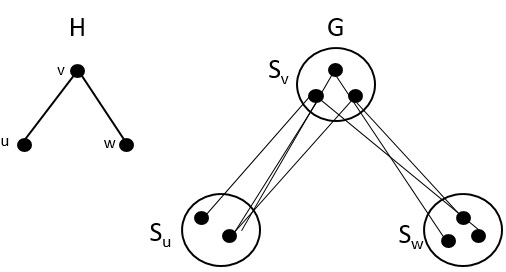}
    \caption{\(G\) is an \(H\)-colorable graph.}
    \label{fig:colorable-graph}
\end{figure}

When \((H,\nu_H),(G,\nu_G)\) have probability measures, then a non-degenerate \(H\)-coloring defines a \emph{new} measure \(\nu_f\) on \(G\) as follows. For \(uv \in \dir{E}\) so that \(f(u)=a,f(b)=b\),
\[\nu_f (uv) := \nu_H(ab) \frac{\nu_G(uv)}{\sum_{u'v': f(u'v')=ab} \nu_G(u'v')}. \]
In other words, we sample (oriented or non oriented) edges in \(E\) by the following process:
\begin{enumerate}
   \item Sample an edge \((a,b) \in \dir{E'}\).
   \item Sample an edge \((u,v)\in \dir{E}\) conditioned on its image being \(\set{a,b}\).
\end{enumerate}
We stress that \(\nu_f\) could be different than \(\nu_G\) the original measure on \(G\). For example it always holds that \(\nu_f(f^{-1}(a))=\nu_H(a)\), regardless of \(\nu_G(f^{-1}(a))\).

When \(G\) is an \(H\)-colorable graph, we can partition the vertices of \(G\) by \(V = \bigcup_{a \in V'} S_a\) where \(S_a = \sett{v \in V}{f(v)=a}\). In \pref{sec:expander-sparsification} we show that when \(H\) is an expander and the bipartite graphs between the parts in the partition of \(G\) are bipartite expanders, then \((G,\Pr_f)\) is also an expander:
\begin{claim} \torestate{\label{claim:graph-composition}
Let \(H = (V',E')\) be a \(\lambda\)-two sided (one-sided) spectral expander. Let \(G = (V,E)\) and let \(f:V \to V'\) be a non-degenerate \(H\)-coloring of \(G\). Assume that for every \(\set{a,b} \in E'\) the bipartite graph between \(S_a,S_b\) is \(\eta\)-bipartite spectral expander (with respect to the conditional weights \(\frac{\nu_G(uv)}{\sum_{u'v': f(u'v')=ab} \nu_G(u'v')}\) for \(uv \in S_a \times S_b\)). Then the weighted graph \((G,\nu_f)\) is a \(\max \set{\lambda,\eta}\)-two sided (one sided) spectral expander.}
\end{claim}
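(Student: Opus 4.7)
The plan is to bound the spectral norm of $A - \Pi_1$, where $A$ is the normalized adjacency operator of $(G,\nu_f)$ and $\Pi_1$ projects onto constants, by analyzing the quadratic form $\langle Ag,g\rangle$ for $g\perp \mathbf{1}_V$ under the natural decomposition into fiber-constant functions and their orthogonal complement. First I would set up the fiber-averaging operator $P\colon \ell_2(V,\nu_f)\to\ell_2(V',\nu_H)$ by $(Pg)(a)=\E_{v\sim \nu_f|_{S_a}}[g(v)]$ with adjoint $(P^*h)(v)=h(f(v))$. A direct calculation from the definition $\nu_f(uv)=\nu_H(ab)\mu_{ab}(uv)$ yields the basic identities $\nu_f(S_a)=\nu_H(a)$, $PP^*=\mathrm{Id}_{\ell_2(V')}$ (so $P^*P$ is the orthogonal projection onto the fiber-constant subspace $V^{\parallel}$), and, crucially, $PAP^*=A_H$.

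With $g=g^{\parallel}+g^{\perp}$ in this decomposition, the fiber-constant contribution is $\langle Ag^{\parallel},g^{\parallel}\rangle=\langle A_H\bar g,\bar g\rangle$, bounded by $\lambda\|g^{\parallel}\|^2$ because $g\perp\mathbf{1}_V$ forces $\bar g\perp\mathbf{1}_{V'}$ and $H$ is a $\lambda$-spectral expander. The fiber-perp contribution decomposes along the fibers as
\[
\langle Ag^{\perp},g^{\perp}\rangle=\sum_{\{a,b\}\in E'}\nu_H(\{a,b\})\,\langle B_{ab}(g^{\perp}|_{S_b}),g^{\perp}|_{S_a}\rangle_{\mu_{ab}},
\]
and I would apply the $\eta$-bipartite spectral expansion on each fiber, then sum using Cauchy--Schwarz in $\ell_2(\vec{E'},\nu_H)$ together with the key identity $\sum_{(a,b)}\nu_H(a,b)\,\E_{\mu_{ab}|_{S_a}}[g^2]=\|g\|^2$ to obtain a bound of $\eta\|g^{\perp}\|^2$ (up to a residual addressed below).

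The main obstacle is twofold: (i) the cross term $\langle Ag^{\parallel},g^{\perp}\rangle$ does not vanish because $V^{\parallel}$ is not $A$-invariant in general; and (ii) the $\nu_f|_{S_a}$-mean-zero property of $g^{\perp}$ is strictly weaker than the $\mu_{ab}|_{S_a}$-mean-zero condition needed to directly invoke the bipartite expansion of $(S_a,S_b)$. Both issues reduce to the measure mismatch $\mu_{ab}|_{S_a}\neq \nu_f|_{S_a}$ (with equality iff the bipartite marginals on $S_a$ do not depend on the choice of neighbor $b$). The escape is to observe that the ``defect'' $\gamma(a,b):=\E_{\mu_{ab}|_{S_a}}[g^{\perp}]$ on oriented edges of $H$ satisfies $\sum_{b\sim a}\nu_H(b\mid a)\,\gamma(a,b)=0$ for every $a\in V'$ (since $\nu_f|_{S_a}=\sum_b \nu_H(b\mid a)\,\mu_{ab}|_{S_a}$), and then to bound the residual bilinear form $\sum_{(a,b)}\nu_H(a,b)\gamma(a,b)\gamma(b,a)$ via the spectral expansion of $H$, combining with the bipartite-expansion estimate to absorb everything into $\max(\lambda,\eta)\|g\|^2$.

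If this direct spectral route proves too delicate to match the clean constant $\max(\lambda,\eta)$, the fallback is to first establish an expander mixing lemma for $(G,\nu_f)$: for any $S,T\subseteq V$, partition $\nu_f(E(S,T))$ over the image edges $\{a,b\}\in E'$, apply the bipartite EML (\pref{lem:bipartite-EML}) fiberwise, sum errors using Cauchy--Schwarz and the identity $\sum_{b\sim a}\nu_H(a,b)\mu_{ab}(v)=\nu_f(v)$, and compare the main term against the EML for $H$ (\pref{lem:EML}) applied to the functions $\bar s(a)=\nu_f(S\cap S_a)/\nu_H(a)$ and $\bar t(a)$. This yields $|\nu_f(E(S,T))-\nu_f(S)\nu_f(T)|\leq O(\lambda+\eta)\sqrt{\nu_f(S)\nu_f(T)}$, after which the Bilu--Linial converse (\pref{thm:converse-bipartite-eml}) concludes spectral expansion, at the price of a logarithmic factor.
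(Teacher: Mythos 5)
Your decomposition is exactly the paper's: the fiber‐averaging operator $P$ gives $V^{\parallel}=W_C$ (fiber‐constant functions) and $V^{\perp}=W_0$ (fiber‐mean‐zero functions), and the identities $\nu_f(S_a)=\nu_H(a)$ and $PAP^*=A_H$ that you derive are the same ones the paper relies on. The difference is that the paper simply \emph{asserts} that $A(W_C)\subseteq W_C$ and $A(W_0)\subseteq W_0$, writes $\iprod{Af,g}=\iprod{Af^c,g^c}+\iprod{Af^0,g^0}$, and applies the expansion of $H$ and of the bipartite pieces to the two summands. You correctly flagged that this invariance is \emph{not} automatic: for $v\in S_a$, the conditional law $\nu_f(S_b\mid v)=\nu_H(a,b)\,\mu_{ab}|_{S_a}(v)/\nu_f(v)$ depends on $v$ unless the bipartite vertex marginal $\mu_{ab}|_{S_a}$ is the same for every neighbor $b$ of $a$ (equivalently, $\mu_{ab}|_{S_a}=\nu_f|_{S_a}$ for all $b\sim a$). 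Your obstacle (ii) is the same mismatch in a second place: the paper applies the $\eta$-bipartite expansion of $(S_a,S_b,\mu_{ab})$ to $g^{\perp}|_{S_a}$, which is $\nu_f|_{S_a}$-mean-zero, not $\mu_{ab}|_{S_a}$-mean-zero. So you identified a genuine gap, not a spurious one.

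Moreover the gap is not merely cosmetic, and your hope to "absorb everything into $\max(\lambda,\eta)\|g\|^2$" will not work in full generality, because the stated constant can fail once the marginals disagree. A small example: let $H$ be the triangle $\{a,b,c\}$ with uniform measure, so $\lambda=\tfrac12$; take fibers of size two, let $E(S_a,S_c)$ and $E(S_b,S_c)$ be complete bipartite (each an $\eta=0$ bipartite expander), and let $E(S_a,S_b)=\{a_1b_1,a_1b_2,a_2b_1\}$ with $\nu_G$ uniform, which is an $\eta=\tfrac12$ bipartite expander. Then the nontrivial eigenvalues of $(G,\nu_f)$ include $\tfrac{-25-\sqrt{205}}{70}\approx-0.562$, strictly larger in magnitude than $\max(\lambda,\eta)=\tfrac12$. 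So the clean bound requires the extra hypothesis that the $\mu_{ab}|_{S_a}$ agree across $b$; under that hypothesis both of your obstacles vanish, the cross terms drop out, and the paper's short argument (and your Steps 1--2) go through verbatim. That hypothesis does hold, approximately, wherever the claim is actually invoked in the paper (the $(c,r,\eta)$-suitability conditions force the relevant marginals to be within a factor $r^{O(d)}$ of one another, and the slack is absorbed elsewhere), so the application is fine -- but the lemma as literally stated is missing a hypothesis, and your proposed "defect" bookkeeping, or the Bilu--Linial fallback with its logarithmic loss, is what you would need if you insisted on the general statement.
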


\subsection{Simplicial complexes and high dimensional expanders}
\label{sec:HDX}
We include here the basic definitions needed for our results. For a more comprehensive introduction to this topic we refer the reader to \cite{DinurK2017} and the references therein. 

A simplicial complex is a hypergraph that is downward closed with respect to containment. It is called $d$-dimensional if the largest hyperedge has size $d+1$. We refer to $X(\ell)$ as the hyperedges (also called faces) of size $\ell+1$. $X(0)$ are the vertices. For a face \(s\) and \(\ell \leq d\) we denote by \(deg_\ell(s) = \abs{\sett{t \in X(\ell)}{s \subseteq t}}\).

It will sometimes be useful to consider oriented faces. We denote by 
\[\dir{X}(\ell) = \sett{(v_0,v_1,...,v_\ell)}{\set{v_0,v_1,...,v_\ell} \in X(\ell)}.\]

We define a weighted simplicial complex. Suppose we have a $d$-dimensional simplicial complex $X$ and a probability distribution $\nu : X(d) \to [0,1]$. We consider the following sampling for choosing lower dimensional faces:
    \begin{enumerate}
    \item Choose some $d$-face $s_d =\set{v_0,v_1,...,v_d} \in X(d)$ with probability $\nu(s_d)$.
    \item Choose an ordering $\dir{s}_d = (v_0,v_1,...,v_d)$ uniformly at random.
\end{enumerate}
%
%
For any $(v_0,v_1,...,v_k) \in \dir{X}(k)$ we denote by
\[\prob{(v_0,v_1,...,v_k)} = \Prob[(u_0,u_1,...,u_d) \in \dir{X}(d)]{u_0=v_0,u_1=v_1,...,u_k=v_k},\]
the probability of sampling an ordered face so that \((v_0,v_1,...,v_k)\) is its prefix. For an unordered face \(s \in X(k)\), we denote by
\[ \prob{s} = \frac{1}{\binom{d+1}{k+1}}\sum_{s' \in X(d), s' \supseteq s} \nu(s') .\]
This is the probability of sampling \((v_0,v_1,...,v_d)\) so that \(s=\set{v_0,v_1,...,v_k}\) is the underlying set of the first \(k\)-vertices.

From here throughout the rest of the paper, when we refer to a simplicial complex $X$, we always assume that there is a probability measure on it constructed as above.

A link of a face in a simplicial complex, is a generalization of a neighbourhood of a vertex in a graph:

\begin{definition}[link of a face]\torestate{\label{def:link}
Let $s \in X(k)$ be some $k$-face. The \emph{link} of $s$ is a $d-(k+1)$-dimensional simplicial complex defined by:
\[X_s = \set{ t \backslash s : s \subseteq t \in X }.\]
The associated probability measure $\nu_{X_s}$, for the link of $s$ is defined to be the probability of sampling \(t\) in the process above given that we sampled \(s\). It is proportional equal to \(\nu_{X_s}(t) = \frac{\nu_X(t \cup s)}{\binom{|t|+|s|}{|s|}\nu(s)}\).}
\end{definition}
For an oriented face \((v_0,v_1,...,v_\ell) \in \dir{X}(\ell)\) we denote by \(X_{(v_0,v_1,...,v_\ell)} = X_{\set{v_0,v_1,...,v_\ell}}\) the link of its underlying set. Let \((u_0,u_1,...,u_\ell) \in \dir{X}(\ell)\), whose underlying set is \(s = \set{u_0,u_1,...,u_\ell}\). Let \((v_{\ell+1},v_{\ell+2},...,v_{\ell+k+2}) \in \dir{X}_s(k)\). Then it is easy to verify that
\[\Prob[\nu_{X_s}]{(v_{\ell+1},v_{\ell+2},...,v_{\ell+k+2})} = \frac{\Prob[X]{(u_0,u_1,...,u_\ell,v_{\ell+1},v_{\ell+2},...,v_{\ell+k+2})}}{\Prob[X]{(u_0,u_1,...,u_\ell)}}.\]

\begin{definition}[1-skeleton]\torestate{\label{def:underlying-graph}
    The \emph{\(1\)-skeleton} of a simplicial complex $X$ with some probability measure as define above, is the graph whose vertices are $X(0)$ and edges are $X(1)$, with (the restriction of) the probability measures of $X$ to the vertices and edges.}
\end{definition}
We are ready to define our notion of high dimensional expanders.
\begin{definition}[one-sided and two-sided high dimensional expander] \torestate{\label{def:prelim-link-expander}
    Let $0 \leq \lambda < 1$. A simplicial complex $X$ is a \emph{$\lambda$-two sided link expander} (or $\lambda$-two sided HDX) if for every $-1 \leq k \leq d-2$ and every $s \in X(k)$, the underlying graph of the link $X_s$ is a $\lambda$-two sided spectral expander.

Similarly, $X$ is a \emph{$\lambda$-one sided link expander} (or $\lambda$-one sided HDX) if for every $-1 \leq k \leq d-2$ and every $s \in X(k)$, the underlying graph of the link $X_s$ is a $\lambda$-one sided spectral expander.}
\end{definition}
When $X$ is a graph, this definition coincides with the definition of a spectral expander.

A theorem by \cite{Oppenheim2018} shows that expansion ``trickles down'', that is, expansion of links of \(d-2\)-faces imply expansion of links of \(d-1\)-faces (if they are connected).
\begin{theorem}[Theorem 5.2 in \cite{Oppenheim2018}]
\label{thm:Oppenheim-trickling-down-lemma}

If for all $s \in X(k)$, $\lambda(X_s) \leq \lambda$, for some $\lambda \in (0,\frac{1}{2}]$, then for any $r \in X(k-1)$, s.t. $X_r$'s \(1\)-skeleton is connected, $\lambda_2(X_r) \leq \frac{\lambda}{1-\lambda}$.
\end{theorem}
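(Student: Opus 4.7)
The plan is to reduce to the case $k = 0$ by passing to the link. Setting $X' := X_r$, the hypothesis ``for every $s \in X(k)$, $\lambda(X_s) \le \lambda$'' becomes ``for every $v \in X'(0)$, the $1$-skeleton of $(X')_v$ is a $\lambda$-spectral expander'' (using $(X')_v = X_{r \cup \{v\}}$), and the desired conclusion is $\lambda_2(X') \le \lambda/(1-\lambda)$. Hence it suffices to treat the case $r = \emptyset$: fix a complex $X$ whose every vertex link is a $\lambda$-one-sided spectral expander and whose $1$-skeleton is connected, and aim to bound $\lambda_2(M)$, where $M$ is the normalized adjacency operator of the $1$-skeleton.

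For each vertex $v$, the one-sided $\lambda$-expansion of $X_v$ gives, for every $g : X_v(0) \to \mathbb R$,
\[
\iprod{A_v g, g}_v \;\le\; \lambda \bigl(\|g\|_v^2 - \bar g_v^2\bigr) + \bar g_v^2,
\]
where $\bar g_v := \Ex[u \sim \nu_{X_v,0}]{g(u)}$ is the mean under the link's vertex measure. The key observation --- immediate from the definition of $\prob{s}$ as an induced measure --- is that $\nu_{X_v,0}(u)$ coincides with the one-step transition probability $\nu(u \mid v)$ of the $1$-skeleton walk, since both are proportional to $\prob{\{u,v\}}$. Hence, applying the inequality to $g = f|_{X_v(0)}$ for any $f : X(0) \to \mathbb R$ gives $\bar g_v = Mf(v)$, while a direct double-counting over $X(d)$ yields
\[
\Ex[v]{\iprod{A_v f, f}_v} = \iprod{f, Mf}, \qquad \Ex[v]{\|f\|_v^2} = \|f\|^2.
\]

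Averaging the local inequality over $v$ and substituting these two identities will give the global bound
\[
\iprod{f, Mf} \;\le\; \lambda\,\|f\|^2 + (1-\lambda)\,\|Mf\|^2.
\]
Finally, taking $f$ to be an eigenvector of $M$ with eigenvalue $\mu = \lambda_2(M)$ and $\|f\| = 1$, and using the connectedness of the $1$-skeleton to ensure $\mu < 1$ and $f \perp \mathbf 1$, the inequality becomes the quadratic $(1-\lambda)\mu^2 - \mu + \lambda \ge 0$, whose roots are $1$ and $\lambda/(1-\lambda)$. Since $\mu < 1$, we conclude $\mu \le \lambda/(1-\lambda)$, as required.

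The main technical obstacle is verifying the two measure-theoretic identities above --- that the link vertex measure equals the one-step transition measure of the global walk, and that averaging local Rayleigh quotients recovers the global one. Both are routine unwindings of the definitions, but they require careful bookkeeping of the normalizing constants $\binom{d+1}{|s|}$ appearing in $\prob{s}$. This identification is the linchpin that turns the $|X(0)|$ independent local expansion inequalities into a single global spectral statement.
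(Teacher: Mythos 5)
The paper does not prove this statement; it cites it as Theorem 5.2 of \cite{Oppenheim2018}, so there is no internal proof to compare against. Your argument is correct and is in fact the standard proof of Oppenheim's trickling-down lemma (Garland's method): reduce to $r=\emptyset$, decompose $f$ locally into its mean and mean-zero parts on each vertex link, use the hypothesis $\lambda_2(X_v)\le\lambda$ on the mean-zero part, and average over $v$. The two measure identities you flag (the link vertex measure equals the one-step transition measure, and the average of local Rayleigh quotients equals the global one) are indeed the technical linchpin, and both check out with the paper's normalization: $\nu_{X_v}(u)=\tfrac{\prob{\{u,v\}}}{2\nu(v)}=\nu(u\mid v)$, and a double-count over $d$-faces gives $\Ex_{v}{\iprod{A_v f,f}_v}=\iprod{f,Mf}$ and $\Ex_v\|f\|_v^2=\|f\|^2$. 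The quadratic $(1-\lambda)\mu^2-\mu+\lambda\ge 0$ has roots $1$ and $\lambda/(1-\lambda)$, and connectivity rules out the branch $\mu\ge 1$, so the conclusion follows. One small remark: at the endpoint $\lambda=\tfrac12$ the two roots coincide and the conclusion $\lambda_2\le 1$ is vacuous, which is consistent; the theorem only has bite for $\lambda<\tfrac12$.
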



\subsection{Simplicial homomorphisms} \label{sec:simplicial-homomorphism}
Let \(X,Y\) be two pure \(d\)-dimensional simplicial complexes. A \emph{simplicial homomorphism} is a function \(f:X(0) \to Y(0)\) so that every \(k\)-face \(s \in X\) is mapped by \(f\) to a face \(k\)-face \(f(s)\in Y(k)\). We say that \(f\) is a \emph{non-degenerate \(Y\)-coloring} if for every \(d\)-face \(t \in Y(d)\) there exists some \(d\)-face \(s \in X(d)\) so that \(f(s)=t\). 

A surjective \emph{function} (not necessarily a coloring) \(f:X(0) \to Y(0)\) induces a measure \(\mu_f\) on \(X\) defined by the following sampling process:
\begin{enumerate}
    \item Sample a \(d\)-face \(t \in Y(d)\).
    \item Sample a \(d\)-face \(s \in X(d)\) given that \(f(s)=t\).
\end{enumerate}
We note that by taking a subcomplex \(X' \subseteq X\) that contains all faces in \(X\) that are in the support of \(\mu_f\), then \(f:X'(0)\to Y(0)\) is a non-degenerate coloring (we can also induce a measure on \(X'\) so that for every \(d\)-face \(s \in X'(d)\), \(\nu_X'(s) = \nu_X(s) / \nu_X(X'(d))\)).
%

We say that two simplicial complexes \(X,Y\) are \emph{isomorphic} if there exists a bijection \(f:X(0) \to Y(0)\) so that \(s \in X\) if and only if \(f(s) \in Y\). We then denote \(X \cong Y\).

\paragraph{Simplicial homomorphisms and oriented faces} Let \(\dir{s} = (v_0,v_1,...,v_d) \in \dir{X}(d)\), whose underlying set \(s = \set{v_0,v_1,...,v_d}\). Let \(f(\dir{s})=(f(v_0),f(v_1),...,f(v_d))\) and let \(f(s) = \set{f(v_0),f(v_1),...,f(v_d)}\).

its probability is
\[\Prob[\mu_f]{\dir{s}}=\frac{1}{(d+1)!} \Prob[\mu_f]{s} = \frac{1}{(d+1)!} \Prob[Y]{f(s)} \cProb{s'\in X(d)}{s'=s}{f(s')=f(s)}\]
\[ = \Prob[Y]{f(\dir{s})}\cProb{s'\in X(d)}{s'=s}{f(s')=f(s)}.\]
For every \(s'\) so that \(f(s')=f(s)\) there is a single orientation \(\dir{s'}\) of \(s'\) so that \(f(\dir{s'})=f(\dir{s})\) so this is equal to
\[ = \Prob[Y]{f(\dir{s})}\cProb{\dir{s}'\in \dir{X}(d)}{\dir{s}'=\dir{s}}{f(\dir{s}')=f(\dir{s})}.\]

Hence we can also describe \(\mu_f\) via the sampling process of oriented faces:
\begin{enumerate}
    \item Sample an oriented \(d\)-face \(\dir{t} \in \dir{Y}(d)\).
    \item Sample an oriented \(d\)-face \(\dir{s} \in \dir{X}(d)\) given that \(f(\dir{s})=\dir{t}\).
\end{enumerate}

\subsection{Simplicial covers}
\begin{definition}[Simplicial cover]
    Let \(X,Y\) be two pure \(d\)-dimensional simplicial complexes. We say \(Y\) \emph{covers} \(X\) if there exists a surjective simplicial homomorphism \(f:Y \to X\) so that for every non-empty face \(s \in Y\), \(f|_{Y_s(0)}\) is a simplicial isomorphism between \(Y_s\) and \(X_{f(s)}\).
\end{definition}
Simplicial covers are the combinatorial equivalent of \emph{topological covers}. These are usually classified and analyzed by the \emph{fundamental group} of a simplicial complex as a topological space, and the \emph{universal cover}. For more on this, see \cite{Surowski1984}. In this paper however, we use an alternative description of covers via the non-abelian cocycles (defined below). We define what we need below, however we will not give full proofs (but we refer the reader again to \cite{Surowski1984}).

Let \(X\) be a connected finite \(d\)-dimensional simplicial complex (for \(d \geq 2\)), where \(X(0)=\set{v_1,v_2,...,v_n}\) (the vertices are ordered in some arbitrary order).  Let \(\Gamma\) be a group (not necessarily finite). Let \(f:X(1) \to \Gamma\) be some labeling. We say that \(f\) is a \(\Gamma\)-cocycle if for every triangle \(v_i v_j v_k \in X(2)\) where \(i<j<k\) it holds that
\[f(v_i v_j)f(v_j v_k)f(v_i v_k)^{-1}=e\]
where \(e\) is the identity in \(\Gamma\). We denote the set of cocycles by \(Z_1(X,\Gamma)\).

For a cocycle \(f \in Z_1(X,\Gamma)\) the \(f\)-cover of \(X\) is the following simplicial complex.

\begin{definition}[Group cover construction]
Let \(X\) be a pure \(d\)-dimensional simplicial complex, and \(\Gamma\) be a finite group. Let \(f \in Z_1(X,\Gamma)\). The \(f\)-cover of \(X\), denoted by \(\tilde{X} = X^f\) is a pure \(d\) dimensional simplicial complex defined as follows. The vertex set \(\tilde{X}(0) = X(0)\times \Gamma\). The \(d\)-faces are all sets \(s=\set{(v_{i_0},g_0),...,(v_{i_d},g_d)}\) so that (1) \(\set{v_{i_0},v_{i_1},...,v_{i_d}}\in X(d)\) and (2) for every two distinct \(i_{j_1},i_{j_2}\) so that \(i_{j_1}< i_{j_2}\), \(f(v_{i_{j_1}} v_{i_{j_2}})=g_{i_{j_1}}^{-1}g_{i_{j_2}}\).

If \(X\) is measured, then \(\mu_{\tilde{X}}(s) = \mu_X(s|_{X}) \frac{1}{|\Gamma|}\), where \(s|_{X}\) of a face \(s = \set{(v_{i_0},g_0),...,(v_{i_k},g_k)}\) is \(s|_{X} = \set{v_{i_0},...,v_{i_k}} \in X(k)\).
\end{definition}
One can show that \(\tilde{X}\) indeed covers \(X\) with the map \(\phi:\tilde{X} \to X\), \(\phi(v_i,g)=v_i\). We note for the curious reader that there is also an inverse to this statement, see \cite{Surowski1984} Proposition 2.3 for a precise statement.
%

We are interested in simplicial complexes that are connected. The following claim characterizes the connected components of \(X^f\).

Let \(\dir{X}(1) = \sett{(v_i,v_j),(v_j,v_i)}{v_i v_j \in X(1)}\) be the set of directed edges on \(X\). For a labeling \(f:X(1) \to \Gamma\), we denote by \(\dir{f}:\dir{X}(1)\to \Gamma\) the function 
\begin{align} \label{eq:dir-f-def}
    \dir{f}(v_i,v_j) = \begin{cases} 
f(v_i v_j) & i < j \\
f(v_i v_j)^{-1} & i>j
\end{cases}.
\end{align}
\begin{claim} \label{claim:connectedness}
Let \(X\) be a \(d\)-dimensional simplicial complex whose \(1\)-skeleton is connected. Fix some arbitrary \(v \in X(0)\) and denote by \(H_v \subseteq G\) to be the set of all elements \(g\in G\) for which there is a cycle \(v=v_{i_0},v_{i_1},v_{i_2},...,v_{i_m}=v\) such that \(g=\dir{f}(v_{i_0}, v_{i_1}) \dir{f}(v_{i_1}, v_{i_2}) \dir{f}(v_{i_2}, v_{i_3}) ... \dir{f}(v_{i_{m-1}}, v_{i_{m}})\).

Then \(H_v\) is a subgroup of \(\Gamma\). Moreover, for every \([g] \in \Gamma/H_v\) there is a connected component in the cover \(X^f\) of \(X\). 
\end{claim}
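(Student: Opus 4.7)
The plan is to first verify that $H_v$ is a subgroup of $\Gamma$, and then to analyze the connected components of $X^f$ by means of the natural left action of $\Gamma$ on $X^f$ by translation in the second coordinate.

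For the subgroup axioms, I would note first that the identity $e$ lies in $H_v$ by considering the two-step closed walk $v, w, v$ for any neighbor $w$ of $v$: by \pref{eq:dir-f-def} we have $\dir{f}(w, v) = \dir{f}(v, w)^{-1}$, so the product along this walk is $\dir{f}(v, w)\dir{f}(v, w)^{-1} = e$. Closure under multiplication is obtained by concatenating closed walks at $v$: if $g_1$ and $g_2$ arise from walks $C_1$ and $C_2$, then $g_1 g_2$ arises from $C_1$ followed by $C_2$. Closure under inverses follows by traversing a walk backwards; reversing $v_{i_0}, v_{i_1}, \ldots, v_{i_m}$ and applying $\dir{f}(v_b, v_a) = \dir{f}(v_a, v_b)^{-1}$ termwise turns the product into its inverse.

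Next I would characterize the edges of $X^f$ explicitly: a pair $\{(v_i, g), (v_j, h)\}$ is an edge of $X^f$ if and only if $v_i v_j \in X(1)$ and $h = g\,\dir{f}(v_i, v_j)$. The forward implication is immediate from the defining constraint on the $g$-labels in a $d$-face of $X^f$. For the converse, I would take any $d$-face $t \in X(d)$ containing both $v_i$ and $v_j$ (which exists since $X$ is pure $d$-dimensional) and lift it through $(v_i, g)$ by assigning to each other vertex $v_k$ of $t$ the label $g\,\dir{f}(v_i, v_k)$; the pairwise relations $f(v_a v_b) = g_a^{-1} g_b$ then reduce to the cocycle identity $f(v_a v_b)f(v_b v_c) = f(v_a v_c)$ applied to the triangles of $t$. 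As a corollary, any walk $v_{i_0}, \ldots, v_{i_m}$ in the $1$-skeleton of $X$ lifts uniquely from each starting vertex $(v_{i_0}, g)$ to a walk in $X^f$ ending at $(v_{i_m},\, g\cdot \dir{f}(v_{i_0}, v_{i_1})\cdots \dir{f}(v_{i_{m-1}}, v_{i_m}))$.

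I would then let $\Gamma$ act on $X^f$ by $g_0 \cdot (v_i, h) = (v_i, g_0 h)$. The edge description shows this action preserves adjacency, hence permutes connected components. Let $C_0$ be the connected component of $(v, e)$; by the lifting description, $(v, g) \in C_0$ iff $g$ is the product of some closed walk at $v$, i.e.\ iff $g \in H_v$. Consequently the translate $g_0 \cdot C_0$ is a connected component meeting $\{v\}\times \Gamma$ in exactly the set $\{(v, g_0 g) : g \in H_v\}$, so $(v, g_1)$ and $(v, g_2)$ lie in the same component iff $g_1 H_v = g_2 H_v$. Since the $1$-skeleton of $X$ is connected, every $(v_i, h) \in X^f(0)$ can be joined by a lifted walk to some $(v, h')$, so each connected component of $X^f$ meets the fiber $\{v\}\times \Gamma$. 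Putting these facts together yields a bijection between the connected components of $X^f$ and the left cosets $\Gamma / H_v$, which is in fact slightly stronger than the stated conclusion. The only step that is not purely formal is the edge characterization, where the cocycle hypothesis on $f$ is used to produce a $d$-face of $X^f$ lifting a chosen $d$-face of $X$ through a prescribed lift of one vertex; this is the sole place the assumption $f \in Z_1(X,\Gamma)$ plays a real role.
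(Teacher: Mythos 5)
Your proof is correct and follows essentially the same route as the paper: verify the subgroup axioms via concatenation and reversal of closed walks at $v$, then lift walks in the $1$-skeleton of $X$ to $X^f$ and use connectedness to identify the components with the cosets $\Gamma/H_v$ (your group-action phrasing is a cosmetic repackaging of the paper's translate-by-$g$ argument). The one place you go further than the paper is worth noting: you explicitly justify that $\{(v_i,g),(v_j,g\,\dir{f}(v_i,v_j))\}$ is an edge of $X^f$ for \emph{every} $v_iv_j\in X(1)$ by lifting a $d$-face through it and invoking the cocycle identity, whereas the paper simply ``recalls'' this edge description — so you have correctly located the step where $f\in Z_1(X,\Gamma)$ is actually used, which the paper leaves implicit.
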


We prove the claim below, but the following corollary is all we need for our result:
\begin{corollary} \label{cor:connected-characterization}
Let \(X\) be a \(d\)-dimensional simplicial complex whose \(1\)-skeleton is connected. \(X^f\) is connected if and only if \(H_v = G\) for some \(v\in X(0)\).
\end{corollary}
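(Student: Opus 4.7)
The plan is to obtain the corollary as an almost immediate consequence of \pref{claim:connectedness}. That claim says that for any fixed base vertex $v \in X(0)$, each coset of $H_v$ in $\Gamma$ yields a connected component of $X^f$. Interpreting this (as intended) as a bijection between $\Gamma / H_v$ and the set of connected components of $X^f$, the number of components equals the index $[\Gamma : H_v]$. Thus $X^f$ is connected if and only if this index is $1$, if and only if $H_v = \Gamma$.

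More concretely, I would argue the two directions as follows. For the ``if'' direction, suppose $H_v = \Gamma$ for some $v$. Fix any other vertex $(u, g) \in X^f(0)$; since the $1$-skeleton of $X$ is connected there is a path $P$ from $v$ to $u$, and following its lift from $(v, e)$ one lands at $(u, g_P)$ where $g_P = \prod \dir{f}(\cdot, \cdot)$ along $P$. By hypothesis $g \cdot g_P^{-1} \in H_v = \Gamma$, so the definition of $H_v$ gives a cycle at $v$ whose $\dir{f}$-product equals $g \cdot g_P^{-1}$; lifting this cycle from $(v, e)$ reaches $(v, g \cdot g_P^{-1})$, and then lifting $P$ reaches $(u, g)$. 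Hence every vertex is reachable from $(v, e)$ and $X^f$ is connected. For the ``only if'' direction, one invokes \pref{claim:connectedness} to get that any $[g] \in \Gamma/H_v$ indexes a distinct connected component, so connectivity of $X^f$ forces $|\Gamma/H_v| = 1$, i.e., $H_v = \Gamma$.

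Finally, the wording ``for some $v$'' versus ``for every $v$'' is harmless: since the $1$-skeleton of $X$ is connected, if $v, v'$ are two base vertices and $P$ is a path from $v$ to $v'$ with $\dir{f}$-product $g_0$, then a standard change-of-basepoint argument shows $H_{v'} = g_0^{-1} H_v g_0$, so $H_v = \Gamma$ for one vertex iff for all. The statement of the corollary is therefore unambiguous.

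The main obstacle, if there is one, is justifying that \pref{claim:connectedness} really yields a bijection (and not merely a surjection) from $\Gamma/H_v$ onto the connected components; but this is the content of the claim itself and reduces to the identification $(v, g) \sim_{X^f} (v, g')$ iff $g'g^{-1} \in H_v$, which is essentially the definition of $H_v$ together with the path-lifting observation above. With that in place the corollary follows in a single line.
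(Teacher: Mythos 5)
Your proposal is correct and matches the paper's intent: the paper gives no separate proof of the corollary, treating it as an immediate consequence of \pref{claim:connectedness}, and your argument (both the one-line version and the direct path-lifting version of the ``if'' direction, plus the change-of-basepoint remark justifying ``for some $v$'') is exactly the natural way to fill that in. One small slip in your closing aside: since edge lifts act by right-multiplication, the identification is $(v,g)\sim(v,g')$ iff $g^{-1}g'\in H_v$ (i.e., $g'\in gH_v$), not $g'g^{-1}\in H_v$; this is harmless here because the corollary only asks whether $H_v=\Gamma$, but it is worth stating the coset condition on the correct side.
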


Finally, we note that if \(f \in Z_1(X,\Gamma)\) and \(N \trianglelefteq \Gamma\) is a normal subgroup, then \(f_N \in Z_1(X,\Gamma / N)\) that is defined by \(f_N(v) = f(v)N\) is also a cocycle. Thus if \(\Gamma\) has an infinite sequence of finite-indexed normal subgroups \(N_1,N_2,...\) whose index goes to infinity, then any \(f \in Z_1(X,\Gamma)\) gives rise to an infinite family of simplicial complexes that cover \(X\), namely, \(\set{X^{f_{N_m}}}_m\).

\begin{proof}[Proof of \pref{claim:connectedness}]
\(e \in H_v\) as the trivial loop gives \(e\). If \(g_1, g_2 \in H_v\) then concatenating their cycles gives the element \(g_1 g_2\). For an element \(g \in H_v\), with a cycle \(v=v_{i_0},v_{i_1},v_{i_2},...,v_{i_m}=v\). The reverse cycle \(v=v_{i_m},v_{i_{m-1}},v_{i_{m-2}},...,v_{i_0}=v\) shows that \(g^{-1} \in H_v\). Thus \(H_v\) is a subgroup of \(\Gamma\).

Next we show that every vertex \((v,g)\) is connected \((v,g')\) if and only if \(g' \in gH\):
Recall that every edge in \(X^f\) is \((u,h)\tilde (w,h\dir{f}(uw))\). Thus any path \(((u_0,g_0),(u_1,g_1),...,(u_m, g_m))\) implies that \(g_m=g_0\dir{f}(u_0 u_1) \dir{f}(u_1 u_2) ... \dir{f}(u_{m-1} u_m).\)
In particular, any path from \((v_0=v,g_0),(v_1,g_1),...,(v_m=v,g_m)\) implies that \(g_m=g_0\dir{f}(v v_1) \dir{f}(v_1 v_2) ... \dir{f}(v_{m-1} v).\) As \((v_0=v,v_1,...,v_m=v)\) is a cycle from \(v\) to itself, then \(\dir{f}(v_1 v_2) ... \dir{f}(v_{m-1} v) \in H_v\). Thus \((v,g)\) is connected only to \(\sett{(v,g')}{g'\in gH_v}\).

On the other hand, for every \(g'\in gH\), there exists a cycle \((v_0=v,v_1,...,v_m=v)\), so that \(\dir{f}(v_1 v_2) ... \dir{f}(v_{m-1} v) = g^{-1}g'\). Hence the path \((v,g),(v_1,g\dir{f}(v v_1)),...,(v_m=v,g\dir{f}(v_1 v_2) ... \dir{f}(v_{m-1} v))\) goes from \((v,g)\) to \((v,g(g^{-1}g'))\).

Finally, we note that every vertex \((u,h)\) is connected to at least one \((v,g)\) (since there is a path from \(u\) to \(v\) that lifts to a path from \((u,h)\) to \emph{some} \((v,g)\)). We've shown that that every two vertices in a set \(\set{v} \times gH_v\) are connected to one another, that every other vertex is connected to at least one of these sets, and that two distinct \(\set{v} \times gH_v\) and \(\set{v} \times g'H_v\) are disconnected. This shows that there is a connected component for every \(gH_v \in \Gamma / H_v\).
\end{proof}

\subsection{The clique complex of a Cayley graph} \label{sec:group-complex}
Let \(\Gamma\) be a finitely presented group (not necessarily finite), and let \(S = \set{s_1,s_2,...,s_m}\) a finite symmetric set of generators. We define \(C(\Gamma,S)\), as the clique complex that arises from the Cayley graph \(Cay(\Gamma,S)\). More explicitly, the vertex set is \(\Gamma\), the labeled edges are all pairs \((s_i,s_k)_{s_j}\) so that \(s_i s_j = s_k\), and the \(d\)-faces are all \(d\)-cliques in this graph. While this complex is infinite, the neighbourhood of every vertex is of finite size \(|S|\).

We limit ourselves to groups and generating sets where there exists some \(d\geq1\) so that every edge is in some \((d+1)\)-clique. In this case, the resulting complex is \emph{pure} and \((d+1)\)-dimensional. We abuse definitions and say that an infinite complex is an \(\eta\)-high dimensional expander if all its links (not including the complex itself) are \(\eta\)-spectral expanders.
\begin{example}
\cite{LubotzkySV2005b} showed that for any \(d>0\) and prime power \(q^e\), the group \(PGL_d(\mathbb{F}_{q^e})\) has a generating set \(S\) of size \(m=m(d)\) so that \(C(\Gamma,S)\) is a pure \(d\)-dimensional simplicial complex. Moreover, all links of \(C(\Gamma,S)\) are \(\eta\)-one-sided spectral expanders for \(\eta\) that goes to \(0\) as \(q^e\) tends to infinity. By using \pref{thm:Oppenheim-trickling-down-lemma} one can show that low dimensional skeletons of these complexes are also two-sided spectral expanders.
\end{example}

\section{Random high dimensional expanders from covers} \label{sec:main}
In this section we present our construction of high dimensional expanders that are based on covers. We first define the simplicial complexes that are suitable for our construction.
\begin{definition} \label{def:suitable-complex}
    Let \(c,r > 1\) and \(\eta > 0\). Let \(X\) be a \(d\)-dimensional pure simplicial complex and denote by \(Q = \max_{v \in X(0)}\deg_d(v)\). We say that \(X\) is \((c,r,\eta)\)-\emph{suitable} if
\begin{enumerate}
    \item \(X\) is an \(\eta\)-two-sided high dimensional expander.
    \item For every \(0 \leq \ell \leq d-2\), \(\sigma \in X(\ell)\) and every \(v \in X_\sigma(0)\), the degree of \(v\) in the \(1\)-skeleton of \(X_\sigma(0)\) is at least \(c (1+\log Q)\).
    \item For every \(0 \leq \ell \leq d-2\) and \(\sigma \in X(\ell)\), the weight of every edge in \(e \in X_\sigma(1)\) is between \(\frac{1}{r|X_\sigma(1)|}\) and \(\frac{r}{|X_\sigma(1)|}\). Moreover, the weight of every vertex \(v\in X_\sigma(0)\) is between \(\frac{1}{r|X_\sigma(0)|}\) and \(\frac{r}{|X_\sigma(0)|}\).
\end{enumerate}
\end{definition}
Note that a \((c,r,\eta)\)-suitable simplicial complex is also \((c',r',\eta')\)-suitable for any \(c'\leq c\), \(r' \geq r\) and \(\eta' \geq \eta\).

\begin{theorem}[Main] \label{thm:main-formal}
For every pair of integers \(d,m\) there exists some \(c,r > 1, \eta>0\) so that the following holds. Let \(\Gamma\) be a group with a generating set \(S\) of size \(m\). Assume that \(C(\Gamma,S)\) is a \(d\)-dimensional \(\frac{\lambda}{2}\)-high dimensional expander for some \(\lambda < 1\). Let \(X\) be a \((c,r,\eta)\)-suitable \(d\)-dimensional simplicial complex. Then there exists some \(Y \subseteq X\) so that:
\begin{enumerate}
    \item \(Y\) is an \(d\)-dimensional \(\lambda\)-high dimensional expander.
    \item \(Y\) has a connected \(\Gamma\)-cover.
    \item \(Y\) contains \(\Omega_{m,d}(1)\) fraction of the faces of \(X\).
\end{enumerate} 
There is a randomized algorithm that outputs \(Y\) in expected polynomial time.
\end{theorem}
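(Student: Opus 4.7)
The plan is to construct $Y$ via a random labeling and then invoke the Lovász Local Lemma. Fix an arbitrary ordering on $X(0)$. I would sample, independently and uniformly, a label $f(e) \in S$ for each edge $e \in X(1)$ (where for oriented edges $(u,v)$ with $u<v$ we put $\dir f(u,v)=f(uv)$ and $\dir f(v,u)=f(uv)^{-1}$). Define $Y$ to be the subcomplex of all $\sigma \in X$ such that for every triangle $\{v_i,v_j,v_k\} \subseteq \sigma$ with $i<j<k$, the relation $f(v_iv_j)f(v_jv_k)=f(v_iv_k)$ holds in $\Gamma$. By construction, $f\restriction_{Y(1)}$ is a $\Gamma$-cocycle on $Y$, so that $Y$ has a $\Gamma$-cover $\tilde Y = Y^f$. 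Moreover $f$ extends to a simplicial homomorphism from $Y$ into $C(\Gamma,S)$ sending $(v_0,\ldots,v_\ell)\in \dir Y(\ell)$ to the simplex $(e,\dir f(v_0,v_1),\dir f(v_0,v_1)\dir f(v_1,v_2),\ldots)$ in $C(\Gamma,S)$.

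The core of the proof is to show that, with positive probability, every link $Y_\sigma$ is a $\lambda$-two-sided spectral expander. Fix $\sigma \in Y(\ell)$ with $\ell \le d-2$. The composition $f$ turns $Y_\sigma$ into a graph that is homomorphic to the link in $C(\Gamma,S)$ of $f(\sigma)$, which by hypothesis is a $(\lambda/2)$-two-sided expander. I would therefore apply \pref{claim:graph-composition}: it suffices to show that the bipartite graphs connecting the color classes in the link are good bipartite expanders. Since these are random sub-bipartite-graphs of the corresponding bipartite graphs in $X_\sigma$, one reduces to a \emph{sparsification} statement for expander graphs with logarithmic minimum degree. The intended way to prove sparsification is Bilu--Linial's converse Expander Mixing Lemma (\pref{thm:converse-bipartite-eml}): check the pseudo-random edge count $|\nu(E(A,B))-\nu(A)\nu(B)| \le \alpha\sqrt{\nu(A)\nu(B)}$ for all $A,B$ using a Chernoff bound per pair, then union-bound over the at most $2^{|X_\sigma(0)|}$ pairs. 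The condition that every vertex in every link has degree $\ge c(1+\log Q)$, combined with the $1/r$-bounded weights, is what makes the Chernoff exponent beat the union bound and produces a failure probability exponentially small in the link size (matching the \badprob/\badprobR bounds defined in the preamble).

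Having per-link failure probability exponentially small in the number of vertices of the link, I would then set up the Lovász Local Lemma (\pref{thm:l3}). Let $B_\sigma$ be the bad event that either $Y_\sigma$ fails to be $\lambda$-two-sided expanding or that some vertex/edge of $X_\sigma$ has been removed in $Y_\sigma$ beyond a small multiplicative slack (needed to invoke the sparsification estimate). The event $B_\sigma$ depends only on the labels of edges that touch vertices of $X_\sigma$, so $B_\sigma$ and $B_{\sigma'}$ are dependent only when $\sigma$ and $\sigma'$ share a vertex whose two links in $X$ touch. The number of such $\sigma'$ is bounded by a polynomial in $Q$ (the maximum top-degree), while the failure probability is $\le e^{-\Omega(c\log Q)} = Q^{-\Omega(c)}$; choosing $c$ large enough (depending only on $d,m$) gives the $\Pr[B_\sigma]\cdot e(R+1)\le 1$ balance that the symmetric LLL requires. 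This yields positive probability that no $B_\sigma$ occurs, which gives item~1 and, by the slack in the definition of $B_\sigma$, also item~3 (a constant fraction of faces survive).

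For item~2 (connectedness of some cover), I would add a small family of auxiliary bad events ensuring $H_v=\Gamma$ in the sense of \pref{claim:connectedness}. Since $S$ generates $\Gamma$ and $X$ has $\eta$-expanding links, for each generator $s\in S$ there are plenty of short cycles through a fixed base vertex $v$; conditioning on the labels along such a cycle, the product equals any specified element of $\Gamma$ with probability $1/|S|$. A Chernoff-style count shows that with overwhelming probability every $s\in S$ is represented, and these auxiliary events can be folded into the LLL without disturbing the previous balance. For the algorithmic statement, I would invoke the Moser--Tardos resampling version of the LLL (already stated in \pref{thm:l3}), which converts the existential result into an expected-polynomial-time randomized algorithm. \textbf{The main obstacle I foresee} is the sparsification step: making the per-link failure probability genuinely exponential in the link size, while the degrees in the links are only $\Theta(\log Q)$ and the sparsification probability is the small constant $1/|S|$. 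This is where the $(c,r,\eta)$-suitability hypothesis is used most delicately, and where the bulk of the technical work (deferred to \pref{sec:expander-sparsification}) lives.
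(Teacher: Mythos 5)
Your proposal mirrors the paper's proof almost exactly: random independent edge-labeling $f:X(1)\to S$, pruning by the cocycle condition, Lovász Local Lemma with per-face bad events for link expansion / measure regularity (your ``multiplicative slack'') / cover connectedness, the graph-composition reduction to bipartite sub-expanders colored by links of $C(\Gamma,S)$, sparsification via Bilu--Linial's converse expander mixing lemma, and Moser--Tardos for the algorithm. The one step you describe loosely (``union-bound over the at most $2^{|X_\sigma(0)|}$ pairs'') is where the paper does real work — it splits pairs $(S,T)$ into small and large cases and shows $|E_H(S,T)| \geq \mu D(|S|+|T|)$ for large pairs so the weighted union bound $\sum_j \binom{n}{j}e^{-\mu D j}$ converges once $D = \Omega(\log n)$ — but you correctly flag this as the main obstacle, and your overall plan is the same.
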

    
\begin{remark}
We prove \pref{thm:main-formal} in an existential manner, using the Lovasz Local Lemma. Once existence is establish, we use the algorithmic version \pref{thm:l3} by \cite{MoserT2010} as a black box to obtain an algorithm that outputs \(Y\).
\end{remark}

\subsection{Proof outline} \label{sec:proof-outline-main-theorem}
As discussed in the introduction, we prove \pref{thm:main-formal} via the probablistic method. Fix a group \(\Gamma\), and a high dimensional expander \(X\) that satisfy the assumptions of the theorem. We sample a function \(f:X(1) \to \set{s_1,...,s_m}\) uniformly at random, and remove all faces \(s \in X\) that contain a triangle \(v_i v_j v_k \in X\) so that \(f(v_i v_j)f(v_j v_k)f(v_i v_k)^{-1} \ne e\) (where \(i<j<k\)). Denote by \(Y \subseteq X\) the resulting sub-complex.

We cannot prove that \(Y\) will be a high dimensional expander with high probability. For example, if \(Q \ll |X(d)|\) (where \(Q\) is the maximal \(d\)-degree of a vertex), then with high probability there exists a face \(\tau \in X(d-2)\) with a disconnected link. This is because the event that a face \(\tau\) has a disconnected link depends only on \(poly(Q)\) edges (the edges whose distance from \(\tau\) is at most \(2\)). Thus, the probability is a function of \(Q\) only. When \(Q\) stays constant and \(|X(d)|\) grows to infinity, the number of faces with a disconnected link will grow in expectation.

This is where the Lovasz Local Lemma comes into play. For every face \(\tau \in X\) we will define a ``bad event'' \(E_\tau\), that informally, says that the link of \(\tau\) is not an expander graph \footnote{The formal definition of \(E_\tau\) is more involved. We also want that when all \(E_\tau\) don't occur, then \(Y\) will have a connected \(\Gamma\)-cover and that \(Y\) will contain a constant fraction of the faces \(X\).}.

To use the Lovasz Local Lemma, we show that these events \(E_\tau, E_{\tau'}\) are independent whenever the distance between every pair of vertices \(v\in \tau, u\in \tau'\) is greater than \(2\). Thus every \(E_\tau\) depends on \(poly(Q)\) other \(E_{\tau'}\). This makes sense since the structure of the link of a face \(\tau \in X\), depends only on the value of \(f\) on edges \(uv\) so that \(\tau \cup uv \in X\).

Furthermore, we show that the probability of \(E_\tau\) is at most \( 1/Q^{\ell}\) for \(\ell\) as large as we need (this is where the fact that \(X\) is \((c,r,\eta)\)-suitable for appropriate \(c,r,\eta\) comes into play). Now we can apply the Lovasz Local Lemma, and get that \(\bigwedge_{t \in X} \neg E_t\) occurs with positive probability. This implies that the resulting sub-complex \(Y\) is a high dimensional expander (the same argument will show the other properties required from \(Y\) occur as well).

\subsection{Notation for the proof}
\begin{definition}
Let \(f:X(1) \to \set{s_1,...,s_m}\). 
\begin{enumerate}
    \item We say a face \(\sigma \in X\) \emph{is satisfied by \(f\)} if for every triangle \(v_i v_j v_k \subseteq \sigma\) so that \(i<j<k\) it holds that \(f(v_i v_j)f(v_j v_k)f(v_i v_k)^{-1}=e\).
    \item For a face \(\sigma \in X\), we say that \(\sigma\) is \emph{maximally satisfied} if there exists \(\tau \in X(d)\) so that \(\sigma \subseteq \tau\) and \(\tau\) is satisfied by \(f\).
\end{enumerate}
\end{definition}
Note that vertices and edges are (vacuously) satisfied by any \(f\). We observe the following about satisfied faces.
\begin{observation} \label{obs:satisfaction}
A face \(\set{v_0,v_1,...,v_r}\) is satisfied if and only if \(\set{f(v_0 v_1), f(v_0 v_2),...,f(v_0 v_r)} \cup \set{e}\) is a face in \(C(\Gamma,S)\), if and only if \(\set{f(v_0 v_1), f(v_0 v_2),...,f(v_0 v_r)} \cup \set{e}\) is a clique in \(Cay(\Gamma,S)\).
\end{observation}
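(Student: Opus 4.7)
The plan is to split the two ``iffs'' into separate pieces. The equivalence between ``$A := \{f(v_0 v_1), \ldots, f(v_0 v_r)\} \cup \{e\}$ is a face in $C(\Gamma, S)$'' and ``$A$ is a clique in $Cay(\Gamma, S)$'' is immediate from \pref{sec:group-complex}: $C(\Gamma,S)$ was defined there as the clique complex of $Cay(\Gamma, S)$, so a subset of $\Gamma$ is a face of $C(\Gamma, S)$ precisely when it is a clique in $Cay(\Gamma, S)$. The remaining content is the equivalence ``$\sigma$ is satisfied by $f$ iff $A$ is a clique in $Cay(\Gamma, S)$''.

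For the forward direction I would fix the ordering $v_0 < v_1 < \cdots < v_r$ on the vertices of $\sigma$ and assume $\sigma$ is satisfied. For each pair $0 < i < j \le r$ the sub-triangle $v_0 v_i v_j \subseteq \sigma$ has this very ordering, so the cocycle identity gives $f(v_0 v_i) f(v_i v_j) f(v_0 v_j)^{-1} = e$, which rearranges to $f(v_0 v_i)^{-1} f(v_0 v_j) = f(v_i v_j)$. Since $f$ takes values in $S$, this element is in $S$, so $f(v_0 v_i)$ and $f(v_0 v_j)$ are adjacent in $Cay(\Gamma, S)$. Moreover $e$ is adjacent to each $f(v_0 v_i)$ because $f(v_0 v_i) \in S$. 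Hence every pair in $A$ is an edge of $Cay(\Gamma, S)$ and $A$ is a clique.

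For the converse I would reduce every cocycle condition on $\sigma$ to those on the ``star-triangles'' $v_0 v_i v_j$ containing the minimum vertex. The clique condition on $A$ furnishes $f(v_0 v_i)^{-1} f(v_0 v_j) \in S$ for all $0 < i < j$, and reading off this element as $f(v_i v_j)$ is precisely the cocycle identity on the star-triangle $v_0 v_i v_j$. For a general triangle $v_i v_j v_k \subseteq \sigma$ with $0 < i < j < k$, the cocycle identity then follows by telescoping the three star-triangle identities:
\[
f(v_i v_j) f(v_j v_k) = \bigl(f(v_0 v_i)^{-1} f(v_0 v_j)\bigr)\bigl(f(v_0 v_j)^{-1} f(v_0 v_k)\bigr) = f(v_0 v_i)^{-1} f(v_0 v_k) = f(v_i v_k).
\]

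The step requiring care is the backward direction's star-triangle reduction, since the clique condition on $A$ is a priori only a membership ``$f(v_0 v_i)^{-1} f(v_0 v_j) \in S$'' rather than an equality with $f(v_i v_j)$. The cleanest framing is via the vertex assignment $\phi\colon v_0 \mapsto e,\ v_i \mapsto f(v_0 v_i)$: this extends to a simplicial homomorphism $\sigma \to C(\Gamma, S)$ precisely when $A$ is a clique, and unpacking ``simplicial homomorphism'' on every $2$-face together with the telescoping identity above recovers the cocycle condition on all triangles of $\sigma$.
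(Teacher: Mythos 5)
Your definitional equivalence between faces of \(C(\Gamma,S)\) and cliques of \(Cay(\Gamma,S)\), and your forward direction, are correct, and the forward direction is exactly the content of the paper's one-line justification: satisfaction of each star-triangle \(v_0 v_i v_j\) gives \(f(v_0 v_i)^{-1}f(v_0 v_j)=f(v_i v_j)\in S\), so the star labels are pairwise adjacent in \(Cay(\Gamma,S)\), and \(e\) is adjacent to every \(f(v_0 v_i)\) simply because \(f\) takes values in \(S\). Your telescoping identity is also the right way to see that the star-triangle identities imply the cocycle identity on every triangle of \(\sigma\).

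The backward direction is where there is a genuine gap, and you located it yourself: the clique hypothesis only yields the membership \(f(v_0 v_i)^{-1}f(v_0 v_j)\in S\) and says nothing whatsoever about the value \(f(v_i v_j)\). Your proposed repair does not close this gap: a simplicial homomorphism \(\sigma\to C(\Gamma,S)\) only requires faces to map to faces, so unpacking it on a \(2\)-face again gives only that \(\set{e,f(v_0 v_i),f(v_0 v_j)}\) is a face, not the equality \(f(v_i v_j)=f(v_0 v_i)^{-1}f(v_0 v_j)\); requiring the homomorphism to preserve the edge labels would be assuming exactly the identity you need to prove. Indeed, the implication ``clique \(\Rightarrow\) satisfied'' is false as literally stated: take \(\Gamma=\mathbb{Z}_2\times\mathbb{Z}_2\) with \(S\) the three non-identity elements (so \(Cay(\Gamma,S)=K_4\)) and label a triangle by \(f(v_0 v_1)=a\), \(f(v_0 v_2)=b\), \(f(v_1 v_2)=a\); then \(\set{e,a,b}\) is a clique while \(f(v_0 v_1)f(v_1 v_2)=e\ne b=f(v_0 v_2)\). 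The ``if and only if'' in the observation is loosely phrased: the paper's own proof argues only the direction you established, and that is also the only direction used later, together with the fact (immediate from the cocycle identity, not from the clique structure) that the star labels force a unique assignment of the remaining edges under which the face is satisfied. The correct statement of a converse needs the additional hypothesis \(f(v_i v_j)=f(v_0 v_i)^{-1}f(v_0 v_j)\) for all \(0<i<j\), at which point your telescoping computation does finish the argument.
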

This observation follows from the fact that whenever \(f(v_i v_j)f(v_j v_k)f(v_i v_k)^{-1}=e\) this implies that \(e, f(v_i v_j), f(v_i v_k)\) are a triangle in the Cayley graph of \(\Gamma,S\).

\begin{definition}
Let \(f:X(1) \to \set{s_1,...,s_m}\). The \emph{\(f\)-pruning} of \(X\) is the simplicial complex \(Y\) whose vertices are \(Y(0)=X(0)\) and faces are all \(t \in X\) that are maximally satisfied by \(f\).
\end{definition}
We imbue \(\dir{Y}(d)\) with the following measure, where we sample an oriented \((d-1)\)-face in the link of the unit \(e\) in \(C(\Gamma,S)\), \((s_{i_0} s_{i_1},...,s_{i_{d-1}})\in C_{e}(\Gamma,S)\), and then sample an oriented \(d\)-face \((u_0,u_1,...,u_d) \in Y\) so that  \(f(u_0 u_j)=s_{i_j}\) and for every \(j_1,j_2 \ne 0\), \(f(u_{j_1} u_{j_2}) = s_{j_1}^{-1} s_{j_2}\). This induces a measure over \(Y(d)\) by taking the underlying set of \((u_0,u_1,...,u_d)\). As \(\prob{(u_0,u_1,...,u_d)} = \prob{(u_{\pi(0)},u_{\pi(1)},...,u_{\pi(d)})}\) for any permutation \(\pi\), it holds that \(\prob{u_0,u_1,...,u_d} = (d+1)! \prob{(u_0,u_1,...,u_d)}\). This measure is only possible if for every \(C(\Gamma,S)\), \(\set{s_{i_0} s_{i_1},...,s_{i_d}}\in C_{e}(\Gamma,S)\) there exists a face in \(Y\) so that \(f(u_0 u_j)=s_{i_j}\) and for every \(j_1,j_2 \ne 0\), \(f(u_{j_1} u_{j_2}) = s_{j_1}^{-1} s_{j_2}\). When this is not possible we do not consider \(Y\) as measured.

\begin{definition}
Fix \(f:X(1) \to \set{s_1,...,s_m}\), and let \(\sigma \in X(\ell)\) be a satisfied face for \(\ell \leq d-2\). The \emph{satisfaction graph} of \(\sigma\), denoted by \(G_\sigma = (V_\sigma, E_\sigma)\) is the following:
\begin{itemize}
    \item The vertices are all \(v \in X_\sigma(0)\) so that \(\sigma \dunion v\) is satisfied.
    \item The edges are all \(uv \in X_\sigma(1)\) so that \(uw \dunion \sigma\) is satisfied.
\end{itemize}
\end{definition}
Let \(\sigma = \set{v_0,v_1,...,v_r}\) be some \(f\)-satisfied face. Denote by \(f(v_0 v_j) = s_{i_j}\) and let \(a = \set{e} \cup \set{s_{i_j}}_{j=1}^r\). Recall that \(C_a\) is the link of \(a\) in the complexes induced by cliques in \(Cay(\Gamma,S)\). The satisfaction graph \(G_{\sigma}\) has a \(C_a\)-coloring \(\psi_{\sigma}:V_\sigma (0) \to C_a(0)\) by \(\psi_\sigma (u) = f(v_0 u)\). The fact that this is a coloring following from \pref{obs:satisfaction}. We consider the measure on \(G_\sigma\) to be the measure induced by the coloring \(\psi_\sigma\) as described in \pref{sec:simplicial-homomorphism}. More explicitly, we sample an edge \(uv \in E_\sigma\) by sampling \(s_i s_j \in C_a\) and then we sample an edge \(uv \in E_\sigma\) so that

\subsection{The proof}
\begin{proof}[Proof of \pref{thm:main-formal}]
Consider the probabilistic experiment where we sample \(f:X(1) \to \set{s_1,...,s_m}\) uniformly at random and set \(Y \subseteq X\) to be the \(f\)-pruning of \(X\).
%
Define the following events for faces in \(\tau \in X\):
\begin{itemize}
    \item[*] \(AT(\tau)\), the event where for some face \(\tau=\set{v_0,v_1,...,v_\ell} \in X(\ell)\), there exists an \(\ell+1\) tuple \((s_{i_0},s_{i_1},...,s_{i_\ell})\) so that the probability (in \(X_\tau\)) of \(u \in X_\tau(0)\) so that that \(f(v_0 u) = s_{i_0}, f(v_1 u)=s_{i_1},...,f(v_\ell u)=s_{i_\ell}\) is either greater than \(\geq \frac{r^2}{m^{\ell+1}} \) or smaller than \(\leq \frac{1}{r^2 m^{\ell+1}}\). (AT stands for ``atypical tuple''). Note that as this should hold for all tuples, then this doesn't depend on the chosen orientation of \(\tau\).
    \item[*] \(NE(\tau)\), the event where the satisfaction graph \(G_\tau\) is not a \(\frac{\lambda}{2}\)-spectral expander (NE stands for ``not expander'').
    \item[*] \(BC(\tau)\), for \(\tau \in X(0)\), the event where there exists some \(s_i \in S\) so that every for every triangle \(\tau u w \in X(2)\), it holds that \(\dir{f}(\tau u) \dir{f}(u,w) \dir{f}(w \tau)\ne s_i\)\footnote{Recall that by \eqref{eq:dir-f-def} the definition of \(\dir{f}:\dir{X}(1) \to \Gamma\) is \(\dir{f}(v_i,v_j) = f(v_i v_j)\) if \(i<j\) and \(\dir{f}(v_i,v_j) = f(v_i v_j)^{-1}\) otherwise.} (BC stands for ``bad cover''). 
\end{itemize}

These events allow us to define \(\sett{E_\tau}{\tau \in X \setminus X(d)}\) as in the proof outline:
\begin{align}
E_\tau = \begin{cases} 
    AT(\tau) & \tau \in X(d-1) \\
    AT(\tau) \cup NE(\tau) & \tau \in X(\ell), 1 \leq \ell \leq d-2 \\
    AT(\tau) \cup NE(\tau) \cup BC(\tau) & \tau \in X(0)
\end{cases}.
\end{align}
The formula is ambiguous when \(d=2\). In this case we define \(E_\tau  = AT(\tau) \cup NE(\tau) \cup BC(\tau)\) for all vertices \(\tau \in X(0)\).

Finally let \(\mathcal{G} = \bigwedge_{\tau \in X \setminus X(d)} \neg E_\tau\). We claim the following:
\begin{claim}\label{claim:G-implies-Y-is-good}
When \(\mathcal{G}\) occurs, then 
\begin{enumerate}
    \item \(Y\) is an \(d\)-dimensional \(\lambda\)-high dimensional expander.
    \item \(Y\) has a connected \(\Gamma\)-cover.
    \item \(Y\) contains \(\Omega_{m,d}(1)\) fraction of the faces of \(X\).
\end{enumerate} 
\end{claim}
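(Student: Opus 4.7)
The three conclusions are handled in turn using the three families of events baked into $\mathcal{G}$. The first step, which feeds into all three, is a \emph{structural lemma}: under $\neg AT(\sigma)$ for all $\sigma\in X\setminus X(d)$, every $f$-satisfied face of $X$ is in fact maximally satisfied, hence lies in $Y$. I would prove this by downward induction on $\dim\sigma$, starting with the trivial $d$-face case. In the inductive step, if $\sigma\in X(\ell)$ is satisfied with $\ell<d$, then by \pref{obs:satisfaction} the label set $\set{e}\cup\sett{f(v_0 v_i)}{1\le i\le\ell}$ is an $\ell$-face of $C(\Gamma,S)$, and since $C(\Gamma,S)$ is pure $d$-dimensional it extends to some $(\ell+1)$-face; the corresponding label tuple is therefore ``valid'' in the sense of $AT(\sigma)$, so $\neg AT(\sigma)$ guarantees some $u\in X_\sigma(0)$ realizing it with positive probability $\ge 1/(r^2 m^{\ell+1})$. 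The resulting $\sigma\cup u$ is satisfied, so by induction lies in $Y$, hence $\sigma$ is maximally satisfied. With this in hand the face-count assertion is immediate: $Y(d)$ equals the set of $f$-satisfied top faces, and iterating $\neg AT$ along a chain $\emptyset\subset\set{v_0}\subset\cdots\subset\set{v_0,\dots,v_d}$ and summing over valid label tuples (which correspond to $d$-faces of $C_e(\Gamma,S)$, a constant number depending only on $m,d$) gives the lower bound $\Omega_{m,d}(1)$.

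For the $\lambda$-HDX property, I would show that for every $\sigma\in Y(\ell)$ with $\ell\le d-2$ the 1-skeleton of the link $Y_\sigma$ coincides with the satisfaction graph $G_\sigma$, both as a graph and as a measure space. Set-theoretically this is a direct consequence of the structural lemma: membership in $Y_\sigma$ reduces to $\sigma\cup\star$ being satisfied, which is exactly the defining condition of $G_\sigma$. The measure identification requires unwinding both constructions; the measure on $\dir Y(d)$ is defined by first sampling a clique in $C_e(\Gamma,S)$ and then a consistent $d$-face of $Y$, and conditioning this sampling on containing $\sigma$ should reproduce the coloring-induced measure on $G_\sigma$ via $\psi_\sigma$. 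Once the identification is in place, $\neg NE(\sigma)$ yields that $G_\sigma$ is a $\frac{\lambda}{2}$-spectral expander, and hence $Y_\sigma$ is a $\lambda$-spectral expander.

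For the connected $\Gamma$-cover, I first observe that since every face of $Y$ is a subface of a satisfied $d$-face, every triangle in $Y$ is satisfied, so the restriction $f|_{Y(1)}$ lies in $Z_1(Y,\Gamma)$ and the cover $Y^f$ is well defined. By \pref{cor:connected-characterization} it suffices to produce a vertex $v\in Y(0)$ with $H_v=\Gamma$. Fix any such $v$ and any generator $s_i\in S$; the event $\neg BC(v)$ furnishes a triangle $vuw\in X(2)$ with $\dir f(v,u)\,\dir f(u,w)\,\dir f(w,v)=s_i$. Applying the structural lemma to each of the edges $vu,\,uw,\,wv$ (each is vacuously satisfied, hence maximally satisfied, hence in $Y(1)$) shows that the closed walk $v\to u\to w\to v$ lies in the $1$-skeleton of $Y$, yielding $s_i\in H_v$. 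Since $S$ generates $\Gamma$, $H_v=\Gamma$.

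The main technical obstacle I expect is the measure-matching step inside the HDX argument. The measure on $\dir Y(d)$ was designed artificially, by sampling through $C_e(\Gamma,S)$, rather than being inherited from $X$; correctly checking that the resulting link measure on the $1$-skeleton of $Y_\sigma$ agrees with the measure that the coloring $\psi_\sigma$ induces on $G_\sigma$ (so that $\neg NE(\sigma)$ can be fed in as-is) is a careful bookkeeping exercise using the conditional sampling description from \pref{sec:simplicial-homomorphism}. Everything else is a routine assembly of the structural lemma and the three event families.
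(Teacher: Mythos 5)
Your decomposition of the claim into a structural lemma (satisfied faces are maximally satisfied) plus the three event families is exactly the paper's strategy, and the connected-cover and face-counting steps are essentially right. The gap is in the HDX step, in the place you yourself flagged as the main obstacle.

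You assert that under $\neg AT$ the link measure on the $1$-skeleton of $Y_\sigma$ \emph{coincides} with the $\psi_\sigma$-induced measure on $G_\sigma$ ``so that $\neg NE(\sigma)$ can be fed in as-is.'' This is false. The measure on $\dir Y(d)$ is built by first sampling a face of $C_e(\Gamma,S)$ and then a consistent $d$-face of $X$; the conditional probability of a given $d$-face of $X$ given its edge labeling depends on how \emph{many} $d$-faces share that labeling, and $\neg AT$ only controls these counts up to the multiplicative factor $r^{\pm 2}$ per vertex. These distortions compound along a face, so the two measures agree only up to a factor $r^{\Theta(d)}$ (the paper's \pref{claim:not-at-implies-measure-behaves-nicely-in-links} makes this $r^{\pm 15d}$). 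Consequently $\lambda(Y_\sigma)$ does not equal $\lambda(G_\sigma)$; one needs the operator perturbation argument in the paper, bounding the row-$\ell_1$ norm of the difference of the two normalized adjacency matrices by $2(r^{30d}-1)$ and then choosing $r$ small enough that $2(r^{30d}-1)\leq \lambda/2$. This is precisely the reason the claim concludes only $\lambda$-expansion rather than $\lambda/2$-expansion. Your phrase ``and hence $Y_\sigma$ is a $\lambda$-spectral expander'' is hiding a missing perturbation lemma, and your ``careful bookkeeping exercise'' is actually the nontrivial \pref{prop:labeling-is-approximately-uniform} together with a spectral stability bound; neither gives exact agreement.

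A secondary remark: your downward induction for the structural lemma is sound and slightly more elementary than the paper's route (which extracts this as a corollary of the quantitative measure-comparison claim), so if you keep it, make sure it is phrased so that it also delivers the \emph{quantitative} face-count estimates needed for item 3, not merely the qualitative ``maximally satisfied'' conclusion.
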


\begin{claim} \label{claim:G-occurs-with-positive-probability}
The event \(\mathcal{G}\) occurs with positive probability.
\end{claim}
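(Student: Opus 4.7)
The proof follows the outline in \pref{sec:proof-outline-main-theorem}: apply the Lovász Local Lemma (\pref{thm:l3}) to the family $\{E_\tau\}_{\tau \in X \setminus X(d)}$. First I would bound the dependency degree $|A_\tau|$. Each of $AT(\tau), NE(\tau), BC(\tau)$ is determined by the values of $f$ on edges contained in $\tau \cup X_\tau(0)$ that, together with $\tau$, span a face of $X$. Two events $E_\tau, E_{\tau'}$ are therefore independent whenever no such relevant edge is shared. Since $|\tau \cup X_\tau(0)| = O(Q)$ for $\tau$ of dimension at most $d-2$, the number of $\tau'$ sharing a relevant edge with $\tau$ is bounded by some $R = Q^{O(d)}$. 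I would take $\alpha_\tau = 1/(R+1)$ for every $\tau$ and aim to show $\Pr[E_\tau] \leq \frac{1}{e(R+1)}$; combined with $(1-1/(R+1))^R \geq 1/e$, this is exactly the LLL hypothesis.

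To bound $\Pr[AT(\tau)]$ for $\tau = \{v_0,\ldots,v_\ell\}$, I would fix a tuple $(s_{i_0},\ldots,s_{i_\ell})$ and note that the indicators $\mathbf{1}[f(v_0 u)=s_{i_0},\ldots,f(v_\ell u)=s_{i_\ell}]$ are independent across $u \in X_\tau(0)$ with common mean $1/m^{\ell+1}$. The $\nu_{X_\tau}$-weighted sum has mean $1/m^{\ell+1}$, and by the third suitability condition each summand lies in $[0, r/|X_\tau(0)|]$. Hoeffding's inequality (\pref{thm:hoeffding}) gives deviation probability $\exp(-\Omega(|X_\tau(0)|/(r^2 m^{2\ell+2})))$. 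Since the second suitability condition gives $|X_\tau(0)| \geq c(1+\log Q)$, taking $c$ large enough (as a function of $m, d, r, N$) and union-bounding over the $m^{\ell+1}$ tuples yields $\Pr[AT(\tau)] \leq \frac{1}{3e(R+1)}$. The analogous calculation handles $BC(\tau)$: for each $s_i \in S$, the number of triangles $\tau u w \in X(2)$ with $\dir{f}(\tau u)\dir{f}(u w)\dir{f}(w\tau) = s_i$ is a sum of independent $1/m$-biased indicators over at least $c(1+\log Q)$ triangles, so Chernoff (\pref{thm:chernoff}) together with a union bound over $s_i$ gives $\Pr[BC(\tau)] \leq \frac{1}{3e(R+1)}$.

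The heart of the argument is the $NE(\tau)$ bound. Conditioning on $\neg AT(\tau)$, the satisfaction graph $G_\tau$ is a random sparsification of the link $X_\tau$ in which edge survival is governed by the cocycle condition imposed by $f$. The condition $\neg AT(\tau)$ controls the vertex weights of the colour classes of $\psi_\tau$, so via \pref{claim:graph-composition} it suffices to show that every bipartite graph between two colour classes is a good bipartite expander. This is exactly what the random sparsification analysis deferred to \pref{sec:expander-sparsification} delivers, by applying the Bilu--Linial inverse expander mixing lemma (\pref{thm:converse-bipartite-eml}) after establishing concentration of weighted edge counts between arbitrary subsets. Choosing $\eta$ small and $c$ large, I can push $\Pr[NE(\tau) \mid \neg AT(\tau)] \leq \frac{1}{3e(R+1)}$.

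Adding the three contributions gives $\Pr[E_\tau] \leq \frac{1}{e(R+1)}$, so the LLL applies and $\Pr[\mathcal{G}] > 0$. The main obstacle is the $NE(\tau)$ step: the random labeling $f$ induces correlated edge survival in the satisfaction graph, and verifying expansion demands controlling the edge weight between every pair of subsets $S, T \subseteq V_\tau$ simultaneously. The sparsification lemmas of \pref{sec:expander-sparsification} are designed precisely to supply this uniform control, which is why their proofs are nontrivial even though the other two bounds reduce to one-shot concentration inequalities.
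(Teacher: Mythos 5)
Your proposal matches the paper's proof: apply the Lovász Local Lemma with dependency degree $\mathrm{poly}(Q)$ coming from edges within distance two of $\tau$, and bound $\Pr[AT(\tau)]$ and $\Pr[BC(\tau)]$ by one-shot concentration while deferring $\Pr[NE(\tau)]$ to the graph-composition claim and the random sparsification analysis of \pref{sec:expander-sparsification}. One detail to tighten in the $BC(\tau)$ step: the indicators over \emph{all} triangles $\tau u w$ are not independent (triangles share edges), so the paper instead fixes a single neighbour $u$ and ranges only over the $\geq c\log Q$ triangles through the edge $\tau u$, making the remaining edges pairwise disjoint and the indicators conditionally independent given $f(\tau u)$.
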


The proof is complete given these claims.
\end{proof}

\begin{proof}[Proof of \pref{claim:G-implies-Y-is-good}]
We assert that when \(\neg AT(\tau)\) holds for all \(\tau \in X \setminus X(d)\) the measure of the \(1\)-skeleton in every link \(X_\sigma\) is similar to the measure induced by \(G_\sigma\) for every \(\sigma \in X\).
\begin{claim} \torestate{\label{claim:not-at-implies-measure-behaves-nicely-in-links}
When \(\bigcup_{\tau \in X \setminus X(d)} \neg AT(\tau)\) occurs, then
\begin{enumerate}
    \item Let \(j \leq d-1\) and \(\dir{\sigma} = (v_0,v_1,...,v_j) \in \dir{X}(j)\) be some satisfied face. Let \(\bar{s} = \set{s_{i_{\ell_1},i_{\ell_2}}}_{\ell_1 \ne \ell_2}\) be so that \(s_{i_{\ell_1},i_{\ell_2}} = f(v_{\ell_1} v_{\ell_2})\) is the labeling of the edges of \(\sigma\)\footnote{formally \(\bar{s}\) is a function that receives as input \(\ell_1 \ne \ell_2 \in \set{0,1,...,j}\) and outputs some \(s_{\ell_1,\ell_2} \in S\).}. Let \(\hat{\sigma} = (s_{i_0,i_1}, s_{i_0 i_2},...,s_{i_0, i_j})\). Then 
    \[r^{-9d} \Prob[C_e]{\hat{\sigma}} \cProb{X}{\dir{\sigma}}{\bar{s}} \leq \Prob[Y]{\dir{\sigma}} \leq r^{9d}\Prob[C_e]{\hat{\sigma}} \cProb{X}{\dir{\sigma}}{\bar{s}},\] 
    where \(\cProb{X}{\dir{\sigma}}{\bar{s}}\) is the probability of sampling \((u_0,u_1,...,u_j) \in X(j)\) given that we sampled some \((u_0',u_1',...,u_j') \in X(j)\) so that its edges are labeled \(f(u_{\ell_1}' u_{\ell_2}') = s_{i_{\ell_1},i_{\ell_2}}\). We interpret \( \Prob[C_e]{\hat{\sigma}} = 0\) when \(\sigma \notin C_e\).
    \item For every \(-1\leq j \leq d-2\), every \(\sigma \in X(j)\) and \(\dir{uv} \in \dir{Y}_\sigma(1)\) or \(w \in Y_\sigma(0)\) it holds that
    \[r^{-15d} \leq \frac{\Prob[Y_\sigma]{\dir{uv}}}{\Prob[G_\sigma]{\dir{uv}}} \leq r^{15d}.\]
    \[r^{-15d} \leq \frac{\Prob[Y_\sigma]{w}}{\Prob[G_\sigma]{w}} \leq r^{15d}.\]
\end{enumerate}}
\end{claim}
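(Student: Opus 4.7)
The plan is to prove part (1) by unfolding the definition of $\nu_Y$ and chaining $\neg AT$ bounds dimension by dimension, and then to deduce part (2) by applying part (1) in a ratio and recognising that the residual factor matches the defining formula of $\nu_{G_\sigma}$.

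For part (1), I will begin from the definition of the $Y$-measure on oriented $d$-faces: for $\dir\rho=(u_0,\ldots,u_d)\in\dir Y(d)$ with Cayley labels $\hat\rho=(f(u_0u_1),\ldots,f(u_0u_d))\in\dir{C_e}(d-1)$, one has
\[
\Prob[Y]{\dir\rho} \;=\; \Prob[C_e]{\hat\rho}\cdot\frac{\Prob[X]{\dir\rho}}{L_d(\bar s_{\hat\rho})},
\]
where $\bar s_{\hat\rho}$ is the full Cayley-forced labeling of the face and $L_k(\bar s)$ denotes the probability that a uniformly sampled oriented $k$-face of $X$ has label pattern $\bar s$. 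Marginalising over extensions of $\dir\sigma$ and then grouping by the label-extension $\hat\tau\supset\hat\sigma$ inside $\dir{C_e}(d-1)$ gives
\[
\Prob[Y]{\dir\sigma} \;=\; \sum_{\hat\tau\supset\hat\sigma} \frac{\Prob[C_e]{\hat\tau}}{L_d(\bar s_{\hat\tau})}\,\sum_{\dir\rho\supset\dir\sigma,\;\text{labels}=\bar s_{\hat\tau}}\Prob[X]{\dir\rho}.
\]
The inner $X$-sum factors as $\Prob[X]{\dir\sigma}$ times the conditional probability in the link $X_\sigma$ of an ordered extension whose edge-labels match the required pattern. I will bound this by iterating $\neg AT$ along the growing chain $(\dir\sigma),(\dir\sigma,v_{j+1}),(\dir\sigma,v_{j+1},v_{j+2}),\ldots$: each new vertex contributes a factor in $[r^{-2}m^{-k},r^{2}m^{-k}]$ for the appropriate $k$. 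The analogous iteration from scratch sandwiches $L_d(\bar s_{\hat\tau})$ and $L_j(\bar s)$; the latter is just the normaliser in $\cProb{X}{\dir\sigma}{\bar s}=\Prob[X]{\dir\sigma}/L_j(\bar s)$. The $m$-powers then cancel exactly, leaving a residual $r^{\pm\Theta(d)}$ slack which is loose enough to be bounded by $r^{\pm 9d}$.

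For part (2), I fix an arbitrary orientation $\dir\sigma$ of $\sigma$, write $\Prob[Y_\sigma]{\dir{uv}}=\Prob[Y]{(\dir\sigma,u,v)}/\Prob[Y]{\dir\sigma}$, and apply part (1) (which extends exactly, with no error, to $j=d$) to both numerator and denominator. The Cayley factors collapse into $\Prob[C_e]{(\hat\sigma,a,b)}/\Prob[C_e]{\hat\sigma}=\Prob[C_a]{(a,b)}$ by the link relation in $C$, where $a=\psi_\sigma(u)$ and $b=\psi_\sigma(v)$. The conditional-$X$ factors give
\[
\frac{\cProb{X}{(\dir\sigma,u,v)}{\bar s_{\text{full}}}}{\cProb{X}{\dir\sigma}{\bar s}} \;=\; \nu_{X_\sigma}(\dir{uv})\cdot\frac{L_j(\bar s)}{L_{j+2}(\bar s_{\text{full}})}.
\]
A short $\neg AT$ computation shows that $L_{j+2}(\bar s_{\text{full}})/L_j(\bar s)$ and $\sum_{\dir{u'v'}:\text{labels match}}\nu_{X_\sigma}(\dir{u'v'})$ agree up to $r^{\pm\Theta(1)}$ (both are the ``uniform-labels'' value $m^{-(2j+3)}$ up to $r^{\pm\Theta(1)}$). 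The residual ratio therefore matches the non-$C_a$ factor in the definition of $\Prob[G_\sigma]{\dir{uv}}$, so the two measures coincide up to $r^{\pm 15d}$. The vertex case $\Prob[Y_\sigma]{w}/\Prob[G_\sigma]{w}$ is handled by the same argument applied to $(\dir\sigma,w)\in\dir X(j+1)$ together with the induced vertex measure on $G_\sigma$.

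The main obstacle is pure bookkeeping: chaining $\neg AT$ applications across many face sizes while tracking both the $r^{\Theta(d)}$ slack and the combinatorial $m$-prefactors, and ensuring the latter cancel exactly. The only conceptual content beyond part (1) is the observation that, after the cancellations in part (2), the residual factor is precisely the one appearing in the definition of $\nu_{G_\sigma}$ through the $C_a$-coloring; this identification forces the conclusion up to the accumulated $\neg AT$ slack.
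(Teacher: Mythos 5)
Your proposal follows the same route as the paper: unfold $\nu_Y$ from its top-dimensional definition, group by label patterns, iterate $\neg AT$ along growing chains of faces to sandwich the conditional label-pattern probabilities (the paper packages this iteration as Proposition~\ref{prop:labeling-is-approximately-uniform}), observe the exact cancellation of $m$-powers, and deduce part (2) by taking the ratio and recognising the residual as the defining formula of $\nu_{G_\sigma}$. The write-up correctly identifies all the key cancellations and the source of the $r^{\Theta(d)}$ slack, so it is essentially the paper's argument.
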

We prove this claim in \pref{sec:proof-of-claim-not-at-implies-measure-behaves-nicely-in-links}. In particular, the first item implies that every satisfied face is also maximally satisfied since it has positive probability. This implies that the \(1\)-skeleton of \(Y_\tau\) and \(G_\tau\) are the same as sets. The second item implies that for every \(\tau\), the \(\psi_\tau\) coloring is non-degenerate and the the measures of \(G_\tau\) and \(Y_\tau\) are similar.

\begin{enumerate}
    \item \textbf{\(Y\) is an \(d\)-dimensional \(\lambda\)-high dimensional expander.}

If \(\mathcal{G}\) occurs, then for every \(\tau \in X\) of dimension \(\leq d-2\), the \(1\)-skeleton of a link \(Y_\tau\) is the satisfaction graph \(G_\tau\) (since every satisfied face is also maximally satisfied). Moreover, fix some \(\tau\) and let \(A\) is the adjacency matrix of the \(1\)-skeleton of \(Y_\tau\) and \(\tilde{A}\) is the adjacency matrix of \(G_\tau\) defined via the non-degenerate coloring \(\psi_\tau\). The spectral norm of \(A\) is the operator norm of the matrix \(B = A - J_{1}\) where \((A-J_1)_{u,v} = \cProb{w \in Y_\tau(0)}{w=v}{w \sim u} - \Prob[Y_\tau]{v}\) and similarly the spectral norm of \(\tilde{A}\) is the operator norm of \(\tilde{B} = A - J_2\) where \((A-J_2)_{u,v} = \cProb{w \in G_\tau}{w=v}{w \sim u} - \Prob[G_\tau]{v}\). 

By the second item of \pref{claim:not-at-implies-measure-behaves-nicely-in-links} and direct calculations, the difference matrix \(\mathcal{E} = B - \tilde{B}\) is a matrix so that in every entry \((u,v)\)
\[\abs{{\mathcal{E}_\tau}(v,u)} \leq (r^{30d} - 1) \cdot \left ( \cProb{w \in G_\tau}{w=v}{w \sim u} + \Prob[G_\tau]{v} \right ).\] 
Thus the \(\ell_1\) norm of every row is at most \(2 (r^{30d} - 1)\). This implies that \(\norm{\mathcal{E}}_{op} \leq 2(r^{30d} - 1)\), and hence
\[ \lambda(Y_\tau) = \norm{B}_{op} \leq \norm{\tilde{B}}_{op} + 2(r^{30d} - 1) = \lambda(G_\tau) + 2(r^{30d} - 1).\]
By \(\neg NE(\tau)\), we have that \(\lambda(G_\tau) \leq \frac{\lambda}{2}\) and so if we choose \(r\) so that \(2(r^{30d} - 1) \leq \frac{\lambda}{2}\) we get that \(\lambda(Y_\tau) \leq \lambda\).

\item \textbf{\(Y\) has a connected \(\Gamma\)-cover.}

The cover is the one induced by \(f\) itself. By \pref{cor:connected-characterization} it is enough to fix some \(v \in Y(0)\) and show that for every \(g \in \Gamma\) there is a cycle in \(Y(1)\), \((v=v_0,v_1,...,v_t=v) \) so that 
\[ \dir{f}(v_0,v_1) \dir{f}(v_1,v_2) \cdot ... \cdot \dir{f}(v_{t-1},v_t) = g. \]
The cycle need not be simple. If \(\mathcal{G}\) occurs the \(1\)-skeleton of \(X\) and of \(Y\) are the same, so its enough to find such a cycle in \(X(1)\).
Fix an arbitrary vertex \(v \in X(0)\). When \(\neg BC(v)\) holds, for every generator \(s_i\) there is a path \((v,u,w,v)\) (in the \(1\)-skeleton) so that \(\dir{f}(v,u) \dir{f}(u,w) \dir{f}(w,v) = s_i\). In particular, this implies that for every \(g=s_{i_1} s_{i_2}...s_{i_r} \in \Gamma\) we can concatenate the cycles for \(s_{i_1},s_{i_2},...,s_{i_r}\) to get a cycle that generates \(g\) as required.

\item \textbf{\(Y\) contains \(\Omega_{m,d}(1)\) fraction of the faces of \(X\).}

If \(\mathcal{G}\) occurs, if an \(\ell\)-face satisfies \(f\) there are at least \(\frac{1}{2m^d}\)-fraction of the \(\ell+1\)-faces that contain it that also satisfy \(f\). Thus by induction, at least \(\left (\frac{1}{2m^d} \right )^{d-\ell}\)-fraction of the \(\ell\)-faces of \(X\) that appear in \(Y\) (where the basis of the induction is by the fact that all edges of \(X\) appear in \(Y\)).
\end{enumerate}
\end{proof}

\begin{proof}[Proof of \pref{claim:G-occurs-with-positive-probability}]
To show that \(\mathcal{G}\) has positive probability we use Lovasz Local Lemma \pref{thm:l3}.
First, for a given \(\tau \in X\) we bound the number \(\tau' \in X\) so that \(E_\tau\) and \(E_{\tau'}\) are not independent.

Note that \(AT(\tau),NE(\tau)\) and \(BC(\tau)\) depend only on the value \(f(uv)\) for edges \(uv \in X(1)\) that either intersect \(\tau\), or so that at least one of \(u,v \in X_\tau(0)\). \(E_\tau\) is independent of the value of \(f(uv)\) for the rest of the edges. Thus, for every \(\tau,\tau' \in X\), if \(E_\tau, E_{\tau'}\) are not independent, then there is a path of length at-most \(2\) edges from a vertex in \(\tau\) to a vertex in \(\tau'\).

Denote by \(Q\) is the maximal number of \(d\)-faces that a contain some fixed vertex \(v \in X(0)\). 
Let \(R\) be the number of edges adjacent to a vertex \(v \in X(0)\). For a fixed \(\tau\), there are at most \(d 2^d Q(1+R+R^2)\) faces \(\tau' \in X\) so that \(E_\tau, E_{\tau'}\) depend on the same edge. This (crude) bound is obtained as follows: we have at most \(d\) choices for \(v \in \tau\), then we have \(1+R+R^2\) choices for \(u \in X(0)\) so that \(u\) is connected to \(v\) by a path of length \(\leq 2\). There are at most \(Q\) \(d\)-faces that contain \(u\) hence there are at most \(Q2^d\) faces on all levels that contain \(u\). Denote \(K = d 2^d Q(1+R+R^2)\), and we record that \(K = poly(Q)\). 

If we show that for all \(\ell<d\) and \(\tau \in X(\ell)\) it holds that
\begin{equation} \label{eq:lll-prob-bound}
    \prob{E_{\tau}}<\frac{1}{(K+1)e}
\end{equation} 
then by the Lovazs Local Lemma, we will be promised that \(\mathcal{G}\) occurs with some positive probability. The \(e\) in \eqref{eq:lll-prob-bound} is Euler's number.

To show \eqref{eq:lll-prob-bound}, we need the following assertions.
\begin{lemma} \torestate{\label{lem:good-expander}
There exists \(c,r>1\) and \(\eta > 0\) so that the following guarantee holds for any \((c,r,\eta)\)-suitable \(d\)-dimensional simplicial complex \(X\) with maximal \(d\)-degree \(Q\). Let \(\tau \in Y(j)\). Then \[\prob{NE(\tau)} \leq 8m^3|X_\tau(0)|^2 e^{\log|X_\tau(0)| - \mu c \log Q } + (|X_\tau(0)|+1) m^d e^{0.03m^{-d}(r-1)^2 c \log Q}.\] 
Here \(\mu=\mu(r,\lambda,m,d)\) is some positive constant.

That is, the satisfaction graph \(G_\tau\) is a \(\frac{\lambda}{2}\)-spectral expander with probability at least \(1-8m^3|X_\tau(0)|^2 e^{\log|X_\tau(0)| - \mu c \log Q }\ - (|X_\tau(0)|+1) m^d e^{0.03m^{-d} (r-1)^2 c \log Q}\). 
}
\end{lemma}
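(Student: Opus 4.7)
The plan is to apply the graph-composition principle of \prettyref{claim:graph-composition}. The satisfaction graph $G_\tau$ comes equipped with a $C_a$-coloring $\psi_\tau : V_\tau(0) \to C_a(0)$ given by $\psi_\tau(u)=f(v_0 u)$, where $\tau=\set{v_0,\ldots,v_j}$ and $a=\set{e}\cup\sett{f(v_0 v_k)}{1\le k\le j}$. Since $C(\Gamma,S)$ is a $\lambda/2$-HDX, the link $C_a$ is itself a $\lambda/2$-spectral expander. By \prettyref{claim:graph-composition}, to deduce $\lambda(G_\tau)\le\lambda/2$ it is therefore enough to establish, with high probability, that (i) $\psi_\tau$ is non-degenerate and the measure on $G_\tau$ agrees with the one induced by $\psi_\tau$, and (ii) for every edge $\set{s_i,s_j}\in C_a(1)$, the induced bipartite sub-graph $H_{ij}$ of $G_\tau$ between the color classes $S_i=\psi_\tau^{-1}(s_i)$ and $S_j=\psi_\tau^{-1}(s_j)$ is a $\lambda/2$-bipartite expander. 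The event $NE(\tau)$ is contained in the union of the failures of (i) and (ii), and the two terms in the claimed bound correspond, respectively, to (i) and (ii).

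For (i): each label $f(v_0 u)$ is independently uniform on $S$, so the number of vertices $u\in X_\tau(0)$ carrying any prescribed tuple of up to $d$ labels relative to the vertices of $\tau$ is a sum of $|X_\tau(0)|$ independent Bernoullis of mean at least $m^{-d}|X_\tau(0)|$. By the $(c,r,\eta)$-suitability of $X$ the quantity $|X_\tau(0)|$ is itself at least $c(1+\log Q)$, so \prettyref{thm:chernoff} gives that a multiplicative $r$-factor deviation occurs with probability $e^{-\Omega(m^{-d}(r-1)^2 c\log Q)}$. A union bound over the $\le (|X_\tau(0)|+1)\cdot m^d$ relevant ``size'' events gives the second term of the lemma, and on its complement both the non-degeneracy of $\psi_\tau$ and the compatibility of the two measures on $G_\tau$ follow.

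For (ii): the bipartite graph $H_{ij}$ is a double random sparsification of the bipartite part of the link $X_\tau$ between the two sides. Each vertex survives on its side independently with probability $1/m$ (through the value of $f(v_0\cdot)$), and each surviving edge $uw$ is then kept independently with probability $1/m$ (it is retained iff $f(uw)$ equals the unique group element $s_i^{-1}s_j$ required so that the triangle $v_0 u w$ is satisfied). I would invoke a random bipartite-expander sparsification lemma, to be proved in \prettyref{sec:expander-sparsification}, whose workhorse is the Bilu--Linial converse expander mixing lemma (\prettyref{thm:converse-bipartite-eml}): using a size-partitioned Hoeffding-type bound (\prettyref{thm:hoeffding}), one verifies that $\nu(E_{H_{ij}}(A,B))$ stays within $\alpha\sqrt{\nu(A)\nu(B)}$ of $\nu(A)\nu(B)$ uniformly over all $A\subseteq S_i, B\subseteq S_j$ for an $\alpha$ small enough that $260\alpha(1+\log_2(3/\alpha))\le\lambda/2$; then \prettyref{thm:converse-bipartite-eml} yields $\lambda(H_{ij})\le\lambda/2$. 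A union bound over the $\le m^2$ edges of $C_a$ and over the size classes in the Bilu--Linial argument produces the first term.

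The main obstacle will be the random sparsification statement itself. A naive union bound over subset pairs $(A,B)$ would cost $2^{O(|X_\tau(0)|)}$, which cannot be beaten by a per-pair Chernoff estimate. Following Bilu--Linial, the remedy is to partition subset pairs by their sizes and apply Hoeffding per size class, so that the overall union-bound overhead collapses to a polynomial in $|X_\tau(0)|$, absorbed into the factor $e^{\log|X_\tau(0)|}$. The near-regularity parameter $r$ and the minimum-degree condition $c(1+\log Q)$ from suitability both play crucial roles: the former makes $\nu(A)$ comparable to $|A|/|X_\tau(0)|$ up to $r^{O(1)}$, so concentration can be read directly in the $\nu$-measure; the latter supplies the $c\log Q$-strength concentration exponent in each size class. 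The constant $\mu=\mu(r,\lambda,m,d)$ emerges from matching the per-size-class Hoeffding exponent with the threshold $\alpha$ required by \prettyref{thm:converse-bipartite-eml}.
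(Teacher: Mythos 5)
Your proposal follows essentially the same route as the paper: you correctly reduce via \pref{claim:graph-composition} to (i) non-degeneracy of the coloring $\psi_\tau$ and measure compatibility (handled by Chernoff bounds on label-tuple counts, i.e.\ the $AT$ events, yielding the second term), and (ii) bipartite expansion of each color-class pair $(S_i,S_j)$, which you correctly analyze as a two-phase vertex-then-edge random sparsification controlled by the Bilu--Linial converse expander mixing lemma with a size-partitioned union bound (the paper packages this as \pref{prop:part-of-dense-expander} followed by \pref{lem:second-step}). One small inaccuracy: for $\tau\in X(j)$ the vertex-survival probability is $m^{-(j+1)}$, not $1/m$, since all $j+1$ labels $f(v_k u)$ must be consistent with the target color, but this does not change the structure of the argument.
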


\begin{claim} \label{claim:face-not-appearing-prob}
Let \(X\) be \((c,r,\eta)\)-suitable with maximal \(d\)-degree \(Q\). Let \(\tau \in X(\ell)\) for \(\ell \leq d-1\). Then \(\prob{AT(\tau)} \leq m^{d} e^{-0.03m^{-d} (r-1)^2 c \log Q}\).
\end{claim}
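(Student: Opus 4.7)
The plan is to apply a Chernoff-type concentration bound to the weighted measure of link vertices whose edges into $\tau$ carry a prescribed labelling, and then union bound over all such prescriptions. Fix $\tau = \set{v_0,\ldots,v_\ell} \in X(\ell)$ and a tuple $\bar s = (s_{i_0},\ldots,s_{i_\ell}) \in S^{\ell+1}$. For each $u \in X_\tau(0)$, let $Z_u$ be the indicator of the event $\set{f(v_j u) = s_{i_j} : 0 \leq j \leq \ell}$, and set $W_{\bar s} = \sum_{u} \nu_{X_\tau}(u)\, Z_u$. Then $W_{\bar s}$ is exactly the probability appearing in the definition of $AT(\tau)$, so
\[AT(\tau) \;=\; \bigcup_{\bar s}\set{W_{\bar s} \notin [m^{-(\ell+1)}/r^2,\; r^2\, m^{-(\ell+1)}]}.\]
Because $f$ is sampled uniformly and independently on $X(1)$, and the edge sets $\set{v_j u}_{j=0}^\ell$ and $\set{v_j u'}_{j=0}^\ell$ are disjoint whenever $u \neq u'$, the $\set{Z_u}$ are mutually independent Bernoullis with common mean $p := m^{-(\ell+1)} \geq m^{-d}$, so $\E[W_{\bar s}] = p$.

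Next I will extract from the suitability assumption two quantitative facts: (a) $\nu_{X_\tau}(u) \leq O(r^2)/|X_\tau(0)|$ for every $u \in X_\tau(0)$, and (b) $|X_\tau(0)| \geq c \log Q$. For $\ell \leq d-2$ both follow directly from items~2 and 3 of \pref{def:suitable-complex}. For $\ell = d-1$, pick any $v_i \in \tau$ and let $\sigma = \tau \setminus \set{v_i} \in X(d-2)$; then $X_\tau(0)$ coincides with the neighbourhood of $v_i$ in the $1$-skeleton of $X_\sigma$, giving (b) via the minimum-degree part of suitability, while $\nu_{X_\tau}(u) \propto \nu_{X_\sigma}(\set{v_i,u})$, giving (a) by combining the edge- and vertex-weight bounds for $X_\sigma$ (using $|X_\sigma(1)| \geq |X_\sigma(0)| \cdot c \log Q / 2$).

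Applying the weighted Chernoff bound to $W_{\bar s} / \max_u \nu_{X_\tau}(u)$, which is a sum of independent $[0,1]$-valued random variables whose expectation is $\Omega(p\, |X_\tau(0)| / r^2)$, yields
\[\prob{\abs{W_{\bar s} - p} \geq (1 - r^{-2})\, p} \;\leq\; 2\exp\!\Bigl(-\Omega\bigl((1 - r^{-2})^2\, p\, |X_\tau(0)|\bigr)\Bigr).\]
Since $\set{W_{\bar s} \geq r^2 p} \cup \set{W_{\bar s} \leq p/r^2} \subseteq \set{\abs{W_{\bar s} - p} \geq (1 - r^{-2}) p}$ and $(1 - r^{-2})^2 = \Omega((r-1)^2)$, substituting $p \geq m^{-d}$ and $|X_\tau(0)| \geq c \log Q$ gives, per tuple, an upper bound of $\exp(-\Omega((r-1)^2\, m^{-d}\, c \log Q))$. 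A union bound over the $m^{\ell+1} \leq m^d$ choices of $\bar s$ then produces the target estimate $m^d e^{-0.03\, m^{-d}(r-1)^2\, c \log Q}$, once the absolute constants in the Chernoff bound are tracked to produce the displayed $0.03$.

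The step I expect to be the main obstacle is \emph{not} the concentration inequality (which is routine) but verifying the weight/size bounds on $X_\tau$ when $\tau$ has dimension exactly $d-1$, since \pref{def:suitable-complex} only directly constrains links of faces up to dimension $d-2$; the transfer through a $(d-2)$-subface sketched above is the one technical point that needs care.
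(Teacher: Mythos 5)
Your proof matches the paper's argument in all essentials: fix a tuple $\bar s$, observe that the per-vertex labelling indicators $Z_u$ are independent Bernoullis, apply a Chernoff-type concentration bound, translate through the suitability weight bounds to the measure appearing in $AT(\tau)$, and union-bound over the at most $m^{\ell+1}\leq m^d$ tuples. The only structural difference is cosmetic: you run the concentration inequality directly on the weighted sum $W_{\bar s}$ (essentially Hoeffding on $\sum_u \nu_{X_\tau}(u)Z_u$), whereas the paper applies Chernoff to the raw count $R_{\bar s}=\sum_u Z_u$ and then converts to the probability in $X_\tau$ via the per-vertex weight bound $\nu_{X_\tau}(u)\in[\frac{1}{r|X_\tau(0)|},\frac{r}{|X_\tau(0)|}]$; the two routes are interchangeable. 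The point you flag about $\ell=d-1$ is real and is in fact glossed over by the paper: \pref{def:suitable-complex} only constrains links of faces of dimension up to $d-2$, yet the paper's proof simply asserts the vertex-weight bound in $X_\tau$ for $\tau\in X(d-1)$ as if it were immediate. Your transfer through a $(d-2)$-subface $\sigma=\tau\setminus\{v_i\}$ is the right repair, although tracking constants you actually get $\nu_{X_\tau}(u)\leq O(r^4)/|X_\tau(0)|$ rather than $O(r^2)/|X_\tau(0)|$, which is harmless. One small caution on the arithmetic step $(1-r^{-2})^2=\Omega((r-1)^2)$: this fails as $r\to\infty$, so it should be read as valid for $r$ bounded (indeed, the way $r$ is ultimately chosen in \pref{thm:main-formal} forces $r$ close to $1$, so this is the operative regime, and the paper's own Chernoff invocation has the same implicit constraint); just be explicit, since the claim itself does not restrict $r$.
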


\begin{claim} \label{claim:face-too-light-prob}
Let \(X\) be \((c,r,\eta)\)-suitable with maximal \(d\)-degree \(Q\), let \(\ell < d\). Fix \(v \in X(0)\). Then \[\prob{BC(v)} < \left (1-\frac{1}{m^2} \right)^{c log Q}.\]

That is, the probability that there exists some \(s_i \in S\) so that for every cycle \((v,u,w,v)\), it holds that  \(\dir{f}(vu)\dir{f}(uw)\dir{f}(wv)\ne s_i\) is at most \(\left (1-\frac{1}{m^2} \right)^{c log Q}\).
\end{claim}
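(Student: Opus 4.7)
The plan is to upper-bound $\prob{BC(v)}$ via a union bound over the $m$ generators in $S$, reducing to the claim that for each fixed $s_i\in S$ the probability that no triangle $vuw\in X(2)$ has $\dir{f}(vu)\dir{f}(uw)\dir{f}(wv)=s_i$ is exponentially small in $\log Q$. The exponential decay will come from finding $\Omega(c\log Q)$ edge-disjoint triangles through $v$ via a matching in the link $X_v$, combined with a lower bound of $1/m^2$ on the probability that any given triangle realises $s_i$.

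For the matching step I would apply the suitability hypothesis with $\sigma=\set{v}$: every vertex in $X_v$ has degree at least $c(1+\log Q)$ in the $1$-skeleton of $X_v$. A standard maximal-matching argument (any unmatched vertex must have all of its neighbors matched, forcing the matched set to contain at least as many vertices as the minimum degree, hence $2k\geq c\log Q$) produces a matching $\set{u_jw_j}_{j=1}^k$ in $X_v$ with $k\geq (c/2)\log Q$. Each matched edge lifts to a triangle $T_j=\set{v,u_j,w_j}\in X(2)$. Since the matched pairs are pairwise disjoint in $X_v(0)$, the edge-triples $\set{vu_j,u_jw_j,vw_j}$ are disjoint across different $j$, so the random label triples $(f(vu_j),f(u_jw_j),f(vw_j))$ are mutually independent.

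For the per-triangle lower bound, fix a triangle $T=\set{v,u,w}$ with edges $e_1,e_2,e_3$. The product $\dir{f}(vu)\dir{f}(uw)\dir{f}(wv)$ has the form $f(e_1)^{\epsilon_1}f(e_2)^{\epsilon_2}f(e_3)^{\epsilon_3}$ for some $\epsilon_j\in\set{\pm 1}$ determined by the vertex ordering. Because $S$ is symmetric, for each $s_k\in S$ I set $f(e_1)=s_k$ and then choose $f(e_2)\in\set{s_k,s_k^{-1}}\subseteq S$ so that the first two factors collapse to the identity (take $f(e_2)=s_k^{-1}$ when $\epsilon_1=\epsilon_2$ and $f(e_2)=s_k$ when $\epsilon_1=-\epsilon_2$); this leaves the product equal to $f(e_3)^{\epsilon_3}$, which equals $s_i$ upon taking $f(e_3)=s_i^{\epsilon_3}\in S$. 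This yields $m$ distinct triples in $S^3$ on which the product equals $s_i$, so
\[\prob{\dir{f}(vu)\dir{f}(uw)\dir{f}(wv)=s_i}\geq \frac{1}{m^2}.\]

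Combining, the probability that no triangle through $v$ realises $s_i$ is at most $(1-1/m^2)^k\leq (1-1/m^2)^{(c/2)\log Q}$, and a union bound over $s_i\in S$ gives $\prob{BC(v)}\leq m(1-1/m^2)^{(c/2)\log Q}$. Taking $c$ sufficiently large as a function of $m$ and $d$ (which the suitability hypothesis allows) absorbs the prefactor $m$ into the exponent and recovers the stated bound. The main subtlety I expect is the per-triangle estimate of $1/m^2$ rather than the naive $1/m^3$: the trick is to use the symmetry of $S$ to collapse an adjacent pair of factors, parametrising $m$ successful triples by a single free element $s_k\in S$.
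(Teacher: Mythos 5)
Your argument is correct but uses a different combinatorial device from the paper's. Where you extract an edge-disjoint family of triangles through $v$ by finding a maximal matching in $X_v$, the paper fixes a single vertex $u \in X_v(0)$ and considers the star of triangles through the edge $uv$: the suitability degree bound gives at least $c(1+\log Q)$ triangles $vuw$, they pairwise share only the edge $uv$, and conditioned on the value of $f(uv)$ the remaining label pairs $(f(uw),f(wv))$ are independent across the various third vertices $w$, each realising $s_i$ with conditional probability at least $1/m^2$. The star is more economical: a matching covers at most half the vertices of $X_v$, so your exponent is $(c/2)\log Q$ rather than the full $c\log Q$, though as you correctly note this is absorbed by enlarging $c$. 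In exchange your triangles are genuinely edge-disjoint, so you get unconditional rather than conditional independence, which is slightly cleaner to state. The per-triangle estimate of $1/m^2$ via collapsing an adjacent pair of factors using the symmetry of $S$ is essentially the same calculation as the paper's. Both arguments pick up a factor of $m$ from the union bound over $s_i \in S$; you explicitly absorb it by enlarging $c$, which is the same mechanism the paper relies on to make the bound fit the Local Lemma budget.
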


For every \(\tau \in X\), \(|X_\tau(0)|\leq Q\). Thus in the assertions above, the upper bounds on the probabilities of \(AT(\tau),NE(\tau)\) and \(BC(\tau)\) are all of the form \(poly(Q)\cdot e^{-\Omega(c \log Q))}\) where the hidden constants depend on \(m,d,\lambda,r\). Hence by fixing some values of \(m,d,\lambda,r\) one can take large enough \(c\) and small enough \(\eta\) so that the bounds are all \( \leq \frac{1}{5(K+1)e}\) for any \((c,r,\eta)\)-suitable simplicial complexes \(X\).

Thus \eqref{eq:lll-prob-bound} holds.
\end{proof}

We first prove \pref{claim:face-not-appearing-prob} and \pref{claim:face-too-light-prob}. Then we prove \pref{lem:good-expander}.

\begin{proof}[Proof of \pref{claim:face-not-appearing-prob}]
Fix \(\tau=\set{u_0,u_1,...,u_\ell} \in X(\ell)\). Recall that the measure of every vertex in the link is \(\frac{1}{r|X_\tau(0)|}\leq \prob{v}\leq \frac{r}{|X_\tau(0)|}\). Thus for every tuple \(\bar{s} = (s_{i_0},s_{i_1},...,s_{i_\ell})\), the \emph{number} of vertices \(v \in X_\tau(0)\) so that \(f(u_0 v)=s_{i_0}, f(u_1 v) = s_{i_1}, ..., f(u_\ell v) =s_{i_\ell}\) is between \(r^{-1}m^{-d}|X_\tau(0)|\) and \(r m^{-d}|X_\tau(0)|\), then \(\neg AT(\tau)\) doesn't hold.

For a fixed tuple \(\bar{s} = (s_{i_0},s_{i_1},...,s_{i_\ell})\) the probability that \(v \in X_\tau(0)\) has that \(f(u_0 v)=s_{i_0}, f(u_1 v) = s_{i_1}, ..., f(u_\ell v) =s_{i_\ell}\) is \(m^{-d}\). Let \(E(v,\bar{s})\) be this event. As \(\sett{E(v,\bar{s})}{v \in X_\tau(0)}\) are independent. Let \(R_s\) be the random variable counting the number of \(E(v,\bar{s})\). The fraction of \(v \in X_\tau(0)\) so that \(E(v,\bar{s})\) holds is greater than \(r^2 m^{-d}\) or smaller than \(r^2 m^{-d}\) when \(\abs{R_s - m^{-d}|X_\tau(0)|} \geq (r-1)|X_\tau(0)|\). By Chernoff's bound, 
\[\prob{\abs{R_s - m^{-d}|X_\tau(0)|} \geq (r-1)|X_\tau(0)| } \leq e^{-0.03m^{-d}(r-1)^2 |X_\tau(0)|}.\]
Thus by a union bound over all tuples \(\bar{s}\), the probability that \(AT(\tau)\) occurs is at most \(m^d e^{-0.03m^{-d}(r-1)^2 |X_\tau(0)|}\).

\end{proof}

\begin{proof}[Proof of \pref{claim:face-too-light-prob}]
Fix \(v \in X(0)\), and let \(u \in X_v(0)\).
\begin{enumerate}
        \item There are at least \(c \log Q\) triangles that contain \(uv\). 
        \item All triangles share only the edge \(vu\). The rest of the edges \(uw,wv\) appear only in one three-cycle each.
        \item Thus the probability that \(\dir{f}(vu)\dir{f}(uw)\dir{f}(wv)=s_i\) is greater or equal to the probability that \(f(uw)=f(vu)^{-1} \ve f(wv) = s_i\), which is \(\frac{1}{m^2}\). Furthermore, these events are all independent from one another (given \emph{any} value \(f(vu)\)).
        \item Hence the probability that this doesn't occur for any triangle as above is at most \(\left (1- \frac{1}{m^2} \right )^{c \log Q}\). We multiply by \(m=|S|\) as a union bound on all \(s_i \in S\).
    \end{enumerate}
\end{proof}

\subsection{Structure of a satisfaction graph} \label{sec:link-description}
In order to prove \pref{lem:good-expander}, we give a description of the satisfaction graph of \(G_\tau\) for \(\tau \in X(\ell), \ell \leq d-2\). We already saw that as sets \(G_\tau\) is equal to the \(1\)-skeleton of \(Y_\tau\) when \(\bigcap_{\sigma} \neg AT(\sigma)\) holds. Recall the definition of \(C(\Gamma,S)\) to be the (possibly infinite) simplicial complex whose faces are cliques in the Cayley graph of \(\Gamma\) with generators \(S\).

%
Order \(\tau = \set{u_0,u_1,...,u_{\ell}}\) arbitrarily, and denote by \(s_{i_j} = \dir{f}(u_0 u_j)\). Denote by \(a = \set{e,s_{i_1},s_{i_2},...,s_{i_{\ell}}}\), and let \(C_a\) be the link of \(a \in C(\Gamma,S)\). Fixing the values of \(f\) on all the edges in \(\tau\), we use \pref{obs:satisfaction} to describe the satisfaction graph \(G_\tau\) according to the following probabilistic experiment:
\begin{enumerate}
    \item \textbf{Vertex sampling:} sample \(f(u_i v)\) for every \(v \in X_\tau (0)\) and \(u_i \in \tau\). We sample \(v\) into \(G_\tau\) if and only if \(\tau \cup v\) is satisfied by \(f\).
    
    This step partitions \(X_\tau(0)\) to distinct sets according to the values of \(f(u_0 v)\). Denote by \(S_i\) all vertices \(v\) so that \(\dir{f}(u_0 v) = s_i\).
    \item \textbf{Edge sampling:} Sample \(f(v v')\) for all edges \(vv' \in X_\tau (1)\). For every edge \(s_i s_j \in C_a(1)\) there exists some \(s_{i,j}\) so that \(s_i s_{i,j} = s_j\). In the edge sample step an edge \(v v' \in X_\tau(1)\) is sampled into \(Y_\tau (1)\) if and only if \(v \in S_i, v' \in S_j\), and \(f(vv')=s_{i,j}\) for some edge \(s_i s_j\) in the link of \(a\).
\end{enumerate}

For every \(s_i \in C_a(0)\), the probability that \(v \in X_\tau(0)\) is sampled into \(S_i\) is \(m^{-(\ell+1)}\): this is the probability that \(\dir{f}(u_0 v) = s_i\), and the rest of the edges \(\dir{f}(u_i v)\) are chosen so that all triangles in \(\tau \cup v\) are satisfied. Every two \(v,v' \in X_\tau(0)\) are sampled independently in this step.

Now consider the edge sample step. Let \(v \in S_i, v' \in S_j\) be so that \(v v' \in X_\tau(1)\). If \(s_i\) and \(s_j\) are not connected in \(C_a\) then the edge \(vv' \notin Y_\tau(1)\) (as \(a \cup s_i s_j\) is not a clique in \(C(\Gamma,S)\)). 

Finally, let \(s_i s_j \in C_a(1)\), i.e. \(a \cup s_i s_j\) is a clique in \(C(\Gamma,S)\). Denote by \(s_{i,j} \in S\) the element so that \(s_i s_{i,j} = s_j\) (there is such an element since \(a \cup s_i s_j\) is a clique in \(C(\Gamma,S)\)). Then for every \(v v' \in X_\tau(1)\) so that \(v \in S_i, v' \in S_j\), the edge \(vv' \in Y_\tau(1)\) if and only if \(f(vv')=s_{i,j}\). By \pref{obs:satisfaction} this is the only possible assignment for \(vv'\) so that \(\tau \cup vv'\) is satisfied by \(f\), and since \(\tau \cup vv'\) is a \(d\)-dimensional face, it is sampled into \(Y\) if and only if it is satisfied. 

To summarize, \(Y_\tau\) is obtained by vertex removal, partitioning the remaining vertices to random sets \(S_i\), and for every pair of sets \(S_i,S_j\) so that \(s_i, s_j\) are in the link of \(a\), random sub-sampling of the edges between \(S_i,S_j\). From this explanation we can conclude the following:
\begin{claim} \label{claim:link-coloring}
There is a \(C_a\)-coloring of \(Y_\tau\), \(\psi_\tau :Y_\tau(0) \to C_a(0),\; \psi_\tau(v)=f(u_0 v).\) \(\qed\)
\end{claim}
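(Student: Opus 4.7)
The plan is to unwind Observation~\ref{obs:satisfaction} (\pref{obs:satisfaction}) twice: once on the face $\tau\cup\{v\}$ to check that the image $\psi_\tau(v)=f(u_0v)$ actually lies in $C_a(0)$, and once on $\tau\cup\{v,v'\}$ to verify edge preservation. The claim is essentially a formalization of the informal description already given in Section~\ref{sec:link-description}, so the proof should be short; the only delicate point is surjectivity, which is not literally implied by the definition of $\psi_\tau$.

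First I would argue well-definedness. Let $v\in Y_\tau(0)$. Then $\tau\cup\{v\}$ is maximally satisfied by $f$, hence in particular satisfied, so by \pref{obs:satisfaction} applied to the ordering $(u_0,u_1,\ldots,u_\ell,v)$ the set $\{e,s_{i_1},\ldots,s_{i_\ell},f(u_0v)\}=a\cup\{f(u_0v)\}$ is a face of $C(\Gamma,S)$. By definition of link this is precisely the statement $f(u_0v)\in C_a(0)$, so $\psi_\tau$ does map $Y_\tau(0)$ into $C_a(0)$.

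Next I would verify the homomorphism property. Let $vv'\in Y_\tau(1)$. By definition of $Y_\tau$ (the pruning), there exists a $d$-face containing $\tau\cup\{v,v'\}$ that is satisfied by $f$, and since satisfaction is inherited by sub-faces, $\tau\cup\{v,v'\}$ itself is satisfied. Applying \pref{obs:satisfaction} to the ordering $(u_0,u_1,\ldots,u_\ell,v,v')$ yields that $a\cup\{f(u_0v),f(u_0v')\}$ is a face in $C(\Gamma,S)$, i.e.\ $\{\psi_\tau(v),\psi_\tau(v')\}\in C_a(1)$, as required.

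Finally I would address surjectivity of $\psi_\tau$ onto $C_a(0)$, which is needed for "coloring" in the sense used elsewhere in the paper. This is where I expect the only real substance: surjectivity is not automatic, but it is immediate under $\neg AT(\tau)$ (which is invoked in the proof of \pref{claim:G-implies-Y-is-good} whenever $\psi_\tau$ is used). Indeed, $\neg AT(\tau)$ guarantees that for every tuple of generators labeling the edges from $\tau$ to a candidate vertex, a positive fraction of vertices of $X_\tau(0)$ realize that tuple; in particular, for any $s_i\in C_a(0)$ the extension $(s_{i_0,i},\ldots,s_{i_\ell,i})$ (with $s_{i_j,i}$ the unique generator with $s_{i_j}s_{i_j,i}=s_i$, available since $a\cup\{s_i\}$ is a clique in $\mathrm{Cay}(\Gamma,S)$) is realized by some $v\in X_\tau(0)$, and this $v$ then satisfies both $\tau\cup\{v\}\in Y$ and $\psi_\tau(v)=s_i$. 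Thus under the hypotheses that bracket every use of this claim, $\psi_\tau$ is surjective, and the claim follows.
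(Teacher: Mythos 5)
Your proposal is correct and follows the same route as the paper, whose entire justification is the remark (just before the claim, and again in the one-line $\qed$) that the homomorphism property ``follows from \pref{obs:satisfaction}'' --- precisely your first two steps. You are in fact \emph{more} careful than the paper on surjectivity: the paper's terminology makes a ``coloring'' a surjective homomorphism, yet the claim is stated unconditionally while the argument you identify shows surjectivity is conditional on $\neg AT(\tau)$. The paper only acknowledges this later, in the proof of \pref{lem:good-expander}, where it attributes ``every possible label $s_i \in C_a$ is given to a vertex'' to $\neg AT(\tau)$. One small wrinkle in your surjectivity paragraph: membership in $Y_\tau(0)$ requires $\tau\cup\{v\}$ to be \emph{maximally} satisfied, not merely satisfied, so your final step ``this $v$ then satisfies $\tau\cup\{v\}\in Y$'' needs one more hop. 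Under $\neg AT$ on all faces the two notions coincide (item 1 of \pref{claim:not-at-implies-measure-behaves-nicely-in-links} shows every satisfied face has positive probability and is hence maximally satisfied); alternatively, note that the discussion in \pref{sec:link-description} is really about the satisfaction graph $G_\tau$, where $v$'s membership follows immediately from satisfaction, with $G_\tau$ identified with the $1$-skeleton of $Y_\tau$ only under $\neg AT$.
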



\subsection{Proof of \pref{lem:good-expander}} \label{sec:proof-of-good-expander-lemma}
We shall use the following assertions proven in \pref{sec:expander-sparsification}.
\restateclaim{claim:graph-composition}
\begin{proposition} \torestate{\label{prop:part-of-dense-expander}
Let \(G = (V,E)\) be a \(\lambda\)-two-sided spectral expander over \(n\) vertices for \(\lambda \leq 0.0001\). Let \(r>1\) be so that every edge has weight between \(\frac{1}{r|E|}\) and \(\frac{r}{|E|}\). Assume further that every vertex has weight between \(\frac{1}{r|V|}\) and \(\frac{r}{|V|}\). Let \(p \in (0,\frac{1}{2})\). Consider the sampling of a random bipartite graph \(H=(A,B,E')\) as follows:
\begin{enumerate}
    \item Sample a set \(A \subseteq V\) by adding every \(v \in V\) to \(A\) independently with probability \(p\).
    \item Sample a set \(B \subseteq V \setminus A\) by adding every \(v \in V \setminus A\) to \(B\) independently with probability \(\frac{p}{1-p}\).
    \item take the edges of \(H\) to be \(E'=E(A,B)\).
\end{enumerate}
Then \(H\) is a \(\frac{100}{p^{3}}\lambda\)-bipartite spectral expander with probability \(\geq 3ne^{-0.2 \lambda^2 p^2 r^{-2}D} \), where \(D\) is the minimal degree of a vertex in \(G\).} 
\end{proposition}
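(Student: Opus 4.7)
The plan is to apply the Bilu--Linial converse bipartite expander mixing lemma (\pref{thm:converse-bipartite-eml}) to reduce the desired spectral bound on $H$ to a uniform discrepancy bound. Specifically, it suffices to show that, outside an event of probability at most $3n e^{-0.2\lambda^2 p^2 r^{-2}D}$, one has
\[ \bigabs{\nu_H(E'(S,T)) - \nu_H(S)\nu_H(T)} \leq \alpha \sqrt{\nu_H(S)\nu_H(T)} \quad \text{for all } S\subseteq A,\ T\subseteq B, \]
with $\alpha$ of order $\lambda/p^3$; absorbing the logarithmic factor from \pref{thm:converse-bipartite-eml} into the constant delivers the advertised $\frac{100\lambda}{p^3}$-bipartite spectral expander.

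The first observation I would make is that the two-stage sampling is equivalent to independently assigning each vertex $v \in V$ a label in $\set{A, B, \text{neither}}$ with probabilities $p$, $p$, $1-2p$; in particular, the labels of distinct vertices are mutually independent. This independence is the structural fact that drives the concentration arguments.

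For a fixed pair $S, T \subseteq V$, I would decompose $\nu_G(E(S \cap A, T \cap B))$ as a sum of contributions, one per edge $uv \in E(S,T)$, each being the indicator of the event $u \in A \wedge v \in B$. Each contribution has probability $p^2$, and contributions on disjoint edges are independent. Hoeffding's inequality (\pref{thm:hoeffding}) then concentrates this sum around $p^2 \nu_G(E(S,T))$, and the expander mixing lemma for $G$ (\pref{lem:EML}) replaces the expectation by $p^2 \nu_G(S)\nu_G(T)$ up to an error $p^2 \lambda \sqrt{\nu_G(S)\nu_G(T)}$. Parallel concentration statements for $\nu_G(A)$, $\nu_G(B)$, and $\nu_G(E(A,B))$, combined with the edge/vertex weight regularity conditions $\nu_G(e) \in [\tfrac{1}{r|E|}, \tfrac{r}{|E|}]$ and analogously for vertices, translate the bound from the $\nu_G$-scale to the $\nu_H$-scale and produce the $r^{-2}$ in the exponent.

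The main obstacle will be the union bound: there are roughly $4^n$ pairs $(S,T)$, and a direct union bound would wipe out the concentration. The prefactor $3n$ in the target bound indicates that only $O(n)$ events really need controlling; I would achieve this by establishing \emph{per-vertex} concentration as the only probabilistic input — namely, that for each $v \in V$, the $H$-weight of its opposite-side neighborhood concentrates around $p$ times its $G$-degree, which follows from Hoeffding applied to the $\geq D$ independent neighbor labels of $v$. One then upgrades from per-vertex degree concentration to a uniform discrepancy bound deterministically, by combining this local regularity with the expander mixing lemma for $G$ at the global scale. The $\lambda^2$ in the exponent is the standard quadratic factor from Hoeffding applied at tolerance of order $\lambda$, while $p^2$ is the per-edge survival probability and $r^{-2}$ comes from converting between $\nu_G$- and $\nu_H$-weights.
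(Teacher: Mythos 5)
Your route is genuinely different from the paper's. The paper proves \pref{prop:part-of-dense-expander} by a direct operator argument: it fixes $f\bot\one_A$, relates $\iprod{A_H f, A_H f}_{H|B}$ to $\iprod{A_G\tilde f, A_G\tilde f}_G$ using the per-vertex concentration events of \pref{claim:common-events}, and then invokes the two-sided spectral gap of $G$ on the extended function $\tilde f$, controlling the nonzero mean of $\tilde f$ separately via \pref{eq:relate-constant-part-of-f-to-its-extension}. Reducing instead to a uniform discrepancy bound and then applying \pref{thm:converse-bipartite-eml} is the route the paper reserves for \pref{lem:second-step}, not for this proposition. Your central observation --- that per-vertex degree concentration, an $O(n)$-event union bound, combined deterministically with the expander mixing lemma of $G$, already yields a uniform discrepancy bound over all $S\subseteq A$, $T\subseteq B$ --- is correct. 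With per-vertex tolerance $\varepsilon = \Theta(\lambda)$ it actually gives $\alpha = O(\lambda/p)$, not $\lambda/p^3$: once $\nu_G(E(A,B)) \approx 2p^2$ and $\nu_{H|A}(S) \approx \nu_G(S)/p$ are in hand, the powers of $p$ cancel down to a single one.

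The real gap is the final claim of your first paragraph: the logarithmic factor in \pref{thm:converse-bipartite-eml} cannot be ``absorbed into the constant.'' That theorem gives $\lambda(B) \leq 260\,\alpha\bigl(1 + \log_2(3/\alpha)\bigr)$, so your route produces a bound of order $\frac{\lambda}{p}\log(p/\lambda)$. The factor $\log(p/\lambda)$ is unbounded as $\lambda\to 0$, so for $\lambda$ sufficiently small relative to $p$ your bound is strictly weaker than the stated $100\lambda/p^3$, and the proposition as written is not recovered. Avoiding this log loss is exactly why the paper's proof of this proposition is a direct spectral computation: the converse mixing lemma is invoked only in \pref{lem:second-step}, where the logarithmic factor is retained and appears explicitly in the conclusion.
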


\begin{lemma} \torestate{\label{lem:second-step}
Let \(p \in (0,1)\). Let \(H = (A,B,E)\) be an \(n\)-vertex bipartite graph so that every edge has weight between \(\frac{1}{r|E|}\) and \(\frac{r}{|E|}\) for some \(r>1\). Let \(0< \varepsilon <0.05\) and \(0<\lambda <(1-0.25\varepsilon)\varepsilon^3p^3\). Denote by \(\mu = \badmu\). Let \(D \in \NN\) so that \(\mu D  > 2\log n + 5\).

Assume that  \(\min_{v \in A \cup B} deg(v) \geq D\) and that \(H\) is a \(\lambda\)-spectral expander. Let \(H'=(A,B,E')\) be a random subgraph where every edge is sampled in \(H'\) with probability \(p\).

Then with probability at least \(1- \badprob\), \(H'\) is a \(260 \varepsilon(1+\ln(3/\varepsilon))\)-spectral expander.}
\end{lemma}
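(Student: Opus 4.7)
The plan is to apply Bilu--Linial's converse bipartite mixing lemma (\pref{thm:converse-bipartite-eml}) to the random subgraph $H'$: it suffices to exhibit an event of probability at least $1-\badprob$ on which every $S \subseteq A, T \subseteq B$ satisfies the discrepancy bound
\[
\Bigl|\nu_{H'}(E'(S,T)) - \nu_{H'}(S)\,\nu_{H'}(T)\Bigr| \leq \varepsilon \sqrt{\nu_{H'}(S)\,\nu_{H'}(T)}.
\]
The stated spectral bound $260\varepsilon(1+\ln(3/\varepsilon))$ then follows at once.

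First, I would pass from the probability measure $\nu_{H'}$ to the unnormalised random weights $\nu_H(e)\,\mathbf{1}_{e\in E'}$. The total weight $\sum_e \nu_H(e)\mathbf{1}_{e\in E'}$ has expectation $p$ and, since every $\nu_H(e)\leq r/|E|$, Hoeffding (\pref{thm:hoeffding}) shows it concentrates to $p(1\pm O(\varepsilon))$ outside an event of exponentially small probability. The same inequality applied per vertex, using $\deg(v)\geq D$, controls $\sum_{e\ni v}\nu_H(e)\mathbf{1}_{e\in E'}$ against $p\nu_H(v)$ on an event of probability at least $1-2e^{-\Omega(\varepsilon^2 p^2 D/r^2)}$. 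A union bound over the $n$ vertices yields one factor of $n$ in the failure probability and guarantees that $\nu_{H'}(v) = (1\pm O(\varepsilon))\nu_H(v)$ simultaneously for all $v$.

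For a fixed pair $(S,T)$, I would then split the target discrepancy into three pieces: the random sparsification error $\nu_{H'}(E'(S,T)) - p^{-1}\nu_H(E(S,T))$, the deterministic term $p^{-1}[\nu_H(E(S,T))-\nu_H(S)\nu_H(T)]$, and the vertex-renormalisation error from the previous step. The deterministic term is bounded by $(\lambda/p)\sqrt{\nu_H(S)\nu_H(T)}$ via the bipartite EML (\pref{lem:bipartite-EML}) applied to $H$, and the gap $\lambda<(1-0.25\varepsilon)\varepsilon^3 p^3$ assumed in the hypothesis leaves precisely the slack needed to absorb the $p^{-1}$ and vertex-renormalisation losses while still targeting final discrepancy $\varepsilon\sqrt{\nu_{H'}(S)\nu_{H'}(T)}$. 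The sparsification error is $X_{S,T}-\mathbb{E}X_{S,T}$ for $X_{S,T}:=\sum_{e\in E(S,T)}\nu_H(e)\mathbf{1}_{e\in E'}$; Hoeffding with edge weights in $[0,r/|E|]$ gives a sub-Gaussian tail. Plugging in $t \asymp \varepsilon^{3/2}p^{3/2}\sqrt{\nu_H(S)\nu_H(T)}$ and using $\nu_H(S),\nu_H(T)\geq D/(r|E|)$ (which follows from the minimum-degree hypothesis), the tail becomes $\exp(-\Omega(\mu D))$ for $\mu$ exactly as in the statement.

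The main obstacle is the union bound over all $(S,T)$: naively this costs a factor of $4^n$, which cannot be absorbed by an exponent of order $\mu D$ unless $D$ is huge, yet the target failure probability only has an $n^2$ prefactor. The plan is to follow a Kahn--Szemer\'edi-style split. For pairs with $\sqrt{\nu_H(S)\nu_H(T)}$ bounded below by a sufficiently large constant, the exponent from the sparsification tail is of order $|E|$, already beating $4^n$. For very small pairs, the minimum-degree hypothesis forces $\nu_H(S),\nu_H(T)$ to grow linearly with $|S|,|T|$, so the discrepancy decomposes into per-vertex contributions that can be bounded on an event covered by a union bound over $n^2$ vertex pairs. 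Matching these two regimes is precisely what the $r^{-2}$, $\varepsilon^{3/2}p^{3/2}$ and $(1-0.25\varepsilon)\varepsilon^3 p^3-\lambda$ factors in the definition of $\mu$ are tuned for; combining the resulting sparsification failures with the normalisation failures from the first step then yields the claimed bound $\badprob$.
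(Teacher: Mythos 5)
Your overall plan matches the paper's: verify the discrepancy bound on a high-probability event and finish with Bilu--Linial's converse mixing lemma, after first using Hoeffding and a union bound over vertices to control $\nu_{H'}(S)$ against $p^{-1}\nu_H(S)$, and then splitting the discrepancy into a sparsification error, a deterministic EML term, and a renormalisation error. The paper implements exactly this skeleton (\pref{claim:eml-holds-whp}, \pref{claim:vertex-probabilities-are-similar}). You also correctly flag the union bound over $(S,T)$ as the crux.

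Where the proposal has a genuine gap is in how that union bound is closed. The paper's pivotal step is the inequality $|E_H(S,T)| \geq \mu D\,(|S|+|T|)$ (equation~\pref{eq:number-of-edges-proportional-to-D-times-sizes}), derived from the EML in $H$ together with the hypothesis $\lambda < (1-0.25\varepsilon)\varepsilon^3 p^3$; this gives a Hoeffding tail $e^{-\mu D(|S|+|T|)}$ that decays with the pair \emph{sizes}, so that $\sum_j \binom{n}{j} e^{-\mu D j}$ converges precisely because $\mu D > 2\log n + 5$. Your ``Kahn--Szemer\'edi'' dichotomy is binary --- $\sqrt{\nu_H(S)\nu_H(T)}$ at least a fixed constant versus ``very small'' --- and neither horn covers the intermediate regime where, say, $\nu_H(S)\nu_H(T) \asymp (D|S|/|E|)(D|T|/|E|)$ is neither $\Omega(1)$ nor negligible. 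In that range the sparsification tail is not $\exp(-\Omega(|E|))$, and the ``per-vertex decomposition'' you gesture at is not what the paper does (nor is it clear how it yields the required bound): the paper handles the case $\min(\nu_{H'}(S),\nu_{H'}(T)) \leq \varepsilon^3 p^3$ \emph{deterministically} on the event $\neg R$, by observing that the positive-sign discrepancy is at most $\nu_{H'}(S)\nu_{H'}(T) \leq \varepsilon^{3/2}p^{3/2}\sqrt{\nu_{H'}(S)\nu_{H'}(T)}$ and the negative-sign one at most $\nu_{H'}(E(S,T))$, which is then controlled by the EML in $H$ and the smallness of $\lambda$ (\pref{claim:b-s-t-is-0-when-s-t-are-small}). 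So the proposal correctly identifies the ingredients and even the role of the constants entering $\mu$, but the size-dependent tail bound that makes the union bound close at every scale is missing, and the small-pair analysis is replaced by a different, unsubstantiated argument.
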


\begin{proof}[Proof of \pref{lem:good-expander}]
Recall that by \pref{claim:link-coloring} there is a \(C_a\) coloring of \(Y_\tau\), \(\\psi_\tau(v) = f(u_0 v)\). 

The probability that \(AT(\tau)\) or \(AT(\tau \cup \set{v})\) occurs for some \(v \in X_\tau(0)\) is at most \((|X_\tau(0)|+1)m^{d} e^{-0.03m^{-d}(r-1)^2 c \log Q}\) (by \pref{claim:face-not-appearing-prob}). When all these events do not occur, the \(C_a\) coloring is non-degenerate; every possible label \(s_i \in C_a\) is given to a vertex (due to \(\neg AT(\tau)\)), and for a vertex \(v\) labeled \(s_i\), every possible edge also appears (due to \(\neg AT(\tau \cup \set{v}\)). Thus we can use \pref{claim:graph-composition}.

By assumption \(C_a\) is a \(\frac{\lambda}{2}\)-spectral expander, so we need to verify that for every edge \(s_i s_j \in C_a\), the bipartite subgraph graph between elements labeled \(s_i\) and those labeled \(s_j\) is also an \(\frac{\lambda}{2}\) bipartite expander.
%

There are at most \(m^2\) edges in the link of \(Cay(\Gamma,S)\). For each edge \(s_i s_j\), we bound the probability that the induced bipartite subgraph between \(S_i, S_j\) is \emph{not} a \(\frac{\lambda}{2}\)-bipartite expander. If we show that this probability is bounded by \(8m|X_\tau(0)|e^{\log |X_\tau(0)|-\mu c \log Q}\) then by a union bound over the \(m^2\) edge we can conclude. Denote by \(s_{i,j}\) the generator so that \(s_i s_{i,j} (s_j)^{-1}=e\).

By \pref{prop:part-of-dense-expander} the bipartite graph between the \(S_i\) and the preimage of \(S_j\) \emph{in \(X_\tau\)} is not a \(O(\eta/m^{1.5d^2})\)-expander with probability \(\leq 3|X_\tau(0)|e^{-0.2\lambda^2r^{-2}c \log Q}\). By \pref{lem:second-step}, whenever the graph between \(S_i, S_j\) in \emph{in \(X_\tau\)} is a \(O(\eta/m^{1.5d^2})\)-expander, the graph in \(Y_\tau\) is not a \(\frac{\lambda}{2}\)-expander with probability at most \(3|X_\tau(0)|^2 e^{\mu c \log Q - log |X_\tau(0)| }\) for some \(\mu>0\). Summing up the probability that the said events mentioned, and taking union bounds over all \(m^2\) subgraphs induced by \(s_i s_j\) we get the desired bound.
\end{proof}

\subsection{Proof of \pref{claim:not-at-implies-measure-behaves-nicely-in-links}} \label{sec:proof-of-claim-not-at-implies-measure-behaves-nicely-in-links}
\restateclaim{claim:not-at-implies-measure-behaves-nicely-in-links}

For \(\ell \leq d\) and \(\bar{s} = (s_{i_0,i_1},s_{i_0,i_2},...,s_{i_{\ell-1},i_{\ell}})\). We denote by \(s_{\leq j}\) the tuple restricted to indexes less or equal to \(j\) and \(s_{>j}\) to be the rest of the indexes. Let 
\[T(X,\bar{s}) = \sett{(v_0,v_1,...,v_\ell) \in \dir{X}(\ell)}{\forall j_1,j_2 \; \dir{f}(v_{j_1} v_{j_2}) = s_{j_1,j_2}}.\]
For a tuple \(\bar{u}=(v_0,v_1,...,v_j)\) we also denote by
\[T(X,\bar{s},\bar{u}) = \sett{(v_{j+1},v_{j+2},...,v_\ell)}{(v_0,v_1,...,v_j,v_{j+1},v_{j+2},...,v_\ell) \in T(X,\bar{s})}.\]

Let \(\bar{u} \in \dir{X}(j)\) and let \(\bar{s} = (s_{i_0,i_1},s_{i_0,i_2},...,s_{i_{\ell},i_{\ell+1}})\) be so that \(s_{\leq \ell}\) is the edge labeling of \(\bar{u}\) (i.e. \(f(u_{j_1} u_{j_2}) = s_{i_{j_1}, i_{j_2}}\)). The \(\neg AT(\bar{u})\) implies that
\[\Prob[X_u(0)]{T(X,\bar{s},\bar{u})} \approx m^{-(j+1)}.\]
The next claim is a generalization of this property.

\begin{proposition} \label{prop:labeling-is-approximately-uniform}
Let \(-1 \leq \ell,j \) be integers so that \(\ell+j \leq d-1\). Let \(t = (v_0,v_1,...,v_\ell) \in \dir{X}(\ell)\) and \(\bar{s} = (s_{i_0,i_1},...,s_{i_{\ell+j},i_{\ell+j+1}})\) so that \(t \in T(X,\bar{s}_{\leq \ell})\). Then
\begin{equation} \label{eq:face-fraction}
    r^{-3d} m^{\binom{\ell+1}{2} -\binom{\ell+j+2}{2}} \leq \Prob[\dir{X}_{t}(j)]{T(X,\bar{s},t)} \leq r^{3d} m^{\binom{\ell+1}{2} -\binom{\ell+j+2}{2}}.
\end{equation}
\end{proposition}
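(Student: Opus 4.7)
The proposition is naturally proved by induction on $j$, iterating the chain rule for link measures recorded in \pref{sec:HDX}, namely
\begin{align*}
\Pr_{\dir{X}_t}\bigl[(v_{\ell+1},v_{\ell+2},\ldots,v_{\ell+j+1})\bigr] = \Pr_{\dir{X}_t}[v_{\ell+1}]\cdot \Pr_{\dir{X}_{t\cup v_{\ell+1}}}\bigl[(v_{\ell+2},\ldots,v_{\ell+j+1})\bigr].
\end{align*}
The base case $j=-1$ is immediate: the empty extension has probability $1$ by hypothesis ($t\in T(X,\bar s_{\leq \ell})$), while the asserted bound reduces to $m^{\binom{\ell+1}{2}-\binom{\ell+1}{2}}=1$.

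\textbf{Inductive step.} Summing the chain rule over all extensions in $T(X,\bar s,t)$ gives
\begin{align*}
\Pr[T(X,\bar s,t)] = \sum_{v_{\ell+1}\text{ correct}}\Pr_{\dir{X}_t}[v_{\ell+1}]\cdot \Pr\bigl[T(X,\bar s,t\cup v_{\ell+1})\bigr],
\end{align*}
where ``correct'' means $\dir f(v_a v_{\ell+1})=s_{a,\ell+1}$ for $a=0,\ldots,\ell$. The outer sum $\sum_{v_{\ell+1}\text{ correct}}\Pr_{\dir X_t}[v_{\ell+1}]$ is exactly the probability in the link $X_t$ that a random vertex satisfies an $(\ell+1)$-tuple of prescribed labels, so by $\neg AT(t)$ (applicable since $\ell\leq d-1$) it lies in $\bigl[r^{-2}m^{-(\ell+1)},\,r^2 m^{-(\ell+1)}\bigr]$. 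The inner probability, by the inductive hypothesis applied to the $(\ell+1)$-face $t\cup v_{\ell+1}$ extended by $j-1$ more vertices (the constraint $\ell+j\leq d-1$ is preserved), lies in a range of the form $\bigl[A\cdot m^{\binom{\ell+2}{2}-\binom{\ell+j+2}{2}},\,A'\cdot m^{\binom{\ell+2}{2}-\binom{\ell+j+2}{2}}\bigr]$ \emph{uniformly in} $v_{\ell+1}$, so it factors out of the sum. Using $\binom{\ell+2}{2}-(\ell+1)=\binom{\ell+1}{2}$, multiplying the two ranges produces the exponent $\binom{\ell+1}{2}-\binom{\ell+j+2}{2}$ of $m$, as required.

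\textbf{Tightening the constant.} The crude induction as stated multiplies the prefactors $r^{\pm 2}$ from each step, producing $r^{\pm 2(j+1)}$. To match the claimed constant $r^{\pm 3d}$, I would simply carry the tighter inductive invariant ``the probability lies in $[r^{-2(j+1)}m^{\binom{\ell+1}{2}-\binom{\ell+j+2}{2}},\,r^{2(j+1)}m^{\binom{\ell+1}{2}-\binom{\ell+j+2}{2}}]$'' and weaken at the end via $2(j+1)\leq 2(d+1)\leq 3d$, valid for $d\geq 2$, which is the regime of this paper. The only real obstacle is bookkeeping: keeping track of which intermediate $(\ell+k)$-face's $\neg AT$ event is being invoked at each step, and verifying the combinatorial identity $\binom{\ell+j+2}{2}-\binom{\ell+1}{2}=(j+1)(\ell+1)+\binom{j+1}{2}$ so that the exponent of $m$ telescopes correctly; no analytic difficulty arises.
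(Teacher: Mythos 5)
Your proof is correct and follows essentially the same mechanism as the paper's: peeling off one vertex at a time and invoking $\neg AT$ of the current face to control the probability of the next vertex carrying the prescribed labels. The paper organizes this in two stages (first the extremal case $\ell+j=d-1$ by induction, then general $j$ by summing over all $m^{\binom{d+1}{2}-\binom{\ell+j+2}{2}}$ full extensions of $\bar{s}$), whereas your direct induction on $j$ does both in one pass and is a bit cleaner; your accounting $2(j+1)\leq 2(d+1)\leq 3d$ for $d\geq 2$ recovers the stated constant, and the only step you elide — that the base of the peel, $\ell=-1$, needs no $AT$ event since then $m^{-(\ell+1)}=1$ — is the same triviality the paper also flags explicitly.
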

This proposition says that whenever we condition on a face having some specific labeling on the edges \(\bar{s}\) then this is (approximately) multiplying by a constant independent of \(\bar{s}\).

\begin{proof}[Proof of \pref{claim:not-at-implies-measure-behaves-nicely-in-links}]
For a face \(\tau =(v_0,v_1,...,v_\ell) \in X(\ell) \) we denote by \(\bar{s}_\tau = (s_{i_0,i_1},...,s_{i_{\ell-1},i_\ell})\) be so that \(\tau \in T(X,\bar{s}_\tau)\). We denote by \(\hat{\sigma} = (s_{\tau,i_0, i_1},...,s_{\tau,i_0,i_j})\). When \(\tau\) is satisfied we denote by \(s_{\hat{\tau}} = s_\tau\); this is well defined since when \(\tau\) is satisfied, \(s_\tau\) is determined by \(\hat{\tau}\) since 
\[s_{i_{j_1},i_{j_2}} = f(v_{j_1} v_{j_2}) \overset{satisfaction}{=} f(v_0 v_{j_1})^{-1} f(v_0 v_{j_2}) = s_{i_0 i_{j_1}}^{-1} s_{i_0, i_{j_2}}.\]

For a tuple \(\rho = (u_0,u_1,...,u_j)\) we denote \(\tau \circ \rho = (v_0,v_1,...,v_\ell,u_0,u_1,...,u_j)\) the concatenation operation. 

For the first item, fix some \(\sigma = (v_0,v_1,...,v_j) \in \dir{X}(j)\) that is a satisfied face. Recall that by \pref{obs:satisfaction} \(\hat{\sigma} \in C_e(j-1)\). The probability of \(\sigma\) is
\begin{equation}
    \begin{aligned}
        \Prob[Y]{\sigma} & = \sum_{\tau \in \dir{Y}(d), \tau = \sigma \circ \rho} \Prob[\dir{Y}(d)]{\tau} =  \sum_{\tau \in \dir{Y}(d), \tau =\sigma \circ \rho} \Prob[C]{\hat{\tau}} \frac{\Prob[\dir{X}(d)]{\tau}}{\Prob[\dir{X}(d)]{T(X,s_\tau)}} \\
        &= \sum_{\hat{\tau} = \hat{\sigma} \circ \hat{\rho}}\frac{\Prob[C]{\hat{\tau}}}{\Prob[\dir{X}(d)]{T(X,s_\tau)}} \sum_{\tau' = \sigma \circ \rho, \; s_{\tau'} = s_{\hat{\tau}}} \Prob[X]{\tau'}\\
        & = \sum_{\hat{\tau} = \hat{\sigma} \circ \hat{\rho}} \frac{\Prob[C]{\hat{\tau}} \cdot \Prob[\dir{X}(j)]{\sigma} \Prob[\dir{X}_\sigma(d-\ell-1)]{T(X,s_\tau,\sigma)}}{\Prob[\dir{X}(d)]{T(X,s_\tau)}}.
    \end{aligned}
\end{equation}

By utilizing \pref{prop:labeling-is-approximately-uniform} on \(\Prob[\dir{X}_\sigma(d-\ell-1)]{T(X,s_\tau,\sigma)}\) and on \(\Prob[\dir{X}(d)]{T(X,s_\tau)}\) we have that
\begin{equation} \label{eq:bound-on-prob-of-face}
    r^{-6d} m^{\binom{d+1}{2} - \binom{\ell+1}{2}} \cdot \Prob[\dir{C}_e(j)]{\hat{\sigma}} \cdot \Prob[\dir{X}(j)]{\sigma} \leq \Prob[Y]{\sigma} \leq r^{6d} m^{\binom{d+1}{2} - \binom{\ell+1}{2}} \cdot \Prob[\dir{C}_e(j)]{\hat{\sigma}} \cdot \Prob[\dir{X}(j)]{\sigma}.
\end{equation}

By  \pref{prop:labeling-is-approximately-uniform} 
\begin{equation} \label{eq:relation-of-T-X-in-links}
r^{-3d} \Prob[X]{T(X,s_\sigma)}^{-1} \leq m^{\binom{d+1}{2} - \binom{\ell+1}{2}} \leq r^{3d} \Prob[X]{T(X,s_\sigma)}^{-1}.
\end{equation}
 We plug \eqref{eq:relation-of-T-X-in-links} in \eqref{eq:bound-on-prob-of-face} and use the fact that 
\(\Prob[ \dir{X}(j) ]{\sigma}\cdot \Prob[X]{T(X,s_\sigma)}^{-1} = \cProb{\dir{X}(j)}{\sigma}{s_\sigma}\) to get the first item.
\bigskip

For the second item, we note that for oriented faces \(\sigma \in \dir{X}(j)\) and \(\tau \in \dir{X}_\sigma(\ell)\), it holds that
\[\Prob[Y_\sigma]{\tau} = \frac{\Prob[Y]{\sigma \circ \tau}}{\Prob[Y]{\sigma}} \overset{\eqref{eq:bound-on-prob-of-face}}{\leq} r^{12d} \frac{\Prob[\dir{C}_e]{\widehat{\sigma \circ \tau} }}{\Prob[\dir{C}_e]{\hat{\sigma}}} \cdot \frac{\Prob[\dir{X}(j+\ell+1)]{\tau \circ \sigma}}{\Prob[\dir{X}(j)]{\sigma}} m^{\binom{\ell+j+2}{2}-\binom{\ell+1}{2}} \leq \]
\[r^{15d} \frac{\Prob[\dir{C}_e]{\widehat{\sigma \circ \tau} }}{\Prob[\dir{C}_e]{\hat{\sigma}}} \cdot \frac{\Prob[\dir{X}(j+\ell+1)]{\tau \circ \sigma}}{\Prob[\dir{X}(j)]{\sigma} \Prob[X_\sigma]{T(X,s_{\sigma \circ \tau})}} \leq\]
\[r^{15d} \Prob[\dir{C}_{e \cup \hat{\sigma}}]{\hat{\tau}} \cProb{X_\sigma(\ell)}{\tau}{s_\tau}. \]
Where the second inequality is because \pref{prop:labeling-is-approximately-uniform} implies that 
\begin{equation} \label{eq:relation-of-T-X-in-links-2}
r^{-3d} \Prob[X_\sigma]{T(X,s_{\sigma \circ \tau})}^{-1} \leq m^{\binom{\ell+j+2}{2} - \binom{\ell+1}{2}} \leq r^{3d} \Prob[X]{T(X,s_{\sigma \circ \tau})}^{-1}.
\end{equation}
Thus \[\frac{\Prob[Y_\sigma]{\tau}}{\Prob[\dir{C}_{e \cup \hat{\sigma}}]{\hat{\tau}} \cProb{X_\sigma(\ell)}{\tau}{s_\tau}} \leq r^{15d}.\]
The symmetric inequalities in the other direction also give us 
\[\frac{\Prob[Y_\sigma]{\tau}}{\Prob[\dir{C}_{e \cup \hat{\sigma}}]{\hat{\tau}} \cProb{X_\sigma(\ell)}{\tau}{s_\tau}} \geq r^{-15d}.\]

In particular, for \(\tau = uv \in \dir{X}_\sigma(1)\) or \(\tau = v \in X_\sigma(0)\), we have by the definition of \(G_\sigma\) that
\[\Prob[\dir{C}_{e \cup \hat{\sigma}}]{\hat{\tau}} \cProb{X_\sigma}{\tau}{s_\tau} = \Prob[G_\sigma]{\tau}.\]
Thus item 2 holds.
\end{proof}

We turn to proving \pref{prop:labeling-is-approximately-uniform}. Although the proof is heavy with notation, the intuitive idea is simple. For every \(\ell-1\)-face \(\sigma = (v_0,v_1,...,v_{\ell-1})\), and every assignment \((s_{i_0, i_\ell}, s_{i_1, i_\ell},...,s_{i_{d-1}, i_\ell})\) the fraction of \(\ell\)-faces \((v_0,v_1,...,v_{\ell-1},u)\) so that \(f(v_j u) =s_{i_j, i_\ell}\) is approximately \(m^{-\ell}\) since \(\neg AT\) holds for \(\sigma\). 

Thus it also holds that for an assignment \((s_{i_0, i_\ell}, s_{i_1, i_\ell},...,s_{i_{d-1}, i_\ell}, s_{i_0, i_{\ell+1}}, s_{i_1, i_{\ell+1}},...,s_{i_\ell, i_{\ell+1}})\), that the fraction of \((\ell+1)\)-faces \((v_0,v_1,...,v_{\ell-1}, u, w)\) so that \(f(v_j u) =s_{i_j, i_\ell}\) \emph{and} \(f(v_j w) =s_{i_j, i_{\ell+1}}\) is a \(m^{-(\ell+1)}\)-fraction of the \(\ell\)-faces \((v_0,v_1,...,v_{\ell-1},u)\) so that \(f(v_j u) =s_{i_j, i_\ell}\) (since for every fixed \((v_0,v_1,...,v_{\ell-1},u)\) \(\neg AT\) also holds). This argument propagates up to the \(\ell+k\)-faces, faces for every \(k\) which is the content of the proposition.

\begin{proof}[Proof of \pref{prop:labeling-is-approximately-uniform}]
Let us first prove the proposition assuming \(\ell + j = d-1\), and show the general case below. Let \(\bar{s}\) be a tuple of \(\binom{d+1}{2}\) indexes as in the proposition. Recall that \(\bar{s}_{\leq \ell}\) be the restriction of \(\bar{s}\) to \(s_{j_1,j_2}\) so that \(j_1,j_2 \leq \ell\).

We prove the right-hand inequality (the proof for left-hand inequality is the same up to flipping the inequalities). We will show by induction on \(k\leq j-2\) that
\begin{equation} \label{eq:face-fraction-inductive}
    \Prob[\dir{X}_\sigma(d)]{T(X,\bar{s},\sigma)} \leq r^{2(k+1)} m^{\binom{\ell+1}{2} - \binom{\ell+k+2}{2}}\max_{\bar{v}=(v_0,v_1,...,v_k) \in T(X,s_{\leq \ell+k+1})} \; \; \Prob[\dir{X}_{\bar{v}}(d-\ell-k-1)]{T(X,\bar{s},\sigma \circ \bar{v})}
\end{equation}
where we recall that \(\sigma \circ \bar{v}\) is concatenation of the two tuples.
Given \eqref{eq:face-fraction-inductive} we note that for every \(\bar{v} = (v_0,v_1,...,v_{d-1}) \in T(X,s_{\leq d-1})\), the probability \(\Prob[\dir{X}_{\bar{v}}(0)]{T(X,\bar{s},\bar{v})}\) is at most \(r^2 m^{-d-1}\) by \(\neg AT(\bar{v})\) and \eqref{eq:face-fraction} follows (for \(\ell + j = d-1\)) .

Indeed, for the base case \(k=0\), we note that 
\begin{equation*}
\begin{aligned}
\Prob[\dir{X}(d)]{T(X,\bar{s})} &= \Ex[v_0 \in X_\sigma(0)]{\Prob[\dir{X}_{\sigma \circ v_0}(d-1)]{T(X,\bar{s},\sigma \circ v_0)}} \\
&\leq r^2 m^{-(\ell+1)} \max_{v_0 \in T(X,\bar{s_{\leq \ell+1}},\sigma)} \Prob[\dir{X}_{v_0}(d-\ell-1)]{T(X,\bar{s},v_0)}.
\end{aligned}
\end{equation*}
The inequality is because when \(v_0 \notin T(X,\bar{s_{\leq \ell+1}},\sigma)\) then 
\(\Prob[\dir{X}_{\sigma \circ v_0}(d-1)]{T(X,\bar{s},\sigma \circ v_0)} = 0\). On the other hand, \(\neg AT(\sigma)\) implies that the fraction of \(v \in X_\sigma(0)\) so that 
the probability of \(v \in T(X,\bar{s_{\leq \ell+1}},\sigma)\) is \(\leq r^2 m^{-\ell-1}\). We note that when \(\ell=-1\) then we can't use the assumption about \(AT(\emptyset)\), but in this case \(m^{-(\ell+1)}=1\) so the bound still holds.

Assume that \eqref{eq:face-fraction-inductive} holds for \(k\) and we show it for \(k+1\). For any fixed \(\bar{v} = (v_0,v_1,...,v_k) \in \dir{X}_\sigma(k)\), it holds by \(\neg AT(\sigma \circ (v_0,v_1,...,v_\ell))\) that there are at most \(r m^{-k+\ell+1}\)-fraction of \(v_{\ell+k+1} \in X_{\sigma \circ (v_0,v_1,...,v_k)}(0)\) so that \(\dir{f}(v_j v_{\ell+k+1}) = s_{i_j,i_{\ell+k+1}}\). Thus
\begin{equation}
    \begin{aligned}
        \Prob[\dir{X}_{\bar{v}}(d-\ell-k-1)]{T(X,\bar{s}, \sigma \circ \bar{v})} &= \Ex[v_{\ell+k+1} \in \dir{X}_{\bar{v}}(0)]{\Prob[\dir{X}_{\sigma \circ \bar{v} \circ v_{\ell+k+1}}(d-\ell-k-2)]{T(X,\bar{s}, \sigma \circ \bar{v} \circ v_{\ell+k+1}) )}}\\
        &=\Prob[v_{\ell+k+1} \in \dir{X}_{\bar{v}}(0)]{T(X,\bar{s}_{\leq \ell+k+2}, \bar{v})} \cdot\\
        &\Ex[v_{\ell+k+1} \in T(X,\bar{s}_{\leq \ell+k+1}, \sigma \circ \bar{v})]{\Prob[\dir{X}_{\sigma \circ \bar{v} \circ v_{\ell+k+1})}(d-\ell-k-2)]{T(X,\bar{s}, \sigma \circ \bar{v} \circ v_{\ell+1} )}}  \\
        &\overset{\neg AT(\bar{v})}{\leq} r^2m^{-(\ell+k+2)} \Ex[v_{\ell+k+1} \in T(X,\bar{s}_{\leq \ell+k+1}, \sigma \circ \bar{v})]{\Prob[\dir{X}_{\sigma \circ \bar{v} \circ v_{\ell+k+1})}(d-\ell-k-2)]{T(X,\bar{s}, \sigma \circ \bar{v} \circ v_{\ell+1} )}} \\
        &\leq  r^2m^{-(\ell+k+2)} \max_{v_{\ell+k+1} \in T(X,\bar{s}_{\leq \ell+k+1}, \sigma \circ \bar{v})} \Prob[\dir{X}_{\sigma \circ \bar{v} \circ v_{\ell+k+1})}(d-\ell-k-2)]{T(X,\bar{s}, \sigma \circ \bar{v} \circ v_{\ell+1} )}.
    \end{aligned}
\end{equation}
Here the second equality is because whenever \(v_{\ell+k+1} \notin T(X,\bar{s}_{\leq \ell+k+2}, \bar{v})\) then \(T(X,\bar{s}, \sigma \circ \bar{v} \circ v_{\ell+1} ) = \emptyset\). Plugging this back in \eqref{eq:face-fraction-inductive} and using Pascal's identity we have that
\begin{equation*} \begin{aligned}
    \Prob[\dir{X}_\sigma (d)]{T(X,\bar{s},\sigma )} &\leq r^{2(k+1)} m^{\binom{\ell+1}{2} - \binom{\ell+k+2}{2}}\max_{\bar{v} \in T(X,s_{\leq \ell+k+1})} \; \; \Prob[\dir{X}_{\bar{v}}(d-\ell-k-1)]{T(X,\bar{s},\sigma \circ \bar{v})}\\
    &\leq r^{2(k+2)} \max_{\bar{v} \in T(X,s_{\leq k+1}), v_{\ell+k+1} \in T(X,\bar{s}_{\leq \ell+k+1}, \sigma \circ \bar{v})} \Prob[\dir{X}_{\sigma \circ \bar{v} \circ v_{\ell+k+1})}(d-\ell-k-2)]{T(X,\bar{s}, \sigma \circ \bar{v} \circ v_{\ell+1} )}
\end{aligned}\end{equation*}
Note that \(\bar{v} \in T(X,s_{\leq \ell+k+1})\) and \(v_{\ell+1} \in T(X,\bar{s}_{\leq \ell+k+2},\sigma \circ \bar{v})\) if and only if \(\bar{v} \circ v_{\ell+1} \in T(X,\bar{s}_{\leq \ell+k+2})\) so the statement follows.

Finally, we can prove the statement for \(\ell + j < d-1\). For \(\bar{s}\) of length \(\binom{\ell+j'+2}{2}\) let \(R\) be the set of all \(\bar{s'}\) of length \(\binom{d+1}{2}\) so that \(s'_{\leq \ell+j+2} = s\). The size of \(R\) is \(m^{\binom{d+1}{2} - \binom{\ell+j'+2}{2}}\). Then
\[\prob{T(X,s,\sigma)} = \sum_{s' \in R} \prob{(X,s',\sigma)} \leq m^{\binom{d+1}{2} - \binom{\ell+j'+2}{2}} \cdot r^{3d} m^{\binom{\ell+1}{2} - \binom{d+1}{2}},\]
which is what we need to show. The other inequality follows similarly.
\end{proof}

\subsection{Instantiating the construction}
\paragraph{The groups} As stated in \pref{sec:group-complex}, for every \(\lambda > 0\) and \(d > 0\), \cite{LubotzkySV2005b} constructed infinite groups \(\Gamma\) with generating sets \(S\) so that the Cayley complex \(Cay(\Gamma,S)\) is has \(\lambda\)-expanding links. These groups are residually finite, and thus they have infinite finite indexed subgroups \(N_1,N_2,N_3,... \unlhd \Gamma\). It holds that \(S \cap N_i = \emptyset\). Thus for every \(f:Y \to S\) obtained by \pref{thm:main-formal}, the function \(f_i:Y(1) \to \Gamma / N_i,\; f_{N_i}(e)= f(e) N_i\) gives rise to a cover \(Y^{f_i}\) which is finite.

\paragraph{The complexes \(X\)} 
\begin{itemize}
    \item The complete \(d\)-dimensional complex with \(n\) vertices, has all the properties needed from \(X\) in \pref{thm:main-formal} for large enough \(n\). We don't usually think of the complete complex as being bounded-degree. However, recall that \(X\) is a single complex and the sequence of covers grows to infinity in size. 
    \item The complexes defined by \cite{LubotzkySV2005b} or \cite{KaufmanO2018} \emph{don't} suffice for this theorem since the weights in their links are not uniform or close to uniform as required by \pref{thm:main-formal}. However, a the work by Friedgut and Iluz \cite{FriedgutI2020} shows how to transform some high dimensional expanders to high dimensional expanders in which the degree is regular in every link, and the weights are uniform. This work applies to the high dimensional expanders in \cite{KaufmanO2018}.

    \item By tensoring, we can also satisfy the requirement that the links be \emph{locally dense}, i.e. that if the maximal number of \(d\)-faces covering a vertex \(v\) is \(Q\), then every \(1\)-skeleton of a link has a minimal degree of at least \(c \log Q\) (for a large enough \(c\)). If the degree in every link is too low, then instead of \(X\) we can take the complex \(X^{t}\), which is a tensor product between \(X\) and the complete complex over \(t\)-vertices. This complex is explicitly defined as follows:
\begin{enumerate}
    \item \(X^t(0) = \set{1,2,...,t} \times X(0)\).
    \item \(X^t(d) = \)
    \[\sett{\set{(i_0,v_0),(i_1,v_1),(i_2,v_2),...,(i_d,v_d)}}{\set{v_1,...,v_d} \in X(d), \;\set{i_0,i_1,...,i_d} \subseteq \set{1,2,...,t} \text{ of size d+1} }.\]
    That is, the faces are all matchings between sets in \(X(d)\) and subsets of size \(d+1\) of the set \(\set{1,2,...,t}\).
\end{enumerate}
The measure on \(d\) faces is by sampling a \(d\)-face \(\set{v_0,v_1,...,v_d} \in X(d)\), sampling a set of size \(d+1\) from \(\set{i_0,i_1,...,i_d} \subseteq \set{1,2,...,d}\) uniformly, and then sampling a matching \(\set{(i_0,v_0),.,,,(i_d,v_d)}\).

\begin{claim}
\begin{enumerate}
    \item \(X^t\) has \(t \cdot |X(0)|\) vertices.
    \item If the maximal degree (i.e. \(d\)-faces covering a vertex) in \(X\) was \(Q\), then the maximal degree in \(X^t\) is \(d!\binom{t}{d}Q\).
    \item Let \(d-2\)-face \(\sigma' = \set{(i_0,v_0),...,(i_{d-2},v_{d-2})}\) where \(\set{v_0,...,v_{d-2}} = \sigma \in X(d-2)\). The \(1\)-skeleton of its link \(X^{t}_{\sigma'}\) is the tensor product of the link of \(X_\sigma\) and the complete graph over \(t-(d-1)\) vertices. In particular, if \(X_\sigma\) was a \(\lambda\)-spectral expander, then \(X^t_\sigma\) is a \(\lambda\)-spectral expander for a large enough \(t\).
    \item If the faces in \(X\) had uniform weights, then so do the faces in \(X^t\).
    \item The minimal degree in all links grow at least by a multiplicative factor of \(t-(d+1)\). Thus for any \(c > \) there is a large enough \(t\) so that the minimal degree of a vertex in a link greater than \(c \cdot \max_{v \in X^t(0)}deg_d(v)\).
\end{enumerate}
\end{claim}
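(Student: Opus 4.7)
The first, second, and fourth items are direct checks from the definition. Item 1 is immediate from \(X^t(0) = \{1,\ldots,t\} \times X(0)\). For item 2, any \(d\)-face containing a fixed vertex \((i,v)\) is determined by (a) a \(d\)-face of \(X\) containing \(v\) (at most \(Q\) choices), (b) a size-\(d\) subset of \(\{1,\ldots,t\}\setminus\{i\}\) (\(\binom{t-1}{d}\) choices), and (c) a matching between these labels and the remaining \(d\) vertices (\(d!\) choices), yielding the bound \(d!\binom{t-1}{d}Q \le d!\binom{t}{d}Q\). For item 4, by the three-step sampling description each \(d\)-face of \(X^t\) is sampled with probability \(\nu_X(s)\cdot\binom{t}{d+1}^{-1}\cdot((d+1)!)^{-1}\), so uniformity of \(\nu_X\) transfers directly to \(X^t\).

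The main content is item 3, which I would prove by identifying the \(1\)-skeleton of \(X^t_{\sigma'}\) as a measured tensor product. Let \(\sigma' = \{(i_0,v_0),\ldots,(i_{d-2},v_{d-2})\}\) and \(\sigma = \{v_0,\ldots,v_{d-2}\} \in X(d-2)\). From the definition of faces of \(X^t\), a vertex \((i,v)\) lies in \(X^t_{\sigma'}(0)\) iff \(i \notin \{i_0,\ldots,i_{d-2}\}\) and \(v \in X_\sigma(0)\), and an unordered pair \(\{(i,v),(i',v')\}\) is an edge iff \(i\ne i'\) both lie outside \(\{i_0,\ldots,i_{d-2}\}\) and \(\{v,v'\} \in X_\sigma(1)\). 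Set-theoretically this graph is exactly the tensor product \(K_{t-(d-1)} \otimes H\), where \(H\) is the \(1\)-skeleton of \(X_\sigma\). To upgrade this to an equality of weighted graphs, I would verify that conditioning the top-face sampling in \(X^t\) on containing \(\sigma'\) factors into three independent samples: an edge of \(H\) distributed as in \(X_\sigma\), an unordered pair of labels drawn uniformly from \(\{1,\ldots,t\}\setminus\{i_0,\ldots,i_{d-2}\}\), and a uniform matching between them.

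Given this measured tensor structure, both items 3 and 5 are immediate from standard facts. The normalized adjacency of \(K_{t-(d-1)}\) has spectrum \(\{1, -\tfrac{1}{t-d},\ldots,-\tfrac{1}{t-d}\}\), and the normalized adjacency of a tensor product of weighted graphs is the tensor product of the individual normalized adjacencies, whose eigenvalues are pairwise products. Hence the nontrivial eigenvalues of \(X^t_{\sigma'}\) are bounded in absolute value by \(\max(\lambda,\tfrac{1}{t-d})\), which is at most \(\lambda\) once \(t \ge d + \lambda^{-1}\). For item 5, the degree of \((i,v)\) in \(K_{t-(d-1)}\otimes H\) equals \((t-d)\deg_H(v)\), so the minimum link degree grows by a multiplicative factor of at least \(t-d \ge t-(d+1)\); combined with the polynomial bound on \(\max\deg_d\) from item 2, a logarithm of the latter is dominated by the linear factor \(t-d\) for large \(t\). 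The step I expect to require the most care is the measured-tensor verification in paragraph two, since one must track how the three independent sampling steps in the definition of \(X^t\) interact with the link conditioning; once that factorization is established, every remaining claim reduces to an eigenvalue product or a direct count.
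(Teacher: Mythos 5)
The paper provides no proof of this claim---it is followed by the sentence ``We leave the verification of these statements to the reader''---so there is no argument to compare yours against. Your verification is correct, and it is the natural one: item 1 is definitional; item 2 follows from the count of (underlying $X$-face, label set, matching) triples, where your tighter bound $d!\binom{t-1}{d}Q$ (the label $i$ of the fixed vertex cannot be reused) is of course $\le d!\binom{t}{d}Q$; item 4 is immediate from the three-step sampling formula; and the heart of item 3 is exactly the observation that conditioning the $X^t$ top-face measure on containing $\sigma'$ factors as (an $X_\sigma$ edge) $\times$ (a uniform label pair) $\times$ (a uniform matching), so the link's weighted $1$-skeleton is the measured tensor product $K_{t-d+1}\otimes X_\sigma$, whose normalized adjacency is $A_{K_{t-d+1}}\otimes A_{X_\sigma}$ with nontrivial spectrum bounded by $\max(\lambda,\tfrac{1}{t-d})$.

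Two small points. First, a $(d-2)$-face uses $d-1$ labels, so the complete-graph factor has $t-(d-1)=t-d+1$ vertices and the multiplicative growth factor on link degrees (minimized at $\ell=d-2$) is $t-d$, matching what you wrote and comfortably above the paper's stated $t-(d+1)$. Second, item 5 of the claim as printed is missing a logarithm: the suitability condition (\pref{def:suitable-complex}) requires link degrees at least $c(1+\log Q)$, so the conclusion should read that the minimum link degree exceeds $c\cdot\log\bigl(\max_{v\in X^t(0)}\deg_d(v)\bigr)$, not $c\cdot\max\deg_d(v)$ (the latter is impossible since $\max\deg_d \sim t^d$ while link degrees grow only linearly in $t$). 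You implicitly supplied this correction---``a logarithm of the latter is dominated by the linear factor $t-d$''---which is exactly what is needed to make $X^t$ $(c,r,\eta)$-suitable.
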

We leave the verification of these statements to the reader.

\end{itemize}

\section{Combining two simplicial complexes} \label{sec:ramifications}
In this section we construct a random simplicial complex \(Y\) from two given simplicial complexes \(X\) and \(C\). The vertices of \(Y(0) = X(0)\) but the link of every face \(t \in Y(\ell)\) is homomorphic to a link of a face in \(C(\ell)\). Moreover, when \(X,C\) are high dimensional expanders, then \(Y\) is a high dimensional expander as well. The random construction in this section is similar to the one in \pref{sec:main}.

\begin{theorem} \label{thm:second-construction}
Let \(m \in \NN\) and let \(C\) be a \(d\)-dimensional simplicial complex with \(m\) vertices that is a \(\lambda\)-high dimensional expander. There exists universal constants \(r=r(m), c=c(m),\eta=\eta(m)\) so that the following holds. Let \(X\) be a \((c,r,\eta)\)-suitable  \(d\)-dimensional simplicial complex\footnote{as in \pref{def:suitable-complex}}.
Then there exists some \(Y \subseteq X\) so that \(Y\) is an \(d\)-dimensional \(\frac{2\lambda}{1-2\lambda}\)-high dimensional expander, and so that \(Y\) has a \emph{non-degenerate \(C\)-coloring}. \(Y\) contains \(\Omega_m(1)\) of the faces of \(X\). Moreover, there is a randomized algorithm that outputs \(Y\) in expected polynomial time.
\end{theorem}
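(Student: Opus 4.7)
The plan is to mimic the construction and analysis of \pref{thm:main-formal} closely, with two essential simplifications arising from the fact that we seek a \(C\)-coloring rather than a \(\Gamma\)-cover. I sample a vertex-labeling \(f:X(0) \to C(0)\) uniformly and independently at random. Say that a face \(\tau \in X\) is \emph{\(C\)-satisfied by \(f\)} if \(f(\tau)\) is a face of \(C\), and let \(Y \subseteq X\) consist of all \(\tau\) contained in at least one \(C\)-satisfied \(d\)-face of \(X\). The restriction \(\psi := f|_{Y(0)}\) is immediately a simplicial homomorphism \(Y \to C\); non-degeneracy and the desired density in \(X\) will follow once we control the local bad events. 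Observe that no cocycle condition is required here, since \(C\)-satisfaction is simply a set-membership condition, so no analogue of the event \(BC(\tau)\) from the proof of \pref{thm:main-formal} is needed.

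For each \(\tau \in X \setminus X(d)\) of dimension \(\ell\), I define events \(AT(\tau)\) and \(NE(\tau)\) analogous to those of \pref{sec:main}: \(AT(\tau)\) states that for some tuple of labels the fraction of \(v \in X_\tau(0)\) with the corresponding \(f\)-values on the edges \(v v_j\) is far from \(|C(0)|^{-(\ell+1)}\), while \(NE(\tau)\) states that the \(1\)-skeleton of \(Y_\tau\) fails to be a sufficiently good spectral expander. Set \(E_\tau = AT(\tau) \cup NE(\tau)\) for \(\tau \in X(\ell),\ \ell \leq d-2\) and \(E_\tau = AT(\tau)\) for \(\tau \in X(d-1)\). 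The key structural observation, parallel to \pref{claim:link-coloring}, is the following: if \(\tau\) is \(C\)-satisfied with \(f(\tau) = \sigma \in C\), then \(Y_\tau\) carries a natural \(C_\sigma\)-coloring \(v \mapsto f(v)\); the color classes \(S_c := \set{v \in X_\tau(0) : f(v)=c}\) arise from independent vertex sampling, and the bipartite subgraph of \(Y_\tau\) between any two color classes \(S_{c_1},S_{c_2}\) is simply the bipartite subgraph of \(X_\tau\) induced on these sets (since here edges are retained whenever their endpoints' labels form an edge of \(C\), with no further random edge removal). Consequently, once \(\neg AT(\tau)\) holds throughout, the color classes have near-uniform measure, and \pref{claim:graph-composition} bounds the expansion of \(Y_\tau\) by the maximum of the expansion of \(C_\sigma\) and the bipartite expansions of these induced subgraphs of \(X_\tau\).

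The probability bounds follow the template of \pref{claim:face-not-appearing-prob} and \pref{lem:good-expander}, but are simpler: \(\prob{AT(\tau)}\) is controlled directly by Chernoff bounds on independent vertex labels, and \(\prob{NE(\tau)}\) is controlled by \pref{prop:part-of-dense-expander} applied to each pair \((c_1,c_2) \in C_\sigma(1)\). Crucially, no analogue of the edge-thinning step \pref{lem:second-step} is needed here, because after the vertex partition there is no further random edge deletion. Both bounds decay with \(Q = \max_{v \in X(0)} \deg_d(v)\) quickly enough that, for a suitable choice of \((c,r,\eta)\) depending only on \(m\) and \(\lambda\), each \(\prob{E_\tau}\) is at most \(\tfrac{1}{e(K+1)}\), where \(K = \mathrm{poly}(Q)\) bounds the number of \(\tau'\) with which \(E_\tau\) can be dependent (two events are dependent only when some vertices of \(\tau,\tau'\) lie at distance \(\leq 2\) in the \(1\)-skeleton of \(X\)). \pref{thm:l3} then yields both positive probability that no \(E_\tau\) occurs and a randomized polynomial-time algorithm producing such a \(Y\).

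The main obstacle is the careful bookkeeping needed to pass between the measure on \(Y_\tau\) inherited from \(X\) via the pruning, and the natural \(C_\sigma\)-composition measure used when invoking \pref{claim:graph-composition}; this is the role played in \pref{sec:main} by \pref{claim:not-at-implies-measure-behaves-nicely-in-links}, and an analogue (slightly simpler, since there are no edge labels) is needed here. The expansion bound \(\tfrac{2\lambda}{1-2\lambda}\), mildly weaker than the input \(\lambda\), arises by combining an \(O(\lambda)\) bound on codimension-\(2\) links (reflecting the unavoidable constant multiplicative loss in \pref{prop:part-of-dense-expander}) with Oppenheim's trickling-down \pref{thm:Oppenheim-trickling-down-lemma} used to propagate expansion to the lower-dimensional links of \(Y\).
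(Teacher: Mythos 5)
Your construction and LLL scheme match the paper's approach closely: you correctly observe that no cocycle event analogous to \(BC\) is needed, that no edge-thinning step (the analogue of \pref{lem:second-step}) is required since edges between color classes are retained deterministically once vertex colors are fixed, that \(\prob{NE(\tau)}\) reduces to \pref{prop:part-of-dense-expander} applied over pairs in \(C_{f(\tau)}(1)\), and that a measure-comparison analogue of \pref{claim:not-at-implies-measure-behaves-nicely-in-links} is needed before \pref{claim:graph-composition} can be applied. However, there is a genuine gap at the point where you invoke Oppenheim's trickling-down theorem.

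\pref{thm:Oppenheim-trickling-down-lemma} carries a connectivity hypothesis, and unlike in the cover construction --- where \(Y(1)=X(1)\) because edges are vacuously satisfied under an edge-labeling --- here the \(1\)-skeleton of \(Y\) is a genuine random subgraph of \(X\)'s \(1\)-skeleton, so its connectivity must be established before Oppenheim's theorem can give the \(\frac{2\lambda}{1-2\lambda}\) bound on the underlying graph. The paper proves this by a routing argument: for any \(vw\in X(1)\) with \(vw\notin Y(1)\), the colors \(f(v),f(w)\) lie at distance \(p>1\) in \(C\); by the concentration event on the link of \(vw\) there is some \(u\in X_{vw}(0)\) whose color is a neighbor of \(f(v)\) on a shortest \(f(v)\)--\(f(w)\) path in \(C\), so \(vu\in Y(1)\), \(uw\in X(1)\), and \(d(f(u),f(w))=p-1\); iterating yields a \(v\)--\(w\) path inside \(Y(1)\). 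Your proposal omits this argument, so the final expansion bound does not follow. A minor secondary slip: you describe \(AT(\tau)\) in terms of ``\(f\)-values on the edges \(vv_j\)'' with target fraction \(|C(0)|^{-(\ell+1)}\), carried over from the cover construction; since \(f\) is a \emph{vertex} labeling here, the controlled quantity is the fraction of \(v\in X_\tau(0)\) receiving a given vertex color, whose natural target is \(|C(0)|^{-1}\).
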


\begin{remark}
As in the proof of \pref{thm:main-formal}. We show that \(Y\) exists using Lovazs Local Lemma. The randomized algorithm follows directly from the algorithmic version of this theorem by \cite{MoserT2010}. See \pref{sec:preliminaries} for more details.
\end{remark}

We use the techniques we developed in \pref{sec:main}. Let \(f:X(0) \to C(0)\). We say a face \(s=\set{v_0,v_1,...,v_\ell} \in X(\ell)\) is satisfied by \(f\) if \(f(s) = \set{f(v_0),f(v_1),...,f(v_\ell)} \in C(\ell)\). The \(f\)-pruning of \(X\) is the simplicial complex \(Y\) whose vertices are \(Y(0)=X(0)\) and whose \(d\)-faces are all \(s \in X(d)\) that are satisfied by \(f\). By construction \(f:Y(0)\to C(0)\) is a simplicial homomorphism from \(Y\) to \(C\).

Consider the following random process:
\begin{enumerate} 
    \item Sample a function \(f:X(0) \to Y(0)\) uniformly at random.
    \item Let \(Y=Y_f\) be the \(f\)-pruning of \(X\).
    \item If \(f:Y(0)\to C(0)\) is not a non-degenerate \(C\)-coloring we output \(FAIL\). Otherwise we output \(Y\).
\end{enumerate}
We denote the distribution defined by this process \(D(C,X)\).

The measure of \(Y\) is the coloring measure defined in \pref{sec:simplicial-homomorphism}. Namely, to sample a \(d\)-face \(t \in Y(d)\):
\begin{enumerate}
    \item We sample \(s \in C(d)\).
    \item We sample \(t \in Y(d)\) given that \(f(t)=s\).
\end{enumerate} 
In order to prove \pref{thm:second-construction}, we show that this process has a positive probability of sampling a high dimensional-expander \(Y\) with the properties described above via the Lovasz Local Lemma.

\begin{proof}[Proof of \pref{thm:second-construction}]
As in the proof of \pref{thm:main-formal} we define the satisfaction graph. For \(\ell \leq d-2\) and a satisfied face \(\sigma \in X(\ell)\), the \emph{satisfaction graph} of \(\sigma\), denoted by \(G_\sigma = (V_\sigma, E_\sigma)\) is the following:
\begin{itemize}
    \item The vertices are all \(v \in X_\sigma(0)\) so that \(\sigma \dunion v\) is satisfied.
    \item The edges are all \(uv \in X_\sigma(1)\) so that \(uw \dunion \sigma\) is satisfied.
\end{itemize}
The link of \(\sigma\) has a \(C_{f(\sigma)}\) coloring. When it is non-degenerate this also induces a measure on \(G_\sigma\).

We define the following events for faces \(\tau \in X\):
\begin{itemize}
\item[*] \(NE(\tau)\), the event where the \(1\)-skeleton of \(G_\tau\) is not a \(\lambda\)-spectral expander.
%
\item[*] \(AC(\tau)\), for a face \(\tau=\set{u_0,u_1,...,u_{\ell}} \in X(\ell)\), the event that \(\tau\) is satisfied by \(f\), colored by \(a=\set{f(u_0),...,f(u_\ell)}\) and there is some \(b \in C_{a}(0)\) such that no \(u \in X_\tau(0)\) is has color \(f(u)=b\) (MC stands for ``atypical color'').
\end{itemize}
%

These events allow us to define \(\sett{E_\tau}{\tau \in X \setminus X(d)}\):
\begin{align}
E_\tau = \begin{cases} 
    AC(\tau) & \tau \in X(d-1) \\
    AC(\tau) \cup NE(\tau) & \tau \in X(j), j \leq d-1 \\
\end{cases}.
\end{align}

Finally let \(\mathcal{G} = \bigwedge_{\tau \in X \setminus X(d)} \neg E_\tau\). We claim the following:
\begin{claim}\label{claim:G-implies-Y-is-good-ramifications}
When \(\mathcal{G}\) occurs, then 
\begin{enumerate}
    \item \(Y\) is an \(d\)-dimensional \(\frac{2\lambda}{1-\lambda}\)-high dimensional expander.
    \item \(Y\) contains \(\Omega_{m,d}(1)\) fraction of the faces of \(X\).
    \item \(f:Y(0) \to C\) is a non-degenerate \(C\)-coloring.
\end{enumerate} 
\end{claim}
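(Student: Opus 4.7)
The plan is to derive the three conclusions from the bad-event definitions, handling item 3 first (so that items 1 and 2 can be stated in terms of a $Y$ with a well-defined non-degenerate coloring), then item 1 (for which the heavy lifting is already done by $\neg NE$), and finally item 2.

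For item 3, I would induct on $\ell$ from $-1$ (trivial base $f(\emptyset)=\emptyset$) up to $d$, showing that every $a\in C(\ell)$ is the image of some satisfied $\tau \in X(\ell)$. The inductive step uses $\neg AC(\tau)$: given a satisfied $\tau \in X(\ell)$ with $f(\tau)=a$ and any $b\in C_a(0)$, $\neg AC(\tau)$ produces some $u\in X_\tau(0)$ with $f(u)=b$, yielding the satisfied $(\ell{+}1)$-face $\tau\cup\{u\}$ whose image is $a\cup\{b\}$. Applied at $\ell=d$, this exhibits a preimage in $X(d)$ (hence in $Y(d)$) of every $d$-face of $C$, which is exactly non-degeneracy.

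For item 1, I would show that for every $\tau \in Y(\ell)$ with $\ell \le d-2$, the $1$-skeleton of $Y_\tau$ coincides as a weighted graph with the satisfaction graph $G_\tau$ equipped with its coloring measure via $\psi_\tau$. The equality of vertex and edge sets follows from $\neg AC$ at higher levels: by the same inductive extension argument as for item 3, any satisfied face of $X$ extends to a satisfied $d$-face, so satisfied faces are exactly the faces of $Y$. The equality of measures is immediate from the definition of the coloring measure on $Y(d)$ — sampling $s\in C(d)$ followed by a preimage restricts in the link of $\tau$ to sampling $s'\in C_{f(\tau)}(d-\ell-1)$ followed by a preimage, which is precisely the coloring measure on $G_\tau$. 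Thus $\neg NE(\tau)$ directly yields $\lambda$-expansion of $Y_\tau$'s $1$-skeleton for all $\ell \le d-2$. For the $1$-skeleton of $Y$ itself (the link of $\emptyset$, which $\mathcal{G}$ does not directly control), I would invoke the trickle-down theorem \pref{thm:Oppenheim-trickling-down-lemma}: the vertex links are $\lambda$-expanders, so $Y$'s $1$-skeleton is a $\tfrac{\lambda}{1-\lambda}$-expander. Combined with the factor-of-$2$ slack one incurs when rewriting the coloring-measure expander bound as a bound on the standard link measure (cf.\ the $\mathcal{E}$-norm argument in the proof of \pref{thm:main-formal}), this yields the stated $\tfrac{2\lambda}{1-\lambda}$ bound uniformly across all links.

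For item 2, the natural inductive claim to chase is: for every satisfied $\tau\in Y(\ell)$, an $\Omega_m(1)$ fraction of the $(\ell{+}1)$-faces of $X$ containing $\tau$ also lie in $Y$; iterating from $\ell=-1$ up to $\ell=d-1$ then gives the $\Omega_{m,d}(1)$ combinatorial density. The hard part is exactly this per-step density estimate. The event $\neg AC(\tau)$ as currently defined delivers only the \emph{existence} of a vertex of each color in $X_\tau(0)$, not a lower bound on the \emph{fraction} of such vertices, so on its own it is too weak. I expect the cleanest route is to enlarge the bad-event collection used in the Lovasz Local Lemma by including concentration events analogous to the $AT$ events of \pref{thm:main-formal} — namely, for each $\tau \in X\setminus X(d)$ and each color $b\in C_{f(\tau)}(0)$, the event that the fraction of $v\in X_\tau(0)$ with $f(v)=b$ deviates from its expectation $1/m$ by more than a constant factor. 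Chernoff (\pref{thm:chernoff}) together with the suitability hypothesis (each link has minimum degree $\ge c(1+\log Q)$ and near-uniform weights) bounds each such event by $\mathrm{poly}(Q)\cdot e^{-\Omega(c\log Q)}$, so they fit into the LLL alongside $AC$ and $NE$. Once $\mathcal{G}$ is strengthened in this way, the per-step density follows immediately, and the inductive density claim gives item 2.
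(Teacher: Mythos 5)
Your overall route matches the paper's proof sketch: same inductive extension argument from $\neg AC$ for item 3, same "satisfaction graph $=$ link" reduction plus trickle-down for item 1, same density induction for item 2. But two points need attention.

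First, a genuine gap in your item 1: \pref{thm:Oppenheim-trickling-down-lemma} has connectivity of the $1$-skeleton as a hypothesis, and in this construction (unlike \pref{thm:main-formal}, where every edge is vacuously satisfied and the $1$-skeleton of $Y$ equals that of $X$) the $f$-pruning \emph{does} discard edges $vw$ with $f(v)f(w)\notin C(1)$, so $Y$'s $1$-skeleton is a proper subgraph of $X$'s and its connectivity is not free. The paper devotes a paragraph to this: for any $X$-edge $vw$ with $d_C(f(v),f(w))=p>1$, $\neg AC(vw)$ produces a common $X$-neighbor $u$ of $v,w$ with $f(u)$ one step closer to $f(w)$ in $C$, and iterating yields a $Y$-path from $v$ to $w$. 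Without something like this, the trickle-down invocation is unjustified. (You also wave at the measure coincidence between $Y_\tau$ and $G_\tau$ as "immediate"; as in \pref{claim:not-at-implies-measure-behaves-nicely-in-links} it is only approximate and does rely on an $AT$-type control on color fractions, not just $\neg AC$, so this is tied to the same issue as the next point.)

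Second, your diagnosis that $\neg AC$ as written is too weak for item 2 is correct and worth making explicit: as defined, $\neg AC(\tau)$ gives only \emph{existence} of a vertex of each color in $X_\tau(0)$, whereas both item 2 and the measure-comparison step of item 1 need a constant-\emph{fraction} lower bound. The paper's proof sketch in fact refers repeatedly to "$\neg AT(\tau)$", an event defined only in \pref{sec:main}, and the parenthetical "MC stands for `atypical color'" suggests $AC$ was meant to be the fraction-deviation analogue of $AT$; as written there is a mismatch. Your fix — adding concentration events for color fractions and bounding them via Chernoff plus suitability — is the right content, but note it amounts to strengthening the definition of $AC$ (and hence of $\mathcal{G}$), not just supplying a proof under the stated $\mathcal{G}$; under the literal $\mathcal{G}$ as written, item 2 does not follow.
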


\begin{claim} \label{claim:G-occurs-with-positive-probability-ramifications}
The event \(\mathcal{G}\) occurs with positive probability.
\end{claim}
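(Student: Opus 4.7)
The plan is to apply the Lovasz Local Lemma exactly as in the proof of \pref{claim:G-occurs-with-positive-probability}. First I would establish the dependency structure: both $AC(\tau)$ and $NE(\tau)$ are determined by the values $f(v)$ for $v \in \tau$ and $v \in X_\tau(0)$, i.e.\ by colors assigned to vertices within graph distance $1$ of $\tau$ in the $1$-skeleton of $X$. Hence two events $E_\tau, E_{\tau'}$ can be dependent only if some vertex of $\tau$ lies within graph distance $2$ of some vertex of $\tau'$, and a crude count bounds the dependency degree by some $K = \poly(Q)$, where $Q$ is the maximal $d$-degree of a vertex of $X$.

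For the tail bound on $AC(\tau)$ with $\tau=\set{u_0,\dots,u_\ell}$: after conditioning on $f(\tau)=a$, the colors $\sett{f(v)}{v \in X_\tau(0)}$ are independent and uniform on $C(0)$, so for each fixed $b \in C_a(0)$ the probability that no vertex of $X_\tau(0)$ receives color $b$ is at most $(1-1/m)^{|X_\tau(0)|}$. A union bound over the at most $m$ choices of $b$ gives $\prob{AC(\tau)} \leq m(1-1/m)^{|X_\tau(0)|}$. Suitability of $X$ ensures $|X_\tau(0)| \geq c(1+\log Q)$, so this bound is at most $m \cdot Q^{-c/(2m)}$ and can be pushed below $\frac{1}{5e(K+1)}$ by choosing $c = c(m,d)$ large enough.

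For the tail bound on $NE(\tau)$ I would mimic the proof of \pref{lem:good-expander}. For a satisfied $\tau \in X(\ell)$ with $f(\tau) = a$, the satisfaction graph $G_\tau$ is built by assigning each $v \in X_\tau(0)$ an independent uniform color in $C(0)$, partitioning $X_\tau(0)$ into classes $\sett{S_b}{b \in C_a(0)}$, and retaining every $X_\tau$-edge between $S_b$ and $S_{b'}$ for which $\set{b,b'} \in C_a(1)$. A key simplification relative to \pref{sec:main} is that there is no analog of the edge-subsampling step from \pref{sec:link-description}, since here $f$ colors only vertices and not edges, so only \pref{prop:part-of-dense-expander} is required and \pref{lem:second-step} is not invoked. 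Applying \pref{prop:part-of-dense-expander} with $p = 1/m$ to each of the at most $m^2$ edges of $C_a$ shows that, except with probability $O(m^2 |X_\tau(0)| e^{-\Omega(c \log Q)})$, every bipartite piece between $S_b$ and $S_{b'}$ is a sufficiently good bipartite expander. Combining this with the hypothesis that $C_a$ is a $\lambda$-expander via \pref{claim:graph-composition} then shows that $G_\tau$ is a $\lambda$-spectral expander provided $\eta$ is chosen small enough as a function of $m,d,\lambda$. This bounds $\prob{NE(\tau)}$ by a $\poly(Q) \cdot e^{-\Omega(c \log Q)}$ quantity, which again is below $\frac{1}{5e(K+1)}$ for $c$ sufficiently large.

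With both bounds established, \pref{thm:l3} applied with $\alpha_\tau = \frac{1}{K+1}$ yields $\prob{\mathcal{G}} > 0$. The main obstacle will be verifying that the weighted measures induced on the bipartite subgraphs $(S_b, S_{b'})$ inside $G_\tau$ satisfy the hypotheses of \pref{prop:part-of-dense-expander} -- in particular the bounded weight-ratio condition and the lower bound $c \log Q$ on the minimum degree -- which requires propagating the suitability parameters of $X$ through to the measures induced on its links, analogously to the analysis in \pref{sec:proof-of-good-expander-lemma}.
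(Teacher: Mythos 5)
Your proposal is correct and follows the same structure as the paper's (very terse) proof: apply the Lovasz Local Lemma with the dependency bound $K = \poly(Q)$ coming from graph distance at most two, bound $\prob{AC(\tau)}$ and $\prob{NE(\tau)}$ by $\poly(Q)\cdot e^{-\Omega(c\log Q)}$, and take $c$ large enough. You do make explicit one point that the paper glosses over by saying the bounds are ``the same'' as in \pref{thm:main-formal} and leaving the details to the reader: since $f$ colors vertices rather than edges in \pref{sec:ramifications}, the satisfaction graph $G_\tau$ has no residual edge-subsampling randomness once the vertex colors are fixed, so only \pref{prop:part-of-dense-expander} is needed and \pref{lem:second-step} drops out of the $NE(\tau)$ analysis entirely.
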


The proof is done given these claims.
\end{proof}

\begin{proof}[Proof sketch of \pref{claim:G-implies-Y-is-good-ramifications}]
\begin{enumerate}
    \item \(Y\) is an \(d\)-dimensional \(\frac{2\lambda}{1-2\lambda}\)-high dimensional expander.

    Similar to the proof of \pref{thm:main-formal}, the fact that \(\neg AT(\tau)\) occurs for all \(\tau\) promises us that the \(1\)-skeletons of all proper links \(Y_\tau\) are the satisfaction graphs as sets, and that their measure is similar to the measure induced by the coloring. \(NE(\tau)\) promises us that \(G_\tau\) are \(\lambda\)-spectral expanders. Hence (similar to the argument in \pref{thm:main-formal}) it holds that all proper links are \(2\lambda\)-two sided spectral expanders.
    
    To show that the underlying graph itself in a \(\frac{2\lambda}{1-2\lambda}\) we will use \pref{thm:Oppenheim-trickling-down-lemma}. To use it we need to show that the \(1\)-skeleton of \(Y\) itself is connected.

    First we note that similar to \pref{thm:main-formal}, whenever \(\neg AC(\tau)\) holds for all \(\tau\), then a face is in \(Y\) if and only if it is satisfied.

    To show connectivity it is enough to show that for every edge \(vw \in X(1)\) there is a path from \(v\) to \(w\) in \(Y\). If \(vw \in Y(1)\) we are done. Otherwise, this implies (by \(\neg AC(vw)\)) that \(vw\) isn't satisfied by \(f\). This implies that \(d(f(v),f(w))= p > 1\). By \(\neg AT(vw)\), there is some \(u\) so that \(vu \in X(1)\), \(vu\) is satisfied by \(f\) (i.e. \(d(f(v),f(u))=1\) as \(f(u)f(v) \in \dir{C}(1)\)) and \(d(f(u),f(w))=p-1\). In addition, \(u,w\) are neighbours in \(X\). We take \(u\) to be the next vertex in our path, and proceed in this process until we reach a vertex \(u'\) so that \(d(f(u'),f(w))=1\). This vertex is connected to \(w\) as required.

    \item \(Y\) contains \(\Omega_{m,d}(1)\) fraction of the faces of \(X\): this follows directly from \(\neg AC(\tau)\) as in \pref{thm:main-formal}.

    \item \(f:Y(0) \to C(0)\) is a non-degenerate \(C\)-coloring:

    We first explain why \(f\) is surjective. Let \(x \in C(0)\) and let \(a=f(v)\) for an arbitrary vertex \(v\). By \(\neg AC(v)\) for every neighbour \(b \tilde a\) in \(C\), there is a neighbour of \(u \tilde v\) colored by \(b\). Thus by taking a path \((a=r_0,r_1,...,r_k=x)\) we can inductively find vertices colored by \(r_1,r_2,...,r_k=x\).

    Now for a \(d\)-face \(\set{a_0,...,a_d}\) we take a vertex \(v_0\) so that \(f(v_0)=a_0\). By \(\neg AC(v_0)\), \(v_0\) has a neighbour \(v_1 \in X_{v_0}(0)\) so that \(f(v_1)=a_1\). By \(\neg MC(v_0 v_1)\), there is a vertex \(v_2 \in X_{v_0 v_1}(0)\) so that \(f(v_2) = a_2\). Note that this implies that \(\set{v_0, v_1, v_2}\) is satisfied by \(f\). Continuing this process shows there is a face \(\set{v_0,v_1,...,v_d} \in X(d)\) that is colored by \(f(v_0)=a_0,...,f(v_d)=a_d\) as required.
\end{enumerate}
\end{proof}

\begin{proof}[Proof of \pref{claim:G-occurs-with-positive-probability-ramifications}]
To show that \(\mathcal{G}\) has positive probability we use Lovasz Local Lemma \pref{thm:l3}. 

Similar to the proof of \pref{thm:main-formal}, it is easy to verify that for every \(\tau,\tau' \in X\), if \(E_\tau, E_{\tau'}\) are not independent, then there is a path of length at-most \(2\) from a vertex in \(\tau\) to a vertex in \(\tau'\).

Thus, denoting \(Q\) the maximal \(d\)-degree of a vertex in \(X\), for every \(\tau \in X\), the event \(E_\tau\) depends only on \(K=poly(Q)\) events \(E_{\tau'}\) (the polynomial depends on \(d\)).

If we show that for all \(\tau \in X \setminus X(d)\) it holds that
\[ \prob{E_\tau}<\frac{1}{(K+1)e}\]
then by the Lovazs Local Lemma, we will be promised that \(\mathcal{G}\) occurs with some positive probability.

To use Lovazs Local Lemma, we need to upper bound the probabilities of the events \(NE(\tau),AC(\tau)\) by \(\frac{1}{2(K+1)e}\). The bounds on \(NE(\tau),AC(\tau)\) are the same as in the proof of  \pref{thm:main-formal}. We leave the details for the reader.

%
%
%

The Theorem follows.
\end{proof}

\section{Deramdomizing the constructions} \label{sec:derandomization}
In this section we show that when \(X\) has bounded degree then we have deterministic algorithmic versions for \pref{thm:main-formal} and \pref{thm:second-construction}.

To do so, we use the work of Chandrasekaran, Goyal and Haeupler \cite{ChandrasekaranGH2013} that gave a derandomization to the for Lovazs Local Lemma's algorithm. We cite a less general result than the result in their paper that suits our needs.
\begin{theorem}[\cite{ChandrasekaranGH2013}, Theorem 5] \label{thm:l3-deterministic}
Let \(X_1,...,X_p\) be independent random variables over a finite domain of size \(m\). Let \(E_1,E_2,...,E_n\) be events determined by these random variables. For every \(E_i\) we denote by \(A_i = \sett{j}{E_i,E_j \text{ are not independent}}\).
\begin{enumerate}
    \item There is some constant \(k\) so that for every \(i=1,2,...,n\), \(|A_i| \leq k\).
    \item Every \(E_n\) depends on at most \(r\) random variables.
    \item For every partial assignment \(\set{X_i=s_i}_{i \in I}\) and every \(E_j\), there is an algorithm that runs in time \(T\) and calculates \(\cProb{}{E_j}{\set{X_i=s_i}_{i \in I}}\).
    \item There exists some \(\varepsilon > 0\) so that
    \[ \prob{E_i} \leq  \left ( \frac{1}{e(k+1)}\right )^{1+\varepsilon}.\] 
\end{enumerate}
Then there exists a deterministic algorithm that finds assignments to \(X_1,X_2,...,X_d\) so that all the events \(E_1,E_2,...,E_n\) do not occur. The algorithm runs in time \(O \left (T m  (p+n+r+k)^{3+2/\varepsilon} \varepsilon^{-2} \right ) \).
\end{theorem}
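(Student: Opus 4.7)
The plan is to derandomize the Moser--Tardos resampling algorithm for the Lov\'asz Local Lemma via the method of pessimistic estimators, with the oracle from assumption (3) doing the heavy lifting. The key object is a potential function \(\Phi(A)\) on partial assignments \(A\) to the variables \(X_1,\dots,X_p\) that upper bounds the probability, over a uniformly random completion of \(A\), that \emph{some} bad event still occurs. Given such a \(\Phi\), one processes the variables one at a time: for each unset variable \(X_\ell\), enumerate its \(m\) possible values, compute \(\Phi\) for each resulting extended partial assignment using the oracle from assumption (3), and commit to the value that keeps \(\Phi\) smallest. The conditional-expectations inequality guarantees that at least one value never makes \(\Phi\) exceed \(\Phi(\emptyset)\), so after \(p\) iterations we land on a complete assignment under which \(\Phi\) is still at most \(\Phi(\emptyset) < 1\); this forces every \(\cProb{}{E_j}{\text{full}}\) to equal \(0\), i.e.\ no bad event occurs.

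The natural definition of \(\Phi\) comes from the Moser--Tardos witness-tree analysis: \(\Phi(A)\) is a weighted sum of conditional probabilities \(\cProb{}{E_j}{A}\), where the weights encode the contribution of each witness tree rooted at \(E_j\) to the expected number of resamplings. Assumption (4), with its strict slack \(\prob{E_i} \leq (e(k+1))^{-(1+\varepsilon)}\), is what makes \(\Phi(\emptyset) < 1\) possible: each level of depth in a witness tree multiplies its probability by at most \((k+1)\), while assumption (4) damps it by \((e(k+1))^{-(1+\varepsilon)}\), so the weighted sum of contributions from trees of depth \(\geq h\) decays geometrically in \(h\) with ratio \(e^{-\varepsilon}\). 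This geometric decay lets us truncate the estimator at depth \(O(1/\varepsilon)\) while keeping the truncation error strictly below the slack left in \(\Phi(\emptyset)\).

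The main obstacle is showing \(\Phi\) can be evaluated in the stated time. After truncation, the number of witness trees entering \(\Phi\) is at most \((k+1)^{O(1/\varepsilon)}\), since at each of the \(O(1/\varepsilon)\) levels each node has at most \(k\) children in the dependency graph. Each term is a conditional probability of (an event determined by) a tree of events, which jointly depend on at most \(r \cdot (k+1)^{O(1/\varepsilon)}\) variables; using assumption (3) and marginalizing over these variables, each term is computable in time \(T \cdot m^{O(r/\varepsilon)}\). Plugging in the bounds and summing over the \(m p\) candidate assignments explored and the \(n\) events used to index the tree roots yields an overall runtime polynomial in \(p,n,r,k,m,T\) with exponent scaling like \(O(1/\varepsilon)\), matching the stated \(O\bigl(Tm(p+n+r+k)^{3+2/\varepsilon}\varepsilon^{-2}\bigr)\). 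The remaining, largely bookkeeping, work is to verify that the chaining property of this pessimistic estimator under conditioning follows from the LLL condition: because the weights are built precisely to absorb the dependency structure, the conditional version of the LLL inequality ensures monotonicity at each step of the greedy assignment.
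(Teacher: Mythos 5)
The paper does not actually prove this theorem; it imports it verbatim from Chandrasekaran, Goyal and Haeupler \cite{ChandrasekaranGH2013} and uses it as a black box. So there is no ``paper's own proof'' to compare against; what follows is an assessment of your sketch on its own terms.

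Your high-level strategy (derandomize Moser--Tardos via a truncated witness-tree estimator, exploit the $\varepsilon$-slack in condition (4) for geometric decay, and chain via conditional expectations) is the right paradigm and matches the spirit of CGH. But there are three concrete gaps that prevent this from being a correct proof. First, the object you condition on is wrong: you define $\Phi$ on partial assignments to the \emph{initial} variables $X_1,\ldots,X_p$, as though a single draw either avoids all bad events or not. The Moser--Tardos witness-tree weights bound the expected number of \emph{resamplings}, not the one-shot failure probability $\Pr[\exists i \, : \, E_i]$; the latter has no natural witness-tree expansion, and the union bound $\sum_i \Pr[E_i \mid A]$ at $A = \emptyset$ is typically $\gg 1$. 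CGH instead fix the entries of the Moser--Tardos \emph{resampling table} one at a time, and the final output is whatever the deterministic simulation of the algorithm on that table produces. You would need to reframe your $\Phi$ as a function of the table. Second, your truncation depth of $O(1/\varepsilon)$ is too shallow: the tail of the weighted sum of witness trees of depth $\geq h$ decays like $n \cdot e^{-\varepsilon h}$ (there are $n$ possible roots), so you need $h = \Theta(\log n / \varepsilon)$ for the truncation error to fall below the slack; the exponent $3 + 2/\varepsilon$ on $(p+n+r+k)$ in the stated running time reflects exactly this, since $(k+1)^{O(\log n/\varepsilon)} = n^{O(\log k/\varepsilon)}$ and $k$ is assumed constant. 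Third, your per-term cost of $T \cdot m^{O(r/\varepsilon)}$ is inconsistent with the theorem's $O(Tm \cdot \text{poly})$ bound unless $m^r$ is itself polynomially bounded, which is not assumed. The resolution in CGH is that the probability of a witness tree factors as a \emph{product} of single-event probabilities $\prod_v \Pr[E_{[v]}]$, because distinct nodes of a valid witness tree consume disjoint entries of the table; this is what lets you evaluate the estimator with $O(1)$ oracle calls per tree node rather than marginalizing jointly over all $r \cdot (k+1)^{O(1/\varepsilon)}$ variables at once. Without that product structure the stated running time cannot be reached.
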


We will use this theorem on simplicial complexes \(X\). We will think of \(p,n\leq |X|\), \(r = O(\log |X|)\), \(k = \poly \log|X|\), and \(T = \exp r=O(|X|^{O(\log m)})\). \(\varepsilon > 0\) will be some constant bounded away from \(0\) and \(m\) (the domain of the \(X_i\)'s) will be some large constant. In this regime of parameters the run time simplifies to \(\poly(|X|)\).

Using \pref{thm:l3-deterministic} we can show the following variant of \pref{thm:main-formal}.
\begin{theorem}[Derandomization of \pref{thm:main-formal}] \label{thm:main-formal-deterministic}
For every pair of integers \(d,m\) there exists some \(c,r > 1, \eta>0\) so that the following holds. Let \(\Gamma\) be a group with a generating set \(S\) of size \(m\). Assume that \(C(\Gamma,S)\) is a \(d\)-dimensional \(\frac{\lambda}{2}\)-high dimensional expander for some \(\lambda < 1\). Let \(X\) be a \((c,r,\eta)\)-suitable \(d\)-dimensional simplicial complex\footnote{as in \pref{def:suitable-complex}.}. Then there exists some \(Y \subseteq X\) so that:
\begin{enumerate}
    \item \(Y\) is an \(d\)-dimensional \(\lambda\)-high dimensional expander.
    \item \(Y\) has a connected \(\Gamma\)-cover.
    \item \(Y\) contains \(\Omega_{m,d}(1)\) fraction of the faces of \(X\).
\end{enumerate} 
There is a deterministic algorithm that runs in time polynomial in \(O(m^{O(L)}|X|^{O(1)})\) and outputs \(Y\), where \(L = \max_{v \in X(0)}\abs{X_v}\). The \(O\) notation hides some dependence on \(d\).
In particular, when \(L = O(\log(|X|))\) the algorithm runs in \(\poly(|X|)\) time.
\end{theorem}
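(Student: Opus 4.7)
The plan is to invoke the Chandrasekaran--Goyal--Haeupler derandomized Lovasz Local Lemma (\pref{thm:l3-deterministic}) on exactly the probabilistic setup used in the proof of \pref{thm:main-formal}. The random variables are the independent uniform edge labels $\{f(e)\}_{e \in X(1)}$ over the alphabet $S$ of size $m$, and the bad events are the same family $\{E_\tau\}_{\tau \in X \setminus X(d)}$ combining $AT(\tau)$, $NE(\tau)$ and $BC(v)$. Once a deterministic labeling $f$ avoiding all $E_\tau$ is produced, \pref{claim:G-implies-Y-is-good} guarantees that its $f$-pruning $Y$ is a $d$-dimensional $\lambda$-HDX with a connected $\Gamma$-cover and containing an $\Omega_{m,d}(1)$ fraction of the faces of $X$.

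To apply \pref{thm:l3-deterministic} one must verify four hypotheses. The dependency bound $|A_\tau|\le k$ and the probability bound $\Pr[E_\tau]\le (e(k+1))^{-(1+\varepsilon)}$ come essentially for free from the proof of \pref{claim:G-occurs-with-positive-probability}: that argument yields $k=\poly(L)$ since $Q,R\le L$, and by choosing the suitability parameter $c$ sufficiently large the bounds of \pref{lem:good-expander}, \pref{claim:face-not-appearing-prob} and \pref{claim:face-too-light-prob} drop to $Q^{-C}$ for any target constant $C$, which dominates $(e(k+1))^{-(1+\varepsilon)}$ for a fixed $\varepsilon>0$. Each $E_\tau$ depends on at most $r=O(L^2)$ variables, namely the edge labels within graph distance two of $\tau$.

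The main technical obstacle is verifying the conditional-probability hypothesis within a per-event time budget $T \le m^{O(L)}$. For the combinatorial subevents this is routine: $AT(\tau)$ is determined by the $O(L)$ labels on edges from $\tau$ to $X_\tau(0)$, so its conditional probability given any partial assignment is an explicit count over $m^{O(L)}$ completions; $BC(v)$ likewise enumerates over the $O(L)$ two-step triangles through $v$. The spectral event $NE(\tau)$ is delicate, since $G_\tau$ can depend on $\Theta(L^2)$ labels and naive enumeration would cost $m^{O(L^2)}$. I would handle it by importing the decomposition used inside the proof of \pref{lem:good-expander}: refine $NE(\tau)$ into $O(m^2)$ subevents $NE_{i,j}(\tau)$, one per color pair $(s_i,s_j)\in C_a(1)$, each asserting that the bipartite subgraph between the random color classes $S_i$ and $S_j$ is not a $(\lambda/2)$-bipartite expander. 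The conditional probability of $NE_{i,j}(\tau)$ given any partial assignment is computed in $m^{O(L)}$ time by first enumerating the $m^{O(L)}$ vertex-classifying labels $\{f(u_0 v)\}_{v \in X_\tau(0)}$, which fixes the partition into $S_i,S_j$, and then, for each fixed partition, evaluating non-expansion of the induced random bipartite subgraph using \pref{thm:converse-bipartite-eml} to express non-expansion as discrepancy of a cut pair whose probability factors across the remaining independent edge labels. Replacing $E_\tau$ by the refined family inflates $n$ and $k$ by at most an $O(m^2)$ factor, which is absorbed into the polynomial in \pref{thm:l3-deterministic}.

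Plugging $p,n\le|X|$, $k,r\le\poly(L)$ and $T\le m^{O(L)}$ into the runtime $O(T\cdot m\cdot(p+n+r+k)^{3+2/\varepsilon}\varepsilon^{-2})$ of \pref{thm:l3-deterministic} yields the claimed overall bound of $m^{O(L)}\cdot|X|^{O(1)}$, which in the regime $L=O(\log|X|)$ with $m$ constant collapses to $\poly(|X|)$, completing the theorem.
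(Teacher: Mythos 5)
Your overall strategy coincides with the paper's: plug the random variables $\set{f(e)}_{e \in X(1)}$ and the bad events $\set{E_\tau}_{\tau \in X \setminus X(d)}$ from the proof of \pref{thm:main-formal} into the deterministic Local Lemma \pref{thm:l3-deterministic}, then appeal to \pref{claim:G-implies-Y-is-good}. Your identification of $p,n\le|X|$, $m=|S|$, $k=\poly(L)$, and the choice of $c$ that makes $\varepsilon$ bounded away from $0$ all agree with the paper.

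The flaw is in your estimate of $r$ and the resulting workaround for $NE(\tau)$. You claim $r=O(L^2)$ (edge labels within graph distance two of $\tau$) and worry that $G_\tau$ may depend on $\Theta(L^2)$ labels, but this is an overcount. The edge set of the satisfaction graph $G_\tau$ is contained in $X_\tau(1)$, and since $X_\tau\subseteq X_v$ for any $v\in\tau$, we have $|X_\tau(1)|\le|X_\tau|\le|X_v|\le L$. More generally, every edge label that any of $AT(\tau)$, $NE(\tau)$, $BC(\tau)$ inspects lies inside a face of $X$ sharing a vertex with $\tau$; there are at most $(d+1)L$ such faces, each with $\binom{d+1}{2}$ edges, giving $r=O(L)$. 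With $r=O(L)$ the conditional probability of $E_\tau$ given any partial assignment is computed by brute-force enumeration over the $m^{r}=m^{O(L)}$ completions of the dependent labels, checking each in $\poly(|X|)$ time via an eigenvalue or counting computation; this yields $T=m^{O(L)}\poly(|X|)$ directly, exactly as the paper does. Your refinement of $NE(\tau)$ into subevents $NE_{i,j}(\tau)$ is therefore unnecessary, and the mechanism you propose for it is not justified as stated: the converse expander mixing lemma \pref{thm:converse-bipartite-eml} passes from uniform discrepancy bounds to expansion, not the other way around, and the event that \emph{some} cut pair among the exponentially many has large discrepancy is a union of dependent events whose probability does not factor across the edge labels.
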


\begin{proof}[Proof sketch of \pref{thm:main-formal-deterministic}]
In the proof of \pref{thm:main-formal} we defined every \(\sett{E_\tau}{\tau \in X}\). Our independent random variables were \(\sett{f(e)}{e \in X(1)}\). We calculate \(n,p,m,r,T,k,\varepsilon\) that appear in the statement of \pref{thm:l3-deterministic}:
\begin{enumerate}
    \item[(n)] Obviously \(n = \abs{\sett{E_\tau}{\tau \in X}} = |X|\).
    \item[(p)] \(p = \abs{\sett{f(e)}{e \in X(1)}} = |X(1)| \leq |X|\).
    \item[(m)] \(m=|S|\) (which was also denoted by \(m\) in the theorem statement).
    \item[(r)] One can verify by the definition given to \(E_\tau\), that \(E_\tau\) only depend on edges that are contained in some face, which shares a vertex with \(\tau\). Therefore the number of \(f(e)\) that \(E_\tau\) depends on is at most \(\abs{\tau}\binom{d+1}{2}L = O(L)\) (when we think of \(d\) as a constant).
    \item[(T)] We can calculate the probability \(\cProb{}{E_\tau}{\set{f(e_i)=s_i}_{i \in I}}\) by exhaustively going over all possible assignments the \(r\) edges that \(E_\tau\) depends on, and counting the fraction of assignments that fall into \(E_\tau\). Checking whether some assignment is in \(E_\tau\) amounts to calculating eigenvalues of an adjacency operator or counting the number of faces that got assigned some specific assignment. All possible in \(\poly(|X|)\) time. The number of possible assignments is at most \(m^{r}\) (it is an upper bound that since some values may have already been set by conditioning on some partial assignment). Hence \(T = m^{O(L)}\poly|X|\).
    \item[(k)] We saw in the proof of \pref{claim:G-occurs-with-positive-probability} that the number of \(E_\tau'\) that \(E_\tau\) depends on in \(k = \poly(L)\).
    \item[(\(\varepsilon\))] In the proof of \pref{claim:G-occurs-with-positive-probability} we show that \(\prob{E_\tau} \leq \frac{1}{(k+1)e} = \frac{1}{\poly(L)}\). We do so by showing that \(\prob{E_\tau} \leq \poly(L,m)\cdot \eta^{c \log Q}\) where \(Q\) is the number of \(d+1\)-faces that are contained in the link of \(\tau\), and \(\eta = \eta_{m,d} < 1\) some constant. Then we argue that taking \(c\) to be large enough, we can make this less or equal to \(\prob{E_\tau} \leq \frac{1}{(k+1)e}\). But taking a large enough \(c\) could also make this less or equal to \(\prob{E_\tau} \leq \left (\frac{1}{(k+1)e}\right )^{2}\). So for an appropriate \(c\) we can set \(\varepsilon = 1\).
\end{enumerate}
Thus by \pref{thm:l3-deterministic}, there is a deterministic algorithm that outputs \(Y\) as in the theorem statement in \(O(m^{O(L)}|X|^{O(1)})\).
\end{proof}

\begin{theorem} \label{thm:second-construction-deterministic}
Let \(m \in \NN\) and let \(C\) be a \(d\)-dimensional simplicial complex with \(m\) vertices that is a \(\lambda\)-high dimensional expander. There exists universal constants \(r=r(m), c=c(m),\eta=\eta(m)\) so that the following holds. Let \(X\) be a \((c,r,\eta)\)-suitable \(d\)-dimensional simplicial complex\footnote{as in \pref{def:suitable-complex}.}.
Then there exists some \(Y \subseteq X\) so that \(Y\) is an \(d\)-dimensional \(\frac{2\lambda}{1-2\lambda}\)-high dimensional expander, and so that \(Y\) has a \emph{non-degenerate \(C\)-coloring}. \(Y\) contains \(\Omega_m(1)\) of the faces of \(X\).  Moreover, there is a deterministic algorithm that finds \(Y\) and runs in time \(O(|C|^{O(L)}|X|^{O(1)})\), where \(L = \max_{v \in X(0)}\abs{X_v}\). In particular, when \(L = O(\log (|X|))\) and \(|C| = O(1)\) then the algorithm runs in time \(\poly(|X|)\).
\end{theorem}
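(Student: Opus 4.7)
The plan is to follow the derandomization of \pref{thm:main-formal} in \pref{thm:main-formal-deterministic} verbatim, and apply \pref{thm:l3-deterministic} to the collection of bad events \(\sett{E_\tau}{\tau \in X\setminus X(d)}\) defined in the proof of \pref{thm:second-construction}. Recall that in that proof the independent random variables are \(\sett{f(v)}{v \in X(0)}\), taking values in the finite domain \(C(0)\) of size \(m\), and every bad event \(E_\tau\) is of the form \(NE(\tau) \cup AC(\tau)\) (or just \(AC(\tau)\) at the top dimension).

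I would then read off the parameters of \pref{thm:l3-deterministic} as follows. The number of events satisfies \(n = |\sett{E_\tau}{\tau \in X \setminus X(d)}| \leq |X|\), and the number of random variables is \(p = |X(0)| \leq |X|\); the domain size is \(|C(0)| = m\). By inspecting the definitions of \(NE(\tau)\) and \(AC(\tau)\), each \(E_\tau\) is determined by \(\sett{f(v)}{v \in \tau \cup X_\tau(0)}\), so it depends on at most \(r = O(L)\) variables, where \(L = \max_{v\in X(0)} |X_v|\) (and the hidden constant depends on \(d\)). The number of other events \(E_{\tau'}\) that share a variable with \(E_\tau\) is bounded by \(k = \poly(L)\), exactly as in the proof of \pref{claim:G-occurs-with-positive-probability-ramifications}. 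To compute \(\cProb{}{E_\tau}{\set{f(v_i)=c_i}_{i\in I}}\), I would exhaustively enumerate all completions of the partial assignment on the at most \(r\) relevant variables, and for each one check whether the resulting configuration lies in \(NE(\tau)\cup AC(\tau)\); the check amounts to computing the spectral norm of a small adjacency operator and counting colored neighbors, both doable in \(\poly(|X|)\) time. Hence \(T = m^{O(L)} \cdot \poly(|X|) = |C|^{O(L)}\cdot\poly(|X|)\).

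For the slack parameter \(\varepsilon\), recall that in the proof of \pref{claim:G-occurs-with-positive-probability-ramifications} the probability bounds on \(NE(\tau)\) and \(AC(\tau)\) are of the form \(\poly(L,m)\cdot \eta^{c\log Q}\), where \(Q\) is the maximal \(d\)-degree and \(\eta = \eta(m,d) < 1\). By enlarging the constant \(c\) in the suitability assumption, we can strengthen this to \(\prob{E_\tau} \leq \bigl(\tfrac{1}{e(k+1)}\bigr)^{2}\), so that condition~4 of \pref{thm:l3-deterministic} holds with \(\varepsilon = 1\). Plugging these values into the running time bound \(O(Tm(p+n+r+k)^{3+2/\varepsilon}\varepsilon^{-2})\) of \pref{thm:l3-deterministic} yields the claimed running time \(O(|C|^{O(L)}|X|^{O(1)})\), which simplifies to \(\poly(|X|)\) in the regime \(L = O(\log|X|)\), \(|C| = O(1)\).

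The only real obstacle is verifying that the conditional probabilities \(\cProb{}{E_\tau}{\cdot}\) can indeed be computed efficiently; once this is in place, the derandomization is a mechanical application of \pref{thm:l3-deterministic} to the same bad events already used in the existential proof. Since \(E_\tau\) depends on \(O(L)\) variables over a domain of size \(m\), brute-force enumeration over the at most \(m^{O(L)}\) completions together with the polynomial-time checks for \(NE\) and \(AC\) suffices, so this obstacle is surmounted exactly as in \pref{thm:main-formal-deterministic}.
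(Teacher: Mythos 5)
Your proposal is correct and matches the paper's (omitted) proof exactly: the paper states only that Theorem 5.3 "follows the same calculation of parameters" as the derandomization of Theorem 3.2, and your writeup carries out precisely that calculation, correctly swapping the random variables from edge labels $f:X(1)\to S$ to vertex labels $f:X(0)\to C(0)$, the domain size from $|S|$ to $|C(0)|=m$, and reading off $n,p,r,k,T,\varepsilon$ accordingly.
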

The proof of \pref{thm:second-construction-deterministic} follows the same calculation of parameters and is therefore omitted.
\printbibliography
\appendix

\section{Sparsification of expander graphs} \label{sec:expander-sparsification}
In this section, we prove the necessary results we need in order to analyze the edge removal process in the links and prove \pref{lem:good-expander}. 
\subsection{Proof of \pref{claim:graph-composition}}
Recall that for measured graphs \(G = (V,E, \nu_G), H=(V',E', \nu_H)\) and a non-degenerate \(H\)-coloring \(f:V \to V'\) of \(G\), we defined the measure \(\nu_f(vu)\) on the edges of \(G\) by the following process:
\begin{enumerate}
    \item Sample an edge \(ab \in E'\).
    \item Sample an edge \(uv \in E\) given that \(f(u)=a, f(v)=b\).
\end{enumerate}

\restateclaim{claim:graph-composition}
\begin{proof}[Proof of \pref{claim:graph-composition}]
We prove the claim with respect to two sided expansion (i.e. we assume that \(H\) is a two-sided spectral expander, and show that \(G\) is a two sided spectral expander). We comment about adjusting the proof to one sided spectral expanders at the end of the proof. 

Let \(A\) be the normalized adjacency operator of \(G\) with respect to \(\nu_f\). Let \(g,f \bot \one\) be any two functions with norm \(1\). We need to show that 
\begin{equation}\label{eq:composition-inner-product}
\iprod{Af,g} \leq \max \set{\lambda,\eta}.
\end{equation}
Let 
\[W_C = \sett{h:V \to \RR}{\forall v \in V', u \in A_v, h(v)=h(u)}.\]
\[W_0 = \sett{h:V \to \RR}{\forall v \in V', \Ex[u \in A_v]{h(u)}=0}.\]
We note that \(W_C \oplus W_0\) is an orthogonal decomposition of \(L_2(V)\). Furthermore, \(A(W_C) \subseteq W_C, A(W_0) \subseteq W_0\).
Thus we can decompose \(f\) (resp. \(g\)) to \(f=f^{c}+f^{0}\) according to the spaces above, write 
\[ \iprod{Af,g} = \iprod{Af^{c},g^c} + \iprod{Af^0, g^0}.\]
We show \eqref{eq:composition-inner-product} separately for each term. Beginning with the left term, \(\iprod{Af^{c},g^c}\), we can define \(\tilde{f}:V' \to \RR, \tilde{g}:V' \to \RR\) by \(\tilde{f}(a) = f^c(a), \tilde{g}(a)=g^c(a)\) for some \(u \in S_a\). We denote \(A_H\) the normalized adjacency-operator of \(H\) and we have that
\[\iprod{Af^{c},g^c} = \iprod{A_H\tilde{f},\tilde{g}}.\]
The equality above is due to the definition of the weights \(\nu_f\) of \(G\), where in particular \(\nu_f(f^{-1}(a))=\nu_h(a)\).
We note that \(f,g \bot \one\) and that \(f^0,g^0 \bot \one\), hence \(f^c,g^c \bot \one\), which implies that \(\tilde{f},\tilde{g} \bot \one\). Thus by \(\lambda\)-spectral expansion of \(H\)
\[\iprod{A_H\tilde{f},\tilde{g}} \leq \lambda \iprod{\tilde{f},\tilde{g}} = \lambda \iprod{f^c,g^c}.\]

As for the right term, we note that 
\[ \iprod{Af^0, g^0} = \Ex[ab \in E']{\Ex[xy \in E_{ab}]{f^0(x)g^0(y)} } = \Ex[ab \in E']{\iprod{A_{ab}f^0|_{S_a},g^0|_{S_b}}_{ab}}.\]
Here \(A_{ab}\) is the normalized bipartite adjacency operator of the bipartite graph between \(S_a,S_b\). For all \(a,b\in V'\) we have that \(f^0|_{S_a} \bot \one|_{S_b}\) (and resp. \(g\)). Thus from \(\eta\)-bipartite expansion we have that 
\[\Ex[ab \in E']{\iprod{A_{ab}f^0|_{S_a},g^0|_{S_b}}_{ab}} \leq \eta \Ex[ab \in E']{\iprod{f^0|_{S_a},g^0|_{S_b}}_{ab}} = \eta \iprod{f^0,g^0}.\]
The claim follows.

\paragraph{The one-sided case} The same proof applies with the following changes. Instead of showing \eqref{eq:composition-inner-product}, we need to show
\begin{equation}\label{eq:composition-inner-product-1-sided}
    \iprod{A f, f} \leq \max \set{\lambda, \eta}.
\end{equation}
We can still orthogonally decompose
\[\iprod{A f, f} = \iprod{A f^c, f^c} + \iprod{A f^0, f^0}.\]
Showing that \(\iprod{A f^0, f^0} \leq \eta\) is the same as in the two-sided case, since the bound uses the assumptions about the bipartite graphs, not the expansion of \(H\). Bounding \(\iprod{A f^c, f^c} \leq \lambda\) follows by the same argument of the two-sided case, namely, that
\[\iprod{A f^c, f^c} = \iprod{ A_H \tilde{f}, \tilde{f}} \leq \lambda \iprod{\tilde{f},\tilde{f}} = \lambda \iprod{f^c,f^c}.\]
Here the inequality used that \(1\)-sided expansion of \(A_H\).

\end{proof}
\subsection{Proof of \pref{prop:part-of-dense-expander}}
\restateprop{prop:part-of-dense-expander}

Note that the sampling of the vertices in \(H\) is sampling two \emph{disjoint} sets \(A,B \subseteq V\) where each vertex is sampled into one of the sets independently with probability \(p\) (and isn't sampled into \(H'\) with probability \(1-2p\)). 

We also remark that this proposition is interesting when \(D = \poly(\log(n))\), otherwise the bound is trivial. This is somewhat justified though. Observe that the probability that for some vertex \(v \in A\) the probability that none of its neighbours are sampled into \(B\) is \((1-p)^D\). So,for example, if \(D\) was constant, this event would happen with high probability when the number of vertices tends to infinity.

Recall that we denote by \(\nu_G\) the probability measure on the graph of \(G\). Before we prove \pref{prop:part-of-dense-expander}, we assert some properties that \(H\) has with high probability. These events are loosely that:
\begin{enumerate}
    \item \(\nu_G(A) \approx p\).
    \item \(\nu_{H|A}(v) \approx p^{-1} \nu_G(v)\), where \(\nu_{H|A}(v)\) is the weight of \(v\) given that we sampled it in the \(A\) side of \(H\).
\end{enumerate}
\begin{claim} \label{claim:common-events}
Let \(\varepsilon > 0\). Then
\begin{enumerate}
    \item 
    \( \prob{|\nu_G(A) - p| \geq \varepsilon p } \leq 2e^{p^2\varepsilon^2 r^{-2}n}\)
    and similarly for \(B\).
    
    That is, the weight of \(A\) and the weight of \(B\) is approximately \(p\) with high probability.
    \item \( \prob{\forall v \in A \, |\sum_{u \in B}\nu_G(vu)- p \cdot \sum_{u \in V} \nu_G(vu)| < \varepsilon p \cdot \sum_{u \in V} \nu_G(vu)} \leq 2n e^{- \varepsilon^2 p^2 r^{-2} D}.\)

    That is, for every vertex \(v\), approximately a \(p\)-fraction of the its edges go to \(B\) with high probability.
\end{enumerate}
Here the \(\prob{X}\) notation is with respect to the sampling of \(H\).
\end{claim}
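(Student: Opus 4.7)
The plan is to reformulate the two-stage vertex sampling as the equivalent product experiment in which each vertex of $V$ is independently placed into $A$, $B$, or neither with respective probabilities $p$, $p$, $1-2p$. In this reformulation the fates of distinct vertices are mutually independent, so the indicators $\one[v\in A]$ and $\one[u\in B]$ for $u\neq v$ are independent, which makes Hoeffding-type concentration directly applicable.

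For the first item I would write
\[\nu_G(A) \;=\; \sum_{v\in V}\nu_G(v)\,\one[v\in A]\]
as a sum of independent random variables lying in $[0,\nu_G(v)]$ with total expectation $p$. The vertex-weight upper bound $\nu_G(v)\le r/n$ yields $\sum_v \nu_G(v)^2 \le (r/n)\sum_v \nu_G(v) = r/n$, so \pref{thm:hoeffding} gives concentration of $\nu_G(A)$ around $p$ with exponent on the order of $\varepsilon^2 p^2 n/r$, which is at least as strong as the stated bound. The argument for $B$ is identical.

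For the second item I would fix an arbitrary $v\in V$, condition on $v\in A$, and write
\[Z_v \;=\; \sum_{u\in B}\nu_G(vu) \;=\; \sum_{u\neq v}\nu_G(vu)\,\one[u\in B],\]
which under the reformulated sampling is a sum of independent contributions with expectation $p\sum_u \nu_G(vu) = p\,\nu_G(v)$ (the graph has no self loops). The edge-weight upper bound $\nu_G(vu)\le r/|E|$ gives $\sum_{u}\nu_G(vu)^2\le (r/|E|)\,\nu_G(v)$, while the minimum-degree hypothesis together with the edge-weight lower bound $\nu_G(vu)\ge 1/(r|E|)$ forces $\nu_G(v)\ge D/(r|E|)$. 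Plugging these into Hoeffding's inequality yields a single-vertex tail bound with exponent of order $\varepsilon^2 p^2 r^{-2} D$. A union bound over the $n$ possible choices of $v$ then produces the stated $2n\exp(-\varepsilon^2 p^2 r^{-2}D)$ estimate.

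The only real bookkeeping obstacle is recasting the two-stage dependent sampling as the equivalent three-way product experiment; once that is in place, the claim reduces to two direct applications of Hoeffding with the weight and degree bounds from $(c,r,\eta)$-suitability doing all the work. No spectral information about $G$ is used at this stage — expansion will enter only later, in the proof of \pref{prop:part-of-dense-expander}.
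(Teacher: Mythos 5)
Your proposal is correct and follows the same approach the paper takes: reformulate the two-stage vertex sampling as an independent three-way placement, apply Hoeffding's inequality termwise using the weight and degree hypotheses to control the variance term, and union-bound over vertices for the second item. The only cosmetic deviations are that you lower-bound $\nu_G(v)$ via the edge-weight lower bound and minimum degree rather than via the vertex-weight lower bound as the paper does, and that you drop a factor of two in $\sum_u\nu_G(vu)=2\nu_G(v)$ — both harmless since they cancel in the final exponent and yield a bound at least as strong as the one stated.
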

We denote by \(\mathcal{E} = \mathcal{E}(\varepsilon)\) the union of the two events that appear in \pref{claim:common-events}.

\begin{proof}[Proof of \pref{prop:part-of-dense-expander}]
Let \(\varepsilon = \min(\lambda^2, 0.0001)\).
The event \(\mathcal{E}\) occurs with probability at most \((2n+2)e^{-\varepsilon^2p^2c^{-2}D}\). We show that when \(\mathcal{E}\) doesn't occur, the resulting graph \(H\) is a \(\frac{100}{p}\lambda\)-bipartite expander. We hereby assume in all claims below that indeed \(\mathcal{E}\) doesn't occur.

For a function \(f:A\to \RR\) or \(g: B \to \RR\) we define \(\tilde{f},\tilde{g}:V \to \RR\) so that \(\tilde{f}|_A = f, \tilde{g}|_B = g\) and \(\tilde{f}|_{V \setminus A}, \tilde{g}|_{V \setminus B} = 0\). 

Fix a function \(f:A \to \RR\) so that \(\iprod{f, \one_A}_{H|A}= \Ex[v \sim \nu_{H|A}]{f(v)}= 0\). We need to show that
\begin{equation} \label{eq:part-of-dense-expander-1}
    \iprod{A_H f,A_H f}_{H|B} \leq \frac{100\lambda^2}{p^3}\norm{f}_{H|A}^2.
\end{equation}

We do the following steps:
\paragraph{Step 1} We relate \(\iprod{A_H f,A_H f}_{H|B} \) to \(\iprod{\widetilde{A_H f},\widetilde{A_H f}}_G\). Namely, \emph{we will prove that:}
\begin{equation} \label{eq:part-of-dense-expander-2}
    \iprod{A_H f,A_H f}_{H|B} \leq  p^{-1}(1+5\varepsilon) \iprod{ \widetilde{ A_H f},\widetilde{ A_H f}}_G.
\end{equation}

\paragraph{Step 2} We relate the inner product of \(\widetilde{A_H f}\) (first using the bipartite adjacency in \(H\), then extending the resulting function from \(B\) to all \(G\)), with the inner product of \(A_G \tilde{f}\) (first extending \(f\) from \(A\) to all \(G\) and then applying \(G\)'s adjacency operator). Namely, \emph{we need to show that:}
\begin{equation} \label{eq:part-of-dense-expander-3}
    p^{-1}(1+5\varepsilon) \iprod{ \widetilde{ A_H f},\widetilde{ A_H f}}_G \leq \frac{1+5\varepsilon}{p^{3}(1-\varepsilon)^2} \iprod{A_G \tilde{f}, A_G \tilde{f}}_G.
\end{equation}

\paragraph{Step 3} We would like to use the expansion of \(G\) to bound \(\iprod{A_G \tilde{f}, A_G \tilde{f}}_G \leq \lambda^2 \iprod{\tilde{f},\tilde{f}}_G\). However, even if \(f \bot \one_A\) it may be the case that \(\tilde{f} \not\bot \one_G\). However, \emph{we will show that}:
\begin{equation} \label{eq:relate-constant-part-of-f-to-its-extension}
    \iprod{\tilde{f}, \one_G}^2 \leq 17 \varepsilon p^2 \iprod{f,f}_{H|A}
\end{equation}
Plugging this in the right-hand side of \eqref{eq:part-of-dense-expander-3} we have that
\begin{equation} \label{eq:part-of-dense-expander-4}
\begin{aligned}
    \frac{1+5\varepsilon}{p^{3}(1-\varepsilon)^2} \iprod{A_G \tilde{f}, A_G \tilde{f}}_G \leq& 
    \frac{1+5\varepsilon}{p^{3}(1-\varepsilon)^2} \iprod{\tilde{f}, \one_G}_G^2 + \frac{1+5\varepsilon}{p^{3}(1-\varepsilon)^2} \lambda^2 \iprod{\tilde{f}, \tilde{f}}_G \leq \\
    & \frac{1+5\varepsilon}{p^3(1-\varepsilon)^2} 17\varepsilon p^2 \iprod{f,f}_{H|A} + \frac{1+5\varepsilon}{p^3(1-\varepsilon)^2} \lambda^2 \iprod{\tilde{f}, \tilde{f}}_G
\end{aligned}
\end{equation}
Finally \emph{we will show that}:
\begin{equation} \label{eq:part-of-dense-expander-5}
\iprod{\tilde{f},\tilde{f}}_G \leq \iprod{f,f}_{H|A}
\end{equation}

Which implies that 
\begin{equation} \label{eq:part-of-dense-expander-6}
    \begin{aligned}
    \frac{1+5\varepsilon}{p^3(1-\varepsilon)^2} 17\varepsilon p^2 \iprod{f,f}_{H|A} + \frac{1+5\varepsilon}{p^3(1-\varepsilon)^2} \lambda^2 \iprod{\tilde{f}, \tilde{f}}_G &\leq \\
    &\frac{(1+5\varepsilon)^2}{p^3(1-\varepsilon)^2}(17\lambda^2 + 16\varepsilon) \norm{f}_{H|A}^2 .
    \end{aligned}
\end{equation}
 By our choice of \(\varepsilon = \min (0.0001,\lambda^2)\) we obtain that \eqref{eq:part-of-dense-expander-1} holds, i.e. \(\iprod{A_H f,A_H f}_{H|B} \leq \frac{100\lambda^2}{p^3}\norm{f}_{H|A}^2.\)

It remains to prove inequalities \eqref{eq:part-of-dense-expander-2}, \eqref{eq:part-of-dense-expander-3}, \eqref{eq:relate-constant-part-of-f-to-its-extension} and \eqref{eq:part-of-dense-expander-5}.

\subsubsection*{Proving the steps}
\paragraph{Inequalities \eqref{eq:part-of-dense-expander-2} and \eqref{eq:part-of-dense-expander-5}} 
The following claim is proven after the proposition. Recall that the expression \(\Ex[v \nu_{H|A}]{g(v)} = \sum_{v \in A}\nu_{H|A}(v) g(v)\), where \(\nu_{H|A}(v) = \sum_{b \in B, v\sim b} \nu_{H}(vb)\), is the weight of \(v\) in the bipartite graph of \(H\) conditioned on sampling \(v \in A\).
\begin{claim} \label{claim:non-negative-function-estimate}
Let \(g:A\to \RR\) be a non-negative function. Then
\begin{equation}\label{eq:positive-functions-estimate-1}
(1-5\varepsilon) p^{-1} \Ex[v \sim \nu_G]{\tilde{f}(v)} \leq \Ex[v \sim \nu_{H|A}]{f(v)}
\end{equation}
and
\begin{equation}\label{eq:positive-functions-estimate-2}
\Ex[v \sim \nu_{H|A}]{f(v)} \leq (1+5\varepsilon)p^{-1}\Ex[v \sim \nu_G]{\tilde{f}(v)}.
\end{equation}

Similar inequalities hold for non-negative \(g:B\to \RR\).
\end{claim}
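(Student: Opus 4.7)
The plan is to unpack the definitions of $\nu_H$ and $\nu_{H\mid A}$ directly, and then show that on the event $\neg\mathcal{E}(\varepsilon)$, the vertex-measure $\nu_{H\mid A}$ on $A$ is (pointwise) a $p^{-1}$-scaling of $\nu_G$ restricted to $A$, up to a multiplicative error of $1\pm 5\varepsilon$. Once this pointwise comparison is in place, the non-negativity of $f$ lets us multiply by $f(v)$ and sum, immediately giving both inequalities of the claim.

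More precisely, the bipartite graph $H$ inherits its edge weights from $G$ restricted to $E(A,B)$ and renormalized, so that $\nu_{H\mid A}(v) = \bigl(\sum_{b \in B, v \sim b}\nu_G(vb)\bigr) / \nu_G(E(A,B))$ for every $v \in A$. The second item of \pref{claim:common-events} (on $\neg\mathcal{E}$) controls the numerator: for every $v \in A$,
\[
\sum_{b \in B,\, v \sim b}\nu_G(vb) \;\in\; [(1-\varepsilon)p,\,(1+\varepsilon)p] \cdot \nu_G(v).
\]
Summing over $v \in A$ and using the first item of \pref{claim:common-events} (applied to both $A$ and $B$) to get $\nu_G(A),\nu_G(B) \in [(1-\varepsilon)p,(1+\varepsilon)p]$, a short computation yields
\[
\nu_G(E(A,B)) \;\in\; \bigl[(1-\varepsilon)^2 p^2,\,(1+\varepsilon)^2 p^2\bigr].
\]
Dividing these bounds gives the pointwise comparison
\[
\nu_{H\mid A}(v) \;\in\; \Bigl[\tfrac{1-\varepsilon}{(1+\varepsilon)^2}\,p^{-1},\;\tfrac{1+\varepsilon}{(1-\varepsilon)^2}\,p^{-1}\Bigr]\cdot \nu_G(v).
\]
A Taylor-style estimate shows that for $\varepsilon \leq 0.0001$ (which is the regime chosen in the proof of \pref{prop:part-of-dense-expander}), the bracket is contained in $[(1-5\varepsilon)p^{-1},\,(1+5\varepsilon)p^{-1}]$.

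Finally, since $f \geq 0$, I would multiply the two pointwise bounds on $\nu_{H\mid A}(v)$ by $f(v)$ and sum over $v \in A$. Because $\tilde{f}$ is supported on $A$, $\sum_{v\in A}\nu_G(v)f(v) = \Ex_{v\sim \nu_G}[\tilde{f}(v)]$, and the resulting inequalities are exactly \eqref{eq:positive-functions-estimate-1} and \eqref{eq:positive-functions-estimate-2}. The same argument applies symmetrically to a non-negative $g$ on $B$.

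There is no conceptual obstacle here; the only thing to be careful about is the arithmetic bookkeeping so that $(1\pm\varepsilon)^2$ factors in numerator and denominator combine into a clean $1\pm 5\varepsilon$ window, and ensuring non-negativity of $f$ is used (cancellations could otherwise spoil a signed version of the inequality). The claim follows.
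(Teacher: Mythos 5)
Your proof takes essentially the same route as the paper: establish a pointwise bound $\nu_{H\mid A}(v)\in[(1-5\varepsilon)p^{-1},(1+5\varepsilon)p^{-1}]\cdot\nu_G(v)$ on the event $\neg\mathcal{E}$, then multiply by the non-negative $f$ and sum. The paper packages the pointwise comparison as \pref{claim:probabilities-in-induced-graph} (giving $\nu_{H\mid A}(v)\leq(1+3\varepsilon)\nu_G(v)/\nu_G(A)$, then lower-bounding $\nu_G(A)\geq p(1-\varepsilon)$ in the summed expression), whereas you inline the derivation and normalize by $\nu_G(E(A,B))$ directly — same content, slightly different bookkeeping. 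One small arithmetic wrinkle: with the paper's conventions, $\sum_{u\in V}\nu_G(vu)=2\nu_G(v)$, so your bound on the numerator should read $\sum_{b\in B,\,v\sim b}\nu_G(vb)\in[(1-\varepsilon)p,(1+\varepsilon)p]\cdot 2\nu_G(v)$, and correspondingly $\nu_G(E(A,B))\in[(1-\varepsilon)^2\cdot 2p^2,(1+\varepsilon)^2\cdot 2p^2]$. The extra factor of $2$ appears in both numerator and denominator and cancels, so your final pointwise window $\bigl[\tfrac{1-\varepsilon}{(1+\varepsilon)^2}p^{-1},\tfrac{1+\varepsilon}{(1-\varepsilon)^2}p^{-1}\bigr]$ and the claim are still correct.
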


Recall that 
\[\iprod{A_H f, A_H f}_{H|B} := \Ex[v \sim \nu_{H|B}]{(A_H f(v))^2},\] 
\[\iprod{\widetilde{A_H f}, \widetilde{A_H f}} := \Ex[v \sim \nu_{G}]{(\widetilde{A_H f(v))^2}}.\]
Applying \eqref{eq:positive-functions-estimate-2} of \pref{claim:non-negative-function-estimate} to the function \(g:B\to \RR; \; g(v) = (A_H f(v))^2\) gives
\begin{equation*} 
    \iprod{A_H f,A_H f}_H \leq p^{-1}(1+5\varepsilon) \iprod{ \widetilde{ A_H f},\widetilde{ A_H f}}_G.
\end{equation*}
and \pref{eq:part-of-dense-expander-2} holds.

Similarly, applying \eqref{eq:positive-functions-estimate-1} of \pref{claim:non-negative-function-estimate} to \(g:A\to \RR; \; g(v)=f(v)^2\) gives \eqref{eq:part-of-dense-expander-5}.

\paragraph{Inequality \eqref{eq:part-of-dense-expander-3}}
We bound the pointwise values of \(\widetilde{A_H f}(v)^2\) by \(\frac{1}{4(1-\varepsilon)^2p^2} A_G \tilde{f}(v)^2\).
\begin{claim} \label{claim:comparison-of-adjacencies}
Let \(f:A \to \RR\). Then for every \(v \in B\) it holds that
\begin{gather}\label{eq:A-is-similar}
    \widetilde{A_H f}(v)^2 \leq \frac{1}{(1-\varepsilon)^2p^2} (A_G \tilde{f}(v))^2.
\end{gather}
\end{claim}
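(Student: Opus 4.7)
The plan is to unpack the two adjacency operators and show that $\widetilde{A_Hf}$ and $A_G\tilde f$ differ only by a scalar rescaling at each $v \in B$, then control that scalar using event $\mathcal{E}$. Concretely, for $v \in B$ the $H$-neighbors of $v$ are precisely $A \cap N_G(v)$, and any global normalization in $\nu_H$ cancels in the conditional expectation defining $A_H$. Writing out the definitions should give
\[ \widetilde{A_Hf}(v) = \frac{\sum_{u \in A \cap N_G(v)} \nu_G(uv)\, f(u)}{\sum_{u \in A \cap N_G(v)} \nu_G(uv)}, \qquad A_G\tilde f(v) = \frac{\sum_{u \in A \cap N_G(v)} \nu_G(uv)\, f(u)}{\nu_G(v)}, \]
where the formula for $A_G\tilde f(v)$ uses that $\tilde f$ vanishes on $V \setminus A$. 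The two expressions share a numerator and differ only in the denominator, so
\[ \widetilde{A_Hf}(v) = \frac{\nu_G(v)}{\sum_{u \in A \cap N_G(v)} \nu_G(uv)} \cdot A_G\tilde f(v). \]

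The next step is to bound the ratio in front by $\frac{1}{(1-\varepsilon)p}$. This is precisely what item~2 of \pref{claim:common-events} provides: under the complement of $\mathcal{E}$, the $G$-weight of edges from $v$ into the sampled side is at least $(1-\varepsilon)\, p\, \nu_G(v)$. Taking absolute values of the identity above and squaring then immediately yields
\[ \widetilde{A_Hf}(v)^2 \;\leq\; \frac{1}{(1-\varepsilon)^2 p^2} \bigl(A_G\tilde f(v)\bigr)^2, \]
which is \eqref{eq:A-is-similar}.

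The main obstacle is a minor bookkeeping point: \pref{claim:common-events}(2) as stated quantifies over $v \in A$ with the sum ranging over neighbors in $B$, whereas here the analogous statement is needed for $v \in B$ with the sum ranging over neighbors in $A$. Since the sampling process is symmetric in the two sides --- each vertex lands in $A$ and in $B$ with marginal probability $p$ --- the identical Chernoff argument goes through verbatim, so it is safe to read $\mathcal{E}$ as including the symmetric statement (indeed this is implicit in the subsequent use of the event). Beyond this, the claim is a direct algebraic identity combined with a single application of $\neg \mathcal{E}$; no finer structure of $G$ is required.
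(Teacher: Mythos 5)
Your proposal is correct and mirrors the paper's argument: both compute the two operators explicitly for $v\in B$, observe they share a numerator $\sum_{u\in A}\nu_G(uv)f(u)$ (the global normalization $K=\nu_G(E(A,B))$ cancels in $A_H$), and bound the ratio of denominators via $\neg\mathcal{E}$; and you are right that \pref{claim:common-events}(2) is to be read symmetrically for $v\in B$, as the paper's own use implicitly assumes. One bookkeeping note: with the paper's convention $\nu_G(v)=\tfrac12\sum_{u\in V}\nu_G(uv)$, the denominator of $A_G\tilde f(v)$ should be $2\nu_G(v)$, and $\neg\mathcal{E}$ gives $\sum_{u\in A}\nu_G(uv)\geq 2(1-\varepsilon)p\,\nu_G(v)$; your missing factors of $2$ cancel, so the conclusion is unaffected.
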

The claim is proven after the proposition.
Note that \eqref{eq:A-is-similar} also holds for \(v \notin B\), since the values on the left-hand side of \eqref{eq:A-is-similar} is \(0\) when \(v \notin B\), and the right-hand side is non-negative. Thus we can take expectation on both sides of \eqref{eq:A-is-similar} with respect to \(\nu_G\), and get \eqref{eq:part-of-dense-expander-3}, that is:
\begin{equation*}
    \begin{aligned}
    p^{-1}(1+5\varepsilon)\Ex[v \sim \nu_G]{\widetilde{A_H f(v)}} &=  p^{-1}(1+5\varepsilon)\iprod{ \widetilde{ A_H f},\widetilde{ A_H f}}_G \leq \\
    &\frac{1+5\varepsilon}{p^{3}(1-\varepsilon)^2} \iprod{A_G \tilde{f}, A_G \tilde{f}}_G = \\
    &\frac{1+5\varepsilon}{p^{3}(1-\varepsilon)^2} \Ex[v \in \nu_G]{A_g \tilde{f}(v)^2}.
    \end{aligned}
\end{equation*}

\paragraph{Inequality \eqref{eq:relate-constant-part-of-f-to-its-extension}}

To show \eqref{eq:relate-constant-part-of-f-to-its-extension} we need the following claim on the weights \(\nu_H(v)\).
\begin{claim} \label{claim:probabilities-in-induced-graph}
For every \(v \in A\)
\begin{equation} \label{eq:probabilities-in-induced-graph}
(1-3\varepsilon)\frac{\nu_G(v)}{\nu_G(A)} \leq \nu_{H|A}({v}) \leq (1+3\varepsilon)\frac{\nu_G(v)}{\nu_G(A)}.
\end{equation}
the same holds for every \(u \in B\).
\end{claim}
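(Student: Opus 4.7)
The plan is to unfold the definition of $\nu_{H|A}(v)$ and apply the second event in \pref{claim:common-events} to both the numerator and denominator. By construction, the edge measure on $H$ is the $\nu_G$-measure on $E(A,B)$, renormalized: $\nu_H(vb) = \nu_G(vb)/\nu_G(E(A,B))$. Hence for $v \in A$,
\[
\nu_{H|A}(v) \;=\; \sum_{b \in B,\, b \sim v} \nu_H(vb) \;=\; \frac{\sum_{b \in B,\, b \sim v} \nu_G(vb)}{\nu_G(E(A,B))}.
\]

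Since we have assumed $\mathcal{E}$ does not occur, the negation of the second event of \pref{claim:common-events} states that, for every $u \in A$,
\[
(1-\varepsilon) p \cdot \Sigma_u \;\le\; \sum_{w \in B,\, w \sim u} \nu_G(uw) \;\le\; (1+\varepsilon) p \cdot \Sigma_u, \qquad \Sigma_u := \sum_{w \sim u} \nu_G(uw).
\]
Under the convention from \pref{sec:preliminaries}, $\Sigma_u = 2\nu_G(u)$. Applying this estimate at the single vertex $v$ bounds the numerator, while summing over $u \in A$ bounds the denominator, since $\nu_G(E(A,B)) = \sum_{u \in A}\sum_{w \in B,\, w \sim u} \nu_G(uw)$, giving a two-sided estimate $(1\pm\varepsilon)p\cdot 2\nu_G(A)$.

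Taking the ratio, the factors of $p$ and of $2$ cancel, so that
\[
\frac{1-\varepsilon}{1+\varepsilon} \cdot \frac{\nu_G(v)}{\nu_G(A)} \;\le\; \nu_{H|A}(v) \;\le\; \frac{1+\varepsilon}{1-\varepsilon} \cdot \frac{\nu_G(v)}{\nu_G(A)}.
\]
Since $\varepsilon \le 0.0001$, one verifies directly that $(1-\varepsilon)/(1+\varepsilon) \ge 1-3\varepsilon$ and $(1+\varepsilon)/(1-\varepsilon) \le 1+3\varepsilon$, which yields \eqref{eq:probabilities-in-induced-graph}. The argument for $u \in B$ is identical with the roles of $A,B$ swapped. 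There is no substantive obstacle here; the only subtlety worth noting is that the multiplicative errors in the numerator and denominator combine in the worst case (rather than cancel), which is what forces the factor of $3$ in the final bound instead of $2$.
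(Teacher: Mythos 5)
Your proof is correct and follows essentially the same route as the paper: unfold $\nu_{H|A}(v)$ as a ratio of $\nu_G$-edge-weights, apply the second event of \pref{claim:common-events} to the numerator at $v$ and (by summing) to the denominator, and combine the $(1\pm\varepsilon)$ factors. The only difference is that you explicitly verify $(1+\varepsilon)/(1-\varepsilon)\le 1+3\varepsilon$ for small $\varepsilon$ where the paper just asserts it, which is a harmless (and welcome) bit of extra care.
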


Let \(f: A\to \RR\) be so that \(\iprod{f, \one_A}_{H|A} = 0\).
\[ \abs{\iprod{\tilde{f},\one_G}_G} = \abs{\Ex[v \in V]{\tilde{f}(v)}} = \nu_G(A) \abs{\sum_{v \in A} \frac{\nu_G(v)}{\nu_G(A)} f(v)} \leq (1+\varepsilon)p \abs{\sum_{v \in A}\frac{\nu_G(v)}{\nu_G(A)} f(v)} \]
By subtracting \(0=\frac{1}{2}\Ex[v \sim \nu_{H | A}]{f(v)} = \sum_{v \in A}\nu_H(v) f(v)\), we get:
\[\leq (1+\varepsilon)p \abs{\sum_{v \in A}\left(\frac{\nu_G(v)}{\nu_G(A)} -\nu_{H|A}(v) \right) f(v)} \]
\[= (1+\varepsilon)p \abs{\sum_{v \in A} \nu_{H|A}(v) \left (\frac{\nu_G(v)}{\nu_H(v) \nu_G(A)} - 1 \right) f(v)}. \]
By \pref{claim:probabilities-in-induced-graph} it holds in particular that \(\left (\frac{\nu_G(v)}{\nu_{H|A}(v) \nu_G(A)} - 1 \right)^2 \leq 16 \varepsilon\) (for small enough \(\varepsilon\)). By Cauchy-Schwartz this is less or equal to
\[ (1+\varepsilon)p \sqrt{\sum_{v \in A}\nu_{H|A}(v) 16\varepsilon} \sqrt{\sum_{v \in A} \nu_{H|A}(v) f(v)^2} \leq 4(1+\varepsilon)\varepsilon^{0.5} p \sqrt{\iprod{f,f}_{H|A}}.\]
In conclusion we have that
\[\iprod{\tilde{f},\one_G}_G^2 \leq 17\varepsilon p^2 \iprod{f,f}_{H|A},\]
for small enough \(\varepsilon\).

\end{proof}

It remains to prove the claims stated in the above proof.
\begin{proof}[Proof of \pref{claim:common-events}]
For the first item, we note that \(\nu_G(A) = \sum_{v \in V}\one_{v \in A} \prob{v}\). This is a sum of \(n\)-independent random variables that are bounded between \(0\) and \(\frac{r}{n}\). Its mean is \(p\). By Hoeffding's inequality, the event that 
\(|\Prob[v\in G]{v \in A} - p| \geq \varepsilon p\) is bounded by 
\(2e^{-2\varepsilon^2p^2c^{-2}n}\).

For the second item, fix  \(v\in A\). The sum of edge weights \(\sum_{u \in B}\nu_G(vu) = \sum_{u \in V} \one_{u \in B} \nu_G(vu)\). This is a sum of \(\geq D\) random variables, that are bounded between \(0\) and \(\frac{r}{|E|}\). Its mean is \(p \cdot \sum_{u \in V} \nu_G(vu)\). By Hoeffding's inequality
\[ \prob{|\sum_{u \in B}\nu_G(vu)- p \cdot \sum_{u \in V} \nu_G(vu)| < \varepsilon p \cdot \sum_{u \in V} \nu_G(vu)} \leq 2 exp \left (-2\varepsilon ^2 p^2 \left (\sum_{u \in V} \nu_G(vu) \right)^2 / \sum_{u \in V} \nu_G(vu)^2 \right ).\]
By the assumption that \(\nu_G(uv) \leq \frac{r}{|E|}\) hence the denominator is bounded by \(\frac{r}{|E|} \sum_{u \in V} \nu_G(vu)\) and 
\[ (*) \leq exp \left (-2\varepsilon ^2 p^2r^{-1} |E| \left (\sum_{u \in V} \nu_G(vu) \right) \right ).\]
As \(\left (\sum_{u \in V} \nu_G(vu) \right) =2 \nu_G(v) \geq \frac{2}{rn}\) this is
\[\leq exp \left (-2\varepsilon ^2 p^2r^{-2} \frac{|E|}{n} \right ).\]
The degree of every vertex is at least \(D\) and in particular \(\frac{|E|}{n} \geq D\), hence
\[ \prob{|\sum_{u \in B}\nu_G(vu)- p \cdot \sum_{u \in V} \nu_G(vu)| \geq \varepsilon p \cdot \sum_{u \in V} \nu_G(vu) } \leq e^{-\varepsilon^2 p^2 r^{-2} D}.\]
Finally, the second item follows from a union bound on \(n\) vertices.
\end{proof}

The proof of \pref{claim:non-negative-function-estimate} relies on \pref{claim:probabilities-in-induced-graph} so we prove \pref{claim:probabilities-in-induced-graph} first.
\begin{proof}[Proof of \pref{claim:probabilities-in-induced-graph}]
For the right-hand inequality in \pref{eq:probabilities-in-induced-graph} observe that
\[\nu_{H|A}(v) = \frac{\sum_{u \in B}\nu_G(vu)}{\sum_{v' \in A}\sum_{u \in B}\nu_G(v'u)}.\]
When \(\mathcal{E}\) doesn't occur, then \(\sum_{u \in B}\nu_G(vu) \leq (1+\varepsilon) p \sum_{u \in V}\nu_G(vu) = 2(1+\varepsilon)p \nu_G(v)\). When \(\mathcal{E}\) doesn't occur, we can upper bound the denominator as well:
\[\sum_{v' \in A}\sum_{u \in B}\nu_G(v'u) \geq \sum_{v' \in A}p(1-\varepsilon)\sum_{u \in V}\nu_G(v'u) =\sum_{v' \in A}2p(1-\varepsilon)\nu_G(v') = 2p(1-\varepsilon)\nu_G(A).\]
Thus \(\nu_H({v}) \leq \frac{2p(1+\varepsilon)}{2p(1-\varepsilon)}\frac{\nu_G(v)}{\nu_G(A)} \leq (1+3\varepsilon)\frac{\nu_G(v)}{\nu_G(A)}\) when \(\varepsilon\) is small enough.
The left-hand inequality follows similarly.
\end{proof}

\begin{proof}[Proof of \pref{claim:non-negative-function-estimate}]
By \pref{claim:probabilities-in-induced-graph} we get that
\[\Ex[v \sim \nu_{H | A}]{f(v)} = \sum_{v \in A}\nu_{H|A}(v) f(v) \overset{\eqref{eq:probabilities-in-induced-graph}}{\leq} \]
\[ \frac{(1+3\varepsilon)}{\nu_G (A)} \sum_{v \in \nu_G}\nu_G(v) \tilde{f}(v) = \frac{(1+3\varepsilon)}{\nu_G (A)} \Ex[v \sim \nu_G]{\tilde{f}(v)} \leq \frac{(1+3\varepsilon)}{p(1-\varepsilon)}\Ex[v \sim G]{\tilde{f}(v)} \leq \frac{(1+5\varepsilon)}{p}\Ex[v \in V]{\tilde{f}(v)}\]
and \eqref{eq:positive-functions-estimate-1} holds. The second inequality is because when \(\mathcal{E}\) doesn't occur then \(\nu_G(A) \geq p(1-\varepsilon)\). The opposite inequality
 \eqref{eq:positive-functions-estimate-2} follows similarly.
\end{proof}

\begin{proof}[Proof of \pref{claim:comparison-of-adjacencies}]
Fix \(v \in B\). We explicitly write down
\[A_G \tilde{f}(v) = \sum_{u \in A}\frac{\nu_G(vu)}{2\nu_G(v)}f(u)\]
and
\[A_H f (v) = \sum_{u \in A}\frac{\nu_H(vu)}{\nu_{H|B}(v)}f(u) = \sum_{u \in A}\frac{\nu_G(vu)}{\nu_{H|B}(v) \cdot K }f(u)\]
where \(K = \sum_{e \in E(A,B)}\nu_G(e') \) is the \(\nu_G\)-weight of edges in \(H\). Thus we need to compare \(\nu_G(v)\) and \(\nu_{H|B}(v) K\).
As
\[ K \nu_{H|B}(v) = K\sum_{u \in A} \nu_H(vu) = K\sum_{u \in A} \frac{1}{K}\nu_G(vu) = \sum_{u \in A} \nu_G(vu).\]
When \(\mathcal{E}\) doesn't occur it holds that
\[K \nu_{H|B}(v) \geq (1-\varepsilon)p \sum_{u \in V}\nu_G(vu) = 2(1-\varepsilon)p \nu_G(v).\]
Hence
\[\left (\widetilde{A_H f}(v)\right )^2 =  (A_H f(v))^2 =  \left (\frac{1}{K \nu_{H|B}(v)} \sum_{u \in A} \nu_G(uv)f(u) \right )^2 \leq \]
\[\left (\frac{1}{(1-\varepsilon)p} \right )^2 \left (\frac{1}{2\nu_G(v)}\sum_{u \in A} \nu_G(uv)f(u) \right )^2  = \frac{1}{(1-\varepsilon)^2p^2} A_G \tilde{f}(v).\]

\end{proof}

\subsection{sparsifying a dense expander}
Let \(H = (V,E)\) be any graph and let \(p \in (0,1)\). We define a random sparsification \(H' = (V,E')\) by independently inserting every edge to \(E'\) with probability \(p\).

We show that if \(H\) is an expander where every vertex has degree \(\Omega(log n)\) and the measure is ``almost'' uniform, then with high probability the graph \(H'\) will also be an expander, albeit with worse parameters.

\restatelemma{lem:second-step}

\begin{proof}[Proof of \pref{lem:second-step}]
Our Lemma shall follow from this claim:
\begin{claim}\label{claim:eml-holds-whp}
With probability at least \(1- \badprob\) over the choice of \(H'\), for every \(S \subseteq A, T \subseteq B\) the following inequality holds:
\begin{equation}
    \label{eq:eml}
    \left |\prob{S}\prob{T} - \prob{E_{H'}(S,T)} \right| \leq \varepsilon \sqrt{\prob{S}\prob{T}},
\end{equation}
where all probabilities in \pref{eq:eml} are with respect to the edge weights of \(H'\).
\end{claim}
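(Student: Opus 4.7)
The plan is to prove \pref{claim:eml-holds-whp} by combining the bipartite expander mixing lemma for the source graph $H$ (\pref{lem:bipartite-EML}) with Hoeffding-style concentration (\pref{thm:hoeffding}) applied to the independent edge-sampling that produces $H'$. The target slack $\varepsilon\sqrt{\Pr[S]\Pr[T]}$ has to absorb two sources of error: the EML slack $\lambda\sqrt{\nu_H(S)\nu_H(T)}$ inherited from $H$ (which is harmless thanks to the hypothesis $\lambda<(1-0.25\varepsilon)\varepsilon^3p^3$), and a sampling error coming from passing from $H$ to $H'$. The form of $\mu$ in the theorem statement is precisely the shrunk gap that remains after those two errors are subtracted from $\varepsilon^3 p^3$.

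Concretely, I would begin with two ``global'' concentration events. The total sampled weight $\nu_H(E')=\sum_{e\in E}\mathbf{1}_{e\in E'}\nu_H(e)$ is a sum of independent variables each bounded by $r/|E|$ with mean $p$, and since every vertex has degree at least $D$ we have $|E|\ge Dn/2$, so Hoeffding yields $|\nu_H(E')-p|=O(\varepsilon p)$ outside an event of probability $\exp(-\Omega(\varepsilon^2 p^2 D/r^2))$. An analogous Hoeffding bound applied at each single vertex $v$ controls the sampled weighted degree $\sum_{u\sim v}\mathbf{1}_{(v,u)\in E'}\nu_H(v,u)$ around $p\nu_H(v)$; a union bound over the $n$ vertices pins down the marginals $\nu_{H'}(v)$ needed to translate between $\nu_H$-quantities and $\nu_{H'}$-quantities.

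For any fixed pair $(S,T)$, the random variable $\nu_H(E'(S,T))=\sum_{e\in E(S,T)}\mathbf{1}_{e\in E'}\nu_H(e)$ is again an independent sum with mean $p\cdot\nu_H(E(S,T))$. Combining Hoeffding on this sum with the EML for $H$, namely $|\nu_H(E(S,T))-\nu_H(S)\nu_H(T)|\le\lambda\sqrt{\nu_H(S)\nu_H(T)}$, and dividing by $\nu_H(E')\approx p$, produces the desired EML-type bound for $H'$ on that single pair. Every small multiplicative loss appearing in the chain of approximations is consolidated into the constant $\mu$ of the theorem statement.

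The main obstacle, and the reason the target probability carries an $n^2$ prefactor, is making the bound hold \emph{simultaneously} for all $(S,T)$: a naive union bound over $2^{2n}$ subset pairs is hopeless. I would reduce to $O(n^2)$ ``atomic'' events by controlling every vertex-pair weighted sampled quantity $\nu_H(E'(\{u\},\{v\}))$ by a Hoeffding bound, and then lift the inequality to arbitrary $(S,T)$ either by summing the atomic inequalities while carefully tracking the $\sqrt{\Pr[S]\Pr[T]}$-type variance term, or by a net-discretization argument in the spirit of the proof of the Bilu--Linial converse EML (\pref{thm:converse-bipartite-eml}), where it suffices to verify the condition on an $O(n^2)$-sized net of pairs. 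Once the atomic concentrations are assembled with the global events from the first step and the EML on $H$, the total failure probability is at most $5n^2 e^{-\mu D}$ as claimed, and the precondition $\mu D>2\log n+5$ ensures this is a meaningful (sub-constant) bound.
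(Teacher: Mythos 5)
Your high-level outline of the ingredients — concentration of $\nu_H(E')$ around $p$, concentration of vertex marginals, and a per-pair Hoeffding bound on $\nu_H(E'(S,T))$ combined with the expander mixing lemma in $H$ — matches the paper's \pref{claim:common-events}, \pref{claim:vertex-probabilities-are-similar} and part of \pref{claim:b-s-t-is-small}. However, your strategy for handling the simultaneous bound over all pairs $(S,T)$ is where the argument breaks down, and you are missing the two ideas that actually make the paper's proof work.

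First, your dismissal of the union bound over all $2^{2n}$ subset pairs as ``hopeless'' is mistaken, and your proposed replacement does not work. For a fixed vertex pair $(u,v)$ the quantity $\nu_H(E'(\{u\},\{v\}))$ is a single Bernoulli variable (it equals $\nu_H(uv)$ with probability $p$, else $0$), so Hoeffding gives nothing, and summing such ``atomic'' inequalities over $S\times T$ cannot reproduce the square-root error term $\varepsilon\sqrt{\Pr[S]\Pr[T]}$ — that cancellation is exactly what a pointwise/per-edge bound cannot see. Nor is a Bilu--Linial-style dyadic net an available shortcut here: in this paper \pref{thm:converse-bipartite-eml} is applied \emph{after} the claim, to turn the EML-type inequality into a spectral bound, and its proof technique (dyadic decomposition of a test vector) does not reduce ``all $(S,T)$'' to a polynomial-size family of set pairs.

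What the paper actually does is a full union bound over all $(S,T)$, made feasible by two observations you do not have. (i) \emph{Small sets need no probability at all}: conditioned on the global event $\neg R$ (total sampled weight and all vertex marginals close to their means), if $\min(\nu_{H'}(S),\nu_{H'}(T))\leq\varepsilon^3p^3$ then \eqref{eq:eml} holds \emph{deterministically}, just from the EML in $H$, because even the worst case $\nu_{H'}(E_{H'}(S,T))\leq(1+\varepsilon)\nu_H(E_H(S,T))/p$ is absorbed by the factor $\sqrt{\nu_{H'}(S)\nu_{H'}(T)}\leq\varepsilon^{1.5}p^{1.5}$ on the right-hand side (this is \pref{claim:b-s-t-is-0-when-s-t-are-small}). (ii) \emph{Large sets see many edges}: when $\nu_{H'}(S),\nu_{H'}(T)\geq\varepsilon^3p^3$, the EML in $H$ together with the minimum degree $D$ and the near-uniform weights gives $|E_H(S,T)|\geq\mu D(|S|+|T|)$, so the per-pair Hoeffding failure probability is $e^{-\Omega(\mu D(|S|+|T|))}$, which dominates the $\binom{n}{|S|}\binom{n}{|T|}\leq n^{|S|+|T|}=e^{(|S|+|T|)\log n}$ count of such pairs precisely because $\mu D>2\log n+5$ (this is \pref{claim:b-s-t-is-small}). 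The $n^2$ prefactor in $\badprob$ then emerges from summing the geometric series $\sum_{j\geq1}e^{j(\log n-\mu D)}$, not from any reduction to $O(n^2)$ events. Without the small/large dichotomy and the edge-count lower bound, your sketch cannot close the union bound.
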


\pref{claim:eml-holds-whp} implies \pref{lem:second-step} via the inverse expander mixing lemma by \cite{BiluL2006}.
\restatetheorem{thm:converse-bipartite-eml}
\end{proof}

We continue to prove \pref{claim:eml-holds-whp}. To do so, we introduce notation that differs between the different probability spaces we refer to. For any graph \(G = (V(G),E(G))\) (be it either \(H\) or one of the random \(H'\)) and a set \(X \subseteq E(G)\) we denote its edge weights by \(\nu_G(X) = \sum_{e \in X}\nu_G(e)\). Likewise, for \(X \subseteq V(G)\) we denote by \(\nu(X)_G = \frac{1}{2}\sum_{v \in X}\sum_{u \in G}\nu_G(vu)\), the weight of \(X\) induced by the edge weights.

The notation \(\prob{A}\) denotes the probability that an event \(A\) occurs with respect to the random choice of \(H'\). With this notation, \eqref{eq:eml} translates to
\begin{equation} \label{eq:eml-with-nu}
        \left |\nu_{H'}(S)\nu_{H'}(T) - \nu_{H'}(E_{H'}(S,T)) \right| \leq \varepsilon \sqrt{\nu_{H'}(S)\nu_{H'}(T)}.
\end{equation}

\begin{proof}[Proof of \pref{claim:eml-holds-whp}]
For \(S \subseteq A, T \subseteq B\), denote by \(B_{S,T}\) the event where \eqref{eq:eml-with-nu} doesn't hold for the pair \(S,T\), and denote by \(B = \bigcup_{S \subseteq A, T \subseteq B} B_{S,T}\). We need to show that \(\prob{B} \leq \badprob\).

Denote by \(R\) the ``bad'' event where
\begin{enumerate}
    \item Either \(\nu_H(E)<(1-0.25\varepsilon)p\) or
    \item There exists some set \(S \subseteq A \cup B\) so that one of the the following inequalities hold:
\begin{gather} \label{eq:basic-prob-chernoff-2}
     \nu_{H'}(S) \leq (1+0.25\varepsilon)\nu_H(S) \text{ or } \nu_{H}(S) \leq (1+0.25\varepsilon)\nu_{H'}(S).
\end{gather}
\end{enumerate}
By a union bound it holds that
\[\prob{B} \leq \prob{R} + \sum_{S,T} \prob{B_{S,T} \ve \neg R}.\]

This claim bounds \(\prob{R}\).
\begin{claim}  \label{claim:vertex-probabilities-are-similar}
\(R\) occurs with probability at-most \(\badprobR\).
\end{claim}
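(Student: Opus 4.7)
The plan is to union bound $R$ into two manageable pieces. First, the event in item~1 (that $\nu_H(E')$ is too small) will be controlled by a direct Hoeffding estimate: writing $\nu_H(E') = \sum_{e \in E}\nu_H(e)X_e$ with $X_e \sim \mathrm{Bern}(p)$ independent, the sum has mean $p$, each term lies in $[0, r/|E|]$, and $|E| \geq Dn/2$ from the minimum-degree assumption, so the one-sided deviation by $\varepsilon/4$ occurs with probability far smaller than the single contribution it makes to $\badprobR$. Second, the event in item~2 is formulated over all subsets $S \subseteq A \cup B$, but the key observation is that if the per-vertex two-sided bounds $(1-\varepsilon/4)\nu_H(v) \leq \nu_{H'}(v) \leq (1+\varepsilon/4)\nu_H(v)$ hold for \emph{every} $v$, then summing over $v \in S$ immediately yields the same multiplicative closeness for every $S$. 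This reduces the union bound from $2^{|A\cup B|}$ sets to at most $2n$ singletons, which accounts for the $(2n+1)$ factor appearing in $\badprobR$.

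For a fixed vertex $v$, I would write $\nu_{H'}(v) = Y_v/(2\nu_H(E'))$ with $Y_v = \sum_{u:\, vu \in E}\nu_H(vu)X_{vu}$ and $\mathbb{E}[Y_v] = 2p\nu_H(v)$. The scale of this weighted sum is controlled by $\sum_u \nu_H(vu)^2 \leq (r/|E|)\cdot 2\nu_H(v)$, and combining with $\nu_H(v) \geq 1/(rn)$ and $|E|\geq Dn/2$, a Hoeffding or Bernstein bound produces a tail estimate of the form $\exp(-c\varepsilon^{\alpha} p^{\beta} r^{-2}D)$ for suitable small universal constants. Once both $Y_v$ (around $2p\nu_H(v)$) and $\nu_H(E')$ (around $p$) are known to be concentrated at relative error $\varepsilon/8$, their ratio $\nu_{H'}(v)$ is concentrated around $\nu_H(v)$ at relative error at most $\varepsilon/4$, which is exactly the per-vertex statement I need. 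Taking a union bound over the $\leq 2n$ vertices together with the single edge-weight event then yields the stated bound $(2n+1)e^{-0.01\varepsilon p^{3}r^{-2}D}$.

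The main subtleties I expect are twofold: cleanly propagating the concentration through the ratio $Y_v/(2\nu_H(E'))$ so that slightly tighter numerator and denominator bounds combine into the final per-vertex bound, and carefully articulating the otherwise routine reduction from per-set to per-vertex concentration. The numerical bookkeeping to fit under the (deliberately weak) exponent $-0.01\varepsilon p^{3}r^{-2}D$ should be comfortable, since standard Chernoff or Bernstein bounds comfortably yield exponents of order $-\Omega(\varepsilon^2 p^2 r^{-2}D)$, which dominate what is required in the parameter regime of interest.
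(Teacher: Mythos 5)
Your plan is essentially the paper's proof: bound the total edge-weight deviation directly, reduce the all-subsets requirement in item~2 to per-vertex concentration via the linearity $\nu_{H'}(S)=\sum_{v\in S}\nu_{H'}(v)$, concentrate each $\nu_{H'}(v)$ individually, and union-bound over the $n$ vertices together with the single edge-weight event. The paper routes both concentration estimates through the auxiliary Claim~\ref{claim:second-step-set-probabilities} (applied once to $E$ and once to each vertex star $F$) instead of re-deriving the Hoeffding-plus-ratio argument inline as you do, but the content is the same.
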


These claims bounds the probability of \(\prob{B_{S,T} \ve \neg R}\).
\begin{claim} \label{claim:b-s-t-is-0-when-s-t-are-small}
Let \(S \subseteq A,T \subseteq B\) so that \(min(\nu_{H'}(S),\nu_{H'}(T)) \leq \varepsilon^3 p^3\). Then
\(\prob{B_{S,T} \ve \neg R} =0.\)
\end{claim}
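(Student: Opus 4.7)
The plan is to establish the EML-style inequality \eqref{eq:eml-with-nu} for the given $S,T$ \emph{deterministically} on the event $\neg R$, by bounding each of the two quantities inside the absolute value on the left-hand side using only the bipartite structure. Without loss of generality assume $\nu_{H'}(S) \leq \varepsilon^3 p^3$; the other case is symmetric. Since $B$ carries total $\nu_{H'}$-weight $1$ in the bipartite convention, one has $\nu_{H'}(S)\nu_{H'}(T) \leq \nu_{H'}(S) \leq \varepsilon^3 p^3$. Since the edges in $E_{H'}(S,T)$ are a subset of the edges incident to $S$, their total $\nu_{H'}$-weight is at most $\nu_{H'}(S)\leq \varepsilon^3 p^3$ as well. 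Consequently the left-hand side of \eqref{eq:eml-with-nu} is at most $\nu_{H'}(S)\leq\varepsilon^3 p^3$.

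I would then show that this upper bound is dominated by the right-hand side $\varepsilon\sqrt{\nu_{H'}(S)\nu_{H'}(T)}$ by splitting on the size of $\nu_{H'}(T)$. If $\nu_{H'}(T)\geq \varepsilon^{-2}\nu_{H'}(S)$, then $\varepsilon\sqrt{\nu_{H'}(S)\nu_{H'}(T)}\geq \varepsilon\sqrt{\nu_{H'}(S)\cdot\varepsilon^{-2}\nu_{H'}(S)}=\nu_{H'}(S)$, and the inequality is immediate. In the complementary case, both $S$ and $T$ are simultaneously very small. Here I would refine the naive bound on $\nu_{H'}(E_{H'}(S,T))$ by first transferring to the $H$-weights via $\neg R$ (which keeps $\nu_H(S),\nu_H(T)$ within a $(1+0.25\varepsilon)$ factor of the corresponding $\nu_{H'}$-weights), then applying the bipartite EML \pref{lem:bipartite-EML} to $H$ itself (which is a $\lambda$-expander with $\lambda<(1-0.25\varepsilon)\varepsilon^3 p^3$) to get a tight comparison between $\nu_H(E_H(S,T))$ and $\nu_H(S)\nu_H(T)$, and finally translating back to $\nu_{H'}(E_{H'}(S,T))$ using the trivial bound $\nu_H(E_{H'}(S,T))\leq \nu_H(E_H(S,T))$ and the lower bound $\nu_H(E')\geq(1-0.25\varepsilon)p$ provided by $\neg R$.

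The main obstacle is precisely this second case, where both $\nu_{H'}(S),\nu_{H'}(T)$ are of the same small order and the naive bound loses a factor of $\varepsilon^{-1}$ compared to what is needed. The delicate step is converting the random quantity $\nu_{H'}(E_{H'}(S,T))$, which depends on the choice of $E'$, into something controlled deterministically by $\neg R$; I expect this to hinge on writing $\nu_{H'}(E_{H'}(S,T))=\nu_H(E_{H'}(S,T))/\nu_H(E')$, dominating the numerator by $\nu_H(E_H(S,T))$, and then invoking the $H$-EML together with the denominator bound from $\neg R$. The constants $\varepsilon$, $p$, $\lambda$ and the threshold $\varepsilon^3 p^3$ must be balanced carefully so that the $\lambda$-error from the $H$-EML is absorbed into the $\varepsilon\sqrt{\nu_{H'}(S)\nu_{H'}(T)}$ on the right-hand side.
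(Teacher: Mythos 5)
Your proposal is correct and follows essentially the same route as the paper: the hard case is handled by transferring to the $\nu_H$-weights via $\neg R$, applying the expander mixing lemma on $H$ (exploiting the hypothesis $\lambda<(1-0.25\varepsilon)\varepsilon^3 p^3$), and translating back using $\nu_H(E_{H'}(S,T))\leq \nu_H(E_H(S,T))$ and the lower bound $\nu_H(E')\geq(1-0.25\varepsilon)p$ from $\neg R$. The only (cosmetic) difference is the case split: you split on whether $\nu_{H'}(T)\geq\varepsilon^{-2}\nu_{H'}(S)$ so that the naive bound $\mathrm{LHS}\leq\nu_{H'}(S)$ suffices in the first branch, while the paper splits on the sign of $\nu_{H'}(E_{H'}(S,T))-\nu_{H'}(S)\nu_{H'}(T)$; both reduce to the same hard case and use the same tools with the same constant balancing.
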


\begin{claim} \label{claim:b-s-t-is-small}
Let \(S \subseteq A,T \subseteq B\) so that \(\nu_{H'}(S),\nu_{H'}(T) \geq \varepsilon^3 p^3\). Then
\(\prob{B_{S,T} \ve \neg R} \leq e^{-\mu D(|S|+|T|)}\)\footnote{recall that \(\mu = \badmu\).}.
\end{claim}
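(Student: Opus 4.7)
The plan is to show that the event \(B_{S,T} \wedge \neg R\) forces a large-deviation event for \(X := \nu_H(E'(S,T))\) whose probability Hoeffding's inequality bounds by \(e^{-\mu D(|S|+|T|)}\). The two main ingredients are the bipartite expander mixing lemma on \(H\) (which controls \(\E X = p\,\nu_H(E(S,T))\)) and the near-uniformity of the edge weights together with the minimum-degree assumption (which controls the Hoeffding variance-proxy).

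On \(\neg R\) we have \(\alpha := \nu_H(E') \in [(1-0.25\varepsilon)p,(1+0.25\varepsilon)p]\), and \(\nu_{H'}(S),\nu_{H'}(T)\) are within a \((1+0.25\varepsilon)\)-multiplicative factor of \(\nu_H(S),\nu_H(T)\). Writing \(q := \nu_H(S)\nu_H(T)\) and \(q' := \nu_{H'}(S)\nu_{H'}(T)\), the hypothesis \(\nu_{H'}(S),\nu_{H'}(T)\geq\varepsilon^3 p^3\) then gives \(\sqrt{q} \geq (1-0.25\varepsilon)\varepsilon^3 p^3\). Since \(\nu_{H'}(E_{H'}(S,T)) = X/\alpha\), the event \(B_{S,T}\) reads \(|X - \alpha q'| > \alpha\varepsilon\sqrt{q'}\). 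Invoking the bipartite expander mixing lemma for \(H\) to obtain \(|\nu_H(E(S,T)) - q|\leq \lambda\sqrt{q}\) and applying a triangle inequality shows that, on \(B_{S,T} \wedge \neg R\),
\[|X - \E X| \;\geq\; \alpha\varepsilon\sqrt{q'} - p\lambda\sqrt{q} - O(\varepsilon p q) \;=:\; t_0.\]
By the standing assumption \(\lambda < (1-0.25\varepsilon)\varepsilon^3 p^3\) and the lower bound on \(\sqrt{q}\), one has \(t_0 \geq p\sqrt{q}\bigl((1-0.25\varepsilon)^2\varepsilon - \lambda\bigr) - O(\varepsilon p q) > 0\). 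Hence \(\prob{B_{S,T} \wedge \neg R} \leq \prob{|X - \E X| > t_0}\).

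I would then apply Hoeffding's inequality to \(X = \sum_{e\in E(S,T)}\nu_H(e)\one_{e\in E'}\), a sum of independent random variables bounded in \([0, r/|E|]\):
\[\prob{|X - \E X| > t_0}\leq 2\exp\!\bigl(-2t_0^2/V\bigr), \qquad V := \sum_{e\in E(S,T)}\nu_H(e)^2.\]
To bound \(V\): per-vertex one has \(\sum_{u\sim v}\nu_H(uv)^2\leq (r/|E|)\nu_H(v)\); combining with the weight assumptions and the minimum-degree assumption (which forces \(|E|\geq nD/2\)) and using \(E(S,T)\subseteq E(S,B)\cap E(A,T)\) (so the sum can be taken over the smaller of \(S\), \(T\)) yields a bound of the shape \(V \leq r^c(|S|+|T|)/(n^2 D)\) for an appropriate constant \(c\).

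Substituting the upper bound on \(V\) and the lower bound on \(t_0\) (using \(\sqrt{q}\geq (1-0.25\varepsilon)\varepsilon^3 p^3\)) into the Hoeffding exponent, and simplifying, produces the claimed exponent \(-\mu D(|S|+|T|)\). The hard part is the careful book-keeping of the various \((1\pm 0.25\varepsilon)\) multiplicative factors inherited from \(\neg R\): the two separate \((1-0.25\varepsilon)\) factors in \(\mu\) correspond exactly to the two places where \(\neg R\)-slack is absorbed --- once from \(\alpha\geq (1-0.25\varepsilon)p\) and once from the derived bound \(\sqrt{q}\geq (1-0.25\varepsilon)\varepsilon^3 p^3\). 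Tracing how the \(\sqrt{q}\)-factors from \(t_0\) combine with the \(V\)-bound to produce exactly the factor \(\varepsilon^{3/2}p^{3/2}\) appearing in \(\mu\) is a short algebraic calculation that requires no further ideas.
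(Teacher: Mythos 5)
Your approach is the same circle of ideas as the paper's — Hoeffding on the sampled edge weight in $E(S,T)$, the expander mixing lemma on $H$, and absorption of $\neg R$-slack — but organized differently. The paper first proves a lower bound $|E_H(S,T)| \geq \mu D(|S|+|T|)$ (using the \emph{lower} direction of the EML, the edge-weight bounds, and the minimum degree), then applies the relative concentration of \pref{claim:second-step-set-probabilities} to $F = E_H(S,T)$ (yielding an exponent proportional to $|E_H(S,T)|$), and finally combines via a three-term triangle inequality. You instead apply Hoeffding directly to $X = \nu_H(E'(S,T))$ and push the whole comparison into the deviation threshold $t_0$; that part of your outline is sound.

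The step that doesn't go through as written is the variance bound $V \leq r^c(|S|+|T|)/(n^2D)$. After $V \leq (r/|E|)\nu_H(E_H(S,T))$ one needs an upper bound on $\nu_H(E_H(S,T))$ (equivalently on $\nu_H(S)$ or $\nu_H(T)$) in terms of $|S|+|T|$, and this requires an upper bound on vertex weights or on the maximum degree — neither is among the hypotheses of \pref{lem:second-step}, which only assume edge weights in $[1/(r|E|), r/|E|]$ and $\min_v \deg(v) \geq D$. The paper's lower-bound-on-$|E_H(S,T)|$ route never needs such an upper bound. Your route can be repaired — e.g.\ use $\nu_H(E_H(S,T)) \leq q + \lambda\sqrt{q} \leq 2q$ (EML upper direction together with $\lambda \leq \sqrt{q}$, which follows from the constraint $\nu_{H'}(S),\nu_{H'}(T) \geq \varepsilon^3p^3$ and $\neg R$) to get $V \leq 2rq/|E|$, whereupon $t_0^2/V \gtrsim p^2\varepsilon^2|E|/r \gtrsim p^2\varepsilon^2 D(|S|+|T|)/r$ via $|E| \geq nD/2 \geq D(|S|+|T|)/2$ — but this is a different calculation from the one you sketched, and you should not expect the "book-keeping" you deferred to deliver the $V$ bound you stated.
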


We combine the above for our bound.
\[\prob{B} \leq \prob{R} + \sum_{S,T} \prob{B_{S,T} \ve \neg R} \leq \]
\[\badprobR + \sum_{S,T : \;\nu_{H'}(S), \nu_{H'}(T) \geq \varepsilon^2p }e^{-\mu D(|S|+|T|)}  \leq \]
\[\badprobR + \left (\sum_{j=1}^{n} \binom{n}{j}e^{-\mu D j} \right )^2 \leq \]
By using the crude estimate \(\binom{n}{j} \leq n^j = e^{j \log n}\) we have
\[\badprobR + \left (\sum_{j=1}^{n} e^{j \log n-\mu D j} \right )^2 \leq \]
\[\badprobR + \frac{e^{2 (\log n - \mu D)}}{(1-e^{\log n - \mu D})^2} \leq \badprob \]
as required. Note that in the last inequality we used the fact that when \(\mu D > 2 \log n +5\) to bound the denominator by a constant close to \(1\) (say \(0.99\)).
\end{proof}

It remains to prove \pref{claim:vertex-probabilities-are-similar}, \pref{claim:b-s-t-is-0-when-s-t-are-small} and \pref{claim:b-s-t-is-small}.
To do so, we show that the probabilities of of sets in a typical \(H'\) are similar those of \(H\).
\begin{claim} \label{claim:second-step-set-probabilities}
Let \(\eta > 0\). For every set of edges \(F \subseteq E\), denote the number of remaining edges \(H'\) by \(F' = E' \cap F\).
\begin{equation}\label{eq:chernoff-basic}
    \Prob[H']{\abs{\nu_{H}(F')-p \nu_H(F)} \geq \eta p\nu_H(F)} < 2e^{-2p^2\eta^2r^{-2}|F|}.
\end{equation}
Furthermore, with probability at least \(1-4e^{-0.07 p\eta^2r^{-2}|F|}\) it holds that
\begin{equation} \label{eq:basic-prob-chernoff}
    (1-\eta)\nu_{H}(F) \leq \nu_{H'}(F) \leq (1+\eta)\nu_{H}(F).
\end{equation}
\end{claim}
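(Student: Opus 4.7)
The plan is to establish \eqref{eq:chernoff-basic} as a direct consequence of Hoeffding's inequality (\pref{thm:hoeffding}), and then to deduce \eqref{eq:basic-prob-chernoff} from it by writing $\nu_{H'}(F)$ as the ratio $\nu_H(F')/\nu_H(E')$ and controlling numerator and denominator separately.

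For \eqref{eq:chernoff-basic}, I would observe that $\nu_H(F') = \sum_{e \in F} \one_{e \in E'}\, \nu_H(e)$ is a sum of $|F|$ independent random variables, the $e$-th of which is supported in $[0,\nu_H(e)] \subseteq [0, r/|E|]$ and has mean $p\nu_H(e)$. Applying \pref{thm:hoeffding} with total mean $p\nu_H(F)$ yields
\[\prob{\abs{\nu_H(F') - p\nu_H(F)} \geq \eta p \nu_H(F)} < 2\exp\!\Big({-\frac{2\eta^2 p^2 \nu_H(F)^2}{\sum_{e \in F} \nu_H(e)^2}}\Big).\]
The hypotheses $\frac{1}{r|E|} \leq \nu_H(e) \leq \frac{r}{|E|}$ give the two bounds $\sum_{e \in F} \nu_H(e)^2 \leq \frac{r}{|E|} \nu_H(F)$ (factoring out the maximum weight in one copy of the sum) and $\nu_H(F) \geq |F|/(r|E|)$ (summing the minimum weight). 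Inserting both collapses the exponent to $-2\eta^2 p^2 r^{-2} |F|$, which is precisely the bound claimed.

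For \eqref{eq:basic-prob-chernoff}, the key observation is that $\nu_{H'}$ is the restriction of $\nu_H$ to $E'$ renormalized, so $\nu_{H'}(F) = \nu_H(F')/\nu_H(E')$. I would apply the just-proven \eqref{eq:chernoff-basic} with parameter $\eta' = \eta/3$ twice — once to the set $F$ and once to the full set $E$ — and take a union bound. Since $|F| \leq |E|$, the probability of failure is at most $4\exp(-2(\eta/3)^2 p^2 r^{-2} |F|)$. When both events hold, $\nu_{H'}(F)/\nu_H(F)$ lies in $\big[\tfrac{1-\eta/3}{1+\eta/3}, \tfrac{1+\eta/3}{1-\eta/3}\big]$, which an elementary check shows is contained in $[1-\eta, 1+\eta]$ for $\eta \in (0,1]$, yielding \eqref{eq:basic-prob-chernoff}.

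The only subtlety worth flagging is the stated constant: the exponent in \eqref{eq:basic-prob-chernoff} carries a single factor of $p$, not $p^2$. To match this in the regime $p \ll 1$, one should instead rescale $Y_e := (|E|/r)\, \one_{e \in E'}\, \nu_H(e) \in [0,1]$ and invoke the multiplicative Chernoff bound \pref{thm:chernoff}, which yields an exponent of order $-0.3\,\varepsilon^2 \tfrac{|E|}{r} p \nu_H(F) \geq -0.3\,\varepsilon^2 p r^{-2} |F|$, linear in $p$ and hence strictly tighter than Hoeffding for small $p$. Passing this through the same union-bound manipulation with $\eta' = \eta/3$ produces a bound of the stated shape $4 e^{-c p \eta^2 r^{-2} |F|}$; the constant $0.07$ is then a matter of bookkeeping (picking $\eta'$ slightly sharper than $\eta/3$ by solving $(1+\eta')/(1-\eta') = 1+\eta$ exactly). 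I do not expect any conceptual obstacle here — only the choice between Hoeffding (better for $p$ near $1$) and Chernoff (better for small $p$) in deriving the exact exponents the claim records.
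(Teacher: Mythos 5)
Your proof of \eqref{eq:chernoff-basic} is exactly the paper's route: Hoeffding applied to the sum $\nu_H(F') = \sum_{e\in F}\one_{e\in E'}\nu_H(e)$, with the exponent extracted from $\sum_{e\in F}\nu_H(e)^2 \leq \tfrac{r}{|E|}\nu_H(F)$ and $\nu_H(F)\geq |F|/(r|E|)$. (The paper just points to the analogous computation in \pref{claim:common-events} and omits the details.)

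For \eqref{eq:basic-prob-chernoff} you depart from the paper's sketch, and you are right to. The paper says to apply \eqref{eq:chernoff-basic} with $\eta'=\eta/4$ to both $F$ and $E$ and union-bound, but that produces an exponent $-2(\eta/4)^2p^2 r^{-2}|F|$, quadratic in $p$, whereas \eqref{eq:basic-prob-chernoff} claims an exponent linear in $p$ --- a strictly stronger statement when $p<1$, which is exactly the regime in which these claims are deployed (e.g.\ $p\in(0,1/2)$ in \pref{prop:part-of-dense-expander}). Your remedy --- rescale the summands $Y_e = \tfrac{|E|}{r}\one_{e\in E'}\nu_H(e)\in[0,1]$ and invoke the multiplicative Chernoff bound \pref{thm:chernoff} so that the exponent is proportional to the mean $\mu = \tfrac{|E|}{r}p\nu_H(F)\geq p|F|/r^2$, hence linear in $p$ --- is the correct and, as far as I can see, necessary fix to actually reach the stated form of the bound. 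The only loose thread, which you already flag, is the constant: with $\eta'=\eta/3$ and the paper's Chernoff constant $0.3$ you get roughly $0.03$ rather than $0.07$; reaching $0.07$ requires $\eta'$ closer to $\eta/2$, which in turn requires $\eta$ small (so that $\frac{1+\eta'}{1-\eta'}\leq 1+\eta$ still holds). That is consistent with how the claim is used downstream, but it is worth noting that $\eta'=\eta/3$ alone does not reproduce $0.07$. Modulo that constant, your argument is sound and in fact more careful than the paper's sketch.
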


\begin{proof}[Proof sketch of \pref{claim:second-step-set-probabilities}]
The proof of the first item is similar to the proof For the first item in \pref{claim:common-events} and is omitted.

As for the second item, note that \(\nu_{H'}(F') = \frac{\nu_H(F')}{\nu_H(E')}\). Thus when \eqref{eq:chernoff-basic} is true for the entire edge set of \(H'\) and for \(F\) with \(\eta' = \frac{\eta}{4}\), then \eqref{eq:basic-prob-chernoff} follows for \(\eta\). The exact calculations are omitted.
\end{proof}

\begin{proof}[Proof of \pref{claim:vertex-probabilities-are-similar}]
By \pref{claim:second-step-set-probabilities}
\[ \prob{|E'|<(1-\varepsilon)p|E|} \leq e^{-0.3p\varepsilon^2r^{-2}|E|}.\]
Thus the first item in \(R\)'s description doesn't occur except with probability \(e^{-0.3p\varepsilon^2r^{-2}|E|}.\)
We bound the probability that the second item occurs. The probability of a vertex \(v \in V\) is half the probability of \(F = \sett{e \in E}{v \in e}\). By assumption, \(|F| \geq D\). Thus for any \(v \in V\), by \pref{claim:second-step-set-probabilities}, \eqref{eq:basic-prob-chernoff-2} holds for \(S = \set{v}\) with probability \( \geq 1-2e^{-0.3p\varepsilon^2 D}\). By a union bound \eqref{eq:basic-prob-chernoff-2} holds for all \(S=\set{v} \subseteq V\) simultaneously with probability \(1-2ne^{-0.01\varepsilon p^{-3}r^{-2}D}\). When this is true, then \eqref{eq:basic-prob-chernoff-2} holds for all \(S \subseteq V\) since \(\nu_{H'}(S) = \sum_{v \in S}\nu(\set{v})_{H'}\).

A union bound over the first item and the second item (for all singletons) completes the proof.
\end{proof}

\begin{proof}[Proof of \pref{claim:b-s-t-is-0-when-s-t-are-small}]
Fix \(S\subseteq A, T \subseteq B\) so that \(\min (\nu_{H'}(S),\nu_{H'}(T)) \leq \varepsilon^3 p^3\). If
\[|\nu_{H'}(E_{H'}(S,T)) - \nu_{H'}(S) \nu_{H'}(T)|= \nu_{H'}(S) \nu_{H'}(T) - \nu_{H'}(E(S,T))\]
then \(\sqrt{ \nu_{H'}(S) \nu_{H'}(T)} \leq \varepsilon^{1.5} p^{1.5} \leq \varepsilon\), it holds that
    \[\nu_{H'}(S) \nu_{H'}(T) - \nu_{H'}(E(S,T)) \leq \nu_{H'}(S) \nu_{H'}(T) \leq \varepsilon \sqrt{\nu_{H'}(S) \nu_{H'}(T)}.\]
Thus assume that
\begin{gather} \label{eq:small-S-T-case-2}
    |\nu_{H'}(E_{H'}(S,T)) - \nu_{H'}(S) \nu_{H'}(T)| = \nu_{H'}(E_{H'}(S,T)) - \nu_{H'}(S) \nu_{H'}(T) \leq \nu_{H'}(E_{H'}(S,T)).
\end{gather}
The event \(\neg R\) implies that \(\nu_H(E') \geq (1-0.25\varepsilon)p\). Thus 
\[ \nu_{H'}(E_{H'}(S,T)) = \frac{\nu_H(E_{H'}(S,T))}{\nu_H(E')} \leq \frac{\nu_H(E_{H'}(S,T))}{(1-0.25\varepsilon)p} \leq \frac{(1+\varepsilon)\nu_H(E_H(S,T))}{p}.\]
This upper bound is tight when \emph{all} edges of \(E_{H}(S,T)\) are sampled into \(H'\).
The original graph \(H\) is a \(\lambda\)-spectral expander, so by the expander mixing lemma \(\nu_{H}(E_{H}(S,T))\) is also small:
\begin{gather} \label{eq:eml-for-small-S-T-in-H}
\nu_{H}(E_{H}(S,T)) \leq \nu_H(S) \nu_H(T) + \lambda \sqrt{\nu_H(S) \nu_H(T)}.
\end{gather}
When \(\neg R\) occurs, then \(\nu_H(S) \leq (1+\varepsilon)\nu_{H'}(S)\) and \(\nu_H(T) \leq (1+\varepsilon)\nu_{H'}(T)\). So we can upper bound \eqref{eq:eml-for-small-S-T-in-H} by 
\[(1+\varepsilon)^{2} \nu_{H'}(S) \nu_{H'}(T) + \lambda(1+\varepsilon) \sqrt{\nu_{H'}(S) \nu_{H'}(T)} \leq (\varepsilon^{1.5} p^{1.5}+\lambda)(1+\varepsilon)^{2} \sqrt{\nu_{H'}(S) \nu_{H'}(T)}.\] 
The inequality is by the fact that \(\nu_{H'}(S) \cdot\nu_{H'}(T) \leq \varepsilon^2 p\).
Plugging this in \eqref{eq:small-S-T-case-2} we get
\[|\nu_{H'}(E(S,T)) - \nu_{H'}(S) \nu_{H'}(T)| \leq \nu_{H'}((E_{H'}(S,T)) \leq \]
\[(1+\varepsilon)^3 (\lambda p^{-1}+\varepsilon^{1.5} p^{0.5})\sqrt{\nu_{H'}(S) \nu_{H'}(T)} \leq \varepsilon \sqrt{\nu_{H'}(S) \nu_{H'}(T)},\]
since by the assumptions on \(\lambda\), \((1+\varepsilon)^3 (\lambda p^{-1}+\varepsilon^{1.5}p^{0.5}) \leq \varepsilon\).
\end{proof}

\begin{proof}[Proof of \pref{claim:b-s-t-is-small}]
Fix \(S\subseteq A, T \subseteq B\) so that \(\nu_{H'}(S), \nu_{H'}(T) \geq \varepsilon^3 p^3\). We will prove below that
\begin{gather} \label{eq:number-of-edges-proportional-to-D-times-sizes}
|E_H(S,T)| \geq \mu D (|S|+|T|)
\end{gather}
for \(\mu\) in the Theorem statement \(\mu: = \badmu\). For now let us assume \eqref{eq:number-of-edges-proportional-to-D-times-sizes}. 
\pref{claim:second-step-set-probabilities} implies that the event that 
\begin{equation} \label{eq:good-inequality-for-s-t}
|\nu_{H'}(E_{H'}(S,T))-\nu_{H}(E_{H}(S,T)) | \leq 0.25\varepsilon \nu_{H}(E_{H}(S,T)),
\end{equation}
occurs with probability \(\geq 1 - 2e^{-0.3p\varepsilon^2r^{-2} |E_H(S,T)|}\). By \eqref{eq:number-of-edges-proportional-to-D-times-sizes} this is \(\geq 1 - 2e^{-0.3p\varepsilon^2r^{-2}\mu D (|S|+|T|)}\).

When \eqref{eq:good-inequality-for-s-t} occurs, then 
\begin{gather}
|\nu_{H'}(E_{H'}(S,T)) - \nu_{H'}(S)\nu_{H'}(T)| \leq \\
    |\nu_{H'}(E_{H'}(S,T))-\nu_{H}(E_{H}(S,T)) | + \label{eq:final-bound-1}\\
    |\nu_{H}(E_{H}(S,T)) - \nu_{H}(S)\nu_{H}(T)| + \label{eq:final-bound-2}\\
    |\nu_{H}(S)\nu_{H}(T) - \nu_{H'}(S)\nu_{H'}(T)| \label{eq:final-bound-3}
\end{gather}
Before bounding the terms \eqref{eq:final-bound-1}, \eqref{eq:final-bound-2}, \eqref{eq:final-bound-3}, we say informally that \eqref{eq:final-bound-2} is bounded by the expander mixing lemma in \(H\), and \eqref{eq:final-bound-1}, \eqref{eq:final-bound-3} are bounded by the fact that when \(\neg R\) occurs, the probabilities of verticies and edge sets between verticies are similar in \(H,H'\).
\begin{enumerate}
    \item By \eqref{eq:good-inequality-for-s-t}, \eqref{eq:final-bound-1} is bounded by \(0.25\varepsilon \nu_{H}(E_{H}(S,T))\). As \(\nu_{H}(E_{H}(S,T))\) is bounded both by \(\nu_{H}(S)\) and \(\nu_{H}(T)\), it is also bounded by their geometric mean. When \(\neg R\) occurs, this implies that \eqref{eq:final-bound-1} is bounded by
    \[0.25\varepsilon \sqrt{\nu_{H}(S)\nu_{H}(T)} \leq 0.25\varepsilon (1+0.25\varepsilon) \sqrt{\nu_{H'}(S)\nu_{H'}(T)}\leq 0.4\varepsilon \sqrt{\nu_{H'}(S)\nu_{H'}(T)}.\]
    \item By the Expander Mixing Lemma in \(H\), \eqref{eq:final-bound-2} is upper bounded by \(\lambda \sqrt{\nu_{H}(S)\nu_{H}(T)}\). When \(\neg R\) occurs, this is further bounded by \(\lambda(1+0.25\varepsilon) \sqrt{\nu_{H'}(S)\nu_{H'}(T)}\). By our assumption on \(\lambda\), this is \(\leq 0.09\varepsilon \sqrt{\nu_{H'}(S)\nu_{H'}(T)}\).
    \item When \(\neg R\) occurs, then \eqref{eq:final-bound-3} is bounded by \(((1+0.25\varepsilon)^2 - 1)\nu_{H'}(S)\nu_{H'}(T)\). This is further bounded by \(0.51\varepsilon \sqrt{\nu_{H'}(S)\nu_{H'}(T)}\).
\end{enumerate}
In conclusion, we showed that with probability \( 1 - 2e^{-0.3p\varepsilon^2r^{-2}\mu D (|S|+|T|)}\), 
\[|\nu_{H'}(E_{H'}(S,T)) - \nu_{H'}(S)\nu_{H'}(T)| \leq \varepsilon \sqrt{\nu_{H'}(S)\nu_{H'}(T)}\]
as required.
\medskip

It remains to show \eqref{eq:number-of-edges-proportional-to-D-times-sizes}.
By the expander mixing lemma for \(H\)
\[\nu_{H}(E_{H}(S,T)) \geq \nu_H(S) \nu_H(T) - \lambda \sqrt{\nu_H(S) \nu_H(T)} \geq\]
\[ \sqrt{\nu_H(S) \nu_H(T)} (\sqrt{\nu_H(S) \nu_H(T)} - \lambda) \geq \]
\begin{equation}\label{eq:b-s-t-is-small}
    \sqrt{\nu_H(S) \nu_H(T)}((1-0.25\varepsilon) \varepsilon^3 p^3-\lambda).
\end{equation}
The last inequality is due to the following: when \(\neg R\) occurs and \(\nu_{H'}(S), \nu_{H'}(T) \geq \varepsilon^3 p^3\) then \(\nu_{H}(S), \nu_{H}(T) \geq (1-0.25\varepsilon) \varepsilon^3 p^3\). Thus we replace their geometric mean by \((1-0.25\varepsilon)\varepsilon^3p^3\).

As \[\sqrt{\nu_{H}(S) \nu_{H}(T)}\geq \sqrt{(1-0.25\varepsilon) \varepsilon^3 p^3 \nu_{H}(T)}\geq (1-0.25\varepsilon) \varepsilon^{3/2} p^{3/2}\nu_{H}(T)\]
and 
\[\sqrt{\nu_{H}(S) \nu_{H}(T)}\geq \sqrt{(1-0.25\varepsilon) \varepsilon^3 p^3 \nu_{H}(S)}\geq (1-0.25\varepsilon) \varepsilon^{3/2} p^{3/2}\nu_{H}(S)\]
then by taking average, we have that 
\[\sqrt{\nu_{H}(S) \nu_{H}(T)} \geq 0.5(1-0.25\varepsilon) \varepsilon^{3/2} p^{3/2} (\nu_{H}(S) + \nu_{H}(T)).\]
Plugging this in \eqref{eq:b-s-t-is-small} we have that
\[\nu_{H}(E_{H}(S,T)) \geq 0.5\varepsilon^{3/2} p^{3/2}(1-0.25\varepsilon)((1-0.25\varepsilon)\varepsilon^3 p^3-\lambda)( \nu_{H}(S) + \nu_{H}(T) ).\]
Every edge in \(H\) has weight at most \(\frac{r}{|E|}\), we have that \(|E_{H}(S,T)| \cdot \frac{r}{|E|} \geq \nu_{H}(E_{H}(S,T))\). Furthermore, since every edge in \(H\) has weight at least \(\frac{1}{r|E|}\), then \(\nu_H(S) \geq \frac{D}{r|E|}|S|\) (and the same for \(T\)). Thus concludes that
\[|E(S,T)| \geq \mu D (|S|+|T|).\]
Where \(\mu = \badmu\).

\end{proof}
\section{Bilu-Linial's Converse Expander Mixing Lemma} \label{sec:bilu-linial}
In this appendix we give a variation of the converse expander mixing lemma given in \cite{BiluL2006}, that suits our purposes in \pref{sec:expander-sparsification}.
\begin{theorem}[\cite{BiluL2006}, Lemma 3.3] \torestate{\label{thm:bilu-linial-variation}
Let \(X\) be a finite set and \(\mu:X \to [0,1]\) be a probability distribution on \(X\). Let \(V = L_2(X,\mu)\) be its inner product space. Let \(A:V\to V\) be a self adjoint operator that satisfies:
\begin{enumerate}
    \item Non-laziness: for every \(x\in X\), \(\iprod{A\one_x,\one_x} = 0\).
    \item For every \(u \in \set{-1,0,1}^X\), and  \(v_1,v_2,...,v_m \subseteq \set{-1,0,1}^X\) whose supports are pairwise disjoint, it holds that
    \[ \sum_{j=1}^m \abs{\iprod{u,Av_j}} \leq d \iprod{u,u}.\]
    \item For every \(S,T \subseteq X\) with disjoint support,
    \[ \iprod{A\one_S,\one_T} \leq \alpha \norm{\one_S} \norm{\one_T}.\]
    Then \(\norm{A} \leq 130(\alpha + \alpha \log_2(d/\alpha))\).
\end{enumerate}
}
\end{theorem}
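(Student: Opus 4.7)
The plan is to adapt the dyadic decomposition argument of \cite{BiluL2006} to the weighted inner product \(\iprod{f,g} = \sum_x \mu(x) f(x) g(x)\). Since \(A\) is self-adjoint, \(\norm{A} = \sup_{\norm{u}=1}|\iprod{u,Au}|\), so it suffices to bound \(|\iprod{u,Au}|\) for a unit vector \(u\). First I would decompose \(u = u_+ - u_-\) into nonnegative parts with disjoint supports, so that \(\iprod{u,Au} = \iprod{u_+,Au_+} + \iprod{u_-,Au_-} - 2\iprod{u_+,Au_-}\); the cross term is between functions of disjoint supports, while the two diagonal terms need the non-laziness hypothesis in order to be split further.

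For the dyadic decomposition, I would write each \(u_\pm\) as \(\sum_i w_\pm^{(i)}\), where \(w_\pm^{(i)}\) is the restriction of \(u_\pm\) to the level set \(S_\pm^{(i)} = \set{x : 2^{i-1} < u_\pm(x) \leq 2^i}\). These level sets are pairwise disjoint, and Chebyshev's inequality gives \(\sum_i 2^{2i}\mu(S_\pm^{(i)}) \leq 4\norm{u_\pm}^2\). To bound \(|\iprod{w_{s_1}^{(i)}, A w_{s_2}^{(j)}}|\) for disjoint-support pairs (i.e., whenever \(s_1 \neq s_2\) or \(i \neq j\)), I would use the layer-cake formula \(w^{(i)} = \int_0^{2^i}\one_{\set{w^{(i)}>t}}\,dt\) to reduce to integrating \(|\iprod{\one_{S'}, A\one_{T'}}|\) over subsets \(S' \subseteq S_\pm^{(i)}, T' \subseteq S_\pm^{(j)}\) of disjoint supports. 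Hypothesis 3 then yields \(|\iprod{w_{s_1}^{(i)}, A w_{s_2}^{(j)}}| \leq 2^{i+j}\alpha\sqrt{\mu(S_{s_1}^{(i)})\mu(S_{s_2}^{(j)})}\), and summing over pairs by Cauchy--Schwarz gives a bound of order \(\alpha L\), where \(L\) is the number of effective dyadic levels.

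To handle the same-support diagonal terms \(\iprod{w_\pm^{(i)}, Aw_\pm^{(i)}}\), I would use the recursive halving scheme of \cite{BiluL2006}: split \(S_\pm^{(i)}\) into two disjoint halves of equal \(\mu\)-measure, express \(\iprod{\one_{S},A\one_{S}}\) as twice a cross term (bounded by hypothesis 3) plus the two halves' diagonal inner products, and recurse. The recursion terminates at singletons, where non-laziness makes \(\iprod{A e_x, e_x} = 0\) so the base case vanishes. To prevent the resulting logarithmic depth from growing with \(|S|\), one truncates the recursion using hypothesis 2: applying it with \(u = \one_S\) and a partition of \(T\) into singletons yields \(|\iprod{\one_S, A\one_T}| \leq d\min(\mu(S),\mu(T))\), which is sharper than the mixing bound \(\alpha\sqrt{\mu(S)\mu(T)}\) once \(\min/\max \leq (\alpha/d)^2\). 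Taking the better of the two bounds at each dyadic pair caps \(L\) at \(O(\log_2(d/\alpha))\), which then yields \(\norm{A} \leq 130(\alpha + \alpha\log_2(d/\alpha))\) after carefully tracking the Cauchy--Schwarz losses.

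The main obstacle will be the weighted bookkeeping. In the original Bilu--Linial setup the measure is uniform counting and \(A\) has \(0/1\) entries, so the halving step splits \(S\) into equal cardinalities and hypothesis 3 is applied with nonnegative entries. Here we must split so that \(\mu(T_1) \approx \mu(T_2)\), which is feasible by a greedy balancing argument but loses constants, and we must observe that applying the hypotheses to \(-A\) preserves all three conditions, allowing us to replace hypothesis 3 by its absolute-value version \(|\iprod{A\one_S,\one_T}| \leq \alpha\sqrt{\mu(S)\mu(T)}\). Propagating the exact constant \(130\) through the combined cross-term and diagonal-term estimates, and through the truncation trade-off between hypotheses 2 and 3, is where most of the calculation lies.
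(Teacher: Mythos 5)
Your plan agrees with the paper on the broad skeleton --- dyadic level sets and a trade-off between the mixing hypothesis (3) and the degree hypothesis (2) that caps the number of relevant level pairs at $O(\log(d/\alpha))$ --- but the handling of the diagonal blocks $\iprod{\one_S, A\one_S}$ is where it breaks.

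Your proposal uses a recursive halving (split $S$ into two $\mu$-equal halves, peel off a cross term, recurse to singletons). This accrues error $\approx \alpha\,\mu(S)$ at each depth, and the recursion genuinely needs depth $\log|S|$ before non-laziness zeroes out the leaves; the error is therefore $\Theta(\alpha\log|S|)\mu(S)$, which is not bounded by anything depending on $\alpha,d$ alone. The truncation you propose does not repair this: hypothesis~2 applied to a piece $S_i$ (with any partition of the test set into singletons) gives $|\iprod{\one_{S_i},A\one_{S_i}}|\leq d\,\mu(S_i)$, so stopping the recursion at depth $k$ leaves a residual $\sum_i|\iprod{\one_{S_i},A\one_{S_i}}| \leq d\,\mu(S)$ that does \emph{not} decrease in $k$. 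The total is $k\alpha\,\mu(S)+d\,\mu(S)$, minimized at $k=0$, giving $d\,\mu(S)$ --- far larger than the required $O(\alpha)\mu(S)$. (Your $d\min(\mu(S),\mu(T))$-vs.-$\alpha\sqrt{\mu(S)\mu(T)}$ comparison legitimately truncates the \emph{cross}-level pairs $i\neq j$, but it buys nothing for the equal-measure halves inside one level set.)

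The paper avoids this entirely with a subset-averaging identity rather than recursion: for $S=\operatorname{supp}(u)$ with $|S|=n$, non-laziness gives
\[
\sum_{T\subseteq S}\iprod{A\one_T,\one_{S\setminus T}} \;=\; 2^{n-2}\iprod{A\one_S,\one_S},
\]
while hypothesis 3 and AM--GM bound the left side by $2^{n-1}\alpha\norm{\one_S}^2$, so $|\iprod{A\one_S,\one_S}|\leq 2\alpha\norm{\one_S}^2$ with \emph{no} logarithmic loss; this is then bootstrapped to all $u,v\in\set{-1,0,1}^X$ (Claim~\ref{claim:positive-negative-labels}) and plugged into the dyadic sum. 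Your layer-cake reduction for the off-diagonal cross terms is a reasonable alternative to the paper's randomized rounding to signed powers of two, but the diagonal step must be redone with the averaging identity (or some other argument not based on naive halving) for the claimed constant to hold.
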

This version of Lemma 3.3 in \cite{BiluL2006}, deals with self-adjoint operators rather than real symmetric matrices which were in the original version.
\restatecorollary{thm:converse-bipartite-eml}.

\begin{proof}[Proof of \pref{thm:converse-bipartite-eml}]
Let \(J:L_2(H')\to L_2(H')\) be the operator of the complete (weighted) bipartite graph
\[J(f)(v) = \begin{cases}
\Ex[u \in X]{f(u)} & v \in Y \\
\Ex[u \in Y]{f(u)} & v \in X 
\end{cases}.
 \]
Let \(B = A-J\) and let \(v_0 = \one_{X\cup Y}\), \(v_n = \one_{X} - \one_{Y}\) the trivial eigenvectors of \(A\). One can verify that \(Bv_0 = Bv_n = 0\). Furthermore, for every vector \(v\) orthogonal to \(v_0,v_n\), \(Bv=Av\). Thus, \(\lambda(A) = \lambda(B) = ||B||\).

We note that \(B\) is self adjoint, and for every \(u \in A \cup B\), \(\iprod{B\one_u,\one_u} = 0\). Moreover, for every \(u \in \set{0,1}^{A \cup B}\) and pairwise disjoint \(v_j \in \set{0,1}^{A \cup B}\) we have that
\[ \sum_{j} \abs{\iprod{Bu,v_j}} \leq \sum_{j} \abs{\iprod{Au,v_j}} + \abs{\iprod{Ju,v_j}} \leq 2\sum_j \iprod{u,v_j} = 2\norm{u}^2.\]

In order to use \pref{thm:bilu-linial-variation}, we bound \(\abs{\iprod{B \one_S, \one_T}}\) for every \(S,T \subseteq A \cup B\) that are disjoint. Indeed, let \(S = S_1 \cup S_2, T = T_1 \cup T_2\) so that \(S_1,T_1 \subseteq A, S_2,T_2 \subseteq B\). Then
\[ \abs{\iprod{B \one_S, \one_T}} = \abs{\iprod{B \one_{S_1}, \one_{T_2}} + \iprod{B \one_{S_2}, \one_{T_1}}} \leq \]
\[ \abs{\iprod{B \one_{S_1}, \one_{T_2}}} + \abs{\iprod{B \one_{S_2}, \one_{T_1}}}.\]
Note that \(\iprod{B \one_{S_1}, \one_{T_2}}  = \prob{E(S_i,T_j)} - \prob{S_i}\prob{T_j}\) hence we get the bound
\[\abs{\iprod{B \one_S, \one_T}} \leq \alpha \sqrt{\prob{S_1}\prob{T_2}} + \sqrt{\prob{S_2}\prob{T_1}} \leq 2\alpha \sqrt{\prob{S}\prob{T}} = \]
\[ 2\alpha \norm{\one_S}\norm{\one_T}.\]
Hence, by \pref{thm:bilu-linial-variation}, we have that
\[ \lambda(A) = ||B|| \leq 260\alpha(1+\log_2(2/\alpha)).\]

\end{proof}

\subsection{Proof of \pref{thm:bilu-linial-variation}}
The proof of \pref{thm:bilu-linial-variation} is the same as the proof of the original version in \cite{BiluL2006}. The only change is that instead of taking matrix multiplication we take inner product. We give it here in full just to stay self contained.
\begin{proof}[Proof of \pref{thm:bilu-linial-variation}]
The operator \(A\) is self adjoint, our goal is to show that
\begin{equation} \label{eq:rayleigh-quotient}
    max_{0 \ne v \in L_2(X,\mu)} \frac{\iprod{\pm Av,v}}{\iprod{v,v}} \leq 130(\alpha + \alpha \log_2(d/\alpha)).
\end{equation}
As \(A\) and \(-A\) both uphold the assumptions of the Theorem, we focus on the inequality with \(A\). We begin by asserting that the third item's inequality also holds when the support is not disjoint, and when instead of vectors in \(\set{0,1}^X\) we take vectors in \(\set{-1,0,1}^X\).

\begin{claim} \label{claim:positive-negative-labels}
Let \(v,u \in \set{-1,0,1}^X\). Then
\[\iprod{Av, u} \leq 64\alpha \norm{v}\norm{u}.\]
\end{claim}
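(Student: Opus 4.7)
The plan is to reduce the general statement to the disjoint-support indicator bound in item 3 of the hypotheses. First, I would decompose \(v = \one_{V^+} - \one_{V^-}\) and \(u = \one_{U^+} - \one_{U^-}\) where \(V^\pm = \set{x : v(x) = \pm 1}\) and analogously for \(u\); these are disjoint decompositions. Expanding bilinearly,
\[ \iprod{Av, u} = \iprod{A\one_{V^+}, \one_{U^+}} - \iprod{A\one_{V^+}, \one_{U^-}} - \iprod{A\one_{V^-}, \one_{U^+}} + \iprod{A\one_{V^-}, \one_{U^-}}. \]
The four pairs of sets in these terms need not be disjoint, so item 3 does not apply directly to any single term.

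To handle a single term \(\iprod{A\one_S, \one_T}\), I would further decompose \(S = P \dunion R\) and \(T = Q \dunion R\) with \(R = S \cap T\), \(P = S \setminus T\), and \(Q = T \setminus S\). Then
\[ \iprod{A\one_S, \one_T} = \iprod{A\one_P, \one_Q} + \iprod{A\one_P, \one_R} + \iprod{A\one_R, \one_Q} + \iprod{A\one_R, \one_R}. \]
The first three sub-terms involve pairs of sets with disjoint supports, so item 3 bounds each by \(\alpha \norm{\one_S}\norm{\one_T}\), using the monotonicity \(\norm{\one_P} \leq \norm{\one_S}\) and so on.

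The main obstacle is the residual ``diagonal'' sub-term \(\iprod{A\one_R, \one_R}\), since \(R\) is not disjoint from itself. Here I would use the non-laziness assumption (item 1) together with a random-splitting argument. Expanding bilinearly gives \(\iprod{A\one_R, \one_R} = \sum_{x, y \in R} \iprod{A\one_x, \one_y}\), and by non-laziness all diagonal \(x = y\) contributions vanish. Now consider a random partition \(R = R_1 \dunion R_2\) in which each element is placed independently into \(R_1\) or \(R_2\) with probability \(\half\); for any \(x \ne y\) the event \(\set{x \in R_1, y \in R_2}\) has probability \(1/4\), giving
\[ \E \iprod{A\one_{R_1}, \one_{R_2}} = \tfrac{1}{4} \iprod{A\one_R, \one_R}. \]
Applying item 3 to the (disjoint) pair \(R_1, R_2\), combined with \(\norm{\one_{R_1}}\norm{\one_{R_2}} \leq \half(\norm{\one_{R_1}}^2 + \norm{\one_{R_2}}^2)\) whose expectation is \(\half \norm{\one_R}^2\), yields \(\abs{\iprod{A\one_R, \one_R}} \leq 2\alpha\norm{\one_R}^2 \leq 2\alpha\norm{\one_S}\norm{\one_T}\).

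Summing the four sub-terms bounds a single \(\abs{\iprod{A\one_S, \one_T}}\) by \(5\alpha\norm{\one_S}\norm{\one_T}\). Summing the four original terms and applying \((\norm{\one_{V^+}} + \norm{\one_{V^-}})(\norm{\one_{U^+}} + \norm{\one_{U^-}}) \leq 2\norm{v}\norm{u}\) (from \(\norm{v}^2 = \norm{\one_{V^+}}^2 + \norm{\one_{V^-}}^2\) by disjointness and Cauchy--Schwarz) gives \(\abs{\iprod{Av, u}} \leq 10\alpha \norm{v}\norm{u}\), comfortably within the claimed \(64\alpha\) bound. The hardest step is the treatment of the diagonal overlap, which is precisely where the non-laziness assumption is invoked.
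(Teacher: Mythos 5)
Your proposal is correct and takes essentially the same approach as the paper: decompose by sign into indicator functions, split off the common support, and handle the remaining diagonal term $\iprod{A\one_R,\one_R}$ by averaging, where non-laziness kills the $x=y$ contributions. Your random partition $R = R_1 \dunion R_2$ is the paper's deterministic identity $\sum_{T\subseteq R}\iprod{A\one_T,\one_{R\setminus T}} = 2^{|R|-2}\iprod{A\one_R,\one_R}$ written in expectation form, and you track the constants a bit more carefully ($10\alpha$ versus the paper's $32\alpha$, both within the claimed $64\alpha$).
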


Next we show that we can approximate the maximum in \eqref{eq:rayleigh-quotient} by taking a maximum only on \(v\)'s whose values are either \(0\) or \(\pm 2^{-i}\)'s.
\begin{claim} \label{claim:powers-of-two-approximation}
Let \(v \in L_2(X,\mu)\). There exists some \(w \in L_2(X,\mu)\) so that 
\begin{enumerate}
    \item \(\frac{\iprod{Av,v}}{\iprod{v,v}} \leq 2\frac{\iprod{A w,w}}{\iprod{w,w}}\).
    \item For all \(x \in X\), \(w(x) = sign(v(x)) 2^{-i}\) for some negative integer \(i <0\).
\end{enumerate}

By \pref{claim:powers-of-two-approximation}, we need to show that for every \(w \in L_2(X,\mu)\) that takes values that are negative powers on \(2\),
\[\frac{\iprod{Aw,w}}{\iprod{w,w}} \leq 64 (\alpha +\alpha(\log_2(d/\alpha)).\]
Indeed, we partition the support of \(w = \sum_{i>0}2^{-i} w_i\) where \(w_i = \pm 1\) and the support of the \(w_i\)'s are pairwise disjoint.

By \pref{claim:positive-negative-labels} we have that
\begin{equation} \label{eq:first-bound}
    \iprod{w_i,Aw_j} \leq 64\alpha \norm{w_i}\norm{w_j}
\end{equation}
and by the second assumption in the theorem we have that
\begin{equation} \label{eq:second-bound}
    \sum_j \abs{\iprod{w_i,Aw_j}} \leq d \iprod{w_i,w_i}
\end{equation}

Furthermore, we have that
\begin{equation} \label{eq:rayleigh-quotient-2}
\frac{\iprod{Aw,w}}{\iprod{w,w}} \leq \frac{\sum_{i,j} 2^{-i-j} \abs{\iprod{w_i,w_j}}}{\sum_i 2^{-2i}\iprod{w_i,w_i}}.
\end{equation}

Denote by \(\gamma = \log(d/\alpha)\), \(q_i = 2^{-2i}\iprod{w_i,w_i}\) and \(Q = \sum_i q_i\). Add up inequalities \eqref{eq:first-bound} and \eqref{eq:second-bound} as follows. For \(i=j\) we multiply \eqref{eq:first-bound} by \(2^{-2i}\). when \(i<j\leq i+\gamma\) multiply it by \(2^{-(i+j)+1}\). Multiply \eqref{eq:second-bound} by \(2^{-(2i+\gamma)}\).
We get that:
\begin{equation} \label{eq:added-up-bounds}
    \sum_{i} 2^{-2i} \abs{\iprod{w_i, Aw_i}} + \sum_i \sum_{i<j\leq i+\gamma} 2^{-(i+j)+1} \abs{\iprod{w_i,Aw_j}} + \sum_i 2^{-2i+\gamma}\sum_j \abs{\iprod{w_i,A w_j}} \leq
\end{equation}
\[64\alpha \sum_i q_i + 64\alpha \sum_i \sum_{i<j\leq i+\gamma} 2\sqrt{q_i q_j} + \alpha \sum_i q_i \leq\]
\[ 67\alpha Q + 64\alpha \sum_i \sum_{i<j\leq i+\gamma} q_i +q_j.\]
Every member \(q_j\) appears in \(\sum_i \sum_{i<j\leq i+\gamma}q_i + q_j\) at most \(\lceil\gamma \rceil+1 \leq \gamma+2\) times, this is at most
\[ 130\alpha + \alpha \log_2(d/\alpha) Q.\]
As \(Q\) is the denominator in \eqref{eq:rayleigh-quotient-2}, it is enough to show that the numerator in \eqref{eq:rayleigh-quotient-2} is at most \eqref{eq:added-up-bounds}. Indeed, we compare the coefficients of \(\abs{\iprod{w_i,Aw_j}}\) in both expressions (as \(A\) is self adjoint, it is enough to consider \(i \leq j\). 

\begin{itemize}
    \item For \(i=j\) the coefficient in \eqref{eq:added-up-bounds} is \(2^{-2i}\) and \(2^{-2i} + 2^{-(2i+\gamma)}\) in \eqref{eq:rayleigh-quotient-2}.
    \item For \(i<j\leq i+\gamma\) the coefficient in \eqref{eq:added-up-bounds} is \(2^{-(i+j)+1}\) and \(2^{-(i+j)+1} + 2^{-(2i+\gamma)} + 2^{-(2j+\gamma)}\) in \eqref{eq:rayleigh-quotient-2}.
    \item For \(i+\gamma < j\) the coefficient in \eqref{eq:added-up-bounds} is \(2^{-(i+j)+1}\). In \eqref{eq:rayleigh-quotient-2} it is 
    \(2^{-(2i+\gamma)} + 2^{-(2j+\gamma)} \geq 2^{-(2i+\gamma)} \geq 2^{-(i+j)+1}\).
\end{itemize}
The claim follows.
\end{claim}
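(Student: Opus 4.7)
The plan is to bound the Rayleigh quotient $\iprod{Av,v}/\iprod{v,v}$ uniformly in $v\in L_2(X,\mu)$, using a dyadic decomposition of $v$ and a case analysis on whether pairs of dyadic levels are close or far apart, measured against the threshold $\gamma := \log_2(d/\alpha)$. Since $A$ is self-adjoint, the supremum of this quotient over nonzero $v$ equals $\norm{A}$. The two hypotheses are used complementarily: assumption 3 gives an EML-style geometric-mean bound, tight when the two vectors have comparable mass (close levels), whereas assumption 2 gives an $\ell_1$-type total-mass bound, useful for aggregating contributions over distant levels.

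First I would reduce to vectors of the form $v = \sum_{i\ge 1} 2^{-i}w_i$ where each $w_i \in \set{-1,0,1}^X$ and the supports of the $w_i$ are pairwise disjoint. Given arbitrary $v$, round each coordinate's magnitude down to the nearest nonnegative power of $2$ while keeping the sign; a short computation shows that $\iprod{v,v}$ and $\iprod{Av,v}$ each change by at most a factor of $4$, so the Rayleigh quotient survives with a loss of at most a factor $2$. The rounded vector admits the desired decomposition, and one then has $\snorm{v} = \sum_i 2^{-2i}\snorm{w_i}$ and $\iprod{Av,v} = \sum_{i,j} 2^{-(i+j)}\iprod{Aw_i,w_j}$.

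The key preparatory step is strengthening assumption 3 from disjoint indicator pairs to arbitrary $\pm 1$ vectors: writing $u = \one_{u^+} - \one_{u^-}$ and $v = \one_{v^+} - \one_{v^-}$, and disjointifying by splitting off $u^\pm \cap v^\pm$, one expresses $\iprod{Au,v}$ as a bounded combination of inner products on disjoint indicator pairs plus diagonal contributions that vanish by assumption 1. This yields $|\iprod{Au,v}| \le O(\alpha)\norm{u}\norm{v}$ with an absolute constant. Setting $q_i := 2^{-2i}\snorm{w_i}$, I would then split the double sum over $(i,j)$ at the threshold $|i-j|\le\gamma$. For near pairs the strengthened bound gives $2^{-(i+j)}|\iprod{Aw_i,w_j}| \le O(\alpha)\sqrt{q_iq_j} \le O(\alpha)(q_i+q_j)/2$, and since each index participates in only $2\gamma+1$ near pairs, the total contribution is $O(\alpha\gamma)\sum_i q_i$. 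For far pairs, assumption 2 yields $\sum_j|\iprod{w_i,Aw_j}| \le d\snorm{w_i}$, and the weight $2^{-(i+j)} \le 2^{-2i-\gamma} = (\alpha/d)\cdot 2^{-2i}$ forces the contribution to be at most $\alpha q_i$ per $i$. Summing both regimes yields $\iprod{Av,v} \le O(\alpha + \alpha\gamma)\snorm{v}$, and tracking the absolute constants carefully produces the prefactor $130$.

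The main obstacle will be the strengthening of assumption 3: the hypothesis is clean only for indicators of disjoint sets, and the disjointification for general $\pm 1$ vectors produces cross-terms and diagonal corrections that must be absorbed via assumption 1 without inflating the constant beyond a moderate factor. A secondary subtlety is the aggregation of near-pair contributions: a naive bound would give a $\gamma^2$ overhead, and it is essential to exploit that each dyadic level appears in only $O(\gamma)$ near pairs, so that the final overhead is linear in $\gamma = \log_2(d/\alpha)$ rather than quadratic.
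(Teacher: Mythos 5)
Your overall architecture --- strengthening assumption 3 to general $\pm 1$ vectors, the dyadic decomposition $w = \sum_i 2^{-i} w_i$, and the near/far split of level pairs at $|i-j| \leq \gamma = \log_2(d/\alpha)$ with the geometric-mean bound on near pairs and the $\ell_1$ bound on far pairs --- matches the paper exactly. But the reduction to dyadic-valued vectors contains a genuine gap. You claim that deterministically rounding each coordinate's magnitude down to the nearest power of $2$ changes $\iprod{Av,v}$ by at most a factor of $4$. This is false. Since $A$ is non-lazy, $\iprod{Av,v}$ expands as a sum of off-diagonal terms $A(x,y)v(x)v(y)$ with mixed signs, and deterministic rounding rescales each term by a \emph{different} factor in $[1/4,1]$; this can destroy cancellations and change the sum by an unbounded factor or flip its sign. (Take two off-diagonal entries of opposite sign and coordinate values $1.9$ and $1$: after rounding, a nonzero sum becomes exactly zero.) The paper instead rounds \emph{randomly}: writing $|v(x)| = (1+\delta_x)2^{i_x}$, each coordinate is independently rounded up to magnitude $2^{i_x+1}$ with probability $\delta_x$ and down to $2^{i_x}$ otherwise. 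By pairwise independence and non-laziness, $\E\brac{\iprod{Aw,w}} = \iprod{Av,v}$ exactly, so some outcome $w$ satisfies $\iprod{Aw,w} \geq \iprod{Av,v}$, while $\iprod{w,w}$ is deterministically bounded by a constant times $\iprod{v,v}$. The probabilistic argument is not optional; with deterministic rounding the numerator of the Rayleigh quotient is uncontrolled and the reduction fails.
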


\end{proof}
\subsection{Proof of various claims}

\begin{proof}[Proof of \pref{claim:positive-negative-labels}]
We first show that for every \(u \in \set{0,1}^X\), \(\iprod{Au,u} \leq 2\alpha \iprod{u,u}\). Indeed, Let \(S = supp(u)\) be of size \(n\). We have that
\[ \abs{\sum_{T \subseteq S} \iprod{A \one_T, \one_{S \setminus T}}} = 2^{n-2} \abs{\sum_{x,y \in S, x \ne y}\iprod{A \one_x,\one_y}} = \]
\begin{equation} \label{eq:sum-all-sets}
2^{n-2} \abs{\iprod{Au,u}}.
\end{equation}
The last inequality uses the assumption that \(\iprod{A \one_x, \one_x} = 0\). On the other hand,

    \[\abs{\sum_{T \subseteq S} \iprod{A \one_T, \one_{S \setminus T}}} \leq \sum_{T \subseteq S} \abs{\iprod{A \one_T, \one_{S \setminus T}}}\]

and by the third assumption in the theorem, this is at most
\[ \alpha \sum_{T \subseteq S} \norm{\one_T}\norm{\one_{S \setminus T}}.\]
By AM-GM this is at most
\begin{equation}\label{eq:other-hand}
\alpha \sum_{T \subseteq S}\frac{1}{2}\norm{u}^2 = 2^{n-1}\alpha \norm{u}^2.
\end{equation}
By combining \eqref{eq:sum-all-sets} and \eqref{eq:other-hand} we get that \(\iprod{Au,u} \leq 2\alpha \iprod{u,u}\).

Next, for arbitrary \(u, v \in \set{0,1}^X\), we write \(u = w+u', v=w+v'\) where the support of \(u',v',w\) is disjoint. We have that
\[ \iprod{Au,v} = \iprod{Aw,w} + \iprod{Au',w} + \iprod{Aw,v'} + \iprod{Aw,v'} \leq\]
\[ 2\alpha (\norm{w}\norm{w} + \norm{u'}\norm{w} + \norm{w}\norm{v'} + \norm{w}\norm{v'}) = \]
\[2\alpha (\norm{w}+\norm{u'})(\norm{w}+\norm{v'}) \leq 8 \alpha \norm{v}\norm{u}.\]

Finally, let \(v,u \in \set{-1,0,1}^X\). We write \(u = u^+ - u^-\) and \(v= v^+ - v^-\) where \(u^+,u^-,v^+,v^- \in \set{0,1}^X\). We again have that
\[ \abs{\iprod{Au,v}} \leq \abs{\iprod{Au^+,v^+}} + \abs{\iprod{Au^+,v^-}} + \abs{\iprod{Au^-,v^+}} + \abs{\iprod{Au^-,v^-}} \leq \]
\[ \leq 8\alpha (\norm{u^+}+\norm{u^-})(\norm{v^+}+\norm{v^-}) \leq\]
\[ 32\alpha \norm{u}\norm{v}.\]
\end{proof}

\begin{proof}[Proof of \pref{claim:powers-of-two-approximation}]
\eqref{eq:rayleigh-quotient} is homogenuous, so we assume without loss of generality that \(max_{x \in X} \abs{v(x)} \leq 2^{-2}\). We denote \(v(x) = sign(v(x))(1+\delta_x)2^{i_x}\) for \(i_x\) a negative integer and \(\delta_x \in [0,1)\). We sample \(w\) as follows. For every \(x \in X\) independently we sample \(w(x) = sign(v(x))2^{i_x+1}\) with probability \(\delta_x\) and \(w(x) = sign(v(x)) 2^{i_x}\) with probability \(1-\delta_x\). Obviously, the second item in the claim holds, namely, \(w(x)\) is equal to \(sign(v(x))\) times some negative power of \(2\). Furthermore, it is clear that \(\iprod{w,w} \leq 2\iprod{x,x}\).
Finally, note that by linearity of expectation, \(\Ex[w]{\iprod{w,Aw}} = \sum_{x, y}\Ex[w]{w(x)w(y)}\iprod{A \one_x, \one_y}\). As \(A\) is non-lazy, and \(w(x),w(y)\) are pairwise independent, this is equal to \(\sum_{x \ne y}\Ex[w]{w(x)}\Ex[w]{w(y)}\iprod{A \one_x, \one_y} = \iprod{Av,v}\). Thus there exists some \(w\) so that its inner product is greater or equal the expectation, \(\iprod{Aw,w} \geq \iprod{Av,v}\). For that \(w\) the first item holds, namely, \(\frac{\iprod{Av,v}}{\iprod{v,v}} \leq 2\frac{\iprod{A w,w}}{\iprod{w,w}}\).
\end{proof}

\end{document}